\definecolor{darkgreen}{rgb}{0.0, 0.5, 0.0}
\definecolor{darkcyan}{rgb}{0.0, 0.55, 0.55}
\theoremstyle{plain}
\newtheorem*{theorem*}{Theorem}
\newtheorem{theorem}{Theorem}
\newtheorem{lemma}[theorem]{Lemma}
\newtheorem*{claim*}{Claim}
\newtheorem{conjecture}[theorem]{Conjecture}
\newtheorem{observation}[theorem]{Observation}
\theoremstyle{definition}
\theoremstyle{remark}
\def\PP{\mathcal{P}}
\def\QQ{\mathcal{Q}}
\def\CC{\mathscr{C}}
\def\DD{\mathscr{D}}
\def\EE{\mathscr{E}}
\def\FF{\mathscr{F}}
\def\N{\mathbb{N}}
\def\Z{\mathbb{Z}}
\def\Q{\mathbb{Q}}
\def\R{\mathbb{R}}
\def\P{\mathbb{P}}
\def\E{\mathbb{E}}
\def\ss{\mathbb{S}}
\def\C{\mathcal}
\def\B{\mathbf}
\def\Scr{\mathscr}
\def\cycprec{\prec}
\DeclareMathOperator\Deg{d}
\DeclareMathOperator\Bi{Bin}
\DeclareMathOperator\Po{Po}
\DeclareMathOperator\Geom{Geom}
\DeclareMathOperator\V{Var}
\let\emptyset\varnothing
\let\eps\varepsilon
\newcommand{\sgn}{\operatorname{sgn}}
\newcommand\ex{\ensuremath{\mathrm{ex}}}
\newcommand\st{\ensuremath{\mathrm{st}}}
\newcommand\EX{\ensuremath{\mathrm{EX}}}
\newcommand*{\abs}[1]{\lvert#1\rvert}
\title{The $S$-packing coloring of the infinite diagonal grid \\with $S=(1,6,6,\ldots)$}
\author{Teeradej Kittipassorn\thanks{\,Department of Mathematics and Computer Science, Faculty of Science, Chulalongkorn University, Bangkok 10330, Thailand; \texttt{teeradej.k@chula.ac.th}} \thanks{\,Centre of Excellence in Mathematics, Ministry of Higher Education, Science, Research and Innovation, Thailand}
  \and Peerawit Suriya\thanks{\,Mathematics Institute, University of Warwick,
      Coventry, UK; \texttt{peerawit.suriya@warwick.ac.uk}}}
\begin{document}
\maketitle

\begin{abstract}
For a non-decreasing sequence of positive integers $S = (a_1, a_2,\ldots)$, the \emph{$S$-packing chromatic number} of a graph $G$ is the smallest positive integer $k$ such that the vertices can be colored with $k$ colors, where the distance between any two distinct vertices of color $i$ is greater than $a_i$. In this paper, we show that the $S$-packing chromatic number of the infinite diagonal grid $P_\infty \boxtimes P_\infty$ with $S = (1,6,6,\ldots)$ is $40$. This confirms a conjecture of the first author and Tiyajamorn.

\end{abstract}

\section{Introduction}\label{sec:1}

Graph coloring is one of the most well-known concepts in graph theory with many applications and open problems. A \emph{proper coloring} of a graph $G$ is an assignment of a color to each vertex such that the distance between any two distinct vertices of the same color is greater than $1$. The minimum number of colors is called the \emph{chromatic number} of $G$. A natural question that arises when considering proper coloring is: what is the chromatic number of a given graph $G$?

For a non-decreasing sequence of positive integers $S=(a_1,a_2,\ldots)$, the \emph{$S$-packing coloring} of a graph $G$ is an assignment of a color to each vertex such that the distance between any two distinct vertices of color $i$ is greater than $a_i$. The minimum number of colors is called the \emph{$S$-packing chromatic number}, denoted by $\chi_S(G)$. Note that a proper coloring is precisely an $S$-packing coloring with $S=(1,1,\ldots)$. In the case $S = (1,2,3,\ldots)$, this coloring is also known as the \emph{packing coloring}, with \emph{packing chromatic number} denoted by $\chi_\rho(G)$. This concept was first studied in 2008 by Goddard, S. M. Hedetniemi, S. T. Hedetniemi, Harris and Roll \cite{goddard2008broadcast}.

An $S$-packing coloring has been studied in various types of graphs. Several graphs that have been studied is infinite. The \emph{two-way infinite path} $P_\infty$ is a graph such that $V(P_\infty)=\mathbb{Z}$ and $\{u,v\}\in E(P_\infty)$ if $|u-v|=1$. The \emph{infinite square grid} $P_\infty \Box P_\infty$ is a graph such that $V(P_\infty \Box P_\infty)=\mathbb{Z}\times\mathbb{Z}$ and $\{(x,y),(x',y')\}\in E(P_\infty \Box P_\infty)$ if either $x=x'$ and $|y-y'|=1$, or $y=y'$ and $|x-x'|=1$. The \emph{infinite diagonal grid} $P_\infty \boxtimes P_\infty$ is obtained from $P_\infty \Box P_\infty$ by adding edges $\{(x,y),(x',y')\}$ if $|x-x'|=|y-y'|=1$.

In 2012, Goddard and Xu \cite{path} studied the two-way infinite path $P_\infty$ for many sequences $S$ and obtained exact values and bounds for $\chi_S(P_\infty)$. Goddard and Xu \cite{goddard_grid} also studied the infinite square grid $P_\infty \Box P_\infty$ for many sequences in 2014. In the case $S=(1,2,3,\ldots)$, there have been many results on the infinite square grid since 2008 \cite{ekstein2010packing,Fiala2009,goddard2008broadcast,martin2015packing,martin2017packing,soukal2010packing,subercaseaux2022packing}, improving the lower and upper bounds for $\chi_\rho(P_\infty \Box P_\infty)$. In 2023, Subercaseaux and Heule \cite{subercaseaux2023packing} finally proved that $\chi_\rho(P_\infty \Box P_\infty) = 15$ by using a SAT solver.

In 2017, Korže and Vesel \cite{KORZE} studied the case where $S$ is of the form $a_i = d+ \lfloor \tfrac{i-1}{n} \rfloor$, where $d$ and $n$ are small positive integers, in the infinite diagonal grid $P_\infty \boxtimes P_\infty$. Later, in 2024, the first author and Tiyajamorn~\cite{tiyajamorn2024packing} studied $S$-packing colorings of the infinite diagonal grid with $S = (i,k,k,\ldots)$. One of the incomplete cases is $i=1$ with even $k$. They obtained lower and upper bounds for $\chi_S(P_\infty \boxtimes P_\infty)$ which are
\[
\frac{3k^2}{4}+\frac{3k}{2}+3 \leq \chi_S(P_\infty \boxtimes P_\infty) \leq \left\lceil \frac{3k^2+7k+8}{4} \right\rceil.
\]
For $k=2$ and $k=4$, from the bounds, the exact values of $\chi_S(P_\infty \boxtimes P_\infty)$ are $9$ and $21$, respectively. For $k=6$, their result implies that $39 \leq \chi_S(P_\infty \boxtimes P_\infty) \leq 40$. They conjectured that $\chi_S(P_\infty \boxtimes P_\infty) = 40$.

In this paper, we confirm the conjecture by showing that it is impossible to use only $39$ colors for an $S$-packing coloring of $P_\infty \boxtimes P_\infty$ with $S = (1,6,6,\ldots)$.

\begin{theorem}\label{thm:1}
If $ S = (1, 6, 6, \ldots) $, then $ \chi_S(P_\infty \boxtimes P_\infty) = 40 $.
\end{theorem}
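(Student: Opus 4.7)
The upper bound $\chi_S(P_\infty\boxtimes P_\infty)\le 40$ is already established by the first author and Tiyajamorn, so the plan is to prove the matching lower bound: no $S$-packing coloring of $P_\infty\boxtimes P_\infty$ uses only $39$ colors. Distances in $P_\infty\boxtimes P_\infty$ are Chebyshev (king) distances, so a color with $a_i=1$ is an independent set in the king graph (density $\le 1/4$), while a color with $a_i=6$ is a packing at pairwise king-distance $\ge 7$ (density $\le 1/49$). The naive density sum for $39$ colors is $1/4+38\cdot 1/49=201/196>1$, which yields only $\chi_S\ge 38$; the heart of the proof is to extract the extra $5/196$ from the geometric interaction between the $1$-color and the $38$ packing colors.

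My plan is a contradiction argument in a finite window. Suppose a valid $39$-coloring exists, with type-$1$ class $C_1$ and type-$6$ classes $C_2,\ldots,C_{39}$. Fix a large square window $W$ of side $N$, where $N$ is a multiple of $14=\mathrm{lcm}(2,7)$, and bound $|C_i\cap W|$ carefully. The per-color bounds $|C_1\cap W|\le N^2/4$ and $|C_i\cap W|\le(N/7)^2$ for $i\ge 2$ sum to $(201/196)N^2$, so one must prove a structural lemma saying these bounds cannot be simultaneously saturated. A natural target is a local dichotomy: wherever $C_1$ attains its maximum density on a $14\times 14$ sub-window, $C_1$ must locally be a shifted checkerboard $(2\Z)^2+v$; this then forces each $C_i$ ($i\ge 2$) to respect a restricted shift pattern modulo $7$, yielding a quantifiable deficit in the sum of the $(N/7)^2$ bounds. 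Summed over a tiling of $W$ by $14\times 14$ blocks, this deficit should dominate the $5N^2/196$ slack and give a contradiction.

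The main obstacle is that the density gap to close is extremely small---only $5/196$ per unit area---and the natural incompatibilities are surprisingly subtle. Even when $C_1=(2\Z)^2$, translates of $(7\Z)^2$ can still be made disjoint from $C_1$ while retaining density $1/49$ by choosing the shift vector in a piecewise-constant fashion across the plane, so a naive pairwise incompatibility argument between $C_1$ and a single $C_i$ is too weak; one must instead bound the \emph{simultaneous} maximum of $38$ disjoint type-$6$ classes whose union lies in $\Z^2\setminus C_1$ and show that it cannot reach $(3/4)N^2$ vertices. I expect the proof to combine the boundary-type correction of order $k$ already used by the first author and Tiyajamorn (which gives the bound $\chi_S\ge 39$ for $k=6$) with a new case analysis on the pattern of $C_1$, exploiting the rigidity of extremal king-independent sets together with the coset structure of $\Z^2$ modulo $14$. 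This case analysis, likely involving a careful verification on an explicit $14\times 14$ or $28\times 28$ window, should be the most intricate step of the argument.
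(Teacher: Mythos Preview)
Your proposal is a plan, not a proof: the decisive step---the ``case analysis on an explicit $14\times 14$ or $28\times 28$ window'' that is supposed to recover the missing $5/196$ of density---is left completely unspecified, and you yourself identify the obstruction that even over the complement of a perfect checkerboard one can lay down $7$-packings of full density $1/49$ by varying their offsets. Nothing in the outline explains what combinatorial rigidity would force the \emph{total} of the $38$ packing classes below $(3/4)N^2$; as written, the argument does not improve on the $\chi_S\ge 39$ bound already obtained by the boundary-correction method you allude to.

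The paper takes a completely different, purely local route and never balances densities over a large window. It proves instead that a short list of small color-$1$ patterns is impossible in any $39$-coloring---most importantly $\begin{bmatrix}1&x&x&1\end{bmatrix}$, $\begin{bmatrix}1&x&1\end{bmatrix}$, and the knight-move $\left[\begin{smallmatrix}x&x&1\\1&x&x\end{smallmatrix}\right]$ (Lemma~\ref{lem:4}(ii),(viii),(ix)). Each is established by a propagation argument: starting from the pattern, one repeatedly uses that every $P_7\boxtimes P_7$ window must carry at least $11$ color-$1$ cells (Observation~\ref{obs:2}(iv)) together with the ``critical subgraph'' dichotomy of Observation~\ref{obs:3}, eventually forcing two cells of some color $k\ge 2$ within distance $6$. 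Once these patterns are excluded, take any $7\times 7$ window $G$ with a color-$1$ cell at its center; the forbidden patterns eliminate $24$ further cells, and the remaining $24$ cells split into four $P_2\boxtimes P_2$ blocks and four $P_3\boxtimes P_1$ strips, each holding at most one color-$1$ cell. Hence $c(G)\le 9<11$, a contradiction already in a single $7\times 7$ window. The real work is the chain of forbidden-pattern lemmas (especially part~(iii), whose proof extends the color-$1$ configuration periodically over the whole plane and then tracks a missing color through many shifted windows); your density framework gives no hint of this structure.
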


The rest of the paper is organized as follows. In Section~\ref{sec:2}, we make basic observations. We introduce the main Lemma, Lemma \ref{lem:4}, in Section~\ref{sec:3}. In Section~\ref{sec:4}, we prove Theorem~\ref{thm:1} using Observation~\ref{obs:2} and Lemma~\ref{lem:4}. Lemma~\ref{lem:4} is proved in Section~\ref{sec:5}. Finally, in Section~\ref{sec:6}, we conclude the paper with some open problems.

\section{Observations}\label{sec:2}
In this section, we explain how we visualize the graph $P_\infty \boxtimes P_\infty$ and introduce some notations used in this paper. We also provide basic observations along with their proofs.

Let $ S = (1, 6, 6, 6, \ldots) $. Throughout this paper we assume that $ \chi_S(P_\infty \boxtimes P_\infty) \leq 39 $, since this is the assumption in the proof. That is, there exists an $ S $-packing coloring of $ P_\infty \boxtimes P_\infty $ that uses at most 39 colors. We refer to these colors as colors $1,2,\ldots,39$. From the sequence $ S $, any two vertices of color 1 must have distance greater than 1, while any two vertices of color $ i \neq 1 $ must have distance greater than 6.

We represent the graph $ P_\infty \boxtimes P_\infty $ using an infinite square grid, where each square corresponds to a vertex of the graph. The distance between any two vertices is defined as the minimum number of steps between corresponding squares, where each step moves one square horizontally, vertically, or diagonally, like a king's move in chess.

Let $ G $ be a $ P_m \boxtimes P_n $ subgraph of $ P_\infty \boxtimes P_\infty $. We refer to each square in $ G $ as $ (x, y) $, for $ 1 \leq x \leq m $ and $ 1 \leq y \leq n $, where $ (1, 1) $ corresponds to the bottom-left corner of $ G $, and $ (m, n) $ to the top-right corner. Moreover, we refer to squares $ (x, j) $, for $ 1 \leq x \leq m $, as \emph{row $ j $}, and squares $ (i, y) $, for $ 1 \leq y \leq n $, as \emph{column $ i $}. Sometimes, we may allow $x$ and $y$ to be outside the ranges to refer to a square outside $G$. We denote by $ c(G) $ the number of squares of color 1 in the subgraph $ G $.

\begin{observation} \label{obs:2}
\begin{enumerate}[label=(\roman*)]
    \item If $ G $ is a $ P_2 \boxtimes P_2 $ subgraph, then $ c(G) \leq 1 $.
    \item If $ G $ is a $ P_7 \boxtimes P_2 $ subgraph, then $ c(G) \leq 4 $. Moreover, if $ c(G) = 4 $, then the squares of color 1 must lie in columns 1, 3, 5, and 7.
    \item Each $ P_7 \boxtimes P_7 $ subgraph contains at most one square of color $ i $, for $ i \in \{2, 3, \ldots, 39\} $.
    \item If $ G $ is a $ P_7 \boxtimes P_7 $ subgraph, then $ c(G) \geq 11 $. Moreover, if $ c(G) = 11 $, then all colors $ 1, 2, \ldots, 39 $ must appear in $ G $.
\end{enumerate}
\end{observation}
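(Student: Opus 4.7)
The plan rests on one ambient fact: in $P_\infty \boxtimes P_\infty$ the graph distance is the Chebyshev distance $d((x,y),(x',y')) = \max(|x-x'|,|y-y'|)$, matching the king-move description given in the section. All four parts reduce to direct computations with this metric together with a short counting step.

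For (i), I would observe that any two squares of a $P_2 \boxtimes P_2$ block satisfy $|x-x'|,|y-y'|\le 1$, so every pair is at distance exactly $1$; since color $1$ requires distance strictly greater than $1$, at most one square of such a block can be colored $1$. For (ii), the key point is that $P_7 \boxtimes P_2$ has only two rows, so any two squares have $|y-y'|\le 1$; consequently their distance exceeds $1$ if and only if their column indices differ by at least $2$. Hence the color-$1$ squares must lie in a subset of $\{1,\dots,7\}$ whose elements are pairwise at distance at least $2$, yielding at most four such columns. If the bound is attained, writing the chosen columns as $c_1<c_2<c_3<c_4$ gives $c_4-c_1\ge 6$ while $c_4-c_1\le 6$, forcing equality throughout, so the columns must be exactly $1,3,5,7$.

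For (iii), I would note that the Chebyshev diameter of $P_7 \boxtimes P_7$ equals $6$, realized by opposite corners; hence any two squares of a common color $i\ge 2$ inside such a block would be at distance at most $6$, violating the requirement that their distance exceed $6$. Part (iv) is then the immediate counting consequence: the block has $49$ squares, each of the $38$ colors in $\{2,\dots,39\}$ contributes at most one square by (iii), so color $1$ must occur at least $49-38=11$ times; and equality forces each of the colors $2,\dots,39$ to appear exactly once, so all $39$ colors appear in $G$.

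There is no real obstacle here; the only mild care needed is the extremal analysis in (ii), which is a one-line pigeonhole once the Chebyshev metric is identified. The observation is best viewed as a toolkit of distance and counting facts that the later arguments (Lemma~\ref{lem:4} and the proof of Theorem~\ref{thm:1}) will invoke repeatedly.
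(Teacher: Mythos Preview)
Your proposal is correct and essentially matches the paper's proof: parts (i), (iii), and (iv) are identical to the paper's arguments. For (ii) the paper partitions $P_7\boxtimes P_2$ into four blocks (column $1$ alone, then three $P_2\boxtimes P_2$ blocks) and applies (i), whereas you project directly onto column indices and do a gap-sum pigeonhole; both are one-line arguments leading to the same conclusion and neither offers a real advantage over the other.
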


\begin{proof}
(i) Since the distance between any two squares of color $i$ must be at least 2, and the distance between any two squares in $ P_2 \boxtimes P_2 $ is equal to $1$, we have $c(G) \leq 1$.

(ii) We partition $ G $ into four parts, as shown in Figure~\ref{fig:1}. By Observation~\ref{obs:2}\,(i), we have $ c(G) \leq 4 $. Moreover, if $ c(G) = 4 $, it follows that each part must contain exactly one square of color 1. Since the first part, column 1, contains a square of color 1, the second part must have a square of color 1 in column 3. Similarly, the third and fourth parts must have squares of color 1 in columns 5 and 7, respectively.

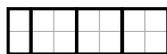
\begin{figure}[ht]
  \centering
  \begin{tikzpicture}[scale=0.30]
\foreach \x in {0,...,7} {
  \draw[gray!70, line width=0.35pt] (\x,0) -- (\x,2);
}

\foreach \y in {0,...,2} {
  \draw[gray!70, line width=0.35pt] (0,\y) -- (7,\y);
}

\draw[very thick] (0,0) rectangle (1,2);
\draw[very thick] (1,0) rectangle (3,2);
\draw[very thick] (3,0) rectangle (5,2);
\draw[very thick] (5,0) rectangle (7,2);
\end{tikzpicture}
  \caption{Partition of $ G $ in the proof of Observation~\ref{obs:2}\,(ii).}
  \label{fig:1}
\end{figure}

(iii) Since the distance between any two squares of color $i$ must be at least 7, and the distance between any two squares in $ P_7 \boxtimes P_7 $ is at most $6$, there is at most one square of color $i$ in $ P_7 \boxtimes P_7 $.

(iv) By Observation~\ref{obs:2}\,(iii), $ G $ has at most 38 squares that are not of color 1. Since $ G $ consists of 49 squares, it follows that $ c(G) \geq 49 - 38 = 11 $. Moreover, if $ c(G) = 11 $, then all colors $ 1, 2, \ldots, 39 $ must appear in $ G $.
\end{proof}

Let $ G $ be a subgraph of $ P_\infty \boxtimes P_\infty $. We write $ G(i,j) $ for the subgraph viewed relative to $ G $, where $ i $ denotes the number of columns (to the right if positive, to the left if negative) and $ j $ denotes the number of rows (upward if positive, downward if negative). For example, $ G(-1,2) $ refers to the $ P_7 \boxtimes P_7 $ subgraph viewed one column to the left and two rows above $ G $.

We refer to a $ P_7 \boxtimes P_7 $ subgraph $ G $ with $ c(G) = 12 $ and $ c(G(1,0)) = c(G(-1,0)) = c(G(0,1)) = c(G(0,-1)) = 11 $ as a \emph{critical subgraph}.

\begin{observation} \label{obs:3}
Let $ G $ be a $ P_7 \boxtimes P_7 $ subgraph with $ c(G) = 12 $.
\begin{enumerate}[label=(\roman*)]
    \item If $ c(G(1, 0)) = 11 $, then there exists a unique color $ k \in \{2, 3, \ldots, 39\} $ which does not appear in $ G $. However, color $k$ appears in column 7 of $ G(1, 0) $.
    \item If $G$ is a critical subgraph, then there exists a unique color $ k \in \{2, 3, \ldots, 39\} $ which does not appear in $ G $. However, color $k$ appears in each of $ c(G(1, 0)) $, $ c(G(-1, 0)) $, $ c(G(0, 1)) $, and $ c(G(0, -1)) $. Moreover, there are only two possible positions in which color $ k $ can appear in each of these subgraphs, referred to as type-A and type-B positions (see Figure~\ref{fig:2}).
    \item If $G$ is a critical subgraph, there exists a color $m \in \{2,3,\ldots,39\}$ such that square $(7,1)$ of $G$ is of color $m$ and color $k$ appears as type-A, then square $(7,7)$ of $G(0,1)$ must also be of color $m$; while if color $k$ appears as type-B, then square $(1,1)$ of $G(-1,0)$ must be of color $m$ (see Figure~\ref{fig:2}).

\end{enumerate}
\end{observation}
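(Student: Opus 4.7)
My plan is to handle parts (i), (ii), and (iii) in sequence; each follows from Observation~\ref{obs:2} together with a short Chebyshev-distance computation, and no elaborate machinery seems to be required.

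For (i), I will invoke Observation~\ref{obs:2}(iii) to conclude that the $49-12=37$ non-color-$1$ squares of $G$ carry $37$ distinct colors from $\{2,\ldots,39\}$, so there is a unique missing color $k$. Since $c(G(1,0))=11$ forces every color to appear in $G(1,0)$, the color $k$ must lie in $G(1,0)\setminus G$, which is precisely column~$7$ of $G(1,0)$.

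For (ii), I will apply (i) in each of the four cardinal directions: the missing color $k$ of $G$ is the same color that must reappear in each of $G(\pm 1,0)$ and $G(0,\pm 1)$. Writing these four occurrences (in the global frame of $G$) as $(8,y_E),(0,y_W),(x_N,8),(x_S,0)$ with all coordinates in $\{1,\ldots,7\}$, I will note that the east--west and north--south pairs are automatically at Chebyshev distance~$8$, while the remaining four pairs each yield a disjunction such as ``$x_N=1$ or $y_E=1$''. A brief case check on these four disjunctions leaves exactly two admissible patterns, which I identify with the type-A and type-B configurations of Figure~\ref{fig:2}: explicitly, type-A gives $(x_N,y_E,x_S,y_W)=(1,7,7,1)$ and type-B gives $(7,1,1,7)$.

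For (iii), let $m$ denote the color of $(7,1)$. Using $c(G(0,1))=11$ and $c(G(-1,0))=11$, color $m$ must reappear in each of these two subgraphs. The distance-greater-than-$6$ requirement measured from $(7,1)$ admits only one row of $G(0,1)$ and one column of $G(-1,0)$, so the new occurrences are at $(x_0,8)$ in $G(0,1)$ and $(0,y_0)$ in $G(-1,0)$ with $x_0,y_0\in\{1,\ldots,7\}$. Applying the same distance inequality to this pair of occurrences gives $\max(x_0,\,8-y_0)\geq 7$, i.e.\ $x_0=7$ or $y_0=1$. In type-A, color $k$ sits at $(1,8)$ and $(0,1)$, eliminating $x_0=1$ and $y_0=1$ and forcing $x_0=7$, so square $(7,8)$—which is square $(7,7)$ of $G(0,1)$—is of color~$m$. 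In type-B, color $k$ sits at $(7,8)$ and $(0,7)$, eliminating $x_0=7$ and forcing $y_0=1$, so square $(0,1)$—which is square $(1,1)$ of $G(-1,0)$—is of color~$m$. The only place that requires care is the two-case enumeration in (ii); once the two types are correctly identified, (iii) falls out of the three-line distance argument above.
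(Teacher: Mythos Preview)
Your proposal is correct and follows essentially the same approach as the paper. The only cosmetic difference is in part~(ii): the paper argues by contradiction (assume the east occurrence of $k$ is not at a corner of column~$7$, force the north and south occurrences to specific corners, then observe the west column has no admissible position), whereas you write the four pairwise Chebyshev-distance constraints as disjunctions and enumerate the two solutions directly; both routes are the same short case analysis, and your identification of type-A and type-B with $(x_N,y_E,x_S,y_W)=(1,7,7,1)$ and $(7,1,1,7)$ matches the paper's Figure~\ref{fig:2}.
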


\begin{figure}[ht]
  \centering
  \begin{tikzpicture}[scale=0.30]
  
\foreach \x in {0,...,9} {
  \draw[gray!60, line width=0.35pt] (\x,0) -- (\x,9);
}
\foreach \y in {0,...,9} {
  \draw[gray!60, line width=0.35pt] (0,\y) -- (9,\y);
}

\draw[very thick] (0,1) rectangle (1,8);
\draw[very thick] (8,1) rectangle (9,8);
\draw[very thick] (1,8) rectangle (8,9);
\draw[very thick] (1,0) rectangle (8,1);

\draw[thick] (1,1) rectangle (8,8);
\node at (4.5,4.5) {\LARGE $ G $};

\node at (7.5,0.5) {\normalsize $ k $};
\node at (8.5,7.5) {\normalsize $ k $};
\node at (1.5,8.5) {\normalsize $ k $};
\node at (0.5,1.5) {\normalsize $ k $};

\node at (7.5,1.5) {\textcolor{blue}{$m$}};
\node at (7.5,8.5) {\textcolor{blue}{$m$}};

\node at (4.5,-1.2) {\large Type-A};

\foreach \x in {0,...,9} {
  \draw[gray!60, line width=0.2pt] (\x+11,0) -- (\x+11,9);
}
\foreach \y in {0,...,9} {
  \draw[gray!60, line width=0.2pt] (11,\y) -- (20,\y);
}

\draw[very thick] (11,1) rectangle (12,8);
\draw[very thick] (19,1) rectangle (20,8);
\draw[very thick] (12,8) rectangle (19,9);
\draw[very thick] (12,0) rectangle (19,1);

\draw[thick] (12,1) rectangle (19,8);
\node at (15.5,4.5) {\LARGE $ G $};

\node at (12.5,0.5) {\normalsize $ k $};
\node at (11.5,7.5) {\normalsize $ k $};
\node at (18.5,8.5) {\normalsize $ k $};
\node at (19.5,1.5) {\normalsize $ k $};

\node at (18.5,1.5) {\textcolor{blue}{$m$}};
\node at (11.5,1.5) {\textcolor{blue}{$m$}};

\node at (15.5,-1.2) {\large Type-B};
\end{tikzpicture}
  \caption{Two possible positions of color $ k $ in each shifted subgraph of $ G $, referred to as type-A and type-B positions, as described in Observation~\ref{obs:3}\,(ii).}
  \label{fig:2}
\end{figure}
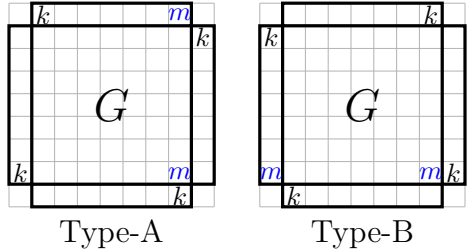

\begin{proof}
(i) Since $ c(G) = 12 $, there are $ 49 - 12 = 37 $ squares that are not of color 1. It follows that there exists a unique color $ k \in \{2, 3, \ldots, 39\} $ which does not appear in $ G $. By Observation~\ref{obs:2} (iv), all colors $ 1, 2, \ldots, 39 $ must appear in $ G(1, 0) $. Therefore, color $ k $ must be in column 7, since columns 1 to 6 of $ G(1, 0) $ are also in $ G $.

(ii) By Observation~\ref{obs:3}\,(i), we obtain that there exists a unique color $ k \in \{2, 3, \ldots, 39\} $ which does not appear in $ G $. However, color $ k $ appears in each of $ c(G(1, 0)) $, $ c(G(-1, 0)) $, $ c(G(0, 1)) $, and $ c(G(0, -1)) $. Assume that color $ k $ does not lie in $ (7,1) $ or $ (7,7) $ of $ G(1, 0) $. Then the possible positions of color $ k $ in $ G(1, 0) $ are $ (7,2), (7,3), (7,4), (7,5), $ and $ (7,6) $. Due to the distance constraint, the square of color $ k $ in $ G(0,1) $ and $ G(0, -1) $ must be $ (1,7) $ and $ (1,1) $, respectively. It follows that column 1 of $ G(-1, 0) $ is too close to those two squares to have color $ k $, which is a contradiction. Hence, the only two possible positions of $ k $ in $ G(1, 0) $ are $ (7,1) $ and $ (7,7) $.

Next, we assume that color $ k $ lies in $ (7, 7) $ of $ G(1, 0) $. It follows that $ (1, 7) $ of $ G(0, 1) $ must be assigned color $ k $. Then, color $ k $ must lie in $ (1, 1) $ of $ G(-1, 0) $. Consequently, $ (7, 1) $ of $ G(0, -1) $ must also be a square of color $ k $. This position of color $ k $ is what we refer to as type-A. Similarly, if color $ k $ lies in $ (7, 1) $ of $ G(1, 0) $, then the position of $ k $ corresponds to type-B (see Figure~\ref{fig:2}).

(iii) Without loss of generality, assume that color $k$ appears as type-A. By Observation~\ref{obs:2}\,(iv) and the distance constraint, color $m$ must appear both in column~1 of $G(-1,0)$ and in row~7 of $G(0,1)$. Since square $(1,1)$ of $G(-1,0)$ is already assigned color~$k$, one of the squares $(1,2), (1,3), (1,4), (1,5), (1,6)$, or $(1,7)$ must be assigned color~$m$. By the distance constraint, square $(7,7)$ of $G(0,1)$ must be assigned color $m$.
\end{proof}

\section{Lemma}\label{sec:3}
In this section, we introduce the main Lemma, Lemma~\ref{lem:4}. The proof of this lemma will be given in Section~\ref{sec:5}.

In the proof of Theorem \ref{thm:1}, we use only Lemma \ref{lem:4}\,(ii), (viii), and (ix), while the other parts of the lemma are used to support the proof of these parts.

We represent a pattern of squares of color 1 using a matrix, where each entry corresponds to a square in the grid. An entry of $ 1 $ indicates a square of color 1, and $ x $ denotes a square that may be assigned any color. An \emph{L-shaped subgraph} is a subgraph in Figure~\ref{fig:28} or its reflections or rotations.

\begin{figure}[ht]
  \centering
  \begin{tikzpicture}[scale=0.3]

  \foreach \x in {0,...,3} {
    \draw[gray!50, thin] (\x,0) -- (\x,3);
  }
  \foreach \y in {0,...,3} {
    \draw[gray!50, thin] (0,\y) -- (3,\y);
  }

  \foreach \pos in {(0,0),(1,0),(2,0),(0,1),(0,2)} {
    \draw[very thick] \pos rectangle ++(1,1);
  }

  \end{tikzpicture}
  \caption{L-shaped subgraph.}
  \label{fig:28}
\end{figure}
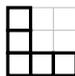

\begin{lemma}\label{lem:4}
The following patterns cannot occur in an $ S $-packing coloring of $ P_\infty \boxtimes P_\infty $ that uses at most 39 colors.
\begin{multicols}{3} 
\begin{enumerate}[label=(\roman*)]
    \item 
    \[
    \begin{bmatrix}
    1 & x & 1 & x & 1
    \end{bmatrix}
    \]
    Consequently, each $ P_5 \boxtimes P_1 $ subgraph contains at most two squares of color~1.
    \item 
    \[
    \begin{bmatrix}
    1 & x & x & 1
    \end{bmatrix}
    \]
    \item 
    \[
    \begin{bmatrix}
    1 & x & 1\\
    x & x & x\\
    1 & x & 1
    \end{bmatrix}
    \]
    \item
    \[
    \begin{bmatrix}
    x & x & x & 1\\
    1 & x & x & x\\
    x & x & x & x\\
    1 & x & 1 & x
    \end{bmatrix}
    \]
    \item 
    \[
    \begin{bmatrix}
    1 & x & x\\
    x & x & x\\
    1 & x & 1
    \end{bmatrix}
    \]
    Consequently, each L-shaped subgraph contains at most two squares of color 1.
    \item 
     \[
    \begin{bmatrix}
    1 & x & 1 & x\\
    x & x & x & x\\
    x & 1 & x & 1
    \end{bmatrix}
    \]
    \item 
    \[
    \begin{bmatrix}
    x & 1 & x\\
    x & x & x\\
    1 & x & 1
    \end{bmatrix}
    \]
    \item 
    \[
    \begin{bmatrix}
    1 & x & 1
    \end{bmatrix}
    \]
    Consequently, each $ P_3 \boxtimes P_1 $ subgraph contains at most one square of color~1.
    \item 
    \[
    \begin{bmatrix}
    x & x & 1\\
    1 & x & x
    \end{bmatrix}
    \]
\end{enumerate}
\end{multicols} 
\end{lemma}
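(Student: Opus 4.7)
All nine parts are forbidden for essentially the same structural reason: if the pattern were to occur, then some $P_7 \boxtimes P_7$ window near the pattern would fail to accommodate the $c(G) \geq 11$ color-1 squares required by Observation~\ref{obs:2}\,(iv). My plan is to prove each part by contradiction, assuming the pattern is present and then bounding from above the number of color-1 squares that can live inside one or two carefully chosen $P_7 \boxtimes P_7$ windows that contain the pattern. The only ambient tools beyond Observation~\ref{obs:2}\,(iv) are the distance-$2$ ban for color 1 (equivalently, the $P_2 \boxtimes P_2$ bound of Observation~\ref{obs:2}\,(i)) and the $P_7 \boxtimes P_2$ bound of $4$, together with its tightness characterization, from Observation~\ref{obs:2}\,(ii).

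The mechanism is most transparent for the simpler patterns. For (i), where three color-1 squares occupy positions $1, 3, 5$ of a single row, the $P_2 \boxtimes P_2$ bound forbids color 1 in every square of the two rows immediately above and below whose column lies between $0$ and $6$; with seven rows of a $P_7 \boxtimes P_7$ to fill, the remaining rows are too constrained (by repeated application of the $P_7 \boxtimes P_2$ bound of $4$, with equality forcing columns $1,3,5,7$) to reach $11$ color-1 squares. For the more delicate parts (ii), (viii), and (ix), a single window is not enough; the argument selects two or three overlapping $P_7 \boxtimes P_7$ windows and plays them off against one another, using earlier parts of the lemma to exclude the sub-configurations that would otherwise let the count reach $11$ in each window.

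This explains the order of the lemma: parts (i) and (iii)-(vii) form a \emph{toolkit} of sub-patterns provable more or less directly from Observation~\ref{obs:2}, with at most modest reuse of earlier items in the list; parts (ii), (viii), (ix) — the parts actually invoked in the proof of Theorem~\ref{thm:1} — then follow by using the toolkit to suppress the candidate placements of color-1 squares that a naive count would allow. The principal obstacle I anticipate is sheer case analysis: for the geometrically richer tool-patterns such as (iv) and (vi), a surrounding $P_7 \boxtimes P_7$ window admits many potential color-1 arrangements, and for each one must exhibit either a distance violation, a previously forbidden sub-pattern, or a drop below $11$ in the count. The bookkeeping of \emph{which squares in two or three overlapping windows are already barred from color 1} is where the bulk of the effort goes; once that accounting is clean, the arithmetic contradiction with Observation~\ref{obs:2}\,(iv) is immediate.
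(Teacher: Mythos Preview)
Your plan has a genuine gap: it omits the central mechanism the paper relies on, namely Observation~\ref{obs:3} and the technique of tracking a specific \emph{non-1} color across many shifted $P_7\boxtimes P_7$ windows. You assert that the only tools needed are Observation~\ref{obs:2}\,(i), (ii), (iv), but this is not how parts (i) and (iii) go. In both, counting color-1 squares in one or two windows does not produce a contradiction; instead the argument forces enough structure that some window $G'$ is a \emph{critical subgraph} (twelve 1's, with all four unit shifts having eleven). Observation~\ref{obs:3}\,(ii) then pins the unique missing color $k\in\{2,\dots,39\}$ to one of two rigid positions (type-A or type-B) on the boundary of $G'$, and the contradiction comes from chasing $k$ (and in part (iii) further colors $m,n,p,q,r$) through a sequence of windows until it is forced into a square already occupied by color~1 or by another tracked color. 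None of this is visible if one restricts attention to color~1 alone.

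Relatedly, your assessment of difficulty and dependency is inverted. Part (ii) is not one of the delicate cases; it follows in a few lines from part (i) and Observation~\ref{obs:2}. Part (iii), which you place in the ``toolkit provable more or less directly from Observation~\ref{obs:2}'', is in fact the heart of the lemma: the paper's proof occupies several pages, shows that the $2\times2$ square of 1's forces one of two global periodic-looking patterns, proves two propagation claims (that pattern (i) replicates at offsets $(7,-2)$ or $(4,1)$), and then in each case runs a multi-color chase using Observation~\ref{obs:3}\,(ii),(iii) through Cases 1.1, 1.2, 2.1.1, 2.1.2, 2.2 to reach a contradiction. Parts (iv)--(ix) are then comparatively short, each reducing to earlier parts. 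Your sketch would need to incorporate the critical-subgraph/missing-color machinery before it can close parts (i) and (iii).
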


\section{Proof of Theorem~\ref{thm:1}}\label{sec:4}
In this section, we prove Theorem~\ref{thm:1} using Observation \ref{obs:2} and Lemma~\ref{lem:4}.

\begin{proof}
Let $ S = (1, 6, 6, 6, \ldots) $. Suppose, for contradiction, that $ \chi_S(P_\infty \boxtimes P_\infty) \leq 39 $. That is, there exists an $ S $-packing coloring of $ P_\infty \boxtimes P_\infty $ that uses at most 39 colors. By Observation~\ref{obs:2}\,(iv), color 1 appears in the coloring. Let $ G $ be a $ P_7 \boxtimes P_7 $ subgraph in which color 1 is assigned to square $(4,4)$. 

By the distance constraint and Lemma \ref{lem:4}\,(ii), (viii) and (ix), the squares that cannot be assigned color~1 are shown in Figure~\ref{fig:33}. We partition the remaining squares into eight parts as shown in Figure~\ref{fig:33}. By Observation \ref{obs:2}\,(i) and Lemma \ref{lem:4}\,(xiii), each part contains at most one square of color 1. We have that $c(G) \leq 1+8 = 9$ which contradicts Observation \ref{obs:2}\,(iv).
\end{proof}
    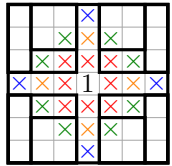
\begin{figure}[ht]
\centering
\begin{tikzpicture}[scale=0.30]

\begin{scope}[shift={(0,0)}]
  \foreach \x in {0,...,7} {
    \draw[gray!70, line width=0.35pt] (\x,0) -- (\x,7);
  }
  \foreach \y in {0,...,7} {
    \draw[gray!70, line width=0.35pt] (0,\y) -- (7,\y);
  }
  \draw[very thick] (0,0) rectangle (7,7);
  \draw[very thick] (0,0) rectangle (3,3);
  \draw[very thick] (0,4) rectangle (3,7);
  \draw[very thick] (4,0) rectangle (7,3);
  \draw[very thick] (4,4) rectangle (7,7);

  \draw[very thick] (0,0) rectangle (1,3);
  \draw[very thick] (0,4) rectangle (1,7);
  \draw[very thick] (6,0) rectangle (7,3);
  \draw[very thick] (6,4) rectangle (7,7);
  \draw[very thick] (1,0) rectangle (3,2);
  \draw[very thick] (1,5) rectangle (3,7);
  \draw[very thick] (4,0) rectangle (6,2);
  \draw[very thick] (4,5) rectangle (6,7);
  \node at (3.5,3.5) {\normalsize 1};

\node at (2.5,2.5) {\textcolor{red}{$\times$}};
\node at (3.5,2.5) {\textcolor{red}{$\times$}};
\node at (4.5,2.5) {\textcolor{red}{$\times$}};

\node at (2.5,3.5) {\textcolor{red}{$\times$}};
\node at (4.5,3.5) {\textcolor{red}{$\times$}};

\node at (2.5,4.5) {\textcolor{red}{$\times$}};
\node at (3.5,4.5) {\textcolor{red}{$\times$}};
\node at (4.5,4.5) {\textcolor{red}{$\times$}};

\node at (3.5,0.5) {\textcolor{blue}{$\times$}};
\node at (3.5,6.5) {\textcolor{blue}{$\times$}};
\node at (0.5,3.5) {\textcolor{blue}{$\times$}};
\node at (6.5,3.5) {\textcolor{blue}{$\times$}};

\node at (1.5,2.5) {\textcolor{darkgreen}{$\times$}};
\node at (1.5,4.5) {\textcolor{darkgreen}{$\times$}};
\node at (5.5,2.5) {\textcolor{darkgreen}{$\times$}};
\node at (5.5,4.5) {\textcolor{darkgreen}{$\times$}};
\node at (2.5,1.5) {\textcolor{darkgreen}{$\times$}};
\node at (4.5,1.5) {\textcolor{darkgreen}{$\times$}};
\node at (2.5,5.5) {\textcolor{darkgreen}{$\times$}};
\node at (4.5,5.5) {\textcolor{darkgreen}{$\times$}};

\node at (3.5,1.5) {\textcolor{orange}{$\times$}};
\node at (3.5,5.5) {\textcolor{orange}{$\times$}};
\node at (1.5,3.5) {\textcolor{orange}{$\times$}};
\node at (5.5,3.5) {\textcolor{orange}{$\times$}};
\end{scope}

\end{tikzpicture}

\caption{Positions of squares of color 1 and squares that cannot be assigned color 1 (marked with $ \times $'s) in $ G $. The colors indicate the order in which the squares that cannot be assigned color 1 are identified in the proof: red (first), blue (second), green (third), and orange (fourth).}
\label{fig:33}
\end{figure}

\section{Proof of Lemma \ref{lem:4}}\label{sec:5}

In this section, we prove Lemma \ref{lem:4}. The main difficulty lies in part (iii).

\subsection{Proof of Lemma \ref{lem:4}\,(i)}
\begin{proof}
Suppose for contradiction that the pattern 
\[
    \begin{bmatrix}
    1 & x & 1 & x & 1
    \end{bmatrix}
    \]
appears in the coloring. Let $ G $ be a $ P_7 \boxtimes P_7 $ subgraph that contains this pattern centered in row 2 (see Figure \ref{fig:3}). We will show that squares of color 1 must appear as shown in Figure \ref{fig:3}. Observe that rows 1, 2, and 3 cannot contain any additional squares of color 1 otherwise there are two squares of color 1 within distance 1. By Observation~\ref{obs:2}\,(ii) and Observation~\ref{obs:2}\,(iv), the combined rows 4 and 5 must contain exactly four squares of color 1, as must rows 6 and 7. Moreover, these squares must lie in columns 1, 3, 5, and 7. That is, both columns 1 and 7 of $ G $ contain a square of color 1 in either row 4 or 5, and a square of color 1 in either row 6 or 7.

Next, we consider $G(-1,0)$. By Observation~\ref{obs:2}\,(iv), column 1 must contain at least two squares of color 1. Due to the distance constraint, these squares cannot lie in rows 4, 5, 6, or 7, and thus must be located in rows 1 and 3. It follows that the square $ (1,4) $ in $ G $ cannot be assigned color 1. Consequently, the squares $ (1,5) $ and $ (1,7) $ in $ G $ must be of color 1. Similarly, by considering $G(1,0)$, the squares $ (7,1) $ and $ (7,3) $ in $G(1,0)$ must be of color 1. It follows that the squares $ (7,5) $ and $ (7,7) $ in $ G $ must also be of color 1.

We now consider $G(0, -4)$. Due to the symmetry between $ G $ and $ G(0,-4) $, rows 1 and 2 together must contain four squares of color 1, and the same holds for rows 3 and 4. Moreover, the squares $ (1,1), (1,3), (7,1) $, and $ (7,3) $ are of color 1 (see Figure~\ref{fig:3}).

Consider $ G(0, -1) $. By Observation~\ref{obs:2} (iv), the squares at $ (3,1) $, $ (3,7) $, $ (5,1) $, and $ (5,7) $ must be of color 1. Likewise, by considering $ G(0, -3) $, it follows that squares of color 1 must also be located at $ (3,1) $, $ (3,7) $, $ (5,1) $, and $ (5,7) $ (see Figure~\ref{fig:3}).

\begin{figure}[ht]
\centering
\begin{tikzpicture}[scale=0.30]

\begin{scope}[shift={(-33,0)}]
  \foreach \x in {0,...,9} {
    \draw[gray!70, line width=0.35pt] (\x,0) -- (\x,11);
  }
  \foreach \y in {0,...,11} {
    \draw[gray!70, line width=0.35pt] (0,\y) -- (9,\y);
  }

  \draw[very thick] (1,4) rectangle (8,11);

  \node at (2.5,5.5) {\normalsize 1};
  \node at (4.5,5.5) {\normalsize 1};
  \node at (6.5,5.5) {\normalsize 1};
  \node at (1.5,6.5) {\textcolor{red}{$\times$}};
  \node at (2.5,6.5) {\textcolor{red}{$\times$}};
  \node at (3.5,6.5) {\textcolor{red}{$\times$}};
  \node at (4.5,6.5) {\textcolor{red}{$\times$}};
  \node at (5.5,6.5) {\textcolor{red}{$\times$}};
  \node at (6.5,6.5) {\textcolor{red}{$\times$}};
  \node at (7.5,6.5) {\textcolor{red}{$\times$}};
  \node at (1.5,5.5) {\textcolor{red}{$\times$}};
  \node at (3.5,5.5) {\textcolor{red}{$\times$}};
  \node at (5.5,5.5) {\textcolor{red}{$\times$}};
  \node at (7.5,5.5) {\textcolor{red}{$\times$}};
  \node at (1.5,4.5) {\textcolor{red}{$\times$}};
  \node at (2.5,4.5) {\textcolor{red}{$\times$}};
  \node at (3.5,4.5) {\textcolor{red}{$\times$}};
  \node at (4.5,4.5) {\textcolor{red}{$\times$}};
  \node at (5.5,4.5) {\textcolor{red}{$\times$}};
  \node at (6.5,4.5) {\textcolor{red}{$\times$}};
  \node at (7.5,4.5) {\textcolor{red}{$\times$}};
  
\end{scope}

\begin{scope}[shift={(-22,0)}]
  \foreach \x in {0,...,9} {
    \draw[gray!70, line width=0.35pt] (\x,0) -- (\x,11);
  }
  \foreach \y in {0,...,11} {
    \draw[gray!70, line width=0.35pt] (0,\y) -- (9,\y);
  }

  \draw[very thick] (1,4) rectangle (8,11);

  \node at (2.5,5.5) {\normalsize 1};
  \node at (4.5,5.5) {\normalsize 1};
  \node at (6.5,5.5) {\normalsize 1};
  \node at (1.5,6.5) {\textcolor{red}{$\times$}};
  \node at (2.5,6.5) {\textcolor{red}{$\times$}};
  \node at (3.5,6.5) {\textcolor{red}{$\times$}};
  \node at (4.5,6.5) {\textcolor{red}{$\times$}};
  \node at (5.5,6.5) {\textcolor{red}{$\times$}};
  \node at (6.5,6.5) {\textcolor{red}{$\times$}};
  \node at (7.5,6.5) {\textcolor{red}{$\times$}};
  \node at (1.5,5.5) {\textcolor{red}{$\times$}};
  \node at (3.5,5.5) {\textcolor{red}{$\times$}};
  \node at (5.5,5.5) {\textcolor{red}{$\times$}};
  \node at (7.5,5.5) {\textcolor{red}{$\times$}};
  \node at (1.5,4.5) {\textcolor{red}{$\times$}};
  \node at (2.5,4.5) {\textcolor{red}{$\times$}};
  \node at (3.5,4.5) {\textcolor{red}{$\times$}};
  \node at (4.5,4.5) {\textcolor{red}{$\times$}};
  \node at (5.5,4.5) {\textcolor{red}{$\times$}};
  \node at (6.5,4.5) {\textcolor{red}{$\times$}};
  \node at (7.5,4.5) {\textcolor{red}{$\times$}};

  \node at (0.5,7.5) {\textcolor{blue}{$\times$}};
  \node at (0.5,8.5) {\textcolor{blue}{$\times$}};
  \node at (0.5,9.5) {\textcolor{blue}{$\times$}};
  \node at (0.5,10.5) {\textcolor{blue}{$\times$}};
  \node at (8.5,7.5) {\textcolor{blue}{$\times$}};
  \node at (8.5,8.5) {\textcolor{blue}{$\times$}};
  \node at (8.5,9.5) {\textcolor{blue}{$\times$}};
  \node at (8.5,10.5) {\textcolor{blue}{$\times$}};
  
\end{scope}

\begin{scope}[shift={(-11,0)}]
  \foreach \x in {0,...,9} {
    \draw[gray!70, line width=0.35pt] (\x,0) -- (\x,11);
  }
  \foreach \y in {0,...,11} {
    \draw[gray!70, line width=0.35pt] (0,\y) -- (9,\y);
  }

  \draw[very thick] (1,4) rectangle (8,11);
  \draw[very thick, dashed] (1,0) rectangle (8,7);

  \node at (2.5,5.5) {\normalsize 1};
  \node at (4.5,5.5) {\normalsize 1};
  \node at (6.5,5.5) {\normalsize 1};
  \node at (0.5,6.5) {\textcolor{blue} 1};
  \node at (0.5,4.5) {\textcolor{blue} 1};
  \node at (8.5,6.5) {\textcolor{blue} 1};
  \node at (8.5,4.5) {\textcolor{blue} 1};
  \node at (1.5,8.5) {\textcolor{darkgreen} 1};
  \node at (7.5,8.5) {\textcolor{darkgreen} 1};
  \node at (1.5,10.5) {\textcolor{darkgreen} 1};
  \node at (7.5,10.5) {\textcolor{darkgreen} 1};
  \node at (1.5,2.5) {\textcolor{darkgreen} 1};
  \node at (7.5,2.5) {\textcolor{darkgreen} 1};
  \node at (1.5,0.5) {\textcolor{darkgreen} 1};
  \node at (7.5,0.5) {\textcolor{darkgreen} 1};
\end{scope}

\begin{scope}[shift={(0,0)}]
  \foreach \x in {0,...,9} {
    \draw[gray!70, line width=0.35pt] (\x,0) -- (\x,11);
  }
  \foreach \y in {0,...,11} {
    \draw[gray!70, line width=0.35pt] (0,\y) -- (9,\y);
  }

  \draw[very thick] (1,4) rectangle (8,11);
  \draw[very thick, dashed] (1,0) rectangle (8,7);

  \node at (2.5,5.5) {\normalsize 1};
  \node at (4.5,5.5) {\normalsize 1};
  \node at (6.5,5.5) {\normalsize 1};
  \node at (0.5,6.5) {\textcolor{blue} 1};
  \node at (0.5,4.5) {\textcolor{blue} 1};
  \node at (8.5,6.5) {\textcolor{blue} 1};
  \node at (8.5,4.5) {\textcolor{blue} 1};

  \node at (3.5,7.5) {\textcolor{orange} 1};
  \node at (5.5,7.5) {\textcolor{orange} 1};
  \node at (1.5,8.5) {\textcolor{darkgreen} 1};
  \node at (7.5,8.5) {\textcolor{darkgreen} 1};
  \node at (3.5,9.5) {\textcolor{orange} 1};
  \node at (5.5,9.5) {\textcolor{orange} 1};
  \node at (1.5,10.5) {\textcolor{darkgreen} 1};
  \node at (7.5,10.5) {\textcolor{darkgreen} 1};

  \node at (3.5,3.5) {\textcolor{orange} 1};
  \node at (5.5,3.5) {\textcolor{orange} 1};
  \node at (1.5,2.5) {\textcolor{darkgreen} 1};
  \node at (7.5,2.5) {\textcolor{darkgreen} 1};
  \node at (3.5,1.5) {\textcolor{orange} 1};
  \node at (5.5,1.5) {\textcolor{orange} 1};
  \node at (1.5,0.5) {\textcolor{darkgreen} 1};
  \node at (7.5,0.5) {\textcolor{darkgreen} 1};
\end{scope}

\end{tikzpicture}
\caption{Positions of squares of color 1 and squares that cannot be assigned color 1 (marked with red $ \times $'s) in $ G $ (black border) and $ G(0, -4) $ (dashed border). The colors indicate the order in which the squares of color 1 are identified in the proof: black (first), blue (second), green (third), and orange (fourth).}
\label{fig:3}
\end{figure}
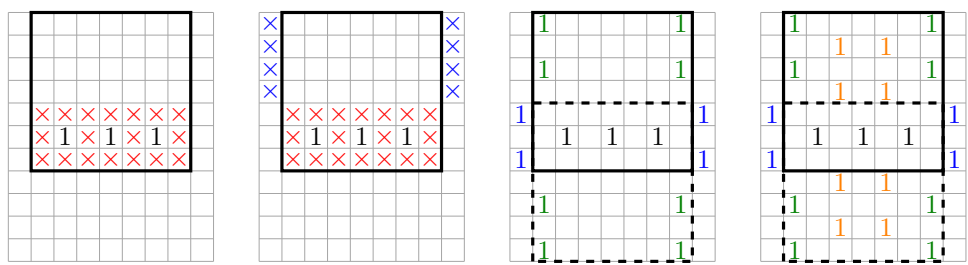

Next, we will show that $ G(0,1) $ is a critical subgraph. Consider $ G(-1,2) $ (See Figure \ref{fig:4}). We observe that rows 1 to 5 cannot contain any additional square of color 1. By Observation~\ref{obs:2}\,(ii) and Observation~\ref{obs:2}\,(iv), the combined rows 6 and 7 of this subgraph must contain exactly four squares of color 1, located in columns 1, 3, 5, and 7. Due to the distance constraint, the squares $ (1,7) $, $ (3,7) $, and $ (7,7) $ in this subgraph must be squares of color 1. Moreover, either $ (5,6) $ or $ (5,7) $ must also be a square of color 1. Since $c(G(-1,1)) \geq 11$ by Observation \ref{obs:2}\,(iv), $(5,6)$ is assigned color 1. Similarly, by considering $ G(1, 2) $, $ G(-1, 6) $ and $ G(1, 6) $, the squares of color 1 must appear as shown in Figure \ref{fig:4}. Since $c(G(0,1)) = 12$ and $c(G(1,1)) = c(G(-1,1)) = c(G(0,2)) = c(G) = 11$, we have that $ G(0,1) $ is a critical subgraph.

By Observation~\ref{obs:3} (ii), let $ k \in \{2, 3, \ldots, 39\} $ be a color that does not appear in $ G(0,1) $ and there are only two possible positions for color $ k $, referred to as type-A and type-B.

Without loss of generality, suppose $ k $ appears in a type-A position. Consider $ G(-1, -6) $. By Observation~\ref{obs:2}\,(iii), since this subgraph contains 11 squares of color 1, it must include all colors $ 1, 2, \ldots, 39 $. Due to the distance constraint, the only possible position for color $ k $ is $ (1,1) $, but this square is already assigned color 1, which is a contradiction (see Figure~\ref{fig:4}).

Moreover, we can show that each $P_5 \boxtimes P_1$ contains at most two squares of color 1. Assume for contradiction that there exists a $P_5 \boxtimes P_1$ subgraph that contains three squares of color 1. Then the subgraph has the pattern  
\[
\begin{bmatrix}
1 & x & 1 & x & 1
\end{bmatrix}
\]  
which is a contradiction.
\end{proof}

\begin{figure}[ht]
\centering
\begin{tikzpicture}[scale=0.30]

\begin{scope}[shift={(-22,0)}]
  \foreach \x in {0,...,9} {
    \draw[gray!70, line width=0.35pt] (\x,0) -- (\x,15);
  }
  \foreach \y in {0,...,15} {
    \draw[gray!70, line width=0.35pt] (0,\y) -- (9,\y);
  }
  \draw[very thick] (1,6) rectangle (8,13);
  \draw[very thick, dashed] (0,8) rectangle (7,15);

  \node at (2.5,7.5) {\normalsize 1};
  \node at (4.5,7.5) {\normalsize 1};
  \node at (6.5,7.5) {\normalsize 1};
  \node at (0.5,8.5) {\normalsize 1};
  \node at (0.5,6.5) {\normalsize 1};
  \node at (8.5,8.5) {\normalsize 1};
  \node at (8.5,6.5) {\normalsize 1};

  \node at (3.5,9.5) {\normalsize 1};
  \node at (5.5,9.5) {\normalsize 1};
  \node at (1.5,10.5) {\normalsize 1};
  \node at (7.5,10.5) {\normalsize 1};
  \node at (3.5,11.5) {\normalsize 1};
  \node at (5.5,11.5) {\normalsize 1};
  \node at (1.5,12.5) {\normalsize 1};
  \node at (7.5,12.5) {\normalsize 1};

  \node at (3.5,5.5) {\normalsize 1};
  \node at (5.5,5.5) {\normalsize 1};
  \node at (1.5,4.5) {\normalsize 1};
  \node at (7.5,4.5) {\normalsize 1};
  \node at (3.5,3.5) {\normalsize 1};
  \node at (5.5,3.5) {\normalsize 1};
  \node at (1.5,2.5) {\normalsize 1};
  \node at (7.5,2.5) {\normalsize 1};
  \node at (1.5,8.5) {\textcolor{red}{$\times$}};
  \node at (2.5,8.5) {\textcolor{red}{$\times$}};
  \node at (3.5,8.5) {\textcolor{red}{$\times$}};
  \node at (4.5,8.5) {\textcolor{red}{$\times$}};
  \node at (5.5,8.5) {\textcolor{red}{$\times$}};
  \node at (6.5,8.5) {\textcolor{red}{$\times$}};

  \node at (0.5,9.5) {\textcolor{red}{$\times$}};
  \node at (1.5,9.5) {\textcolor{red}{$\times$}};
  \node at (2.5,9.5) {\textcolor{red}{$\times$}};
  \node at (4.5,9.5) {\textcolor{red}{$\times$}};
  \node at (6.5,9.5) {\textcolor{red}{$\times$}};

  \node at (0.5,10.5) {\textcolor{red}{$\times$}};
  \node at (2.5,10.5) {\textcolor{red}{$\times$}};
  \node at (3.5,10.5) {\textcolor{red}{$\times$}};
  \node at (4.5,10.5) {\textcolor{red}{$\times$}};
  \node at (5.5,10.5) {\textcolor{red}{$\times$}};
  \node at (6.5,10.5) {\textcolor{red}{$\times$}};

  \node at (0.5,11.5) {\textcolor{red}{$\times$}};
  \node at (1.5,11.5) {\textcolor{red}{$\times$}};
  \node at (2.5,11.5) {\textcolor{red}{$\times$}};
  \node at (4.5,11.5) {\textcolor{red}{$\times$}};
  \node at (6.5,11.5) {\textcolor{red}{$\times$}};

  \node at (0.5,12.5) {\textcolor{red}{$\times$}};
  \node at (2.5,12.5) {\textcolor{red}{$\times$}};
  \node at (3.5,12.5) {\textcolor{red}{$\times$}};
  \node at (4.5,12.5) {\textcolor{red}{$\times$}};
  \node at (5.5,12.5) {\textcolor{red}{$\times$}};
  \node at (6.5,12.5) {\textcolor{red}{$\times$}};

  \node at (0.5,13.5) {\textcolor{red}{$\times$}};
  \node at (1.5,13.5) {\textcolor{red}{$\times$}};
  \node at (2.5,13.5) {\textcolor{red}{$\times$}};
  \node at (6.5,13.5) {\textcolor{red}{$\times$}};

\end{scope}

\begin{scope}[shift={(-11,0)}]
  \foreach \x in {0,...,9} {
    \draw[gray!70, line width=0.35pt] (\x,0) -- (\x,15);
  }
  \foreach \y in {0,...,15} {
    \draw[gray!70, line width=0.35pt] (0,\y) -- (9,\y);
  }
  \draw[very thick] (1,6) rectangle (8,13);
  \draw[very thick, dashed] (0,8) rectangle (7,15);

  \node at (2.5,7.5) {\normalsize 1};
  \node at (4.5,7.5) {\normalsize 1};
  \node at (6.5,7.5) {\normalsize 1};
  \node at (0.5,8.5) {\normalsize 1};
  \node at (0.5,6.5) {\normalsize 1};
  \node at (8.5,8.5) {\normalsize 1};
  \node at (8.5,6.5) {\normalsize 1};

  \node at (3.5,9.5) {\normalsize 1};
  \node at (5.5,9.5) {\normalsize 1};
  \node at (1.5,10.5) {\normalsize 1};
  \node at (7.5,10.5) {\normalsize 1};
  \node at (3.5,11.5) {\normalsize 1};
  \node at (5.5,11.5) {\normalsize 1};
  \node at (1.5,12.5) {\normalsize 1};
  \node at (7.5,12.5) {\normalsize 1};

  \node at (3.5,5.5) {\normalsize 1};
  \node at (5.5,5.5) {\normalsize 1};
  \node at (1.5,4.5) {\normalsize 1};
  \node at (7.5,4.5) {\normalsize 1};
  \node at (3.5,3.5) {\normalsize 1};
  \node at (5.5,3.5) {\normalsize 1};
  \node at (1.5,2.5) {\normalsize 1};
  \node at (7.5,2.5) {\normalsize 1};

  \node at (4.5,13.5) {\textcolor{darkgreen} 1};
  \node at (2.5,14.5) {\textcolor{blue} 1};
  \node at (0.5,14.5) {\textcolor{blue} 1};
  \node at (6.5,14.5) {\textcolor{blue} 1};
  \node at (8.5,14.5) {\textcolor{blue} 1};

  \node at (4.5,1.5) {\textcolor{darkgreen} 1};
  \node at (2.5,0.5) {\textcolor{blue} 1};
  \node at (0.5,0.5) {\textcolor{blue} 1};
  \node at (6.5,0.5) {\textcolor{blue} 1};
  \node at (8.5,0.5) {\textcolor{blue} 1};
\end{scope}

\begin{scope}[shift={(0,0)}]
  \foreach \x in {0,...,9} {
    \draw[gray!70, line width=0.35pt] (\x,0) -- (\x,15);
  }
  \foreach \y in {0,...,15} {
    \draw[gray!70, line width=0.35pt] (0,\y) -- (9,\y);
  }

  \draw[very thick, red] (1,7) rectangle (8,14);
  \draw[very thick, blue] (0,0) rectangle (7,7);

  \node at (2.5,7.5) {\normalsize 1};
  \node at (4.5,7.5) {\normalsize 1};
  \node at (6.5,7.5) {\normalsize 1};
  \node at (0.5,8.5) {\normalsize 1};
  \node at (0.5,6.5) {\normalsize 1};
  \node at (8.5,8.5) {\normalsize 1};
  \node at (8.5,6.5) {\normalsize 1};

  \node at (3.5,9.5) {\normalsize 1};
  \node at (5.5,9.5) {\normalsize 1};
  \node at (1.5,10.5) {\normalsize 1};
  \node at (7.5,10.5) {\normalsize 1};
  \node at (3.5,11.5) {\normalsize 1};
  \node at (5.5,11.5) {\normalsize 1};
  \node at (1.5,12.5) {\normalsize 1};
  \node at (7.5,12.5) {\normalsize 1};

  \node at (3.5,5.5) {\normalsize 1};
  \node at (5.5,5.5) {\normalsize 1};
  \node at (1.5,4.5) {\normalsize 1};
  \node at (7.5,4.5) {\normalsize 1};
  \node at (3.5,3.5) {\normalsize 1};
  \node at (5.5,3.5) {\normalsize 1};
  \node at (1.5,2.5) {\normalsize 1};
  \node at (7.5,2.5) {\normalsize 1};

  \node at (4.5,13.5) {\textcolor{darkgreen} 1};
  \node at (2.5,14.5) {\textcolor{blue} 1};
  \node at (0.5,14.5) {\textcolor{blue} 1};
  \node at (6.5,14.5) {\textcolor{blue} 1};
  \node at (8.5,14.5) {\textcolor{blue} 1};

  \node at (4.5,1.5) {\textcolor{darkgreen} 1};
  \node at (2.5,0.5) {\textcolor{blue} 1};
  \node at (0.5,0.5) {\textcolor{blue} 1};
  \node at (6.5,0.5) {\textcolor{blue} 1};
  \node at (8.5,0.5) {\textcolor{blue} 1};

  \node at (1.5,14.5) {\textcolor{red}{$k$}};
  \node at (0.5,7.5)  {\textcolor{red}{$k$}};
  \node at (7.5,6.5)  {\textcolor{red}{$k$}};
  \node at (8.5,13.5) {\textcolor{red}{$k$}};
  \fill[yellow, opacity=0.3] (0,0) rectangle (1,1);
\end{scope}

\end{tikzpicture}
\caption{$G$ (black), $G(-1,2)$ (dashed), $G(0,1)$ (red), and $G(-1,6)$ (blue).}
\label{fig:4}
\end{figure}
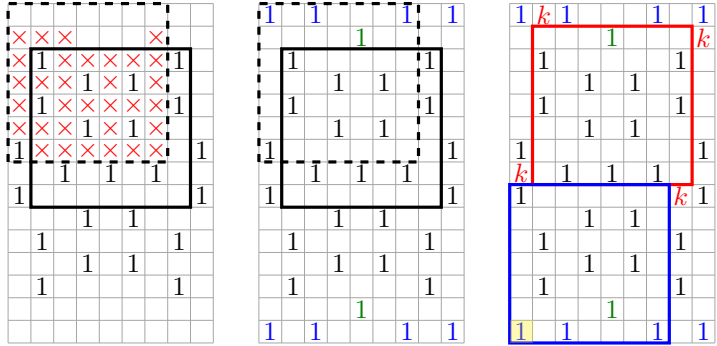

Note that this lemma also applies to any pattern that is a reflection or rotation of the pattern we mentioned. The same holds for all following part of this lemma.

\subsection{Proof of Lemma \ref{lem:4}\,(ii)}
\begin{proof}
Suppose for contradiction that the pattern
\[
    \begin{bmatrix}
    1 & x & x & 1\\
    \end{bmatrix}
    \]
    appears in the coloring. Let $ G $ be a $ P_7 \boxtimes P_7 $ subgraph in which the pattern appears in row 2, where the squares at $ (2,2) $ and $ (5,2) $ are of color 1 (see Figure \ref{fig:5}). Observe that the other squares in rows 1, 2, and 3 cannot be assigned color 1 in columns 1 to 6. We now partition the remaining squares into five parts, as shown in Figure~\ref{fig:5}.

By Observation~\ref{obs:2}\,(i), (ii), (iv) and Lemma \ref{lem:4}\,(i), the $ P_1 \boxtimes P_5 $ part must contain exactly two squares of color 1, each of the $ P_2 \boxtimes P_2 $ parts must contain exactly one square of color 1 and the $ P_7 \boxtimes P_2 $ part must contain exactly four squares of color 1, located in columns 1, 3, 5, and 7.

Next, consider the subgraph $ G(-1, 0) $. By Observation~\ref{obs:2}\,(iv), column 1 of this subgraph must contain at least three squares of color 1. However, since either $ (2,6) $ or $ (2,7) $ is a square of color 1, both squares $ (1,6) $ and $ (1,7) $ cannot be assigned color 1 due to the distance constraint (see Figure~\ref{fig:3}). Therefore, all three squares of color 1 in column 1 must lie within the $ P_1 \boxtimes P_5 $ subgraph from $ (1,1) $ to $ (1,5) $, which contradicts Lemma \ref{lem:4}\,(i).
\end{proof}

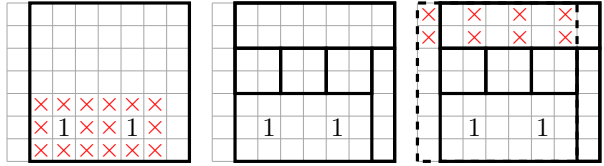
\begin{figure}[ht]
\centering
\begin{tikzpicture}[scale=0.30]

\begin{scope}[shift={(-18,0)}]
  \foreach \x in {0,...,8} {
    \draw[gray!70, line width=0.35pt] (\x,0) -- (\x,7);
  }
  \foreach \y in {0,...,7} {
    \draw[gray!70, line width=0.35pt] (0,\y) -- (8,\y);
  }
  \draw[very thick] (1,0) rectangle (8,7);

  \node at (2.5,1.5) {\normalsize 1};
  \node at (5.5,1.5) {\normalsize 1};
  \node at (1.5,0.5) {\textcolor{red}{$\times$}};
  \node at (2.5,0.5) {\textcolor{red}{$\times$}};
  \node at (3.5,0.5) {\textcolor{red}{$\times$}};
  \node at (4.5,0.5) {\textcolor{red}{$\times$}};
  \node at (5.5,0.5) {\textcolor{red}{$\times$}};
  \node at (6.5,0.5) {\textcolor{red}{$\times$}};
  \node at (1.5,1.5) {\textcolor{red}{$\times$}};
  \node at (3.5,1.5) {\textcolor{red}{$\times$}};
  \node at (4.5,1.5) {\textcolor{red}{$\times$}};
  \node at (6.5,1.5) {\textcolor{red}{$\times$}};
  \node at (1.5,2.5) {\textcolor{red}{$\times$}};
  \node at (2.5,2.5) {\textcolor{red}{$\times$}};
  \node at (3.5,2.5) {\textcolor{red}{$\times$}};
  \node at (4.5,2.5) {\textcolor{red}{$\times$}};
  \node at (5.5,2.5) {\textcolor{red}{$\times$}};
  \node at (6.5,2.5) {\textcolor{red}{$\times$}};
  
\end{scope}

\begin{scope}[shift={(-9,0)}]
  \foreach \x in {0,...,8} {
    \draw[gray!70, line width=0.35pt] (\x,0) -- (\x,7);
  }
  \foreach \y in {0,...,7} {
    \draw[gray!70, line width=0.35pt] (0,\y) -- (8,\y);
  }
  \draw[very thick] (1,0) rectangle (8,7);
  \draw[very thick] (7,0) rectangle (8,5);
  \draw[very thick] (1,5) rectangle (8,7);
  \draw[very thick] (1,3) rectangle (3,5);
  \draw[very thick] (3,3) rectangle (5,5);
  \draw[very thick] (5,3) rectangle (7,5);

  \node at (2.5,1.5) {\normalsize 1};
  \node at (5.5,1.5) {\normalsize 1};
\end{scope}

\begin{scope}[shift={(0,0)}]
  \foreach \x in {0,...,8} {
    \draw[gray!70, line width=0.35pt] (\x,0) -- (\x,7);
  }
  \foreach \y in {0,...,7} {
    \draw[gray!70, line width=0.35pt] (0,\y) -- (8,\y);
  }

  \draw[very thick] (1,0) rectangle (8,7);
  \draw[very thick] (7,0) rectangle (8,5);
  \draw[very thick] (1,5) rectangle (8,7);
  \draw[very thick] (1,3) rectangle (3,5);
  \draw[very thick] (3,3) rectangle (5,5);
  \draw[very thick] (5,3) rectangle (7,5);
  \draw[very thick, dashed] (0,0) rectangle (7,7);

  \node at (2.5,1.5) {\normalsize 1};
  \node at (5.5,1.5) {\normalsize 1};
  \node at (6.5,6.5) {\textcolor{red}{$\times$}};
  \node at (6.5,5.5) {\textcolor{red}{$\times$}};
  \node at (4.5,6.5) {\textcolor{red}{$\times$}};
  \node at (4.5,5.5) {\textcolor{red}{$\times$}};
  \node at (2.5,6.5) {\textcolor{red}{$\times$}};
  \node at (2.5,5.5) {\textcolor{red}{$\times$}};
  \node at (0.5,6.5) {\textcolor{red}{$\times$}};
  \node at (0.5,5.5) {\textcolor{red}{$\times$}};
\end{scope}

\end{tikzpicture}
\caption{$G$ (black) and $G(-1,0)$ (dashed) in the proof of Lemma~4\,(ii).}

\label{fig:5}
\end{figure}

Let $G$ be a subgraph of $P_\infty \boxtimes P_\infty$. Sometimes, we would like to rotate the $x$- and $y$-axes clockwise by $90k^\circ$ for $k \in \{1,2,3\}$. We write $G^{(k)}$ for the same subgraph $G$ when viewed with respect to the new axes.

\subsection{Proof of Lemma \ref{lem:4}\,(iii)}
\begin{proof}
Suppose for contradiction that the pattern
\[
    \begin{bmatrix}
    1 & x & 1\\
    x & x & x\\
    1 & x & 1
    \end{bmatrix}
    \]
appears in the coloring. Let $ G $ be a $ P_7 \boxtimes P_7 $ subgraph in which the pattern appears in the middle of $ G $, where the squares at $ (3,3) $, $ (3,5) $, $ (5,3) $, and $ (5,5) $ are of color~1 (see Figure~\ref{fig:6}). We will show that the positions of squares that cannot be assigned color~1 and squares of color~1 must be as shown in Figure~\ref{fig:7}.
 Consider $ G(-1,-1) $, and partition the remaining squares that can be assigned color~1 in this subgraph into seven parts, as illustrated in Figure~\ref{fig:6}. By Observation~\ref{obs:2}\,(i) and (iv), each part must contain exactly one square of color~1. Due to the distance constraint, color~1 cannot be assigned to the squares marked with a red $ \times $ in the third diagram of Figure~\ref{fig:6}. Therefore, $ (1,1) $ must be a square of color~1.

\begin{figure}[ht]
\centering
\begin{tikzpicture}[scale=0.30]

\begin{scope}[shift={(-18,0)}]
  \foreach \x in {0,...,8} {
    \draw[gray!70, line width=0.35pt] (\x,0) -- (\x,8);
  }
  \foreach \y in {0,...,8} {
    \draw[gray!70, line width=0.35pt] (0,\y) -- (8,\y);
  }

  \draw[very thick] (1,1) rectangle (8,8);
  \node at (3.5,3.5) {\normalsize 1};
  \node at (5.5,3.5) {\normalsize 1};
  \node at (3.5,5.5) {\normalsize 1};
  \node at (5.5,5.5) {\normalsize 1};
  \node at (2.5,2.5) {\textcolor{red}{$\times$}};
  \node at (3.5,2.5) {\textcolor{red}{$\times$}};
  \node at (4.5,2.5) {\textcolor{red}{$\times$}};
  \node at (5.5,2.5) {\textcolor{red}{$\times$}};
  \node at (6.5,2.5) {\textcolor{red}{$\times$}};
  
  \node at (2.5,3.5) {\textcolor{red}{$\times$}};
  \node at (4.5,3.5) {\textcolor{red}{$\times$}};
  \node at (6.5,3.5) {\textcolor{red}{$\times$}};

  \node at (2.5,4.5) {\textcolor{red}{$\times$}};
  \node at (3.5,4.5) {\textcolor{red}{$\times$}};
  \node at (4.5,4.5) {\textcolor{red}{$\times$}};
  \node at (5.5,4.5) {\textcolor{red}{$\times$}};
  \node at (6.5,4.5) {\textcolor{red}{$\times$}};

  \node at (2.5,5.5) {\textcolor{red}{$\times$}};
  \node at (4.5,5.5) {\textcolor{red}{$\times$}};
  \node at (6.5,5.5) {\textcolor{red}{$\times$}};

  \node at (2.5,6.5) {\textcolor{red}{$\times$}};
  \node at (3.5,6.5) {\textcolor{red}{$\times$}};
  \node at (4.5,6.5) {\textcolor{red}{$\times$}};
  \node at (5.5,6.5) {\textcolor{red}{$\times$}};
  \node at (6.5,6.5) {\textcolor{red}{$\times$}};

\end{scope}

\begin{scope}[shift={(-9,0)}]
  \foreach \x in {0,...,8} {
    \draw[gray!70, line width=0.35pt] (\x,0) -- (\x,8);
  }
  \foreach \y in {0,...,8} {
    \draw[gray!70, line width=0.35pt] (0,\y) -- (8,\y);
  }

  \draw[very thick, dashed] (0,0) rectangle (2,2);
  \draw[very thick, dashed] (0,2) rectangle (2,4);
  \draw[very thick, dashed] (0,4) rectangle (2,6);
  \draw[very thick, dashed] (0,6) rectangle (2,7);
  \draw[very thick, dashed] (2,0) rectangle (4,2);
  \draw[very thick, dashed] (4,0) rectangle (6,2);
  \draw[very thick, dashed] (6,0) rectangle (7,2);
  \draw[very thick, dashed] (2,2) rectangle (7,7);
  \node at (3.5,3.5) {\normalsize 1};
  \node at (5.5,3.5) {\normalsize 1};
  \node at (3.5,5.5) {\normalsize 1};
  \node at (5.5,5.5) {\normalsize 1};
  \node at (2.5,2.5) {\textcolor{red}{$\times$}};
  \node at (3.5,2.5) {\textcolor{red}{$\times$}};
  \node at (4.5,2.5) {\textcolor{red}{$\times$}};
  \node at (5.5,2.5) {\textcolor{red}{$\times$}};
  \node at (6.5,2.5) {\textcolor{red}{$\times$}};
  
  \node at (2.5,3.5) {\textcolor{red}{$\times$}};
  \node at (4.5,3.5) {\textcolor{red}{$\times$}};
  \node at (6.5,3.5) {\textcolor{red}{$\times$}};

  \node at (2.5,4.5) {\textcolor{red}{$\times$}};
  \node at (3.5,4.5) {\textcolor{red}{$\times$}};
  \node at (4.5,4.5) {\textcolor{red}{$\times$}};
  \node at (5.5,4.5) {\textcolor{red}{$\times$}};
  \node at (6.5,4.5) {\textcolor{red}{$\times$}};

  \node at (2.5,5.5) {\textcolor{red}{$\times$}};
  \node at (4.5,5.5) {\textcolor{red}{$\times$}};
  \node at (6.5,5.5) {\textcolor{red}{$\times$}};

  \node at (2.5,6.5) {\textcolor{red}{$\times$}};
  \node at (3.5,6.5) {\textcolor{red}{$\times$}};
  \node at (4.5,6.5) {\textcolor{red}{$\times$}};
  \node at (5.5,6.5) {\textcolor{red}{$\times$}};
  \node at (6.5,6.5) {\textcolor{red}{$\times$}};

\end{scope}

\begin{scope}[shift={(0,0)}]
  \foreach \x in {0,...,8} {
    \draw[gray!70, line width=0.35pt] (\x,0) -- (\x,8);
  }
  \foreach \y in {0,...,8} {
    \draw[gray!70, line width=0.35pt] (0,\y) -- (8,\y);
  }

  \draw[very thick, dashed] (0,0) rectangle (2,2);
  \draw[very thick, dashed] (0,2) rectangle (2,4);
  \draw[very thick, dashed] (0,4) rectangle (2,6);
  \draw[very thick, dashed] (0,6) rectangle (2,7);
  \draw[very thick, dashed] (2,0) rectangle (4,2);
  \draw[very thick, dashed] (4,0) rectangle (6,2);
  \draw[very thick, dashed] (6,0) rectangle (7,2);
  \draw[very thick, dashed] (2,2) rectangle (7,7);

  \node at (3.5,3.5) {\normalsize 1};
  \node at (5.5,3.5) {\normalsize 1};
  \node at (3.5,5.5) {\normalsize 1};
  \node at (5.5,5.5) {\normalsize 1};
  \node at (0.5,0.5) {\textcolor{blue} 1};

  \node at (1.5,5.5) {\textcolor{red}{$\times$}};
  \node at (0.5,5.5) {\textcolor{red}{$\times$}};
  \node at (1.5,3.5) {\textcolor{red}{$\times$}};
  \node at (0.5,3.5) {\textcolor{red}{$\times$}};
  \node at (1.5,1.5) {\textcolor{red}{$\times$}};
  \node at (0.5,1.5) {\textcolor{red}{$\times$}};
  \node at (1.5,0.5) {\textcolor{red}{$\times$}};
  \node at (3.5,0.5) {\textcolor{red}{$\times$}};
  \node at (3.5,1.5) {\textcolor{red}{$\times$}};
  \node at (5.5,0.5) {\textcolor{red}{$\times$}};
  \node at (5.5,1.5) {\textcolor{red}{$\times$}};
\end{scope}

\end{tikzpicture}
\caption{$G$ (black) and $G(-1,-1)$ (dashed).}

\label{fig:6}
\end{figure}
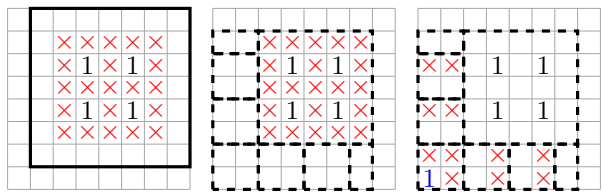

Similarly, by considering $ G(-1, 1) $, $ G(1, -1) $, and $ G(1, 1) $, we obtain that the positions of squares of color 1, and the squares that cannot be assigned color 1, must appear as shown in Figure~\ref{fig:7}.

\begin{figure}[ht]
\centering
\begin{tikzpicture}[scale=0.30]

\foreach \x in {0,...,9} {
  \draw[gray!70, line width=0.35pt] (\x,0) -- (\x,9);
}

\foreach \y in {0,...,9} {
  \draw[gray!70, line width=0.35pt] (0,\y) -- (9,\y);
}

\draw[very thick] (1,1) rectangle (8,8);
\node at (3.5,3.5) {\normalsize 1};
\node at (5.5,3.5) {\normalsize 1};
\node at (3.5,5.5) {\normalsize 1};
\node at (5.5,5.5) {\normalsize 1};
\node at (0.5,0.5) {\textcolor{blue} 1};
\node at (8.5,0.5) {\textcolor{blue} 1};
\node at (8.5,8.5) {\textcolor{blue} 1};
\node at (0.5,8.5) {\textcolor{blue} 1};

\node at (1.5,7.5) {\textcolor{red}{$\times$}};
\node at (0.5,7.5) {\textcolor{red}{$\times$}};
\node at (1.5,5.5) {\textcolor{red}{$\times$}};
\node at (0.5,5.5) {\textcolor{red}{$\times$}};
\node at (1.5,3.5) {\textcolor{red}{$\times$}};
\node at (0.5,3.5) {\textcolor{red}{$\times$}};
\node at (1.5,1.5) {\textcolor{red}{$\times$}};
\node at (0.5,1.5) {\textcolor{red}{$\times$}};

\node at (8.5,7.5) {\textcolor{red}{$\times$}};
\node at (7.5,7.5) {\textcolor{red}{$\times$}};
\node at (8.5,5.5) {\textcolor{red}{$\times$}};
\node at (7.5,5.5) {\textcolor{red}{$\times$}};
\node at (8.5,3.5) {\textcolor{red}{$\times$}};
\node at (7.5,3.5) {\textcolor{red}{$\times$}};
\node at (8.5,1.5) {\textcolor{red}{$\times$}};
\node at (7.5,1.5) {\textcolor{red}{$\times$}};

\node at (1.5,0.5) {\textcolor{red}{$\times$}};
\node at (3.5,0.5) {\textcolor{red}{$\times$}};
\node at (3.5,1.5) {\textcolor{red}{$\times$}};
\node at (5.5,0.5) {\textcolor{red}{$\times$}};
\node at (5.5,1.5) {\textcolor{red}{$\times$}};
\node at (7.5,0.5) {\textcolor{red}{$\times$}};

\node at (1.5,8.5) {\textcolor{red}{$\times$}};
\node at (3.5,7.5) {\textcolor{red}{$\times$}};
\node at (3.5,8.5) {\textcolor{red}{$\times$}};
\node at (5.5,7.5) {\textcolor{red}{$\times$}};
\node at (5.5,8.5) {\textcolor{red}{$\times$}};
\node at (7.5,8.5) {\textcolor{red}{$\times$}};

\node at (2.5,2.5) {\textcolor{red}{$\times$}};
  \node at (3.5,2.5) {\textcolor{red}{$\times$}};
  \node at (4.5,2.5) {\textcolor{red}{$\times$}};
  \node at (5.5,2.5) {\textcolor{red}{$\times$}};
  \node at (6.5,2.5) {\textcolor{red}{$\times$}};
  
  \node at (2.5,3.5) {\textcolor{red}{$\times$}};
  \node at (4.5,3.5) {\textcolor{red}{$\times$}};
  \node at (6.5,3.5) {\textcolor{red}{$\times$}};

  \node at (2.5,4.5) {\textcolor{red}{$\times$}};
  \node at (3.5,4.5) {\textcolor{red}{$\times$}};
  \node at (4.5,4.5) {\textcolor{red}{$\times$}};
  \node at (5.5,4.5) {\textcolor{red}{$\times$}};
  \node at (6.5,4.5) {\textcolor{red}{$\times$}};

  \node at (2.5,5.5) {\textcolor{red}{$\times$}};
  \node at (4.5,5.5) {\textcolor{red}{$\times$}};
  \node at (6.5,5.5) {\textcolor{red}{$\times$}};

  \node at (2.5,6.5) {\textcolor{red}{$\times$}};
  \node at (3.5,6.5) {\textcolor{red}{$\times$}};
  \node at (4.5,6.5) {\textcolor{red}{$\times$}};
  \node at (5.5,6.5) {\textcolor{red}{$\times$}};
  \node at (6.5,6.5) {\textcolor{red}{$\times$}};

\end{tikzpicture}
\caption{$G$.}

\label{fig:7}
\end{figure}

Next, we partition the remaining squares that can be assigned color 1 in $G$ into four parts: three squares in column 1, row 1, column 7, and row 7. By Lemma \ref{lem:4}\,(i), each part contains at most two squares of color 1. By Observation~\ref{obs:2}\,(iv), each part contains at least one square of color 1. Hence, each part contains either one or two squares of color 1. We will show that each part must contain exactly two squares of color 1. Without loss of generality, suppose for contradiction that the part in column 1 contains only one square of color 1. Then, by Observation~\ref{obs:2}\,(iv), the other parts must contain exactly two squares of color 1. By considering $ G(0,1) $, we have $ c(G(0,1)) \leq 10 $, contradicting Observation~\ref{obs:2}\,(iv). Therefore, each part contains exactly two squares of color 1.

Then, we can see that there are only two possible positions for the squares of color~1 in $ G $, as shown in Figure~\ref{fig:8}. 


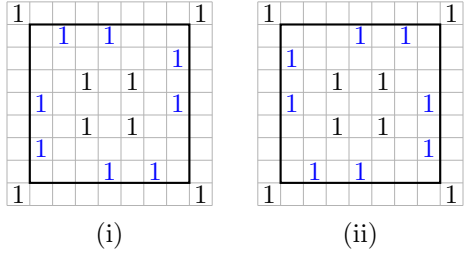
\begin{figure}[ht]
\centering
\begin{tikzpicture}[scale=0.30]

\begin{scope}[shift={(0,0)}]
  \foreach \x in {0,...,9} {
    \draw[gray!60, line width=0.2pt] (\x,0) -- (\x,9);
  }
  \foreach \y in {0,...,9} {
    \draw[gray!60, line width=0.2pt] (0,\y) -- (9,\y);
  }

  \draw[thick] (1,1) rectangle (8,8);

  \node at (3.5,3.5) {\normalsize 1};
  \node at (5.5,3.5) {\normalsize 1};
  \node at (3.5,5.5) {\normalsize 1};
  \node at (5.5,5.5) {\normalsize 1};
  \node at (0.5,0.5) { 1};
  \node at (8.5,0.5) { 1};
  \node at (8.5,8.5) { 1};
  \node at (0.5,8.5) { 1};

  \node at (1.5,2.5) {\textcolor{blue}{1}};
  \node at (1.5,4.5) {\textcolor{blue}{1}};
  \node at (4.5,1.5) {\textcolor{blue}{1}};
  \node at (6.5,1.5) {\textcolor{blue}{1}};
  \node at (7.5,4.5) {\textcolor{blue}{1}};
  \node at (7.5,6.5) {\textcolor{blue}{1}};
  \node at (4.5,7.5) {\textcolor{blue}{1}};
  \node at (2.5,7.5) {\textcolor{blue}{1}};
  \node at (4.5,-1.3) {(i)};
\end{scope}

\begin{scope}[shift={(11,0)}]
  \foreach \x in {0,...,9} {
    \draw[gray!60, line width=0.35pt] (\x,0) -- (\x,9);
  }
  \foreach \y in {0,...,9} {
    \draw[gray!60, line width=0.35pt] (0,\y) -- (9,\y);
  }
  \draw[thick] (1,1) rectangle (8,8);
  \node at (3.5,3.5) {\normalsize 1};
  \node at (5.5,3.5) {\normalsize 1};
  \node at (3.5,5.5) {\normalsize 1};
  \node at (5.5,5.5) {\normalsize 1};
  \node at (0.5,0.5) { 1};
  \node at (8.5,0.5) { 1};
  \node at (8.5,8.5) { 1};
  \node at (0.5,8.5) { 1};
  \node at (1.5,6.5) {\textcolor{blue}{1}};
  \node at (1.5,4.5) {\textcolor{blue}{1}};
  \node at (4.5,1.5) {\textcolor{blue}{1}};
  \node at (2.5,1.5) {\textcolor{blue}{1}};
  \node at (7.5,4.5) {\textcolor{blue}{1}};
  \node at (7.5,2.5) {\textcolor{blue}{1}};
  \node at (4.5,7.5) {\textcolor{blue}{1}};
  \node at (6.5,7.5) {\textcolor{blue}{1}};
  \node at (4.5,-1.3) {(ii)};
\end{scope}

\end{tikzpicture}
\caption{Two possible positions of squares of color~1 in $ G $, referred to as pattern~(i) and pattern~(ii).}
\label{fig:8}
\end{figure}

Without loss of generality, assume that $ G $ has pattern~(i). Consider $ G(1,1) $ (see Figure~\ref{fig:9}). By Lemma \ref{lem:4}\,(i), the square $ (6,7) $ cannot be assigned color~1. By Observation~\ref{obs:2}\,(iv), squares $ (1,1) $, $ (5,7) $, and $ (7,7) $ must be assigned color~1 (see Figure~\ref{fig:9}). Next, consider $ G(3,1) $ (see Figure~\ref{fig:9}). By Lemma \ref{lem:4}\,(i), the square $ (7,7) $ cannot be assigned color~1. Moreover, by Lemma~\ref{lem:4}\,(ii), the squares $ (7,3) $ and $ (7,5) $ cannot be squares of color~1. We then partition the subgraph $ P_2 \boxtimes P_6 $ from $ (6,1) $ to $ (7,6) $ into three parts, where each part is a $ P_2 \boxtimes P_2 $ subgraph. By Observation~\ref{obs:2}\,(i) and Observation~\ref{obs:2}\,(iv), each part must contain exactly one square of color~1. Observe that the square of color 1 in the upper part is either $(7,6)$ or $(6,5)$. We consider the following two cases.

\begin{figure}[ht]
\centering
\begin{tikzpicture}[scale=0.30]

\begin{scope}[shift={(-24,0)}]
  \foreach \x in {0,...,10} {
    \draw[gray!70, line width=0.35pt] (\x,0) -- (\x,8);
  }
  \foreach \y in {0,...,8} {
    \draw[gray!70, line width=0.35pt] (0,\y) -- (10,\y);
  }

  \draw[very thick] (0,0) rectangle (7,7);
  \draw[very thick, blue] (1,1) rectangle (8,8);
  \node at (2.5,2.5) {\normalsize 1};
  \node at (4.5,2.5) {\normalsize 1};
  \node at (2.5,4.5) {\normalsize 1};
  \node at (4.5,4.5) {\normalsize 1};
  \node at (1.5,1.5) {\textcolor{red}{$\times$}};
  \node at (2.5,1.5) {\textcolor{red}{$\times$}};
  \node at (3.5,1.5) {\textcolor{red}{$\times$}};
  \node at (4.5,1.5) {\textcolor{red}{$\times$}};
  \node at (5.5,1.5) {\textcolor{red}{$\times$}};
  
  \node at (1.5,2.5) {\textcolor{red}{$\times$}};
  \node at (3.5,2.5) {\textcolor{red}{$\times$}};
  \node at (5.5,2.5) {\textcolor{red}{$\times$}};

  \node at (1.5,3.5) {\textcolor{red}{$\times$}};
  \node at (2.5,3.5) {\textcolor{red}{$\times$}};
  \node at (3.5,3.5) {\textcolor{red}{$\times$}};
  \node at (4.5,3.5) {\textcolor{red}{$\times$}};
  \node at (5.5,3.5) {\textcolor{red}{$\times$}};

  \node at (1.5,4.5) {\textcolor{red}{$\times$}};
  \node at (3.5,4.5) {\textcolor{red}{$\times$}};
  \node at (5.5,4.5) {\textcolor{red}{$\times$}};

  \node at (1.5,5.5) {\textcolor{red}{$\times$}};
  \node at (2.5,5.5) {\textcolor{red}{$\times$}};
  \node at (3.5,5.5) {\textcolor{red}{$\times$}};
  \node at (4.5,5.5) {\textcolor{red}{$\times$}};
  \node at (5.5,5.5) {\textcolor{red}{$\times$}};

  \node at (0.5,1.5) {\normalsize 1};
  \node at (0.5,3.5) {\normalsize 1};
  \node at (3.5,0.5) {\normalsize 1};
  \node at (5.5,0.5) {\normalsize 1};
  \node at (6.5,3.5) {\normalsize 1};
  \node at (6.5,5.5) {\normalsize 1};
  \node at (1.5,6.5) {\normalsize 1};
  \node at (3.5,6.5) {\normalsize 1};

  \node at (0.5,0.5) {\textcolor{red}{$\times$}};
  \node at (0.5,2.5) {\textcolor{red}{$\times$}};
  \node at (0.5,4.5) {\textcolor{red}{$\times$}};
  \node at (0.5,5.5) {\textcolor{red}{$\times$}};
  \node at (0.5,6.5) {\textcolor{red}{$\times$}};

  \node at (1.5,0.5) {\textcolor{red}{$\times$}};
  \node at (2.5,0.5) {\textcolor{red}{$\times$}};
  \node at (4.5,0.5) {\textcolor{red}{$\times$}};
  \node at (6.5,0.5) {\textcolor{red}{$\times$}};

  \node at (6.5,1.5) {\textcolor{red}{$\times$}};
  \node at (6.5,2.5) {\textcolor{red}{$\times$}};
  \node at (6.5,4.5) {\textcolor{red}{$\times$}};
  \node at (6.5,6.5) {\textcolor{red}{$\times$}};

  \node at (5.5,6.5) {\textcolor{red}{$\times$}};
  \node at (4.5,6.5) {\textcolor{red}{$\times$}};
  \node at (2.5,6.5) {\textcolor{red}{$\times$}};

  \node at (0.5,7.5) {\textcolor{red}{$\times$}};
  \node at (1.5,7.5) {\textcolor{red}{$\times$}};
  \node at (2.5,7.5) {\textcolor{red}{$\times$}};
  \node at (3.5,7.5) {\textcolor{red}{$\times$}};
  \node at (4.5,7.5) {\textcolor{red}{$\times$}};

  \node at (7.5,6.5) {\textcolor{red}{$\times$}};
  \node at (7.5,5.5) {\textcolor{red}{$\times$}};
  \node at (7.5,4.5) {\textcolor{red}{$\times$}};
  \node at (7.5,3.5) {\textcolor{red}{$\times$}};
  \node at (7.5,2.5) {\textcolor{red}{$\times$}};
\end{scope}

\begin{scope}[shift={(-12,0)}]
  \foreach \x in {0,...,10} {
    \draw[gray!70, line width=0.35pt] (\x,0) -- (\x,8);
  }
  \foreach \y in {0,...,8} {
    \draw[gray!70, line width=0.35pt] (0,\y) -- (10,\y);
  }

  \draw[very thick] (0,0) rectangle (7,7);
  \draw[very thick, blue] (1,1) rectangle (8,8);
  \node at (2.5,2.5) {\normalsize 1};
  \node at (4.5,2.5) {\normalsize 1};
  \node at (2.5,4.5) {\normalsize 1};
  \node at (4.5,4.5) {\normalsize 1};
  \node at (5.5,7.5) {\textcolor{blue}{1}};
  \node at (7.5,7.5) {\textcolor{blue}{1}};
  \node at (7.5,1.5) {\textcolor{blue}{1}};
  \node at (1.5,1.5) {\textcolor{red}{$\times$}};
  \node at (2.5,1.5) {\textcolor{red}{$\times$}};
  \node at (3.5,1.5) {\textcolor{red}{$\times$}};
  \node at (4.5,1.5) {\textcolor{red}{$\times$}};
  \node at (5.5,1.5) {\textcolor{red}{$\times$}};
  
  \node at (1.5,2.5) {\textcolor{red}{$\times$}};
  \node at (3.5,2.5) {\textcolor{red}{$\times$}};
  \node at (5.5,2.5) {\textcolor{red}{$\times$}};

  \node at (1.5,3.5) {\textcolor{red}{$\times$}};
  \node at (2.5,3.5) {\textcolor{red}{$\times$}};
  \node at (3.5,3.5) {\textcolor{red}{$\times$}};
  \node at (4.5,3.5) {\textcolor{red}{$\times$}};
  \node at (5.5,3.5) {\textcolor{red}{$\times$}};

  \node at (1.5,4.5) {\textcolor{red}{$\times$}};
  \node at (3.5,4.5) {\textcolor{red}{$\times$}};
  \node at (5.5,4.5) {\textcolor{red}{$\times$}};

  \node at (1.5,5.5) {\textcolor{red}{$\times$}};
  \node at (2.5,5.5) {\textcolor{red}{$\times$}};
  \node at (3.5,5.5) {\textcolor{red}{$\times$}};
  \node at (4.5,5.5) {\textcolor{red}{$\times$}};
  \node at (5.5,5.5) {\textcolor{red}{$\times$}};

  \node at (0.5,1.5) {\normalsize 1};
  \node at (0.5,3.5) {\normalsize 1};
  \node at (3.5,0.5) {\normalsize 1};
  \node at (5.5,0.5) {\normalsize 1};
  \node at (6.5,3.5) {\normalsize 1};
  \node at (6.5,5.5) {\normalsize 1};
  \node at (1.5,6.5) {\normalsize 1};
  \node at (3.5,6.5) {\normalsize 1};

  \node at (0.5,0.5) {\textcolor{red}{$\times$}};
  \node at (0.5,2.5) {\textcolor{red}{$\times$}};
  \node at (0.5,4.5) {\textcolor{red}{$\times$}};
  \node at (0.5,5.5) {\textcolor{red}{$\times$}};
  \node at (0.5,6.5) {\textcolor{red}{$\times$}};

  \node at (1.5,0.5) {\textcolor{red}{$\times$}};
  \node at (2.5,0.5) {\textcolor{red}{$\times$}};
  \node at (4.5,0.5) {\textcolor{red}{$\times$}};
  \node at (6.5,0.5) {\textcolor{red}{$\times$}};

  \node at (6.5,1.5) {\textcolor{red}{$\times$}};
  \node at (6.5,2.5) {\textcolor{red}{$\times$}};
  \node at (6.5,4.5) {\textcolor{red}{$\times$}};
  \node at (6.5,6.5) {\textcolor{red}{$\times$}};

  \node at (5.5,6.5) {\textcolor{red}{$\times$}};
  \node at (4.5,6.5) {\textcolor{red}{$\times$}};
  \node at (2.5,6.5) {\textcolor{red}{$\times$}};

  \node at (0.5,7.5) {\textcolor{red}{$\times$}};
  \node at (1.5,7.5) {\textcolor{red}{$\times$}};
  \node at (2.5,7.5) {\textcolor{red}{$\times$}};
  \node at (3.5,7.5) {\textcolor{red}{$\times$}};
  \node at (4.5,7.5) {\textcolor{red}{$\times$}};

  \node at (7.5,6.5) {\textcolor{red}{$\times$}};
  \node at (7.5,5.5) {\textcolor{red}{$\times$}};
  \node at (7.5,4.5) {\textcolor{red}{$\times$}};
  \node at (7.5,3.5) {\textcolor{red}{$\times$}};
  \node at (7.5,2.5) {\textcolor{red}{$\times$}};

  \node at (6.5,7.5) {\textcolor{blue}{$\times$}};
  \node at (8.5,7.5) {\textcolor{blue}{$\times$}};
  \node at (8.5,6.5) {\textcolor{blue}{$\times$}};
  \node at (8.5,2.5) {\textcolor{blue}{$\times$}};
  \node at (8.5,1.5) {\textcolor{blue}{$\times$}};
  \node at (8.5,0.5) {\textcolor{blue}{$\times$}};
  \node at (7.5,0.5) {\textcolor{blue}{$\times$}};

\end{scope}

\begin{scope}[shift={(0,0)}]
  \foreach \x in {0,...,10} {
    \draw[gray!70, line width=0.35pt] (\x,0) -- (\x,8);
  }
  \foreach \y in {0,...,8} {
    \draw[gray!70, line width=0.35pt] (0,\y) -- (10,\y);
  }

  \draw[very thick] (0,0) rectangle (7,7);
  \draw[very thick, darkgreen] (3,1) rectangle (10,8);
  \draw[very thick, darkgreen] (8,1) rectangle (10,3);
  \draw[very thick, darkgreen] (8,3) rectangle (10,5);
  \draw[very thick, darkgreen] (8,5) rectangle (10,7);
  \node at (2.5,2.5) {\normalsize 1};
  \node at (4.5,2.5) {\normalsize 1};
  \node at (2.5,4.5) {\normalsize 1};
  \node at (4.5,4.5) {\normalsize 1};
  \node at (5.5,7.5) {\textcolor{blue}{1}};
  \node at (7.5,7.5) {\textcolor{blue}{1}};
  \node at (7.5,1.5) {\textcolor{blue}{1}};
  \node at (1.5,1.5) {\textcolor{red}{$\times$}};
  \node at (2.5,1.5) {\textcolor{red}{$\times$}};
  \node at (3.5,1.5) {\textcolor{red}{$\times$}};
  \node at (4.5,1.5) {\textcolor{red}{$\times$}};
  \node at (5.5,1.5) {\textcolor{red}{$\times$}};
  
  \node at (1.5,2.5) {\textcolor{red}{$\times$}};
  \node at (3.5,2.5) {\textcolor{red}{$\times$}};
  \node at (5.5,2.5) {\textcolor{red}{$\times$}};

  \node at (1.5,3.5) {\textcolor{red}{$\times$}};
  \node at (2.5,3.5) {\textcolor{red}{$\times$}};
  \node at (3.5,3.5) {\textcolor{red}{$\times$}};
  \node at (4.5,3.5) {\textcolor{red}{$\times$}};
  \node at (5.5,3.5) {\textcolor{red}{$\times$}};

  \node at (1.5,4.5) {\textcolor{red}{$\times$}};
  \node at (3.5,4.5) {\textcolor{red}{$\times$}};
  \node at (5.5,4.5) {\textcolor{red}{$\times$}};

  \node at (1.5,5.5) {\textcolor{red}{$\times$}};
  \node at (2.5,5.5) {\textcolor{red}{$\times$}};
  \node at (3.5,5.5) {\textcolor{red}{$\times$}};
  \node at (4.5,5.5) {\textcolor{red}{$\times$}};
  \node at (5.5,5.5) {\textcolor{red}{$\times$}};

  \node at (0.5,1.5) {\normalsize 1};
  \node at (0.5,3.5) {\normalsize 1};
  \node at (3.5,0.5) {\normalsize 1};
  \node at (5.5,0.5) {\normalsize 1};
  \node at (6.5,3.5) {\normalsize 1};
  \node at (6.5,5.5) {\normalsize 1};
  \node at (1.5,6.5) {\normalsize 1};
  \node at (3.5,6.5) {\normalsize 1};

  \node at (0.5,0.5) {\textcolor{red}{$\times$}};
  \node at (0.5,2.5) {\textcolor{red}{$\times$}};
  \node at (0.5,4.5) {\textcolor{red}{$\times$}};
  \node at (0.5,5.5) {\textcolor{red}{$\times$}};
  \node at (0.5,6.5) {\textcolor{red}{$\times$}};

  \node at (1.5,0.5) {\textcolor{red}{$\times$}};
  \node at (2.5,0.5) {\textcolor{red}{$\times$}};
  \node at (4.5,0.5) {\textcolor{red}{$\times$}};
  \node at (6.5,0.5) {\textcolor{red}{$\times$}};

  \node at (6.5,1.5) {\textcolor{red}{$\times$}};
  \node at (6.5,2.5) {\textcolor{red}{$\times$}};
  \node at (6.5,4.5) {\textcolor{red}{$\times$}};
  \node at (6.5,6.5) {\textcolor{red}{$\times$}};

  \node at (5.5,6.5) {\textcolor{red}{$\times$}};
  \node at (4.5,6.5) {\textcolor{red}{$\times$}};
  \node at (2.5,6.5) {\textcolor{red}{$\times$}};

  \node at (0.5,7.5) {\textcolor{red}{$\times$}};
  \node at (1.5,7.5) {\textcolor{red}{$\times$}};
  \node at (2.5,7.5) {\textcolor{red}{$\times$}};
  \node at (3.5,7.5) {\textcolor{red}{$\times$}};
  \node at (4.5,7.5) {\textcolor{red}{$\times$}};

  \node at (7.5,6.5) {\textcolor{red}{$\times$}};
  \node at (7.5,5.5) {\textcolor{red}{$\times$}};
  \node at (7.5,4.5) {\textcolor{red}{$\times$}};
  \node at (7.5,3.5) {\textcolor{red}{$\times$}};
  \node at (7.5,2.5) {\textcolor{red}{$\times$}};

  \node at (6.5,7.5) {\textcolor{blue}{$\times$}};
  \node at (8.5,7.5) {\textcolor{blue}{$\times$}};
  \node at (8.5,6.5) {\textcolor{blue}{$\times$}};
  \node at (8.5,2.5) {\textcolor{blue}{$\times$}};
  \node at (8.5,1.5) {\textcolor{blue}{$\times$}};
  \node at (8.5,0.5) {\textcolor{blue}{$\times$}};
  \node at (7.5,0.5) {\textcolor{blue}{$\times$}};

  \node at (9.5,7.5) {\textcolor{darkgreen}{$\times$}};
  \node at (9.5,5.5) {\textcolor{darkgreen}{$\times$}};
  \node at (9.5,3.5) {\textcolor{darkgreen}{$\times$}};

  \fill[yellow, opacity=0.3] (9,6) rectangle (10,7);
  \fill[yellow, opacity=0.3] (8,5) rectangle (9,6);
  
\end{scope}

\end{tikzpicture}
\caption{$G$ (black), $G(1,1)$ (blue) and $G(3,1)$ (green).}

\label{fig:9}
\end{figure}
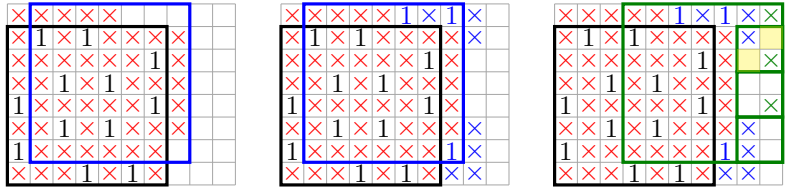

\textbf{Case 1:} Square $ (7,6) $ of $G(3,1)$, which is square $(10,7)$ of $G$, has color 1.

\begin{claim*}
    If $H$ has pattern (i) and square $(10,7)$ of $H$ is of color 1, then $ H(7,-2) $ has pattern~(i) and square $(7,-2)$ of $H$ is of color 1.
\end{claim*}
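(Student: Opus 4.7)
The plan is to propagate pattern~(i) diagonally: show that the given color-1 at $(10,7)$ of $H$, on top of pattern~(i) of $H$, forces pattern~(i) to appear at the shifted block $H(7,-2)$ (which occupies $H$'s columns $8$ to $14$ and rows $-1$ to $5$). Two of the color-1 squares required for pattern~(i) of $H(7,-2)$ are already guaranteed by pattern~(i) of $H$, namely $H$'s $(8,0)$ (playing the role of $(1,2)$ in $H(7,-2)$) and $H$'s $(7,6)$ (the extended corner $(0,8)$ of $H(7,-2)$); the extra square $(7,-2)$ of $H$ claimed in the conclusion is precisely the extended corner $(0,0)$ of $H(7,-2)$, so it follows automatically once pattern~(i) of $H(7,-2)$ is established.

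The first and main step is to pin down the four central color-1 squares of $H(7,-2)$, namely $H$'s $(10,1), (12,1), (10,3), (12,3)$. For each I would consider an appropriate $P_7 \boxtimes P_7$ subgraph containing it and apply Observation~\ref{obs:2}(i),(iv) together with the already-proved Lemma~\ref{lem:4}(i),(ii) and the distance constraint. The known pattern-(i) color-1 squares of $H$ along the right boundary — especially $(7,4)$, $(7,6)$, $(8,0)$, $(8,8)$ — together with the extra $(10,7)$, eliminate every candidate center square other than these four. This case analysis, analogous to the $G(-1,-1)$ argument at the beginning of the proof of Lemma~\ref{lem:4}(iii), is the main obstacle, because the role of $(10,7)$ in breaking the symmetry between patterns~(i) and~(ii) for $H(7,-2)$ has to be tracked carefully.

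With the central four color-1 squares in place, I would replay the opening of the proof of Lemma~\ref{lem:4}(iii) with $H(7,-2)$ in place of $G$. The four diagonal shifts $H(7,-2)(\pm 1,\pm 1)$, each partitioned into the seven parts used earlier, force via Observation~\ref{obs:2}(i),(iv) the four extended corners of $H(7,-2)$ — $H$'s $(7,-2), (15,-2), (7,6), (15,6)$ — to be color~1; the corner $(7,6)$ is already color~1 by $H$'s pattern~(i), which is consistently built in. A subsequent partition of the four boundary strips of $H(7,-2)$ into three-square pieces, combined with Lemma~\ref{lem:4}(i) and Observation~\ref{obs:2}(iv), forces each strip to contain exactly two color-1 squares, leaving only pattern~(i) or pattern~(ii) of $H(7,-2)$ as possibilities.

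To finish, I would rule out pattern~(ii) of $H(7,-2)$ by exhibiting a conflict with the structure already known in $H$: a color-1 square forced on some boundary strip of $H(7,-2)$ under pattern~(ii) would lie within distance $1$ of a pattern-(i) color-1 square of $H$ (or of the given $(10,7)$), or would combine with such a square to produce a forbidden configuration from Lemma~\ref{lem:4}(i) or~(ii). This leaves pattern~(i) for $H(7,-2)$, and color~1 at its extended corner $(0,0)$ is precisely color~1 at $(7,-2)$ of $H$, as claimed.
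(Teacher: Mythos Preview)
Your overall architecture is the paper's: first nail down the four central color-1 squares $(10,1),(12,1),(10,3),(12,3)$ of $H(7,-2)$, then quote the already-proved pattern~(i)/(ii) dichotomy for any $7\times 7$ block with that central configuration, and finally exclude pattern~(ii) by matching against color-1 squares already known. Steps~2 and~3 of your outline are exactly what the paper does.

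The gap is in Step~1. You write that ``for each'' central square you would pick an appropriate $P_7\boxtimes P_7$ block and that the known squares $(7,4),(7,6),(8,0),(8,8)$ together with $(10,7)$ ``eliminate every candidate center square other than these four.'' This is where your plan diverges from what actually works. To the right of column~8 you know only the single color-1 square $(10,7)$; the squares you list are not enough to pin down anything in columns $10$--$12$ by a single block argument. The paper does not treat the four centers independently. It first invokes the $H(1,1)$ argument (the same one used for $G$) to obtain the additional color-1 square $(8,2)$, which you omit, and then runs a specific \emph{chain} of deductions:
\begin{itemize}
\item $H(4,0)$, partitioned into four small pieces, forces color~1 at $(10,3)$ and at the intermediate squares $(9,5),(11,5)$;
\item $H(2,-3)$, via Observation~\ref{obs:2}(ii), forces color~1 at $(7,-2)$ and $(9,-2)$;
\item $H(5,-2)$, using the squares just obtained, then forces color~1 at $(10,1),(12,1),(12,3)$.
\end{itemize}
Each step feeds the next; the centers cannot be reached without the intermediate squares $(8,2),(9,5),(11,5),(9,-2)$. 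Your proposal does not identify this chain, and your phrasing suggests you expect each center to fall to a single-block argument, which it will not. Once you supply this chain (or an equivalent one), the rest of your outline goes through exactly as in the paper.
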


\begin{proof}
    We know that there are some squares of color 1 outside $H$, as shown in Figure~\ref{fig:9}, since the same argument for $G$ also applies. Observe that the squares that cannot be assigned color~1 must appear as shown in Figure~\ref{fig:10}. Consider $ H(4,0) $, and partition its squares into four parts as illustrated in Figure~\ref{fig:10}. By Observation~\ref{obs:2}\,(i) and~(iv), each part must contain exactly one square of color~1. Since the bottom part must contain a square of color~1, the square $ (6,2) $ in $ H(4,0) $ cannot be assigned color~1, and thus $ (6,3) $ must be assigned color~1. It follows that the squares of color~1 in $ H(4,0) $ must be as shown in Figure~\ref{fig:10}.

    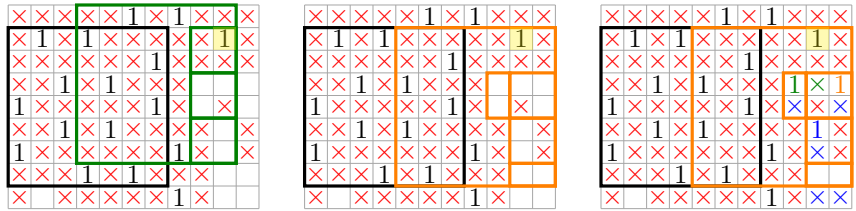
\begin{figure}[ht]
\centering
\begin{tikzpicture}[scale=0.30]

\begin{scope}[shift={(-26,0)}]
  \foreach \x in {0,...,11} {
    \draw[gray!70, line width=0.35pt] (\x,0) -- (\x,9);
  }
  \foreach \y in {0,...,9} {
    \draw[gray!70, line width=0.35pt] (0,\y) -- (11,\y);
  }

\draw[very thick] (0,1) rectangle (7,8);
\draw[very thick, darkgreen] (3,2) rectangle (10,9);
\draw[very thick, darkgreen] (8,2) rectangle (10,4);
\draw[very thick, darkgreen] (8,4) rectangle (10,6);
\draw[very thick, darkgreen] (8,6) rectangle (10,8);

\node at (2.5,3.5) {\normalsize 1};
\node at (4.5,3.5) {\normalsize 1};
\node at (2.5,5.5) {\normalsize 1};
\node at (4.5,5.5) {\normalsize 1};
\node at (5.5,8.5) {{1}};
\node at (7.5,8.5) {{1}};
\node at (7.5,2.5) {{1}};

\node at (1.5,2.5) {\textcolor{red}{$\times$}};
\node at (2.5,2.5) {\textcolor{red}{$\times$}};
\node at (3.5,2.5) {\textcolor{red}{$\times$}};
\node at (4.5,2.5) {\textcolor{red}{$\times$}};
\node at (5.5,2.5) {\textcolor{red}{$\times$}};

\node at (1.5,3.5) {\textcolor{red}{$\times$}};
\node at (3.5,3.5) {\textcolor{red}{$\times$}};
\node at (5.5,3.5) {\textcolor{red}{$\times$}};

\node at (1.5,4.5) {\textcolor{red}{$\times$}};
\node at (2.5,4.5) {\textcolor{red}{$\times$}};
\node at (3.5,4.5) {\textcolor{red}{$\times$}};
\node at (4.5,4.5) {\textcolor{red}{$\times$}};
\node at (5.5,4.5) {\textcolor{red}{$\times$}};

\node at (1.5,5.5) {\textcolor{red}{$\times$}};
\node at (3.5,5.5) {\textcolor{red}{$\times$}};
\node at (5.5,5.5) {\textcolor{red}{$\times$}};

\node at (1.5,6.5) {\textcolor{red}{$\times$}};
\node at (2.5,6.5) {\textcolor{red}{$\times$}};
\node at (3.5,6.5) {\textcolor{red}{$\times$}};
\node at (4.5,6.5) {\textcolor{red}{$\times$}};
\node at (5.5,6.5) {\textcolor{red}{$\times$}};

\node at (0.5,2.5) {\normalsize 1};
\node at (0.5,4.5) {\normalsize 1};
\node at (3.5,1.5) {\normalsize 1};
\node at (5.5,1.5) {\normalsize 1};
\node at (6.5,4.5) {\normalsize 1};
\node at (6.5,6.5) {\normalsize 1};
\node at (1.5,7.5) {\normalsize 1};
\node at (3.5,7.5) {\normalsize 1};

\node at (7.5,0.5) {\normalsize 1};
\node at (8.5,0.5) {\textcolor{red}{$\times$}};
\node at (6.5,0.5) {\textcolor{red}{$\times$}};
\node at (5.5,0.5) {\textcolor{red}{$\times$}};
\node at (4.5,0.5) {\textcolor{red}{$\times$}};
\node at (3.5,0.5) {\textcolor{red}{$\times$}};
\node at (2.5,0.5) {\textcolor{red}{$\times$}};
\node at (0.5,0.5) {\textcolor{red}{$\times$}};

\node at (0.5,1.5) {\textcolor{red}{$\times$}};
\node at (0.5,3.5) {\textcolor{red}{$\times$}};
\node at (0.5,5.5) {\textcolor{red}{$\times$}};
\node at (0.5,6.5) {\textcolor{red}{$\times$}};
\node at (0.5,7.5) {\textcolor{red}{$\times$}};

\node at (1.5,1.5) {\textcolor{red}{$\times$}};
\node at (2.5,1.5) {\textcolor{red}{$\times$}};
\node at (4.5,1.5) {\textcolor{red}{$\times$}};
\node at (6.5,1.5) {\textcolor{red}{$\times$}};

\node at (6.5,2.5) {\textcolor{red}{$\times$}};
\node at (6.5,3.5) {\textcolor{red}{$\times$}};
\node at (6.5,5.5) {\textcolor{red}{$\times$}};
\node at (6.5,7.5) {\textcolor{red}{$\times$}};

\node at (5.5,7.5) {\textcolor{red}{$\times$}};
\node at (4.5,7.5) {\textcolor{red}{$\times$}};
\node at (2.5,7.5) {\textcolor{red}{$\times$}};

\node at (0.5,8.5) {\textcolor{red}{$\times$}};
\node at (1.5,8.5) {\textcolor{red}{$\times$}};
\node at (2.5,8.5) {\textcolor{red}{$\times$}};
\node at (3.5,8.5) {\textcolor{red}{$\times$}};
\node at (4.5,8.5) {\textcolor{red}{$\times$}};

\node at (7.5,7.5) {\textcolor{red}{$\times$}};
\node at (7.5,6.5) {\textcolor{red}{$\times$}};
\node at (7.5,5.5) {\textcolor{red}{$\times$}};
\node at (7.5,4.5) {\textcolor{red}{$\times$}};
\node at (7.5,3.5) {\textcolor{red}{$\times$}};
\node at (10.5,3.5) {\textcolor{red}{$\times$}};
\node at (10.5,2.5) {\textcolor{red}{$\times$}};

\node at (6.5,8.5) {\textcolor{red}{$\times$}};
\node at (8.5,8.5) {\textcolor{red}{$\times$}};
\node at (8.5,7.5) {\textcolor{red}{$\times$}};
\node at (8.5,3.5) {\textcolor{red}{$\times$}};
\node at (8.5,2.5) {\textcolor{red}{$\times$}};
\node at (8.5,1.5) {\textcolor{red}{$\times$}};
\node at (7.5,1.5) {\textcolor{red}{$\times$}};

\node at (9.5,8.5) {\textcolor{red}{$\times$}};
\node at (9.5,6.5) {\textcolor{red}{$\times$}};
\node at (9.5,4.5) {\textcolor{red}{$\times$}};

\node at (9.5,7.5) {\normalsize 1};
\node at (8.5,6.5) {\textcolor{red}{$\times$}};
\node at (10.5,8.5) {\textcolor{red}{$\times$}};
\node at (10.5,7.5) {\textcolor{red}{$\times$}};
\node at (10.5,6.5) {\textcolor{red}{$\times$}};

\fill[yellow, opacity=0.3] (9,7) rectangle (10,8);
\end{scope}

\begin{scope}[shift={(-13,0)}]
  \foreach \x in {0,...,11} {
    \draw[gray!70, line width=0.35pt] (\x,0) -- (\x,9);
  }
  \foreach \y in {0,...,9} {
    \draw[gray!70, line width=0.35pt] (0,\y) -- (11,\y);
  }

\draw[very thick] (0,1) rectangle (7,8);
\draw[very thick, orange] (4,1) rectangle (11,8);
\draw[very thick, orange] (9,1) rectangle (11,2);
\draw[very thick, orange] (9,2) rectangle (11,4);
\draw[very thick, orange] (9,4) rectangle (11,6);
\draw[very thick, orange] (8,4) rectangle (9,6);

\node at (2.5,3.5) {\normalsize 1};
\node at (4.5,3.5) {\normalsize 1};
\node at (2.5,5.5) {\normalsize 1};
\node at (4.5,5.5) {\normalsize 1};
\node at (5.5,8.5) {{1}};
\node at (7.5,8.5) {{1}};
\node at (7.5,2.5) {{1}};

\node at (1.5,2.5) {\textcolor{red}{$\times$}};
\node at (2.5,2.5) {\textcolor{red}{$\times$}};
\node at (3.5,2.5) {\textcolor{red}{$\times$}};
\node at (4.5,2.5) {\textcolor{red}{$\times$}};
\node at (5.5,2.5) {\textcolor{red}{$\times$}};

\node at (1.5,3.5) {\textcolor{red}{$\times$}};
\node at (3.5,3.5) {\textcolor{red}{$\times$}};
\node at (5.5,3.5) {\textcolor{red}{$\times$}};

\node at (1.5,4.5) {\textcolor{red}{$\times$}};
\node at (2.5,4.5) {\textcolor{red}{$\times$}};
\node at (3.5,4.5) {\textcolor{red}{$\times$}};
\node at (4.5,4.5) {\textcolor{red}{$\times$}};
\node at (5.5,4.5) {\textcolor{red}{$\times$}};

\node at (1.5,5.5) {\textcolor{red}{$\times$}};
\node at (3.5,5.5) {\textcolor{red}{$\times$}};
\node at (5.5,5.5) {\textcolor{red}{$\times$}};

\node at (1.5,6.5) {\textcolor{red}{$\times$}};
\node at (2.5,6.5) {\textcolor{red}{$\times$}};
\node at (3.5,6.5) {\textcolor{red}{$\times$}};
\node at (4.5,6.5) {\textcolor{red}{$\times$}};
\node at (5.5,6.5) {\textcolor{red}{$\times$}};

\node at (0.5,2.5) {\normalsize 1};
\node at (0.5,4.5) {\normalsize 1};
\node at (3.5,1.5) {\normalsize 1};
\node at (5.5,1.5) {\normalsize 1};
\node at (6.5,4.5) {\normalsize 1};
\node at (6.5,6.5) {\normalsize 1};
\node at (1.5,7.5) {\normalsize 1};
\node at (3.5,7.5) {\normalsize 1};

\node at (7.5,0.5) {\normalsize 1};
\node at (8.5,0.5) {\textcolor{red}{$\times$}};
\node at (6.5,0.5) {\textcolor{red}{$\times$}};
\node at (5.5,0.5) {\textcolor{red}{$\times$}};
\node at (4.5,0.5) {\textcolor{red}{$\times$}};
\node at (3.5,0.5) {\textcolor{red}{$\times$}};
\node at (2.5,0.5) {\textcolor{red}{$\times$}};
\node at (0.5,0.5) {\textcolor{red}{$\times$}};

\node at (0.5,1.5) {\textcolor{red}{$\times$}};
\node at (0.5,3.5) {\textcolor{red}{$\times$}};
\node at (0.5,5.5) {\textcolor{red}{$\times$}};
\node at (0.5,6.5) {\textcolor{red}{$\times$}};
\node at (0.5,7.5) {\textcolor{red}{$\times$}};

\node at (1.5,1.5) {\textcolor{red}{$\times$}};
\node at (2.5,1.5) {\textcolor{red}{$\times$}};
\node at (4.5,1.5) {\textcolor{red}{$\times$}};
\node at (6.5,1.5) {\textcolor{red}{$\times$}};

\node at (6.5,2.5) {\textcolor{red}{$\times$}};
\node at (6.5,3.5) {\textcolor{red}{$\times$}};
\node at (6.5,5.5) {\textcolor{red}{$\times$}};
\node at (6.5,7.5) {\textcolor{red}{$\times$}};

\node at (5.5,7.5) {\textcolor{red}{$\times$}};
\node at (4.5,7.5) {\textcolor{red}{$\times$}};
\node at (2.5,7.5) {\textcolor{red}{$\times$}};

\node at (0.5,8.5) {\textcolor{red}{$\times$}};
\node at (1.5,8.5) {\textcolor{red}{$\times$}};
\node at (2.5,8.5) {\textcolor{red}{$\times$}};
\node at (3.5,8.5) {\textcolor{red}{$\times$}};
\node at (4.5,8.5) {\textcolor{red}{$\times$}};

\node at (7.5,7.5) {\textcolor{red}{$\times$}};
\node at (7.5,6.5) {\textcolor{red}{$\times$}};
\node at (7.5,5.5) {\textcolor{red}{$\times$}};
\node at (7.5,4.5) {\textcolor{red}{$\times$}};
\node at (7.5,3.5) {\textcolor{red}{$\times$}};

\node at (6.5,8.5) {\textcolor{red}{$\times$}};
\node at (8.5,8.5) {\textcolor{red}{$\times$}};
\node at (8.5,7.5) {\textcolor{red}{$\times$}};
\node at (8.5,3.5) {\textcolor{red}{$\times$}};
\node at (8.5,2.5) {\textcolor{red}{$\times$}};
\node at (8.5,1.5) {\textcolor{red}{$\times$}};
\node at (7.5,1.5) {\textcolor{red}{$\times$}};

\node at (9.5,8.5) {\textcolor{red}{$\times$}};
\node at (9.5,6.5) {\textcolor{red}{$\times$}};
\node at (9.5,4.5) {\textcolor{red}{$\times$}};
\node at (10.5,3.5) {\textcolor{red}{$\times$}};
\node at (10.5,2.5) {\textcolor{red}{$\times$}};

\node at (9.5,7.5) {\normalsize 1};
\node at (8.5,6.5) {\textcolor{red}{$\times$}};
\node at (10.5,8.5) {\textcolor{red}{$\times$}};
\node at (10.5,7.5) {\textcolor{red}{$\times$}};
\node at (10.5,6.5) {\textcolor{red}{$\times$}};

\fill[yellow, opacity=0.3] (9,7) rectangle (10,8);
  
\end{scope}

\begin{scope}[shift={(0,0)}]
  \foreach \x in {0,...,11} {
    \draw[gray!70, line width=0.35pt] (\x,0) -- (\x,9);
  }
  \foreach \y in {0,...,9} {
    \draw[gray!70, line width=0.35pt] (0,\y) -- (11,\y);
  }

\draw[very thick] (0,1) rectangle (7,8);
\draw[very thick, orange] (4,1) rectangle (11,8);
\draw[very thick, orange] (9,1) rectangle (11,2);
\draw[very thick, orange] (9,2) rectangle (11,4);
\draw[very thick, orange] (9,4) rectangle (11,6);
\draw[very thick, orange] (8,4) rectangle (9,6);

\node at (2.5,3.5) {\normalsize 1};
\node at (4.5,3.5) {\normalsize 1};
\node at (2.5,5.5) {\normalsize 1};
\node at (4.5,5.5) {\normalsize 1};
\node at (5.5,8.5) {{1}};
\node at (7.5,8.5) {{1}};
\node at (7.5,2.5) {{1}};

\node at (1.5,2.5) {\textcolor{red}{$\times$}};
\node at (2.5,2.5) {\textcolor{red}{$\times$}};
\node at (3.5,2.5) {\textcolor{red}{$\times$}};
\node at (4.5,2.5) {\textcolor{red}{$\times$}};
\node at (5.5,2.5) {\textcolor{red}{$\times$}};

\node at (1.5,3.5) {\textcolor{red}{$\times$}};
\node at (3.5,3.5) {\textcolor{red}{$\times$}};
\node at (5.5,3.5) {\textcolor{red}{$\times$}};

\node at (1.5,4.5) {\textcolor{red}{$\times$}};
\node at (2.5,4.5) {\textcolor{red}{$\times$}};
\node at (3.5,4.5) {\textcolor{red}{$\times$}};
\node at (4.5,4.5) {\textcolor{red}{$\times$}};
\node at (5.5,4.5) {\textcolor{red}{$\times$}};

\node at (1.5,5.5) {\textcolor{red}{$\times$}};
\node at (3.5,5.5) {\textcolor{red}{$\times$}};
\node at (5.5,5.5) {\textcolor{red}{$\times$}};

\node at (1.5,6.5) {\textcolor{red}{$\times$}};
\node at (2.5,6.5) {\textcolor{red}{$\times$}};
\node at (3.5,6.5) {\textcolor{red}{$\times$}};
\node at (4.5,6.5) {\textcolor{red}{$\times$}};
\node at (5.5,6.5) {\textcolor{red}{$\times$}};

\node at (0.5,2.5) {\normalsize 1};
\node at (0.5,4.5) {\normalsize 1};
\node at (3.5,1.5) {\normalsize 1};
\node at (5.5,1.5) {\normalsize 1};
\node at (6.5,4.5) {\normalsize 1};
\node at (6.5,6.5) {\normalsize 1};
\node at (1.5,7.5) {\normalsize 1};
\node at (3.5,7.5) {\normalsize 1};

\node at (7.5,0.5) {\normalsize 1};
\node at (8.5,0.5) {\textcolor{red}{$\times$}};
\node at (6.5,0.5) {\textcolor{red}{$\times$}};
\node at (5.5,0.5) {\textcolor{red}{$\times$}};
\node at (4.5,0.5) {\textcolor{red}{$\times$}};
\node at (3.5,0.5) {\textcolor{red}{$\times$}};
\node at (2.5,0.5) {\textcolor{red}{$\times$}};
\node at (0.5,0.5) {\textcolor{red}{$\times$}};

\node at (0.5,1.5) {\textcolor{red}{$\times$}};
\node at (0.5,3.5) {\textcolor{red}{$\times$}};
\node at (0.5,5.5) {\textcolor{red}{$\times$}};
\node at (0.5,6.5) {\textcolor{red}{$\times$}};
\node at (0.5,7.5) {\textcolor{red}{$\times$}};

\node at (1.5,1.5) {\textcolor{red}{$\times$}};
\node at (2.5,1.5) {\textcolor{red}{$\times$}};
\node at (4.5,1.5) {\textcolor{red}{$\times$}};
\node at (6.5,1.5) {\textcolor{red}{$\times$}};

\node at (6.5,2.5) {\textcolor{red}{$\times$}};
\node at (6.5,3.5) {\textcolor{red}{$\times$}};
\node at (6.5,5.5) {\textcolor{red}{$\times$}};
\node at (6.5,7.5) {\textcolor{red}{$\times$}};

\node at (5.5,7.5) {\textcolor{red}{$\times$}};
\node at (4.5,7.5) {\textcolor{red}{$\times$}};
\node at (2.5,7.5) {\textcolor{red}{$\times$}};

\node at (0.5,8.5) {\textcolor{red}{$\times$}};
\node at (1.5,8.5) {\textcolor{red}{$\times$}};
\node at (2.5,8.5) {\textcolor{red}{$\times$}};
\node at (3.5,8.5) {\textcolor{red}{$\times$}};
\node at (4.5,8.5) {\textcolor{red}{$\times$}};

\node at (7.5,7.5) {\textcolor{red}{$\times$}};
\node at (7.5,6.5) {\textcolor{red}{$\times$}};
\node at (7.5,5.5) {\textcolor{red}{$\times$}};
\node at (7.5,4.5) {\textcolor{red}{$\times$}};
\node at (7.5,3.5) {\textcolor{red}{$\times$}};

\node at (6.5,8.5) {\textcolor{red}{$\times$}};
\node at (8.5,8.5) {\textcolor{red}{$\times$}};
\node at (8.5,7.5) {\textcolor{red}{$\times$}};
\node at (8.5,3.5) {\textcolor{red}{$\times$}};
\node at (8.5,2.5) {\textcolor{red}{$\times$}};
\node at (8.5,1.5) {\textcolor{red}{$\times$}};
\node at (7.5,1.5) {\textcolor{red}{$\times$}};

\node at (9.5,8.5) {\textcolor{red}{$\times$}};
\node at (9.5,6.5) {\textcolor{red}{$\times$}};
\node at (9.5,4.5) {\textcolor{red}{$\times$}};
\node at (10.5,3.5) {\textcolor{red}{$\times$}};
\node at (10.5,2.5) {\textcolor{red}{$\times$}};

\node at (9.5,7.5) {\normalsize 1};
\node at (8.5,6.5) {\textcolor{red}{$\times$}};
\node at (10.5,8.5) {\textcolor{red}{$\times$}};
\node at (10.5,7.5) {\textcolor{red}{$\times$}};
\node at (10.5,6.5) {\textcolor{red}{$\times$}};

\node at (9.5,2.5) {\textcolor{blue}{$\times$}};
\node at (9.5,0.5) {\textcolor{blue}{$\times$}};
\node at (10.5,0.5) {\textcolor{blue}{$\times$}};
\node at (9.5,3.5) {\textcolor{blue}{1}};
\node at (10.5,4.5) {\textcolor{blue}{$\times$}};
\node at (8.5,4.5) {\textcolor{blue}{$\times$}};
\node at (8.5,5.5) {\textcolor{darkgreen}{1}};
\node at (9.5,5.5) {\textcolor{darkgreen}{$\times$}};
\node at (10.5,5.5) {\textcolor{orange}{1}};

\fill[yellow, opacity=0.3] (9,7) rectangle (10,8);
  
\end{scope}

\end{tikzpicture}
\caption{$H$ (black), $H(3,1)$ (green) and $H(4,0)$ (orange).}
\label{fig:10}
\end{figure}

Next, we consider $ H(2,-3) $ (see Figure~\ref{fig:11}). By Observation~\ref{obs:2}\,(ii) and~(iv), the combined rows 1 and 2 must contain exactly four squares of color~1, located in columns 1, 3, 5, and 7. Due to the distance constraint, we have that $ (5,1) $ and $ (7,1) $ must be assigned color~1 (see Figure~\ref{fig:11}). We then consider $ H(5,-2) $ and partition its squares into four parts as shown in Figure~\ref{fig:11}. By Observation~\ref{obs:2}\,(ii) and~(iii), each part must contain exactly one square of color~1. Since $ (5,3) $ is the only square in its part, it must be assigned color~1. Moreover, since the bottom part contains either $ (6,1) $ or $ (7,1) $ as a square of color~1, we have that $ (7,2) $ cannot be assigned color~1. It follows that $ (7,3) $ must be a square of color~1 and thus $ (7,5) $ must also be assigned color~1.

\begin{figure}[ht]
\centering
\begin{tikzpicture}[scale=0.30]

\begin{scope}[shift={(-28,0)}]
  \foreach \x in {0,...,12} {
    \draw[gray!70, line width=0.35pt] (\x,0) -- (\x,11);
  }
  \foreach \y in {0,...,11} {
    \draw[gray!70, line width=0.35pt] (0,\y) -- (12,\y);
  }

\draw[very thick] (0,3) rectangle (7,10);
\draw[very thick, blue] (2,0) rectangle (9,7);

\node at (2.5,5.5) {\normalsize 1};
\node at (4.5,5.5) {\normalsize 1};
\node at (2.5,7.5) {\normalsize 1};
\node at (4.5,7.5) {\normalsize 1};
\node at (5.5,10.5) {1};
\node at (7.5,10.5) {1};
\node at (7.5,4.5) {1};

\node at (1.5,4.5) {\textcolor{red}{$\times$}};
\node at (2.5,4.5) {\textcolor{red}{$\times$}};
\node at (3.5,4.5) {\textcolor{red}{$\times$}};
\node at (4.5,4.5) {\textcolor{red}{$\times$}};
\node at (5.5,4.5) {\textcolor{red}{$\times$}};

\node at (1.5,5.5) {\textcolor{red}{$\times$}};
\node at (3.5,5.5) {\textcolor{red}{$\times$}};
\node at (5.5,5.5) {\textcolor{red}{$\times$}};

\node at (1.5,6.5) {\textcolor{red}{$\times$}};
\node at (2.5,6.5) {\textcolor{red}{$\times$}};
\node at (3.5,6.5) {\textcolor{red}{$\times$}};
\node at (4.5,6.5) {\textcolor{red}{$\times$}};
\node at (5.5,6.5) {\textcolor{red}{$\times$}};

\node at (1.5,7.5) {\textcolor{red}{$\times$}};
\node at (3.5,7.5) {\textcolor{red}{$\times$}};
\node at (5.5,7.5) {\textcolor{red}{$\times$}};

\node at (1.5,8.5) {\textcolor{red}{$\times$}};
\node at (2.5,8.5) {\textcolor{red}{$\times$}};
\node at (3.5,8.5) {\textcolor{red}{$\times$}};
\node at (4.5,8.5) {\textcolor{red}{$\times$}};
\node at (5.5,8.5) {\textcolor{red}{$\times$}};

\node at (0.5,4.5) {\normalsize 1};
\node at (0.5,6.5) {\normalsize 1};
\node at (3.5,3.5) {\normalsize 1};
\node at (5.5,3.5) {\normalsize 1};
\node at (6.5,6.5) {\normalsize 1};
\node at (6.5,8.5) {\normalsize 1};
\node at (1.5,9.5) {\normalsize 1};
\node at (3.5,9.5) {\normalsize 1};

\node at (7.5,2.5) {\normalsize 1};
\node at (8.5,2.5) {\textcolor{red}{$\times$}};
\node at (6.5,2.5) {\textcolor{red}{$\times$}};
\node at (5.5,2.5) {\textcolor{red}{$\times$}};
\node at (4.5,2.5) {\textcolor{red}{$\times$}};
\node at (3.5,2.5) {\textcolor{red}{$\times$}};
\node at (2.5,2.5) {\textcolor{red}{$\times$}};
\node at (0.5,2.5) {\textcolor{red}{$\times$}};

\node at (0.5,3.5) {\textcolor{red}{$\times$}};
\node at (0.5,5.5) {\textcolor{red}{$\times$}};
\node at (0.5,7.5) {\textcolor{red}{$\times$}};
\node at (0.5,8.5) {\textcolor{red}{$\times$}};
\node at (0.5,9.5) {\textcolor{red}{$\times$}};

\node at (1.5,3.5) {\textcolor{red}{$\times$}};
\node at (2.5,3.5) {\textcolor{red}{$\times$}};
\node at (4.5,3.5) {\textcolor{red}{$\times$}};
\node at (6.5,3.5) {\textcolor{red}{$\times$}};

\node at (6.5,4.5) {\textcolor{red}{$\times$}};
\node at (6.5,5.5) {\textcolor{red}{$\times$}};
\node at (6.5,7.5) {\textcolor{red}{$\times$}};
\node at (6.5,9.5) {\textcolor{red}{$\times$}};

\node at (5.5,9.5) {\textcolor{red}{$\times$}};
\node at (4.5,9.5) {\textcolor{red}{$\times$}};
\node at (2.5,9.5) {\textcolor{red}{$\times$}};

\node at (0.5,10.5) {\textcolor{red}{$\times$}};
\node at (1.5,10.5) {\textcolor{red}{$\times$}};
\node at (2.5,10.5) {\textcolor{red}{$\times$}};
\node at (3.5,10.5) {\textcolor{red}{$\times$}};
\node at (4.5,10.5) {\textcolor{red}{$\times$}};

\node at (7.5,9.5) {\textcolor{red}{$\times$}};
\node at (7.5,8.5) {\textcolor{red}{$\times$}};
\node at (7.5,7.5) {\textcolor{red}{$\times$}};
\node at (7.5,6.5) {\textcolor{red}{$\times$}};
\node at (7.5,5.5) {\textcolor{red}{$\times$}};

\node at (6.5,10.5) {\textcolor{red}{$\times$}};
\node at (8.5,10.5) {\textcolor{red}{$\times$}};
\node at (8.5,9.5) {\textcolor{red}{$\times$}};
\node at (8.5,5.5) {\textcolor{red}{$\times$}};
\node at (8.5,4.5) {\textcolor{red}{$\times$}};
\node at (8.5,3.5) {\textcolor{red}{$\times$}};
\node at (7.5,3.5) {\textcolor{red}{$\times$}};

\node at (9.5,10.5) {\textcolor{red}{$\times$}};
\node at (9.5,8.5) {\textcolor{red}{$\times$}};
\node at (9.5,6.5) {\textcolor{red}{$\times$}};
\node at (10.5,5.5) {\textcolor{red}{$\times$}};
\node at (10.5,4.5) {\textcolor{red}{$\times$}};

\node at (9.5,9.5) {\normalsize 1};
\node at (8.5,8.5) {\textcolor{red}{$\times$}};
\node at (10.5,10.5) {\textcolor{red}{$\times$}};
\node at (10.5,9.5) {\textcolor{red}{$\times$}};
\node at (10.5,8.5) {\textcolor{red}{$\times$}};

\node at (9.5,4.5) {\textcolor{red}{$\times$}};
\node at (9.5,2.5) {\textcolor{red}{$\times$}};
\node at (10.5,2.5) {\textcolor{red}{$\times$}};
\node at (9.5,5.5) {1};
\node at (10.5,6.5) {\textcolor{red}{$\times$}};
\node at (8.5,6.5) {\textcolor{red}{$\times$}};
\node at (8.5,7.5) {1};
\node at (9.5,7.5) {\textcolor{red}{$\times$}};
\node at (10.5,7.5) {1};

\node at (7.5,1.5) {\textcolor{red}{$\times$}};
\node at (6.5,1.5) {\textcolor{red}{$\times$}};
\node at (8.5,1.5) {\textcolor{red}{$\times$}};

\fill[yellow, opacity=0.3] (9,9) rectangle (10,10);

\end{scope}

\begin{scope}[shift={(-14,0)}]
  \foreach \x in {0,...,12} {
    \draw[gray!70, line width=0.35pt] (\x,0) -- (\x,11);
  }
  \foreach \y in {0,...,11} {
    \draw[gray!70, line width=0.35pt] (0,\y) -- (12,\y);
  }

\draw[very thick] (0,3) rectangle (7,10);
\draw[very thick, blue] (2,0) rectangle (9,7);

\node at (2.5,5.5) {\normalsize 1};
\node at (4.5,5.5) {\normalsize 1};
\node at (2.5,7.5) {\normalsize 1};
\node at (4.5,7.5) {\normalsize 1};
\node at (5.5,10.5) {1};
\node at (7.5,10.5) {1};
\node at (7.5,4.5) {1};

\node at (1.5,4.5) {\textcolor{red}{$\times$}};
\node at (2.5,4.5) {\textcolor{red}{$\times$}};
\node at (3.5,4.5) {\textcolor{red}{$\times$}};
\node at (4.5,4.5) {\textcolor{red}{$\times$}};
\node at (5.5,4.5) {\textcolor{red}{$\times$}};

\node at (1.5,5.5) {\textcolor{red}{$\times$}};
\node at (3.5,5.5) {\textcolor{red}{$\times$}};
\node at (5.5,5.5) {\textcolor{red}{$\times$}};

\node at (1.5,6.5) {\textcolor{red}{$\times$}};
\node at (2.5,6.5) {\textcolor{red}{$\times$}};
\node at (3.5,6.5) {\textcolor{red}{$\times$}};
\node at (4.5,6.5) {\textcolor{red}{$\times$}};
\node at (5.5,6.5) {\textcolor{red}{$\times$}};

\node at (1.5,7.5) {\textcolor{red}{$\times$}};
\node at (3.5,7.5) {\textcolor{red}{$\times$}};
\node at (5.5,7.5) {\textcolor{red}{$\times$}};

\node at (1.5,8.5) {\textcolor{red}{$\times$}};
\node at (2.5,8.5) {\textcolor{red}{$\times$}};
\node at (3.5,8.5) {\textcolor{red}{$\times$}};
\node at (4.5,8.5) {\textcolor{red}{$\times$}};
\node at (5.5,8.5) {\textcolor{red}{$\times$}};

\node at (0.5,4.5) {\normalsize 1};
\node at (0.5,6.5) {\normalsize 1};
\node at (3.5,3.5) {\normalsize 1};
\node at (5.5,3.5) {\normalsize 1};
\node at (6.5,6.5) {\normalsize 1};
\node at (6.5,8.5) {\normalsize 1};
\node at (1.5,9.5) {\normalsize 1};
\node at (3.5,9.5) {\normalsize 1};

\node at (7.5,2.5) {\normalsize 1};
\node at (8.5,2.5) {\textcolor{red}{$\times$}};
\node at (6.5,2.5) {\textcolor{red}{$\times$}};
\node at (5.5,2.5) {\textcolor{red}{$\times$}};
\node at (4.5,2.5) {\textcolor{red}{$\times$}};
\node at (3.5,2.5) {\textcolor{red}{$\times$}};
\node at (2.5,2.5) {\textcolor{red}{$\times$}};
\node at (0.5,2.5) {\textcolor{red}{$\times$}};

\node at (0.5,3.5) {\textcolor{red}{$\times$}};
\node at (0.5,5.5) {\textcolor{red}{$\times$}};
\node at (0.5,7.5) {\textcolor{red}{$\times$}};
\node at (0.5,8.5) {\textcolor{red}{$\times$}};
\node at (0.5,9.5) {\textcolor{red}{$\times$}};

\node at (1.5,3.5) {\textcolor{red}{$\times$}};
\node at (2.5,3.5) {\textcolor{red}{$\times$}};
\node at (4.5,3.5) {\textcolor{red}{$\times$}};
\node at (6.5,3.5) {\textcolor{red}{$\times$}};

\node at (6.5,4.5) {\textcolor{red}{$\times$}};
\node at (6.5,5.5) {\textcolor{red}{$\times$}};
\node at (6.5,7.5) {\textcolor{red}{$\times$}};
\node at (6.5,9.5) {\textcolor{red}{$\times$}};

\node at (5.5,9.5) {\textcolor{red}{$\times$}};
\node at (4.5,9.5) {\textcolor{red}{$\times$}};
\node at (2.5,9.5) {\textcolor{red}{$\times$}};

\node at (0.5,10.5) {\textcolor{red}{$\times$}};
\node at (1.5,10.5) {\textcolor{red}{$\times$}};
\node at (2.5,10.5) {\textcolor{red}{$\times$}};
\node at (3.5,10.5) {\textcolor{red}{$\times$}};
\node at (4.5,10.5) {\textcolor{red}{$\times$}};

\node at (7.5,9.5) {\textcolor{red}{$\times$}};
\node at (7.5,8.5) {\textcolor{red}{$\times$}};
\node at (7.5,7.5) {\textcolor{red}{$\times$}};
\node at (7.5,6.5) {\textcolor{red}{$\times$}};
\node at (7.5,5.5) {\textcolor{red}{$\times$}};

\node at (6.5,10.5) {\textcolor{red}{$\times$}};
\node at (8.5,10.5) {\textcolor{red}{$\times$}};
\node at (8.5,9.5) {\textcolor{red}{$\times$}};
\node at (8.5,5.5) {\textcolor{red}{$\times$}};
\node at (8.5,4.5) {\textcolor{red}{$\times$}};
\node at (8.5,3.5) {\textcolor{red}{$\times$}};
\node at (7.5,3.5) {\textcolor{red}{$\times$}};

\node at (9.5,10.5) {\textcolor{red}{$\times$}};
\node at (9.5,8.5) {\textcolor{red}{$\times$}};
\node at (9.5,6.5) {\textcolor{red}{$\times$}};
\node at (10.5,5.5) {\textcolor{red}{$\times$}};
\node at (10.5,4.5) {\textcolor{red}{$\times$}};

\node at (9.5,9.5) {\normalsize 1};
\node at (8.5,8.5) {\textcolor{red}{$\times$}};
\node at (10.5,10.5) {\textcolor{red}{$\times$}};
\node at (10.5,9.5) {\textcolor{red}{$\times$}};
\node at (10.5,8.5) {\textcolor{red}{$\times$}};

\node at (9.5,4.5) {\textcolor{red}{$\times$}};
\node at (9.5,2.5) {\textcolor{red}{$\times$}};
\node at (10.5,2.5) {\textcolor{red}{$\times$}};
\node at (9.5,5.5) {1};
\node at (10.5,6.5) {\textcolor{red}{$\times$}};
\node at (8.5,6.5) {\textcolor{red}{$\times$}};
\node at (8.5,7.5) {1};
\node at (9.5,7.5) {\textcolor{red}{$\times$}};
\node at (10.5,7.5) {1};

\node at (7.5,1.5) {\textcolor{red}{$\times$}};
\node at (6.5,1.5) {\textcolor{red}{$\times$}};
\node at (8.5,1.5) {\textcolor{red}{$\times$}};

\node at (8.5,0.5) {\textcolor{blue}{1}};
\node at (7.5,0.5) {\textcolor{blue}{$\times$}};
\node at (9.5,0.5) {\textcolor{blue}{$\times$}};
\node at (9.5,1.5) {\textcolor{blue}{$\times$}};
\node at (6.5,0.5) {\textcolor{blue}{1}};
\node at (5.5,0.5) {\textcolor{blue}{$\times$}};
\node at (5.5,1.5) {\textcolor{blue}{$\times$}};

\fill[yellow, opacity=0.3] (9,9) rectangle (10,10);

\end{scope}

\begin{scope}[shift={(0,0)}]
  \foreach \x in {0,...,12} {
    \draw[gray!70, line width=0.35pt] (\x,0) -- (\x,11);
  }
  \foreach \y in {0,...,11} {
    \draw[gray!70, line width=0.35pt] (0,\y) -- (12,\y);
  }

\draw[very thick] (0,3) rectangle (7,10);
\draw[very thick, darkgreen] (5,1) rectangle (12,8);
\draw[very thick, darkgreen] (10,1) rectangle (12,2);
\draw[very thick, darkgreen] (10,2) rectangle (12,4);
\draw[very thick, darkgreen] (11,4) rectangle (12,6);
\draw[very thick, darkgreen] (9,3) rectangle (10,4);

\node at (2.5,5.5) {\normalsize 1};
\node at (4.5,5.5) {\normalsize 1};
\node at (2.5,7.5) {\normalsize 1};
\node at (4.5,7.5) {\normalsize 1};
\node at (5.5,10.5) {1};
\node at (7.5,10.5) {1};
\node at (7.5,4.5) {1};

\node at (1.5,4.5) {\textcolor{red}{$\times$}};
\node at (2.5,4.5) {\textcolor{red}{$\times$}};
\node at (3.5,4.5) {\textcolor{red}{$\times$}};
\node at (4.5,4.5) {\textcolor{red}{$\times$}};
\node at (5.5,4.5) {\textcolor{red}{$\times$}};

\node at (1.5,5.5) {\textcolor{red}{$\times$}};
\node at (3.5,5.5) {\textcolor{red}{$\times$}};
\node at (5.5,5.5) {\textcolor{red}{$\times$}};

\node at (1.5,6.5) {\textcolor{red}{$\times$}};
\node at (2.5,6.5) {\textcolor{red}{$\times$}};
\node at (3.5,6.5) {\textcolor{red}{$\times$}};
\node at (4.5,6.5) {\textcolor{red}{$\times$}};
\node at (5.5,6.5) {\textcolor{red}{$\times$}};

\node at (1.5,7.5) {\textcolor{red}{$\times$}};
\node at (3.5,7.5) {\textcolor{red}{$\times$}};
\node at (5.5,7.5) {\textcolor{red}{$\times$}};

\node at (1.5,8.5) {\textcolor{red}{$\times$}};
\node at (2.5,8.5) {\textcolor{red}{$\times$}};
\node at (3.5,8.5) {\textcolor{red}{$\times$}};
\node at (4.5,8.5) {\textcolor{red}{$\times$}};
\node at (5.5,8.5) {\textcolor{red}{$\times$}};

\node at (0.5,4.5) {\normalsize 1};
\node at (0.5,6.5) {\normalsize 1};
\node at (3.5,3.5) {\normalsize 1};
\node at (5.5,3.5) {\normalsize 1};
\node at (6.5,6.5) {\normalsize 1};
\node at (6.5,8.5) {\normalsize 1};
\node at (1.5,9.5) {\normalsize 1};
\node at (3.5,9.5) {\normalsize 1};

\node at (7.5,2.5) {\normalsize 1};
\node at (8.5,2.5) {\textcolor{red}{$\times$}};
\node at (6.5,2.5) {\textcolor{red}{$\times$}};
\node at (5.5,2.5) {\textcolor{red}{$\times$}};
\node at (4.5,2.5) {\textcolor{red}{$\times$}};
\node at (3.5,2.5) {\textcolor{red}{$\times$}};
\node at (2.5,2.5) {\textcolor{red}{$\times$}};
\node at (0.5,2.5) {\textcolor{red}{$\times$}};

\node at (0.5,3.5) {\textcolor{red}{$\times$}};
\node at (0.5,5.5) {\textcolor{red}{$\times$}};
\node at (0.5,7.5) {\textcolor{red}{$\times$}};
\node at (0.5,8.5) {\textcolor{red}{$\times$}};
\node at (0.5,9.5) {\textcolor{red}{$\times$}};

\node at (1.5,3.5) {\textcolor{red}{$\times$}};
\node at (2.5,3.5) {\textcolor{red}{$\times$}};
\node at (4.5,3.5) {\textcolor{red}{$\times$}};
\node at (6.5,3.5) {\textcolor{red}{$\times$}};

\node at (6.5,4.5) {\textcolor{red}{$\times$}};
\node at (6.5,5.5) {\textcolor{red}{$\times$}};
\node at (6.5,7.5) {\textcolor{red}{$\times$}};
\node at (6.5,9.5) {\textcolor{red}{$\times$}};

\node at (5.5,9.5) {\textcolor{red}{$\times$}};
\node at (4.5,9.5) {\textcolor{red}{$\times$}};
\node at (2.5,9.5) {\textcolor{red}{$\times$}};

\node at (0.5,10.5) {\textcolor{red}{$\times$}};
\node at (1.5,10.5) {\textcolor{red}{$\times$}};
\node at (2.5,10.5) {\textcolor{red}{$\times$}};
\node at (3.5,10.5) {\textcolor{red}{$\times$}};
\node at (4.5,10.5) {\textcolor{red}{$\times$}};

\node at (7.5,9.5) {\textcolor{red}{$\times$}};
\node at (7.5,8.5) {\textcolor{red}{$\times$}};
\node at (7.5,7.5) {\textcolor{red}{$\times$}};
\node at (7.5,6.5) {\textcolor{red}{$\times$}};
\node at (7.5,5.5) {\textcolor{red}{$\times$}};

\node at (6.5,10.5) {\textcolor{red}{$\times$}};
\node at (8.5,10.5) {\textcolor{red}{$\times$}};
\node at (8.5,9.5) {\textcolor{red}{$\times$}};
\node at (8.5,5.5) {\textcolor{red}{$\times$}};
\node at (8.5,4.5) {\textcolor{red}{$\times$}};
\node at (8.5,3.5) {\textcolor{red}{$\times$}};
\node at (7.5,3.5) {\textcolor{red}{$\times$}};

\node at (9.5,10.5) {\textcolor{red}{$\times$}};
\node at (9.5,8.5) {\textcolor{red}{$\times$}};
\node at (9.5,6.5) {\textcolor{red}{$\times$}};
\node at (10.5,5.5) {\textcolor{red}{$\times$}};
\node at (10.5,4.5) {\textcolor{red}{$\times$}};

\node at (9.5,9.5) {\normalsize 1};
\node at (8.5,8.5) {\textcolor{red}{$\times$}};
\node at (10.5,10.5) {\textcolor{red}{$\times$}};
\node at (10.5,9.5) {\textcolor{red}{$\times$}};
\node at (10.5,8.5) {\textcolor{red}{$\times$}};

\node at (9.5,4.5) {\textcolor{red}{$\times$}};
\node at (9.5,2.5) {\textcolor{red}{$\times$}};
\node at (10.5,2.5) {\textcolor{red}{$\times$}};
\node at (9.5,5.5) {1};
\node at (10.5,6.5) {\textcolor{red}{$\times$}};
\node at (8.5,6.5) {\textcolor{red}{$\times$}};
\node at (8.5,7.5) {1};
\node at (9.5,7.5) {\textcolor{red}{$\times$}};
\node at (10.5,7.5) {1};

\node at (7.5,1.5) {\textcolor{red}{$\times$}};
\node at (6.5,1.5) {\textcolor{red}{$\times$}};
\node at (8.5,1.5) {\textcolor{red}{$\times$}};
\node at (11.5,8.5) {\textcolor{red}{$\times$}};
\node at (11.5,7.5) {\textcolor{red}{$\times$}};
\node at (11.5,6.5) {\textcolor{red}{$\times$}};

\node at (8.5,0.5) {{1}};
\node at (7.5,0.5) {\textcolor{red}{$\times$}};
\node at (9.5,0.5) {\textcolor{red}{$\times$}};
\node at (9.5,1.5) {\textcolor{red}{$\times$}};
\node at (6.5,0.5) {{1}};
\node at (5.5,0.5) {\textcolor{red}{$\times$}};
\node at (5.5,1.5) {\textcolor{red}{$\times$}};

\node at (9.5,3.5) {\textcolor{blue}{1}};
\node at (10.5,3.5) {\textcolor{blue}{$\times$}};
\node at (11.5,2.5) {\textcolor{blue}{$\times$}};
\node at (11.5,3.5) {\textcolor{darkgreen}{1}};
\node at (11.5,4.5) {\textcolor{darkgreen}{$\times$}};
\node at (11.5,5.5) {\textcolor{orange}{1}};

\fill[yellow, opacity=0.3] (9,9) rectangle (10,10);
  
\end{scope}

\end{tikzpicture}
\caption{$H$ (black), $H(2,-3)$ (blue) and $H(5,-2)$ (green).}

\label{fig:11}
\end{figure}
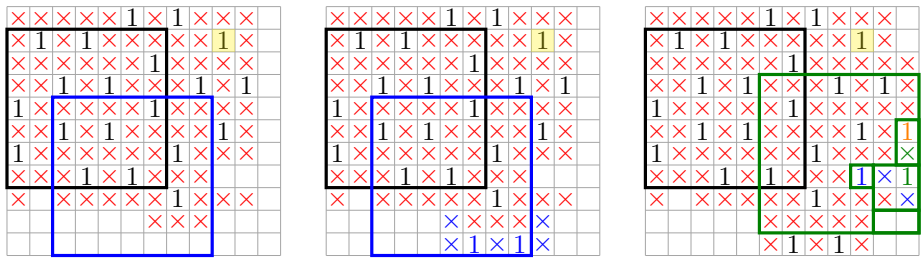

Consider $ H(7,-2) $ (see Figure~\ref{fig:12}). Observe that this subgraph is a $ P_7 \boxtimes P_7 $ graph that contains the pattern
\[
\begin{bmatrix}
1 & x & 1\\
x & x & x\\
1 & x & 1
\end{bmatrix}
\]
in the middle, which is the same as in $ G $. Since we have already shown that there are only two possible positions for squares of color~1 in $ G $, as shown in Figure~\ref{fig:8}, it follows that there are also only two possible positions for squares of color~1 in $ H(7,-2) $. Among these, only pattern (i) matches the observed position of squares of color 1 in $ H(7,-2) $. Hence, the pattern of squares of color~1 in $ H(7,-2) $ must be the same as pattern (i) (see Figure~\ref{fig:12}).
\end{proof}

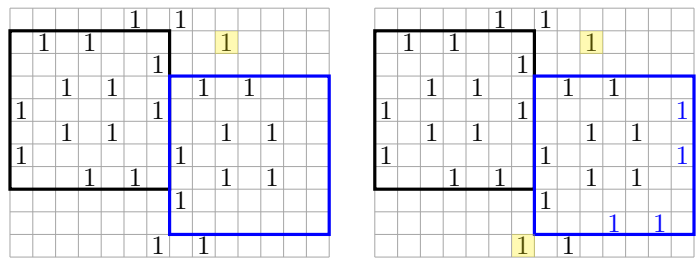
\begin{figure}[ht]
\centering
\begin{tikzpicture}[scale=0.30]
\begin{scope}[shift={(-16,0)}]
  \foreach \x in {0,...,14} {
    \draw[gray!70, line width=0.35pt] (\x,0) -- (\x,11);
  }
  \foreach \y in {0,...,11} {
    \draw[gray!70, line width=0.35pt] (0,\y) -- (14,\y);
  }

\draw[very thick] (0,3) rectangle (7,10);
\draw[very thick, blue] (7,1) rectangle (14,8);

\node at (2.5,5.5) {\normalsize 1};
\node at (4.5,5.5) {\normalsize 1};
\node at (2.5,7.5) {\normalsize 1};
\node at (4.5,7.5) {\normalsize 1};
\node at (5.5,10.5) {1};
\node at (7.5,10.5) {1};
\node at (7.5,4.5) {1};

\node at (9.5,5.5) {1};
\node at (8.5,7.5) {1};
\node at (10.5,7.5) {1};
\node at (0.5,4.5) {\normalsize 1};
\node at (0.5,6.5) {\normalsize 1};
\node at (3.5,3.5) {\normalsize 1};
\node at (5.5,3.5) {\normalsize 1};
\node at (6.5,6.5) {\normalsize 1};
\node at (6.5,8.5) {\normalsize 1};
\node at (1.5,9.5) {\normalsize 1};
\node at (3.5,9.5) {\normalsize 1};

\node at (7.5,2.5) {\normalsize 1};
\node at (9.5,9.5) {\normalsize 1};
\node at (8.5,0.5) {{1}};
\node at (6.5,0.5) {{1}};
\node at (9.5,3.5) {{1}};
\node at (11.5,3.5) {{1}};
\node at (11.5,5.5) {{1}};

\fill[yellow, opacity=0.3] (9,9) rectangle (10,10);

\end{scope}

\begin{scope}[shift={(0,0)}]
  \foreach \x in {0,...,14} {
    \draw[gray!70, line width=0.35pt] (\x,0) -- (\x,11);
  }
  \foreach \y in {0,...,11} {
    \draw[gray!70, line width=0.35pt] (0,\y) -- (14,\y);
  }

\draw[very thick] (0,3) rectangle (7,10);
\draw[very thick, blue] (7,1) rectangle (14,8);

\node at (2.5,5.5) {\normalsize 1};
\node at (4.5,5.5) {\normalsize 1};
\node at (2.5,7.5) {\normalsize 1};
\node at (4.5,7.5) {\normalsize 1};
\node at (5.5,10.5) {1};
\node at (7.5,10.5) {1};
\node at (7.5,4.5) {1};

\node at (0.5,4.5) {\normalsize 1};
\node at (0.5,6.5) {\normalsize 1};
\node at (3.5,3.5) {\normalsize 1};
\node at (5.5,3.5) {\normalsize 1};
\node at (6.5,6.5) {\normalsize 1};
\node at (6.5,8.5) {\normalsize 1};
\node at (1.5,9.5) {\normalsize 1};
\node at (3.5,9.5) {\normalsize 1};

\node at (7.5,2.5) {\normalsize 1};
\node at (9.5,9.5) {\normalsize 1};
\node at (9.5,5.5) {1};

\node at (8.5,7.5) {1};
\node at (10.5,7.5) {1};
\node at (8.5,0.5) {{1}};
\node at (6.5,0.5) {{1}};
\fill[yellow, opacity=0.3] (6,0) rectangle (7,1);

\node at (9.5,3.5) {{1}};
\node at (11.5,3.5) {{1}};
\node at (11.5,5.5) {{1}};

\node at (10.5,1.5) {\textcolor{blue}{1}};
\node at (12.5,1.5) {\textcolor{blue}{1}};
\node at (13.5,4.5) {\textcolor{blue}{1}};
\node at (13.5,6.5) {\textcolor{blue}{1}};

\fill[yellow, opacity=0.3] (9,9) rectangle (10,10);
  
\end{scope}

\end{tikzpicture}
\caption{ $ H $ (black) and $H(7,-2)$ (blue).}

\label{fig:12}
\end{figure}

By this claim, we have that $ G(7,-2) $ has pattern~(i) and square $(7,-2)$ of $G$ is of color~1. Since $ G^{(1)} $ has pattern~(i) and square $(10,7)$ of $ G^{(1)} $, which is square $(7,-2)$ of $G$, is of color~1, we can also apply this claim with $ H = G^{(1)} $. As a result, $ G(-2,-7) $ has pattern~(i) and square $(-2,1)$ of $G$ is of color 1. Next, we apply this claim with $ H = G^{(2)} $ and obtain that $ G(-7,2) $ has pattern~(i) and square $(1,10)$ of $G$ is of color~1. Finally, we apply this claim with $ H = G^{(3)} $. Hence, $ G(2,7) $ has pattern~(i) (see Figure~\ref{fig:13}).

Moreover, we can apply this claim with $H =  G(-2, -7) $. Similarly, we can apply the claim with $ H= G^{(1)}(-7, 2) $, $H = G^{(2)}(2, 7) $, and $H = G^{(3)}(7, -2) $ (see Figure~\ref{fig:13}). Therefore, this pattern of squares of color~1 extends repeatedly throughout $ P_\infty \boxtimes P_\infty $. In this pattern, we can see that it is impossible to assign any additional squares of color~1.

\begin{figure}[ht]
\centering
\begin{tikzpicture}[scale=0.30]
\begin{scope}[shift={(-29,0)}]
  \foreach \x in {0,...,27} {
    \draw[gray!70, line width=0.35pt] (\x,0) -- (\x,27);
  }
  \foreach \y in {0,...,27} {
    \draw[gray!70, line width=0.35pt] (0,\y) -- (27,\y);
  }

\draw[very thick] (10,10) rectangle (17,17);
\draw[very thick, blue] (17,8) rectangle (24,15);
\draw[very thick, blue] (8,3) rectangle (15,10);
\draw[very thick, blue] (3,12) rectangle (10,19);
\draw[very thick, blue] (12,17) rectangle (19,24);

\node at (12.5,12.5) {\normalsize 1};
\node at (14.5,12.5) {\normalsize 1};
\node at (12.5,14.5) {\normalsize 1};
\node at (14.5,14.5) {\normalsize 1};
\node at (15.5,17.5) {1};
\node at (17.5,17.5) {1};
\node at (17.5,11.5) {1};

\node at (10.5,11.5) {\normalsize 1};
\node at (10.5,13.5) {\normalsize 1};
\node at (13.5,10.5) {\normalsize 1};
\node at (15.5,10.5) {\normalsize 1};
\node at (16.5,13.5) {\normalsize 1};
\node at (16.5,15.5) {\normalsize 1};
\node at (11.5,16.5) {\normalsize 1};
\node at (13.5,16.5) {\normalsize 1};

\node at (17.5,9.5) {\normalsize 1};
\node at (19.5,16.5) {\normalsize 1};
\node at (19.5,12.5) {1};

\node at (18.5,14.5) {1};
\node at (20.5,14.5) {1};
\node at (18.5,7.5) {1};
\node at (16.5,7.5) {1};

\node at (19.5,10.5) {1};
\node at (21.5,10.5) {1};
\node at (21.5,12.5) {1};

\node at (20.5,8.5) {1};
\node at (22.5,8.5) {1};
\node at (23.5,11.5) {1};
\node at (23.5,13.5) {1};

\node at (8.5,4.5) {\textcolor{blue}{1}};
\node at (8.5,6.5) {\textcolor{blue}{1}};
\node at (9.5,9.5) {\textcolor{blue}{1}};
\node at (11.5,9.5) {\textcolor{blue}{1}};
\node at (14.5,8.5) {\textcolor{blue}{1}};
\node at (14.5,6.5) {\textcolor{blue}{1}};
\node at (13.5,3.5) {\textcolor{blue}{1}};
\node at (11.5,3.5) {\textcolor{blue}{1}};
\node at (10.5,5.5) {\textcolor{blue}{1}};
\node at (10.5,7.5) {\textcolor{blue}{1}};
\node at (12.5,5.5) {\textcolor{blue}{1}};
\node at (12.5,7.5) {\textcolor{blue}{1}};
\node at (7.5,10.5) {\textcolor{blue}{1}};

\node at (3.5,13.5) {\textcolor{darkgreen}{1}};
\node at (3.5,15.5) {\textcolor{darkgreen}{1}};
\node at (4.5,18.5) {\textcolor{darkgreen}{1}};
\node at (6.5,18.5) {\textcolor{darkgreen}{1}};
\node at (9.5,17.5) {\textcolor{darkgreen}{1}};
\node at (9.5,15.5) {\textcolor{darkgreen}{1}};
\node at (8.5,12.5) {\textcolor{darkgreen}{1}};
\node at (6.5,12.5) {\textcolor{darkgreen}{1}};
\node at (5.5,14.5) {\textcolor{darkgreen}{1}};
\node at (5.5,16.5) {\textcolor{darkgreen}{1}};
\node at (7.5,14.5) {\textcolor{darkgreen}{1}};
\node at (7.5,16.5) {\textcolor{darkgreen}{1}};
\node at (10.5,19.5) {\textcolor{darkgreen}{1}};

\node at (12.5,18.5) {\textcolor{orange}{1}};
\node at (12.5,20.5) {\textcolor{orange}{1}};
\node at (13.5,23.5) {\textcolor{orange}{1}};
\node at (15.5,23.5) {\textcolor{orange}{1}};
\node at (18.5,22.5) {\textcolor{orange}{1}};
\node at (18.5,20.5) {\textcolor{orange}{1}};
\node at (14.5,19.5) {\textcolor{orange}{1}};
\node at (14.5,21.5) {\textcolor{orange}{1}};
\node at (16.5,19.5) {\textcolor{orange}{1}};
\node at (16.5,21.5) {\textcolor{orange}{1}};

\fill[yellow, opacity=0.3] (19,16) rectangle (20,17);
\fill[yellow, opacity=0.3] (16,7) rectangle (17,8);
\fill[blue, opacity=0.3] (7,10) rectangle (8,11);
\fill[green, opacity=0.3] (10,19) rectangle (11,20);

\end{scope}

\begin{scope}[shift={(0,0)}]
  \foreach \x in {0,...,27} {
    \draw[gray!70, line width=0.35pt] (\x,0) -- (\x,27);
  }
  \foreach \y in {0,...,27} {
    \draw[gray!70, line width=0.35pt] (0,\y) -- (27,\y);
  }

\draw[very thick] (10,10) rectangle (17,17);
\draw[very thick] (17,8) rectangle (24,15);
\draw[very thick] (8,3) rectangle (15,10);
\draw[very thick] (3,12) rectangle (10,19);
\draw[very thick] (12,17) rectangle (19,24);

\draw[very thick, blue] (15,1) rectangle (22,8);
\draw[very thick, blue] (1,5) rectangle (8,12);
\draw[very thick, blue] (5,19) rectangle (12,26);
\draw[very thick, blue] (19,15) rectangle (26,22);

\node at (12.5,12.5) {\normalsize 1};
\node at (14.5,12.5) {\normalsize 1};
\node at (12.5,14.5) {\normalsize 1};
\node at (14.5,14.5) {\normalsize 1};
\node at (15.5,17.5) {1};
\node at (17.5,17.5) {1};
\node at (17.5,11.5) {1};

\node at (10.5,11.5) {\normalsize 1};
\node at (10.5,13.5) {\normalsize 1};
\node at (13.5,10.5) {\normalsize 1};
\node at (15.5,10.5) {\normalsize 1};
\node at (16.5,13.5) {\normalsize 1};
\node at (16.5,15.5) {\normalsize 1};
\node at (11.5,16.5) {\normalsize 1};
\node at (13.5,16.5) {\normalsize 1};

\node at (17.5,9.5) {\normalsize 1};
\node at (19.5,16.5) {\normalsize 1};
\node at (19.5,12.5) {1};

\node at (18.5,14.5) {1};
\node at (20.5,14.5) {1};
\node at (18.5,7.5) {1};
\node at (16.5,7.5) {1};

\node at (19.5,10.5) {1};
\node at (21.5,10.5) {1};
\node at (21.5,12.5) {1};

\node at (20.5,8.5) {1};
\node at (22.5,8.5) {1};
\node at (23.5,11.5) {1};
\node at (23.5,13.5) {1};

\node at (8.5,4.5) {1};
\node at (8.5,6.5) {1};
\node at (9.5,9.5) {1};
\node at (11.5,9.5) {1};
\node at (14.5,8.5) {1};
\node at (14.5,6.5) {1};
\node at (13.5,3.5) {1};
\node at (11.5,3.5) {1};
\node at (10.5,5.5) {1};
\node at (10.5,7.5) {1};
\node at (12.5,5.5) {1};
\node at (12.5,7.5) {1};
\node at (7.5,10.5) {1};

\node at (3.5,13.5) {1};
\node at (3.5,15.5) {1};
\node at (4.5,18.5) {1};
\node at (6.5,18.5) {1};
\node at (9.5,17.5) {1};
\node at (9.5,15.5) {1};
\node at (8.5,12.5) {1};
\node at (6.5,12.5) {1};
\node at (5.5,14.5) {1};
\node at (5.5,16.5) {1};
\node at (7.5,14.5) {1};
\node at (7.5,16.5) {1};
\node at (10.5,19.5) {1};

\node at (12.5,18.5) {1};
\node at (12.5,20.5) {1};
\node at (13.5,23.5) {1};
\node at (15.5,23.5) {1};
\node at (18.5,22.5) {1};
\node at (18.5,20.5) {1};
\node at (14.5,19.5) {1};
\node at (14.5,21.5) {1};
\node at (16.5,19.5) {1};
\node at (16.5,21.5) {1};

\node at (8.5,19.5) {\textcolor{blue}{1}};
\node at (5.5,20.5) {\textcolor{blue}{1}};
\node at (5.5,22.5) {\textcolor{blue}{1}};
\node at (6.5,25.5) {\textcolor{blue}{1}};
\node at (8.5,25.5) {\textcolor{blue}{1}};
\node at (11.5,24.5) {\textcolor{blue}{1}};
\node at (11.5,22.5) {\textcolor{blue}{1}};
\node at (7.5,21.5) {\textcolor{blue}{1}};
\node at (7.5,23.5) {\textcolor{blue}{1}};
\node at (9.5,21.5) {\textcolor{blue}{1}};
\node at (9.5,23.5) {\textcolor{blue}{1}};

\node at (24.5,15.5) {\textcolor{blue}{1}};
\node at (22.5,15.5) {\textcolor{blue}{1}};
\node at (19.5,18.5) {\textcolor{blue}{1}};
\node at (20.5,21.5) {\textcolor{blue}{1}};
\node at (22.5,21.5) {\textcolor{blue}{1}};
\node at (25.5,20.5) {\textcolor{blue}{1}};
\node at (25.5,18.5) {\textcolor{blue}{1}};
\node at (21.5,17.5) {\textcolor{blue}{1}};
\node at (21.5,19.5) {\textcolor{blue}{1}};
\node at (23.5,17.5) {\textcolor{blue}{1}};
\node at (23.5,19.5) {\textcolor{blue}{1}};

\node at (20.5,1.5) {\textcolor{blue}{1}};
\node at (18.5,1.5) {\textcolor{blue}{1}};
\node at (15.5,2.5) {\textcolor{blue}{1}};
\node at (15.5,4.5) {\textcolor{blue}{1}};
\node at (21.5,6.5) {\textcolor{blue}{1}};
\node at (21.5,4.5) {\textcolor{blue}{1}};
\node at (17.5,3.5) {\textcolor{blue}{1}};
\node at (17.5,5.5) {\textcolor{blue}{1}};
\node at (19.5,3.5) {\textcolor{blue}{1}};
\node at (19.5,5.5) {\textcolor{blue}{1}};

\node at (6.5,5.5) {\textcolor{blue}{1}};
\node at (4.5,5.5) {\textcolor{blue}{1}};
\node at (1.5,6.5) {\textcolor{blue}{1}};
\node at (1.5,8.5) {\textcolor{blue}{1}};
\node at (2.5,11.5) {\textcolor{blue}{1}};
\node at (4.5,11.5) {\textcolor{blue}{1}};
\node at (7.5,8.5) {\textcolor{blue}{1}};
\node at (3.5,7.5) {\textcolor{blue}{1}};
\node at (3.5,9.5) {\textcolor{blue}{1}};
\node at (5.5,7.5) {\textcolor{blue}{1}};
\node at (5.5,9.5) {\textcolor{blue}{1}};

\node at (14.5,0.5) {\textcolor{blue}{1}};
\node at (0.5,12.5) {\textcolor{blue}{1}};
\node at (12.5,26.5) {\textcolor{blue}{1}};
\node at (26.5,14.5) {\textcolor{blue}{1}};

\fill[blue, opacity=0.3] (14,0) rectangle (15,1);
\fill[blue, opacity=0.3] (0,12) rectangle (1,13);
\fill[blue, opacity=0.3] (12,26) rectangle (13,27);
\fill[blue, opacity=0.3] (26,14) rectangle (27,15);

\fill[yellow, opacity=0.3] (17,9) rectangle (18,10);
\fill[yellow, opacity=0.3] (17,17) rectangle (18,18);
\fill[yellow, opacity=0.3] (9,9) rectangle (10,10);
\fill[yellow, opacity=0.3] (9,17) rectangle (10,18);

\end{scope}

\end{tikzpicture}
\caption{Expansion of the pattern of squares of color 1 in Case~1.}

\label{fig:13}
\end{figure}
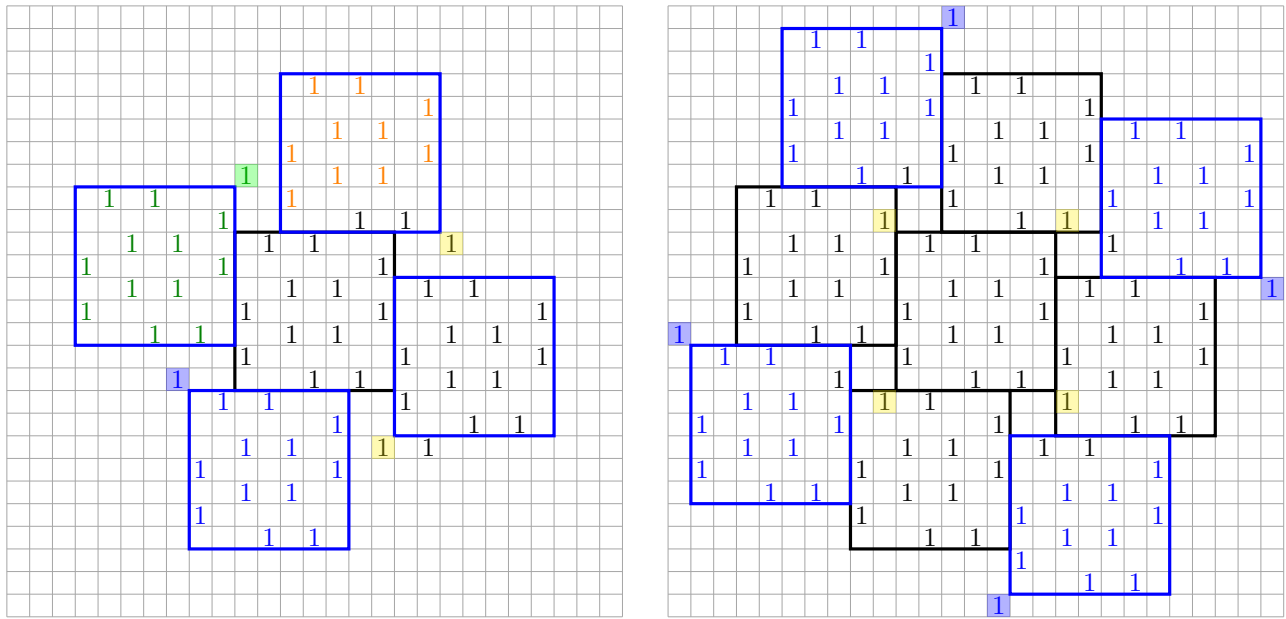

We observe that $ G $ is a critical subgraph. By Observation~\ref{obs:3}\,(ii), let $ k \in \{2, 3, \ldots, 39\} $ be the color that does not appear in $ G $ and there are only two possible positions for color~$ k $, referred to as type-A and type-B. We consider the two cases separately based on the position of color~$ k $.

\textbf{Case 1.1:} $ k $ appears in a type-A position.

Consider $ G(-1,-7) $ (see Figure~\ref{fig:14}). Since $ c(G(-1,-7)) = 11 $, by Observation~\ref{obs:2}\,(iv), color~$ k $ must appear in this subgraph. Due to the distance constraint, the only square that can be assigned color~$ k $ in this subgraph is $ (1,1) $. Hence, $ (1,1) $ is a square of color~$ k $. Similarly, by considering $ G(0,-8) $, we have that $ (7,1) $ in this subgraph must be a square of color~$ k $. We then consider $ G(-1,-14) $. Observe that this is a critical subgraph in which color~$ k $ does not appear, since the only square that can be assigned color~$ k $ is $ (1,1) $ and it is already assigned color~1. By Observation~\ref{obs:3}\,(ii), color~$ k $ must appear in a type-A position. Again, since $ c(G(-2,-21)) = c(G(-1,-22)) = c(G(-2,-28)) = c(G(-1,-29)) = 11 $, by Observation~\ref{obs:2}\,(iv), color~$ k $ must appear as shown in Figure~\ref{fig:14}.

Similarly, by considering $ G(7,-1) $, $ G(7,-8) $, $ G(6,-15) $, and $ G(6,-22) $, color~$ k $ must be assigned as shown in Figure~\ref{fig:14}. We then consider $ G(6,-29) $ and observe that this subgraph is a critical subgraph that does not contain color~$ k $. This contradicts Observation~\ref{obs:3}, since the position of color~$ k $ can be neither type-A nor type-B.

\textbf{Case 1.2:} $ k $ appears in a type-B position.

We proceed similarly to Case~1.1. Consider $ G(1, -7) $. Observe that this subgraph is a critical subgraph that does not contain color~$ k $. By Observation~\ref{obs:3}\,(ii), the position of color~$ k $ must be type-B. Since $ c(G(2,-14)) = c(G(1,-15)) = c(G(2,-21)) = 11 $, by Observation~\ref{obs:2}\,(iv), color~$ k $ must appear as shown in the second diagram of Figure~\ref{fig:14}. Finally, consider $ G(1,-22) $. We observe that $ c(G(1,-22)) = 11 $, and this subgraph cannot contain color~$ k $, since the only square that can be assigned color~$ k $ is $ (1,1) $, which is already assigned color~1. This contradicts Observation~\ref{obs:2}\,(iv).

\begin{figure}[ht]
\centering
\begin{tikzpicture}[scale=0.30]
\begin{scope}[shift={(-18,0)}]
  \foreach \x in {0,...,16} {
    \draw[gray!70, line width=0.35pt] (\x,0) -- (\x,38);
  }
  \foreach \y in {0,...,38} {
    \draw[gray!70, line width=0.35pt] (0,\y) -- (16,\y);
  }

\draw[very thick] (2,30) rectangle (9,37);

\node at (4.5,32.5) {\normalsize 1};
\node at (6.5,32.5) {\normalsize 1};
\node at (4.5,34.5) {\normalsize 1};
\node at (6.5,34.5) {\normalsize 1};
\node at (7.5,37.5) {1};
\node at (9.5,37.5) {1};
\node at (9.5,31.5) {1};

\node at (2.5,31.5) {\normalsize 1};
\node at (2.5,33.5) {\normalsize 1};
\node at (5.5,30.5) {\normalsize 1};
\node at (7.5,30.5) {\normalsize 1};
\node at (8.5,33.5) {\normalsize 1};
\node at (8.5,35.5) {\normalsize 1};
\node at (3.5,36.5) {\normalsize 1};
\node at (5.5,36.5) {\normalsize 1};

\node at (9.5,29.5) {\normalsize 1};
\node at (11.5,36.5) {\normalsize 1};
\node at (11.5,32.5) {1};

\node at (10.5,34.5) {1};
\node at (12.5,34.5) {1};
\node at (10.5,27.5) {1};
\node at (8.5,27.5) {1};

\node at (11.5,30.5) {1};
\node at (13.5,30.5) {1};
\node at (13.5,32.5) {1};

\node at (12.5,28.5) {1};
\node at (14.5,28.5) {1};
\node at (15.5,31.5) {1};
\node at (15.5,33.5) {1};
\node at (8.5,27.5) {1};

\node at (14.5,35.5) {1};
\node at (13.5,37.5) {1};
\node at (15.5,37.5) {1};
\node at (1.5,37.5) {1};
\node at (1.5,35.5) {1};
\node at (0.5,32.5) {1};
\node at (1.5,29.5) {1};
\node at (3.5,29.5) {1};
\node at (6.5,28.5) {1};
\node at (4.5,27.5) {1};
\node at (2.5,27.5) {1};
\node at (0.5,26.5) {1};
\node at (6.5,26.5) {1};
\node at (13.5,26.5) {1};
\node at (2.5,25.5) {1};
\node at (4.5,25.5) {1};
\node at (9.5,25.5) {1};
\node at (11.5,25.5) {1};
\node at (15.5,25.5) {1};
\node at (0.5,24.5) {1};
\node at (7.5,24.5) {1};
\node at (13.5,24.5) {1};
\node at (3.5,23.5) {1};
\node at (5.5,23.5) {1};
\node at (9.5,23.5) {1};
\node at (11.5,23.5) {1};
\node at (1.5,22.5) {1};
\node at (7.5,22.5) {1};
\node at (14.5,22.5) {1};
\node at (4.5,21.5) {1};
\node at (10.5,21.5) {1};
\node at (12.5,21.5) {1};
\node at (0.5,20.5) {1};
\node at (2.5,20.5) {1};
\node at (6.5,20.5) {1};
\node at (8.5,20.5) {1};
\node at (14.5,20.5) {1};
\node at (4.5,19.5) {1};
\node at (11.5,19.5) {1};
\node at (0.5,18.5) {1};
\node at (2.5,18.5) {1};
\node at (7.5,18.5) {1};
\node at (9.5,18.5) {1};
\node at (13.5,18.5) {1};
\node at (15.5,18.5) {1};
\node at (5.5,17.5) {1};
\node at (11.5,17.5) {1};
\node at (1.5,16.5) {1};
\node at (3.5,16.5) {1};
\node at (7.5,16.5) {1};
\node at (9.5,16.5) {1};
\node at (14.5,16.5) {1};
\node at (5.5,15.5) {1};
\node at (12.5,15.5) {1};
\node at (2.5,14.5) {1};
\node at (8.5,14.5) {1};
\node at (10.5,14.5) {1};
\node at (14.5,14.5) {1};
\node at (0.5,13.5) {1};
\node at (4.5,13.5) {1};
\node at (6.5,13.5) {1};
\node at (12.5,13.5) {1};
\node at (2.5,12.5) {1};
\node at (9.5,12.5) {1};
\node at (15.5,12.5) {1};
\node at (0.5,11.5) {1};
\node at (5.5,11.5) {1};
\node at (7.5,11.5) {1};
\node at (11.5,11.5) {1};
\node at (13.5,11.5) {1};
\node at (3.5,10.5) {1};
\node at (9.5,10.5) {1};
\node at (1.5,9.5) {1};
\node at (5.5,9.5) {1};
\node at (7.5,9.5) {1};
\node at (12.5,9.5) {1};
\node at (14.5,9.5) {1};
\node at (3.5,8.5) {1};
\node at (10.5,8.5) {1};
\node at (0.5,7.5) {1};
\node at (6.5,7.5) {1};
\node at (8.5,7.5) {1};
\node at (12.5,7.5) {1};
\node at (14.5,7.5) {1};
\node at (2.5,6.5) {1};
\node at (4.5,6.5) {1};
\node at (10.5,6.5) {1};
\node at (0.5,5.5) {1};
\node at (7.5,5.5) {1};
\node at (13.5,5.5) {1};
\node at (15.5,5.5) {1};
\node at (3.5,4.5) {1};
\node at (5.5,4.5) {1};
\node at (9.5,4.5) {1};
\node at (11.5,4.5) {1};
\node at (1.5,3.5) {1};
\node at (7.5,3.5) {1};
\node at (14.5,3.5) {1};
\node at (3.5,2.5) {1};
\node at (5.5,2.5) {1};
\node at (10.5,2.5) {1};
\node at (12.5,2.5) {1};
\node at (1.5,1.5) {1};
\node at (8.5,1.5) {1};
\node at (14.5,1.5) {1};
\node at (4.5,0.5) {1};
\node at (6.5,0.5) {1};
\node at (10.5,0.5) {1};
\node at (12.5,0.5) {1};

\node at (2.5,37.5) {\textcolor{blue}{$k$}};
\node at (1.5,30.5) {\textcolor{blue}{$k$}};
\node at (8.5,29.5) {\textcolor{blue}{$k$}};
\node at (9.5,36.5) {\textcolor{blue}{$k$}};
\draw[very thick, blue] (1,23) rectangle (8,30);
\node at (1.5,23.5) {\textcolor{blue}{$k$}};
\draw[very thick, blue] (2,22) rectangle (9,29);
\node at (8.5,22.5) {\textcolor{blue}{$k$}};
\draw[very thick, blue] (1,16) rectangle (8,23);
\node at (0.5,16.5) {\textcolor{blue}{$k$}};
\node at (7.5,15.5) {\textcolor{blue}{$k$}};
\draw[very thick, blue] (0,9) rectangle (7,16);
\node at (0.5,9.5) {\textcolor{blue}{$k$}};
\draw[very thick, blue] (1,8) rectangle (8,15);
\node at (7.5,8.5) {\textcolor{blue}{$k$}};
\draw[very thick, blue] (0,2) rectangle (7,9);
\node at (0.5,2.5) {\textcolor{blue}{$k$}};
\draw[very thick, blue] (1,1) rectangle (8,8);
\node at (7.5,1.5) {\textcolor{blue}{$k$}};
\draw[very thick, darkgreen] (9,29) rectangle (16,36);
\node at (15.5,29.5) {\textcolor{darkgreen}{$k$}};
\draw[very thick, darkgreen] (9,22) rectangle (16,29);
\node at (15.5,22.5) {\textcolor{darkgreen}{$k$}};
\draw[very thick, darkgreen] (8,15) rectangle (15,22);
\node at (14.5,15.5) {\textcolor{darkgreen}{$k$}};
\draw[very thick, darkgreen] (8,8) rectangle (15,15);
\node at (14.5,8.5) {\textcolor{darkgreen}{$k$}};
\draw[very thick, darkgreen] (8,1) rectangle (15,8);
\fill[yellow, opacity=0.3] (8,1) rectangle (15,8);
\node at (9,-1.3) {Case 1.1};

\end{scope}

\begin{scope}[shift={(0,0)}]
  \foreach \x in {0,...,16} {
    \draw[gray!70, line width=0.35pt] (\x,0) -- (\x,38);
  }
  \foreach \y in {0,...,38} {
    \draw[gray!70, line width=0.35pt] (0,\y) -- (16,\y);
  }

\draw[very thick] (2,30) rectangle (9,37);

\node at (4.5,32.5) {\normalsize 1};
\node at (6.5,32.5) {\normalsize 1};
\node at (4.5,34.5) {\normalsize 1};
\node at (6.5,34.5) {\normalsize 1};
\node at (7.5,37.5) {1};
\node at (9.5,37.5) {1};
\node at (9.5,31.5) {1};

\node at (2.5,31.5) {\normalsize 1};
\node at (2.5,33.5) {\normalsize 1};
\node at (5.5,30.5) {\normalsize 1};
\node at (7.5,30.5) {\normalsize 1};
\node at (8.5,33.5) {\normalsize 1};
\node at (8.5,35.5) {\normalsize 1};
\node at (3.5,36.5) {\normalsize 1};
\node at (5.5,36.5) {\normalsize 1};

\node at (9.5,29.5) {\normalsize 1};
\node at (11.5,36.5) {\normalsize 1};
\node at (11.5,32.5) {1};

\node at (10.5,34.5) {1};
\node at (12.5,34.5) {1};
\node at (10.5,27.5) {1};
\node at (8.5,27.5) {1};

\node at (11.5,30.5) {1};
\node at (13.5,30.5) {1};
\node at (13.5,32.5) {1};

\node at (12.5,28.5) {1};
\node at (14.5,28.5) {1};
\node at (15.5,31.5) {1};
\node at (15.5,33.5) {1};
\node at (8.5,27.5) {1};

\node at (14.5,35.5) {1};
\node at (13.5,37.5) {1};
\node at (15.5,37.5) {1};
\node at (1.5,37.5) {1};
\node at (1.5,35.5) {1};
\node at (0.5,32.5) {1};
\node at (1.5,29.5) {1};
\node at (3.5,29.5) {1};
\node at (6.5,28.5) {1};
\node at (4.5,27.5) {1};
\node at (2.5,27.5) {1};
\node at (0.5,26.5) {1};
\node at (6.5,26.5) {1};
\node at (13.5,26.5) {1};
\node at (2.5,25.5) {1};
\node at (4.5,25.5) {1};
\node at (9.5,25.5) {1};
\node at (11.5,25.5) {1};
\node at (15.5,25.5) {1};
\node at (0.5,24.5) {1};
\node at (7.5,24.5) {1};
\node at (13.5,24.5) {1};
\node at (3.5,23.5) {1};
\node at (5.5,23.5) {1};
\node at (9.5,23.5) {1};
\node at (11.5,23.5) {1};
\node at (1.5,22.5) {1};
\node at (7.5,22.5) {1};
\node at (14.5,22.5) {1};
\node at (4.5,21.5) {1};
\node at (10.5,21.5) {1};
\node at (12.5,21.5) {1};
\node at (0.5,20.5) {1};
\node at (2.5,20.5) {1};
\node at (6.5,20.5) {1};
\node at (8.5,20.5) {1};
\node at (14.5,20.5) {1};
\node at (4.5,19.5) {1};
\node at (11.5,19.5) {1};
\node at (0.5,18.5) {1};
\node at (2.5,18.5) {1};
\node at (7.5,18.5) {1};
\node at (9.5,18.5) {1};
\node at (13.5,18.5) {1};
\node at (15.5,18.5) {1};
\node at (5.5,17.5) {1};
\node at (11.5,17.5) {1};
\node at (1.5,16.5) {1};
\node at (3.5,16.5) {1};
\node at (7.5,16.5) {1};
\node at (9.5,16.5) {1};
\node at (14.5,16.5) {1};
\node at (5.5,15.5) {1};
\node at (12.5,15.5) {1};
\node at (2.5,14.5) {1};
\node at (8.5,14.5) {1};
\node at (10.5,14.5) {1};
\node at (14.5,14.5) {1};
\node at (0.5,13.5) {1};
\node at (4.5,13.5) {1};
\node at (6.5,13.5) {1};
\node at (12.5,13.5) {1};
\node at (2.5,12.5) {1};
\node at (9.5,12.5) {1};
\node at (15.5,12.5) {1};
\node at (0.5,11.5) {1};
\node at (5.5,11.5) {1};
\node at (7.5,11.5) {1};
\node at (11.5,11.5) {1};
\node at (13.5,11.5) {1};
\node at (3.5,10.5) {1};
\node at (9.5,10.5) {1};
\node at (1.5,9.5) {1};
\node at (5.5,9.5) {1};
\node at (7.5,9.5) {1};
\node at (12.5,9.5) {1};
\node at (14.5,9.5) {1};
\node at (3.5,8.5) {1};
\node at (10.5,8.5) {1};
\node at (0.5,7.5) {1};
\node at (6.5,7.5) {1};
\node at (8.5,7.5) {1};
\node at (12.5,7.5) {1};
\node at (14.5,7.5) {1};
\node at (2.5,6.5) {1};
\node at (4.5,6.5) {1};
\node at (10.5,6.5) {1};
\node at (0.5,5.5) {1};
\node at (7.5,5.5) {1};
\node at (13.5,5.5) {1};
\node at (15.5,5.5) {1};
\node at (3.5,4.5) {1};
\node at (5.5,4.5) {1};
\node at (9.5,4.5) {1};
\node at (11.5,4.5) {1};
\node at (1.5,3.5) {1};
\node at (7.5,3.5) {1};
\node at (14.5,3.5) {1};
\node at (3.5,2.5) {1};
\node at (5.5,2.5) {1};
\node at (10.5,2.5) {1};
\node at (12.5,2.5) {1};
\node at (1.5,1.5) {1};
\node at (8.5,1.5) {1};
\node at (14.5,1.5) {1};
\node at (4.5,0.5) {1};
\node at (6.5,0.5) {1};
\node at (10.5,0.5) {1};
\node at (12.5,0.5) {1};

\node at (8.5,37.5) {\textcolor{blue}{$k$}};
\node at (1.5,36.5) {\textcolor{blue}{$k$}};
\node at (2.5,29.5) {\textcolor{blue}{$k$}};
\node at (9.5,30.5) {\textcolor{blue}{$k$}};
\draw[very thick, blue] (3,23) rectangle (10,30);
\node at (10.5,23.5) {\textcolor{blue}{$k$}};
\node at (3.5,22.5) {\textcolor{blue}{$k$}};
\draw[very thick, blue] (4,16) rectangle (11,23);
\node at (10.5,16.5) {\textcolor{blue}{$k$}};
\draw[very thick, blue] (3,15) rectangle (10,22);
\node at (3.5,15.5) {\textcolor{blue}{$k$}};
\draw[very thick, blue] (4,9) rectangle (11,16);
\node at (10.5,9.5) {\textcolor{blue}{$k$}};
\draw[very thick, blue] (3,8) rectangle (10,15);
\fill[yellow, opacity=0.3] (3,8) rectangle (4,9);

\node at (9,-1.3) {Case 1.2};

\end{scope}

\end{tikzpicture}
\caption{Proof of Cases 1.1 and 1.2.}
\label{fig:14}
\end{figure}
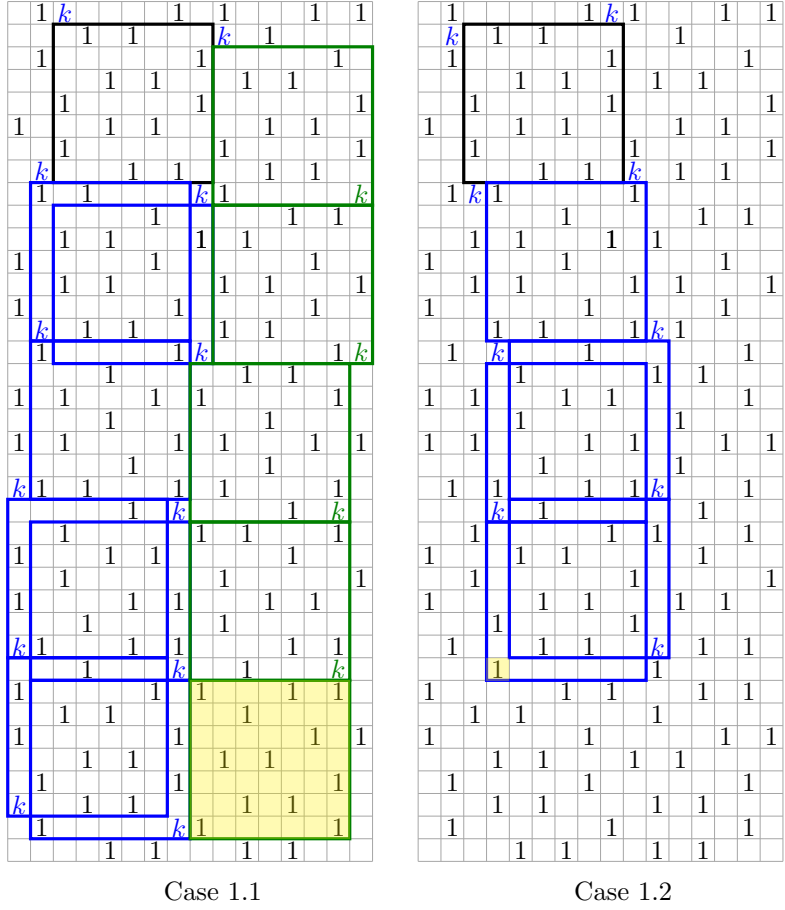

\textbf{Case 2:} Square $ (6,5) $ of $G(3,1)$, which is square $(9,6)$ of $G$, has color 1.

In fact, we can assume that square $(10,7)$ of any $ P_7 \boxtimes P_7 $ subgraph with pattern~(i) cannot be assigned color~1, since the case where it is assigned color~1 is already covered in Case~1.

\begin{claim*}
    If $ H $ has pattern~(i), then $ H(4,1) $ also has pattern~(i).
\end{claim*}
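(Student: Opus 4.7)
The plan is to prove the claim in three steps.

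The first and main step is to show that $(9,4)$ of $H$ is of color $1$. I will reuse the $H(3,1)$ partition argument already set up in the preceding discussion of the proof: the $P_2\boxtimes P_6$ region from $(6,1)$ to $(7,6)$ of $H(3,1)$ partitions into three $P_2\boxtimes P_2$ parts, each containing exactly one square of color~$1$ by Observations~\ref{obs:2}(i) and~(iv), and $(7,3), (7,5), (7,7)$ of $H(3,1)$ are already excluded by Lemmas~\ref{lem:4}(i) and~(ii). Focusing on the middle part, the four candidate squares translate to $(9,4), (10,4), (9,5), (10,5)$ of $H$. The exclusion of $(7,3)$ of $H(3,1)$ via Lemma~\ref{lem:4}(ii) with the color-$1$ square $(7,4)$ of $H$ rules out $(10,4)$. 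Moreover, $(9,5)$ and $(10,5)$ of $H$ both lie at distance $1$ from the Case-$2$ square $(9,6)$ of $H$, so neither can be of color $1$. Thus the unique color-$1$ square in the middle part must be $(9,4)$ of $H$.

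Second, with the color-$1$ squares $(7,4), (7,6), (9,4), (9,6)$ of $H$ in hand, the $P_7\boxtimes P_7$ subgraph $H(4,1)$ contains the pattern
\[
\begin{bmatrix}1 & x & 1\\ x & x & x\\ 1 & x & 1\end{bmatrix}
\]
at its central positions $(3,3), (3,5), (5,3), (5,5)$. I will then invoke the analysis at the start of the proof of Lemma~\ref{lem:4}(iii), which established that any $P_7\boxtimes P_7$ subgraph containing this central configuration must exhibit either pattern~(i) or pattern~(ii) of Figure~\ref{fig:8}.

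Third, I will rule out pattern~(ii) for $H(4,1)$. If $H(4,1)$ had pattern~(ii), then its ``blue~$1$'' at position $(1,6)$ of $H(4,1)$, which corresponds to $(5,7)$ of $H$, would be of color~$1$. But $(5,7)$ of $H$ lies at distance $1$ from $(4,7)$ of $H$, which is of color~$1$ by $H$'s pattern~(i); this violates the color-$1$ distance constraint. Consequently, $H(4,1)$ must have pattern~(i), as claimed.

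The main obstacle is the first step, which crucially relies on the Case-$2$ hypothesis that $(9,6)$ of $H$ is of color~$1$: this single fact simultaneously kills the candidates $(9,5)$ and $(10,5)$ in the middle part of the $H(3,1)$ partition, leaving $(9,4)$ as the only option. Without it, two candidates would survive and the argument would not close. The remaining two steps are then routine applications of earlier results in this proof.
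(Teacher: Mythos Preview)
Your approach is essentially the same as the paper's, and the three steps you outline are exactly right: derive that $(9,4)$ of $H$ has color~$1$ via the middle part of the $H(3,1)$ partition; observe that $H(4,1)$ then carries the central $2\times 2$ configuration and hence must be pattern~(i) or pattern~(ii); and rule out pattern~(ii) by a distance-$1$ clash with a known color-$1$ square of $H$. The paper does the last step slightly differently---it simply says the already-determined color-$1$ squares in $H(4,1)$ match only pattern~(i)---but your explicit contradiction via $(5,7)$ versus $(4,7)$ of $H$ is a perfectly good way to spell this out.

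There is one point to tighten. You treat ``$(9,6)$ of $H$ has color~$1$'' as a hypothesis coming from Case~2, but Case~2 as stated is about $G$, not about an arbitrary pattern-(i) subgraph $H$. What is actually available for general $H$ is the sentence immediately preceding the claim: for \emph{every} pattern-(i) subgraph, $(10,7)$ is not of color~$1$ (since Case~1 already disposed of that possibility). Combining this with the upper part of your own $H(3,1)$ partition---whose only surviving candidates are $(9,6)$ and $(10,7)$ of $H$---forces $(9,6)$ of $H$ to be color~$1$. The paper makes this one-line deduction explicitly (``Since square $(10,7)$ of $H$ is not of color~$1$, square $(9,6)$ must be of color~$1$''), and you should too; otherwise the recursive application of the claim to $H(4,1)$, $H(8,2)$, $\ldots$ would not be justified.
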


\begin{proof}
We know that there are some squares of color 1 outside $H$, as shown in Figure~\ref{fig:9}, since the same argument for $G$ also applies. Since square $ (10,7) $ of $ H $ is not of color~1, square $(9,6)$ must be of color 1. We have that the squares that cannot be assigned color 1 must be as shown in Figure \ref{fig:15}. Consider $H(3,1)$. Since each part must contains exactly one square of color 1, we have that, in the middle part, square of color 1 must be $(6,3)$. Observe that $H(4,1)$ is $P_7 \boxtimes P_7$ that contains the pattern
\[
\begin{bmatrix}
1 & x & 1\\
x & x & x\\
1 & x & 1
\end{bmatrix}
\]
in the middle, which is the same as in $ G $ (see Figure~\ref{fig:15}). Since we have already shown that there are only two possible positions for squares of color~1 in $ G $, as shown in Figure~\ref{fig:8}, it follows that there are also only two possible positions for squares of color~1 in $ H(4,1) $. Among these, only pattern (i) matches the observed position of squares of color 1 in $ H(4,1) $. Hence, the pattern of squares of color~1 in $ H(4,1) $ must be pattern (i) (see Figure~\ref{fig:15}).
\end{proof}

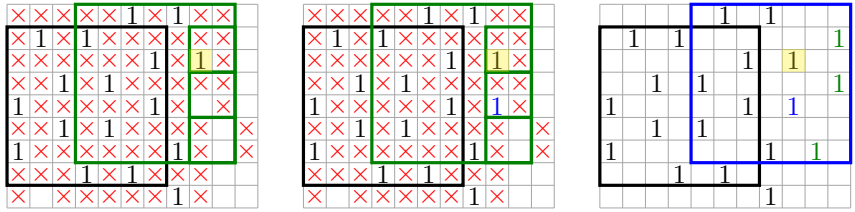
\begin{figure}[ht]
\centering
\begin{tikzpicture}[scale=0.30]

\begin{scope}[shift={(-26,0)}]
  \foreach \x in {0,...,11} {
    \draw[gray!70, line width=0.35pt] (\x,0) -- (\x,9);
  }
  \foreach \y in {0,...,9} {
    \draw[gray!70, line width=0.35pt] (0,\y) -- (11,\y);
  }

\draw[very thick] (0,1) rectangle (7,8);
\draw[very thick, darkgreen] (3,2) rectangle (10,9);
\draw[very thick, darkgreen] (8,2) rectangle (10,4);
\draw[very thick, darkgreen] (8,4) rectangle (10,6);
\draw[very thick, darkgreen] (8,6) rectangle (10,8);

\node at (2.5,3.5) {\normalsize 1};
\node at (4.5,3.5) {\normalsize 1};
\node at (2.5,5.5) {\normalsize 1};
\node at (4.5,5.5) {\normalsize 1};
\node at (5.5,8.5) {{1}};
\node at (7.5,8.5) {{1}};
\node at (7.5,2.5) {{1}};

\node at (1.5,2.5) {\textcolor{red}{$\times$}};
\node at (2.5,2.5) {\textcolor{red}{$\times$}};
\node at (3.5,2.5) {\textcolor{red}{$\times$}};
\node at (4.5,2.5) {\textcolor{red}{$\times$}};
\node at (5.5,2.5) {\textcolor{red}{$\times$}};

\node at (1.5,3.5) {\textcolor{red}{$\times$}};
\node at (3.5,3.5) {\textcolor{red}{$\times$}};
\node at (5.5,3.5) {\textcolor{red}{$\times$}};

\node at (1.5,4.5) {\textcolor{red}{$\times$}};
\node at (2.5,4.5) {\textcolor{red}{$\times$}};
\node at (3.5,4.5) {\textcolor{red}{$\times$}};
\node at (4.5,4.5) {\textcolor{red}{$\times$}};
\node at (5.5,4.5) {\textcolor{red}{$\times$}};

\node at (1.5,5.5) {\textcolor{red}{$\times$}};
\node at (3.5,5.5) {\textcolor{red}{$\times$}};
\node at (5.5,5.5) {\textcolor{red}{$\times$}};

\node at (1.5,6.5) {\textcolor{red}{$\times$}};
\node at (2.5,6.5) {\textcolor{red}{$\times$}};
\node at (3.5,6.5) {\textcolor{red}{$\times$}};
\node at (4.5,6.5) {\textcolor{red}{$\times$}};
\node at (5.5,6.5) {\textcolor{red}{$\times$}};

\node at (0.5,2.5) {\normalsize 1};
\node at (0.5,4.5) {\normalsize 1};
\node at (3.5,1.5) {\normalsize 1};
\node at (5.5,1.5) {\normalsize 1};
\node at (6.5,4.5) {\normalsize 1};
\node at (6.5,6.5) {\normalsize 1};
\node at (1.5,7.5) {\normalsize 1};
\node at (3.5,7.5) {\normalsize 1};

\node at (7.5,0.5) {\normalsize 1};
\node at (8.5,0.5) {\textcolor{red}{$\times$}};
\node at (6.5,0.5) {\textcolor{red}{$\times$}};
\node at (5.5,0.5) {\textcolor{red}{$\times$}};
\node at (4.5,0.5) {\textcolor{red}{$\times$}};
\node at (3.5,0.5) {\textcolor{red}{$\times$}};
\node at (2.5,0.5) {\textcolor{red}{$\times$}};
\node at (0.5,0.5) {\textcolor{red}{$\times$}};

\node at (0.5,1.5) {\textcolor{red}{$\times$}};
\node at (0.5,3.5) {\textcolor{red}{$\times$}};
\node at (0.5,5.5) {\textcolor{red}{$\times$}};
\node at (0.5,6.5) {\textcolor{red}{$\times$}};
\node at (0.5,7.5) {\textcolor{red}{$\times$}};

\node at (1.5,1.5) {\textcolor{red}{$\times$}};
\node at (2.5,1.5) {\textcolor{red}{$\times$}};
\node at (4.5,1.5) {\textcolor{red}{$\times$}};
\node at (6.5,1.5) {\textcolor{red}{$\times$}};

\node at (6.5,2.5) {\textcolor{red}{$\times$}};
\node at (6.5,3.5) {\textcolor{red}{$\times$}};
\node at (6.5,5.5) {\textcolor{red}{$\times$}};
\node at (6.5,7.5) {\textcolor{red}{$\times$}};

\node at (5.5,7.5) {\textcolor{red}{$\times$}};
\node at (4.5,7.5) {\textcolor{red}{$\times$}};
\node at (2.5,7.5) {\textcolor{red}{$\times$}};

\node at (0.5,8.5) {\textcolor{red}{$\times$}};
\node at (1.5,8.5) {\textcolor{red}{$\times$}};
\node at (2.5,8.5) {\textcolor{red}{$\times$}};
\node at (3.5,8.5) {\textcolor{red}{$\times$}};
\node at (4.5,8.5) {\textcolor{red}{$\times$}};

\node at (7.5,7.5) {\textcolor{red}{$\times$}};
\node at (7.5,6.5) {\textcolor{red}{$\times$}};
\node at (7.5,5.5) {\textcolor{red}{$\times$}};
\node at (7.5,4.5) {\textcolor{red}{$\times$}};
\node at (7.5,3.5) {\textcolor{red}{$\times$}};
\node at (10.5,3.5) {\textcolor{red}{$\times$}};
\node at (10.5,2.5) {\textcolor{red}{$\times$}};

\node at (6.5,8.5) {\textcolor{red}{$\times$}};
\node at (8.5,8.5) {\textcolor{red}{$\times$}};
\node at (8.5,7.5) {\textcolor{red}{$\times$}};
\node at (8.5,3.5) {\textcolor{red}{$\times$}};
\node at (8.5,2.5) {\textcolor{red}{$\times$}};
\node at (8.5,1.5) {\textcolor{red}{$\times$}};
\node at (7.5,1.5) {\textcolor{red}{$\times$}};

\node at (9.5,8.5) {\textcolor{red}{$\times$}};
\node at (9.5,6.5) {\textcolor{red}{$\times$}};
\node at (9.5,4.5) {\textcolor{red}{$\times$}};

\node at (8.5,6.5) {\normalsize 1};
\node at (9.5,7.5) {\textcolor{red}{$\times$}};
\node at (8.5,5.5) {\textcolor{red}{$\times$}};
\node at (9.5,5.5) {\textcolor{red}{$\times$}};

\fill[yellow, opacity=0.3] (8,6) rectangle (9,7);
\end{scope}

\begin{scope}[shift={(-13,0)}]
  \foreach \x in {0,...,11} {
    \draw[gray!70, line width=0.35pt] (\x,0) -- (\x,9);
  }
  \foreach \y in {0,...,9} {
    \draw[gray!70, line width=0.35pt] (0,\y) -- (11,\y);
  }

\draw[very thick] (0,1) rectangle (7,8);
\draw[very thick, darkgreen] (3,2) rectangle (10,9);
\draw[very thick, darkgreen] (8,2) rectangle (10,4);
\draw[very thick, darkgreen] (8,4) rectangle (10,6);
\draw[very thick, darkgreen] (8,6) rectangle (10,8);

\node at (2.5,3.5) {\normalsize 1};
\node at (4.5,3.5) {\normalsize 1};
\node at (2.5,5.5) {\normalsize 1};
\node at (4.5,5.5) {\normalsize 1};
\node at (5.5,8.5) {{1}};
\node at (7.5,8.5) {{1}};
\node at (7.5,2.5) {{1}};

\node at (1.5,2.5) {\textcolor{red}{$\times$}};
\node at (2.5,2.5) {\textcolor{red}{$\times$}};
\node at (3.5,2.5) {\textcolor{red}{$\times$}};
\node at (4.5,2.5) {\textcolor{red}{$\times$}};
\node at (5.5,2.5) {\textcolor{red}{$\times$}};

\node at (1.5,3.5) {\textcolor{red}{$\times$}};
\node at (3.5,3.5) {\textcolor{red}{$\times$}};
\node at (5.5,3.5) {\textcolor{red}{$\times$}};

\node at (1.5,4.5) {\textcolor{red}{$\times$}};
\node at (2.5,4.5) {\textcolor{red}{$\times$}};
\node at (3.5,4.5) {\textcolor{red}{$\times$}};
\node at (4.5,4.5) {\textcolor{red}{$\times$}};
\node at (5.5,4.5) {\textcolor{red}{$\times$}};

\node at (1.5,5.5) {\textcolor{red}{$\times$}};
\node at (3.5,5.5) {\textcolor{red}{$\times$}};
\node at (5.5,5.5) {\textcolor{red}{$\times$}};

\node at (1.5,6.5) {\textcolor{red}{$\times$}};
\node at (2.5,6.5) {\textcolor{red}{$\times$}};
\node at (3.5,6.5) {\textcolor{red}{$\times$}};
\node at (4.5,6.5) {\textcolor{red}{$\times$}};
\node at (5.5,6.5) {\textcolor{red}{$\times$}};

\node at (0.5,2.5) {\normalsize 1};
\node at (0.5,4.5) {\normalsize 1};
\node at (3.5,1.5) {\normalsize 1};
\node at (5.5,1.5) {\normalsize 1};
\node at (6.5,4.5) {\normalsize 1};
\node at (6.5,6.5) {\normalsize 1};
\node at (1.5,7.5) {\normalsize 1};
\node at (3.5,7.5) {\normalsize 1};

\node at (7.5,0.5) {\normalsize 1};
\node at (8.5,0.5) {\textcolor{red}{$\times$}};
\node at (6.5,0.5) {\textcolor{red}{$\times$}};
\node at (5.5,0.5) {\textcolor{red}{$\times$}};
\node at (4.5,0.5) {\textcolor{red}{$\times$}};
\node at (3.5,0.5) {\textcolor{red}{$\times$}};
\node at (2.5,0.5) {\textcolor{red}{$\times$}};
\node at (0.5,0.5) {\textcolor{red}{$\times$}};

\node at (0.5,1.5) {\textcolor{red}{$\times$}};
\node at (0.5,3.5) {\textcolor{red}{$\times$}};
\node at (0.5,5.5) {\textcolor{red}{$\times$}};
\node at (0.5,6.5) {\textcolor{red}{$\times$}};
\node at (0.5,7.5) {\textcolor{red}{$\times$}};

\node at (1.5,1.5) {\textcolor{red}{$\times$}};
\node at (2.5,1.5) {\textcolor{red}{$\times$}};
\node at (4.5,1.5) {\textcolor{red}{$\times$}};
\node at (6.5,1.5) {\textcolor{red}{$\times$}};

\node at (6.5,2.5) {\textcolor{red}{$\times$}};
\node at (6.5,3.5) {\textcolor{red}{$\times$}};
\node at (6.5,5.5) {\textcolor{red}{$\times$}};
\node at (6.5,7.5) {\textcolor{red}{$\times$}};

\node at (5.5,7.5) {\textcolor{red}{$\times$}};
\node at (4.5,7.5) {\textcolor{red}{$\times$}};
\node at (2.5,7.5) {\textcolor{red}{$\times$}};

\node at (0.5,8.5) {\textcolor{red}{$\times$}};
\node at (1.5,8.5) {\textcolor{red}{$\times$}};
\node at (2.5,8.5) {\textcolor{red}{$\times$}};
\node at (3.5,8.5) {\textcolor{red}{$\times$}};
\node at (4.5,8.5) {\textcolor{red}{$\times$}};

\node at (7.5,7.5) {\textcolor{red}{$\times$}};
\node at (7.5,6.5) {\textcolor{red}{$\times$}};
\node at (7.5,5.5) {\textcolor{red}{$\times$}};
\node at (7.5,4.5) {\textcolor{red}{$\times$}};
\node at (7.5,3.5) {\textcolor{red}{$\times$}};
\node at (10.5,3.5) {\textcolor{red}{$\times$}};
\node at (10.5,2.5) {\textcolor{red}{$\times$}};

\node at (6.5,8.5) {\textcolor{red}{$\times$}};
\node at (8.5,8.5) {\textcolor{red}{$\times$}};
\node at (8.5,7.5) {\textcolor{red}{$\times$}};
\node at (8.5,3.5) {\textcolor{red}{$\times$}};
\node at (8.5,2.5) {\textcolor{red}{$\times$}};
\node at (8.5,1.5) {\textcolor{red}{$\times$}};
\node at (7.5,1.5) {\textcolor{red}{$\times$}};

\node at (9.5,8.5) {\textcolor{red}{$\times$}};
\node at (9.5,6.5) {\textcolor{red}{$\times$}};
\node at (9.5,4.5) {\textcolor{red}{$\times$}};

\node at (8.5,6.5) {\normalsize 1};
\node at (9.5,7.5) {\textcolor{red}{$\times$}};
\node at (8.5,5.5) {\textcolor{red}{$\times$}};
\node at (9.5,5.5) {\textcolor{red}{$\times$}};
\node at (8.5,4.5) {\textcolor{blue} {1}};

\fill[yellow, opacity=0.3] (8,6) rectangle (9,7);
  
\end{scope}

\begin{scope}[shift={(0,0)}]
  \foreach \x in {0,...,11} {
    \draw[gray!70, line width=0.35pt] (\x,0) -- (\x,9);
  }
  \foreach \y in {0,...,9} {
    \draw[gray!70, line width=0.35pt] (0,\y) -- (11,\y);
  }

\draw[very thick] (0,1) rectangle (7,8);
\draw[very thick, blue] (4,2) rectangle (11,9);

\node at (2.5,3.5) {\normalsize 1};
\node at (4.5,3.5) {\normalsize 1};
\node at (2.5,5.5) {\normalsize 1};
\node at (4.5,5.5) {\normalsize 1};
\node at (5.5,8.5) {{1}};
\node at (7.5,8.5) {{1}};
\node at (7.5,2.5) {{1}};

\node at (0.5,2.5) {\normalsize 1};
\node at (0.5,4.5) {\normalsize 1};
\node at (3.5,1.5) {\normalsize 1};
\node at (5.5,1.5) {\normalsize 1};
\node at (6.5,4.5) {\normalsize 1};
\node at (6.5,6.5) {\normalsize 1};
\node at (1.5,7.5) {\normalsize 1};
\node at (3.5,7.5) {\normalsize 1};

\node at (7.5,0.5) {\normalsize 1};

\node at (8.5,6.5) {\normalsize 1};
\node at (8.5,4.5) {\textcolor{blue} {1}};
\node at (9.5,2.5) {\textcolor{darkgreen} {1}};
\node at (10.5,5.5) {\textcolor{darkgreen} {1}};
\node at (10.5,7.5) {\textcolor{darkgreen} {1}};

\fill[yellow, opacity=0.3] (8,6) rectangle (9,7);
  
\end{scope}

\end{tikzpicture}
\caption{$ H $ (black), $H(3,1)$ (green) and $H(4,1)$ (blue).}
\label{fig:15}
\end{figure}



By this claim, $G(4,1)$ has pattern (i). Since $ G^{(1)} $, $G^{(2)}$ and $G^{(3)}$ have pattern (i) and we have shown that the squares of color~1 cannot appear as in Case~1, we can also apply this claim with $H = G^{(1)}$, $H = G^{(2)}$ and $H = G^{(3)}$. Hence, the positions of squares of color~1 in $G(1,-4)$, $G(-4,-1)$ and $G(-1,4)$ follow Pattern~(i) (see Figure \ref{fig:16}).

Moreover, we can apply this claim recursively with $H =  G(4, 1) $, $H =  G(1, -4) $, $H =  G(-4, -1) $ and $H =  G(-1, 4) $ (see Figure~\ref{fig:16}). Therefore, this pattern of squares of color~1 extends repeatedly throughout $ P_\infty \boxtimes P_\infty $.

\begin{figure}[ht]
\centering
\begin{tikzpicture}[scale=0.30]
\begin{scope}[shift={(-25,0)}]
  \foreach \x in {0,...,23} {
    \draw[gray!70, line width=0.35pt] (\x,0) -- (\x,23);
  }
  \foreach \y in {0,...,23} {
    \draw[gray!70, line width=0.35pt] (0,\y) -- (23,\y);
  }

\draw[very thick] (8,8) rectangle (15,15);
\draw[very thick, blue] (12,9) rectangle (19,16);
\draw[very thick, blue] (9,4) rectangle (16,11);
\draw[very thick, blue] (4,7) rectangle (11,14);
\draw[very thick, blue] (7,12) rectangle (14,19);

\node at (10.5,10.5) {\normalsize 1};
\node at (12.5,10.5) {\normalsize 1};
\node at (10.5,12.5) {\normalsize 1};
\node at (12.5,12.5) {\normalsize 1};
\node at (8.5,9.5) {\normalsize 1};
\node at (8.5,11.5) {\normalsize 1};
\node at (11.5,8.5) {\normalsize 1};
\node at (13.5,8.5) {\normalsize 1};
\node at (14.5,11.5) {\normalsize 1};
\node at (14.5,13.5) {\normalsize 1};
\node at (9.5,14.5) {\normalsize 1};

\node at (11.5,14.5) {\textcolor{blue}{1}};
\node at (16.5,11.5) {\textcolor{blue}{1}};
\node at (16.5,13.5) {\textcolor{blue}{1}};
\node at (15.5,9.5) {\textcolor{blue}{1}};
\node at (17.5,9.5) {\textcolor{blue}{1}};
\node at (18.5,12.5) {\textcolor{blue}{1}};
\node at (18.5,14.5) {\textcolor{blue}{1}};
\node at (13.5,15.5) {\textcolor{blue}{1}};
\node at (15.5,15.5) {\textcolor{blue}{1}};
\node at (9.5,16.5) {\textcolor{blue}{1}};
\node at (11.5,16.5) {\textcolor{blue}{1}};
\node at (7.5,13.5) {\textcolor{blue}{1}};
\node at (7.5,15.5) {\textcolor{blue}{1}};
\node at (13.5,17.5) {\textcolor{blue}{1}};
\node at (8.5,18.5) {\textcolor{blue}{1}};
\node at (10.5,18.5) {\textcolor{blue}{1}};
\node at (6.5,9.5) {\textcolor{blue}{1}};
\node at (6.5,11.5) {\textcolor{blue}{1}};
\node at (4.5,8.5) {\textcolor{blue}{1}};
\node at (4.5,10.5) {\textcolor{blue}{1}};
\node at (7.5,7.5) {\textcolor{blue}{1}};
\node at (9.5,7.5) {\textcolor{blue}{1}};
\node at (5.5,13.5) {\textcolor{blue}{1}};
\node at (11.5,6.5) {\textcolor{blue}{1}};
\node at (13.5,6.5) {\textcolor{blue}{1}};
\node at (9.5,5.5) {\textcolor{blue}{1}};
\node at (12.5,4.5) {\textcolor{blue}{1}};
\node at (14.5,4.5) {\textcolor{blue}{1}};
\node at (15.5,7.5) {\textcolor{blue}{1}};

\end{scope}

\begin{scope}[shift={(0,0)}]
  \foreach \x in {0,...,23} {
    \draw[gray!70, line width=0.35pt] (\x,0) -- (\x,23);
  }
  \foreach \y in {0,...,23} {
    \draw[gray!70, line width=0.35pt] (0,\y) -- (23,\y);
  }

\draw[very thick] (8,8) rectangle (15,15);
\draw[very thick] (12,9) rectangle (19,16);
\draw[very thick] (9,4) rectangle (16,11);
\draw[very thick] (4,7) rectangle (11,14);
\draw[very thick] (7,12) rectangle (14,19);

\draw[very thick, blue] (16,10) rectangle (23,17);
\draw[very thick, blue] (13,5) rectangle (20,12);
\draw[very thick, blue] (11,13) rectangle (18,20);

\draw[very thick, blue] (10,0) rectangle (17,7);
\draw[very thick, blue] (5,3) rectangle (12,10);

\draw[very thick, blue] (0,6) rectangle (7,13);
\draw[very thick, blue] (3,11) rectangle (10,18);
\draw[very thick, blue] (6,16) rectangle (13,23);

\node at (10.5,10.5) {\normalsize 1};
\node at (12.5,10.5) {\normalsize 1};
\node at (10.5,12.5) {\normalsize 1};
\node at (12.5,12.5) {\normalsize 1};
\node at (8.5,9.5) {\normalsize 1};
\node at (8.5,11.5) {\normalsize 1};
\node at (11.5,8.5) {\normalsize 1};
\node at (13.5,8.5) {\normalsize 1};
\node at (14.5,11.5) {\normalsize 1};
\node at (14.5,13.5) {\normalsize 1};
\node at (9.5,14.5) {\normalsize 1};
\node at (11.5,14.5) {\normalsize 1};
\node at (16.5,11.5) {\normalsize 1};
\node at (16.5,13.5) {\normalsize 1};
\node at (15.5,9.5) {\normalsize 1};
\node at (17.5,9.5) {\normalsize 1};
\node at (18.5,12.5) {\normalsize 1};
\node at (18.5,14.5) {\normalsize 1};
\node at (13.5,15.5) {\normalsize 1};
\node at (15.5,15.5) {\normalsize 1};
\node at (9.5,16.5) {\normalsize 1};
\node at (11.5,16.5) {\normalsize 1};
\node at (7.5,13.5) {\normalsize 1};
\node at (7.5,15.5) {\normalsize 1};
\node at (13.5,17.5) {\normalsize 1};
\node at (8.5,18.5) {\normalsize 1};
\node at (10.5,18.5) {\normalsize 1};
\node at (6.5,9.5) {\normalsize 1};
\node at (6.5,11.5) {\normalsize 1};
\node at (4.5,8.5) {\normalsize 1};
\node at (4.5,10.5) {\normalsize 1};
\node at (7.5,7.5) {\normalsize 1};
\node at (9.5,7.5) {\normalsize 1};
\node at (5.5,13.5) {\normalsize 1};
\node at (11.5,6.5) {\normalsize 1};
\node at (13.5,6.5) {\normalsize 1};
\node at (9.5,5.5) {\normalsize 1};
\node at (12.5,4.5) {\normalsize 1};
\node at (14.5,4.5) {\normalsize 1};
\node at (15.5,7.5) {\normalsize 1};

\node at (20.5,12.5) {\textcolor{blue}{1}};
\node at (20.5,14.5) {\textcolor{blue}{1}};
\node at (19.5,10.5) {\textcolor{blue}{1}};
\node at (21.5,10.5) {\textcolor{blue}{1}};
\node at (22.5,13.5) {\textcolor{blue}{1}};
\node at (22.5,15.5) {\textcolor{blue}{1}};
\node at (17.5,16.5) {\textcolor{blue}{1}};
\node at (19.5,16.5) {\textcolor{blue}{1}};
\node at (15.5,17.5) {\textcolor{blue}{1}};
\node at (17.5,18.5) {\textcolor{blue}{1}};
\node at (12.5,19.5) {\textcolor{blue}{1}};
\node at (14.5,19.5) {\textcolor{blue}{1}};
\node at (8.5,20.5) {\textcolor{blue}{1}};
\node at (10.5,20.5) {\textcolor{blue}{1}};
\node at (6.5,17.5) {\textcolor{blue}{1}};
\node at (6.5,19.5) {\textcolor{blue}{1}};
\node at (12.5,21.5) {\textcolor{blue}{1}};
\node at (7.5,22.5) {\textcolor{blue}{1}};
\node at (9.5,22.5) {\textcolor{blue}{1}};
\node at (5.5,15.5) {\textcolor{blue}{1}};
\node at (3.5,12.5) {\textcolor{blue}{1}};
\node at (3.5,14.5) {\textcolor{blue}{1}};
\node at (6.5,11.5) {\normalsize 1};
\node at (8.5,11.5) {\normalsize 1};
\node at (9.5,14.5) {\normalsize 1};
\node at (4.5,17.5) {\textcolor{blue}{1}};
\node at (2.5,8.5) {\textcolor{blue}{1}};
\node at (2.5,10.5) {\textcolor{blue}{1}};
\node at (0.5,7.5) {\textcolor{blue}{1}};
\node at (0.5,9.5) {\textcolor{blue}{1}};
\node at (3.5,6.5) {\textcolor{blue}{1}};
\node at (5.5,6.5) {\textcolor{blue}{1}};
\node at (6.5,9.5) {\normalsize 1};
\node at (1.5,12.5) {\textcolor{blue}{1}};
\node at (7.5,5.5) {\textcolor{blue}{1}};
\node at (5.5,4.5) {\textcolor{blue}{1}};
\node at (8.5,3.5) {\textcolor{blue}{1}};
\node at (10.5,3.5) {\textcolor{blue}{1}};
\node at (11.5,8.5) {\normalsize 1};
\node at (8.5,9.5) {\normalsize 1};
\node at (12.5,2.5) {\textcolor{blue}{1}};
\node at (14.5,2.5) {\textcolor{blue}{1}};
\node at (12.5,4.5) {\normalsize 1};
\node at (10.5,1.5) {\textcolor{blue}{1}};
\node at (13.5,0.5) {\textcolor{blue}{1}};
\node at (15.5,0.5) {\textcolor{blue}{1}};
\node at (16.5,3.5) {\textcolor{blue}{1}};
\node at (16.5,5.5) {\textcolor{blue}{1}};
\node at (19.5,5.5) {\textcolor{blue}{1}};
\node at (19.5,7.5) {\textcolor{blue}{1}};
\node at (17.5,7.5) {\textcolor{blue}{1}};

\end{scope}

\end{tikzpicture}
\caption{Expansion of the pattern of squares of color 1 in Case~2.}
\label{fig:16}
\end{figure}
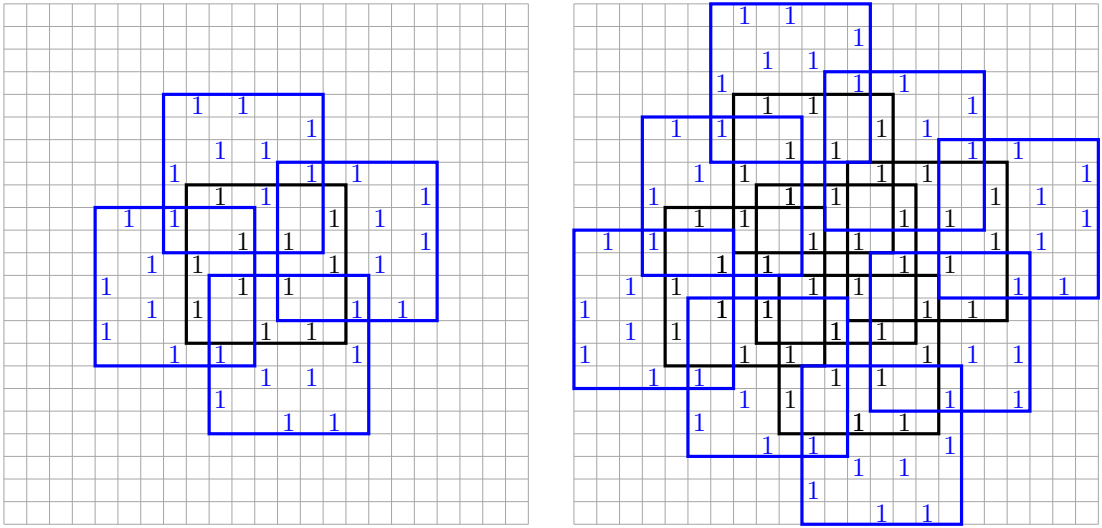

We observe that $ G $ is a critical subgraph. By Observation~\ref{obs:3}\,(ii), let $ k \in \{2, 3, \ldots, 39\} $ be the color that does not appear in $ G $ and there are only two possible positions for color~$ k $, referred to as type-A and type-B. We consider the two cases separately based on the position of color~$ k $.

\textbf{Case 2.1:} $ k $ appears in a type-A position.

We observe that $G(-6, 7)$ follows pattern (i) and is a critical subgraph (see Figure \ref{fig:17}). By Observation~\ref{obs:3}\,(ii), let $ m \in \{2, 3, \ldots, 39\} $ be the color that does not appear in $ G(-6, 7) $ and there are only two possible positions for color~$ m $, referred to as type-A and type-B. We consider the two cases separately based on the position of color~$ m $.

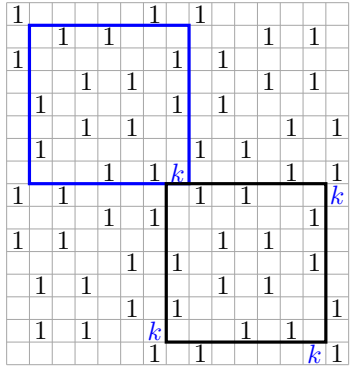
\begin{figure}[ht]
\centering
\begin{tikzpicture}[scale=0.30]

  \foreach \x in {0,...,15} {
    \draw[gray!70, line width=0.35pt] (\x,0) -- (\x,16);
  }
  \foreach \y in {0,...,16} {
    \draw[gray!70, line width=0.35pt] (0,\y) -- (15,\y);
  }

\draw[very thick, blue] (1,8) rectangle (8,15);
\draw[very thick] (7,1) rectangle (14,8);
\node at (3.5,10.5) {\normalsize 1};
\node at (5.5,10.5) {\normalsize 1};
\node at (3.5,12.5) {\normalsize 1};
\node at (5.5,12.5) {\normalsize 1};
\node at (1.5,9.5) {\normalsize 1};
\node at (1.5,11.5) {\normalsize 1};
\node at (4.5,8.5) {\normalsize 1};
\node at (6.5,8.5) {\normalsize 1};
\node at (7.5,11.5) {\normalsize 1};
\node at (7.5,13.5) {\normalsize 1};
\node at (2.5,14.5) {\normalsize 1};
\node at (4.5,14.5) {\normalsize 1};

\node at (9.5,3.5) {\normalsize 1};
\node at (11.5,3.5) {\normalsize 1};
\node at (9.5,5.5) {\normalsize 1};
\node at (11.5,5.5) {\normalsize 1};
\node at (7.5,2.5) {\normalsize 1};
\node at (7.5,4.5) {\normalsize 1};
\node at (10.5,1.5) {\normalsize 1};
\node at (12.5,1.5) {\normalsize 1};
\node at (13.5,4.5) {\normalsize 1};
\node at (13.5,6.5) {\normalsize 1};
\node at (8.5,7.5) {\normalsize 1};
\node at (10.5,7.5) {\normalsize 1};

\node at (9.5,13.5) {1};
\node at (9.5,11.5) {1};
\node at (6.5,15.5) {1};
\node at (8.5,15.5) {1};
\node at (0.5,15.5) {1};
\node at (0.5,13.5) {1};
\node at (8.5,9.5) {1};
\node at (10.5,9.5) {1};
\node at (11.5,12.5) {1};
\node at (11.5,14.5) {1};
\node at (13.5,12.5) {1};
\node at (13.5,14.5) {1};
\node at (12.5,10.5) {1};
\node at (14.5,10.5) {1};
\node at (12.5,8.5) {1};
\node at (14.5,8.5) {1};
\node at (14.5,2.5) {1};
\node at (14.5,0.5) {1};
\node at (0.5,7.5) {1};
\node at (2.5,7.5) {1};
\node at (4.5,6.5) {1};
\node at (6.5,6.5) {1};
\node at (0.5,5.5) {1};
\node at (2.5,5.5) {1};
\node at (5.5,4.5) {1};
\node at (3.5,3.5) {1};
\node at (1.5,3.5) {1};
\node at (5.5,2.5) {1};
\node at (3.5,1.5) {1};
\node at (1.5,1.5) {1};
\node at (6.5,0.5) {1};
\node at (8.5,0.5) {1};
\node at (14.5,7.5) {\textcolor{blue} {$k$}};
\node at (7.5,8.5) {\textcolor{blue} {$k$}};
\node at (6.5,1.5) {\textcolor{blue} {$k$}};
\node at (13.5,0.5) {\textcolor{blue} {$k$}};

\end{tikzpicture}
\caption{$ G $ (black) and $G(-6,7)$ (blue).}
\label{fig:17}
\end{figure}

\textbf{Case 2.1.1:} $ m $ appears in a type-B position (see Figure~\ref{fig:18}).

Since $G(-6,7)$ is a critical subgraph, $m$ appears in a type-B position and $(7,1)$ has color $k$, by Observation \ref{obs:3}\,(iii), $(1,1)$ of $G(-7,7)$ must also has color $k$ (see Figure~\ref{fig:18}). Next, we consider $ G(-8,0) $ (see Figure~\ref{fig:18}). Since $ c(G(-8,0)) = 11 $, by Observation~\ref{obs:2}\,(iv), this subgraph must contain color~$ m $. Due to the distance constraint, the only square that can be assigned color~$ m $ is $ (1,1) $. Hence, $ (1,1) $ must be assigned color~$ m $.

\begin{figure}[ht]
\centering
\begin{tikzpicture}[scale=0.30]
\begin{scope}[shift={(-36,0)}]
  \foreach \x in {0,...,16} {
    \draw[gray!70, line width=0.35pt] (\x,0) -- (\x,16);
  }
  \foreach \y in {0,...,16} {
    \draw[gray!70, line width=0.35pt] (0,\y) -- (16,\y);
  }

\draw[very thick, blue] (2,8) rectangle (9,15);
\draw[very thick] (8,1) rectangle (15,8);

\node at (4.5,10.5) {\normalsize 1};
\node at (6.5,10.5) {\normalsize 1};
\node at (4.5,12.5) {\normalsize 1};
\node at (6.5,12.5) {\normalsize 1};
\node at (2.5,9.5) {\normalsize 1};
\node at (2.5,11.5) {\normalsize 1};
\node at (5.5,8.5) {\normalsize 1};
\node at (7.5,8.5) {\normalsize 1};
\node at (8.5,11.5) {\normalsize 1};
\node at (8.5,13.5) {\normalsize 1};
\node at (3.5,14.5) {\normalsize 1};
\node at (5.5,14.5) {\normalsize 1};

\node at (10.5,3.5) {\normalsize 1};
\node at (12.5,3.5) {\normalsize 1};
\node at (10.5,5.5) {\normalsize 1};
\node at (12.5,5.5) {\normalsize 1};
\node at (8.5,2.5) {\normalsize 1};
\node at (8.5,4.5) {\normalsize 1};
\node at (11.5,1.5) {\normalsize 1};
\node at (13.5,1.5) {\normalsize 1};
\node at (14.5,4.5) {\normalsize 1};
\node at (14.5,6.5) {\normalsize 1};
\node at (9.5,7.5) {\normalsize 1};
\node at (11.5,7.5) {\normalsize 1};

\node at (10.5,13.5) {1};
\node at (10.5,11.5) {1};
\node at (7.5,15.5) {1};
\node at (9.5,15.5) {1};
\node at (1.5,15.5) {1};
\node at (1.5,13.5) {1};
\node at (9.5,9.5) {1};
\node at (11.5,9.5) {1};
\node at (12.5,12.5) {1};
\node at (12.5,14.5) {1};
\node at (14.5,12.5) {1};
\node at (14.5,14.5) {1};
\node at (13.5,10.5) {1};
\node at (15.5,10.5) {1};
\node at (13.5,8.5) {1};
\node at (15.5,8.5) {1};
\node at (15.5,2.5) {1};
\node at (15.5,0.5) {1};
\node at (1.5,7.5) {1};
\node at (3.5,7.5) {1};
\node at (5.5,6.5) {1};
\node at (7.5,6.5) {1};
\node at (1.5,5.5) {1};
\node at (3.5,5.5) {1};
\node at (6.5,4.5) {1};
\node at (4.5,3.5) {1};
\node at (2.5,3.5) {1};
\node at (6.5,2.5) {1};
\node at (4.5,1.5) {1};
\node at (2.5,1.5) {1};
\node at (7.5,0.5) {1};
\node at (9.5,0.5) {1};
\node at (0.5,11.5) {1};
\node at (0.5,9.5) {1};
\node at (0.5,2.5) {1};
\node at (0.5,0.5) {1};

\node at (8.5,15.5) {\textcolor{darkgreen} {$m$}};
\node at (9.5,8.5) {\textcolor{darkgreen} {$m$}};
\node at (2.5,7.5) {\textcolor{darkgreen} {$m$}};
\node at (1.5,14.5) {\textcolor{darkgreen} {$m$}};

\node at (8.5,8.5) {\textcolor{blue} {$k$}};
\node at (7.5,1.5) {\textcolor{blue} {$k$}};
\node at (14.5,0.5) {\textcolor{blue} {$k$}};
\node at (15.5,7.5) {\textcolor{blue} {$k$}};

\end{scope}

\begin{scope}[shift={(-18,0)}]
  \foreach \x in {0,...,16} {
    \draw[gray!70, line width=0.35pt] (\x,0) -- (\x,16);
  }
  \foreach \y in {0,...,16} {
    \draw[gray!70, line width=0.35pt] (0,\y) -- (16,\y);
  }

\draw[very thick, blue] (2,8) rectangle (9,15);
\draw[very thick] (8,1) rectangle (15,8);

\node at (4.5,10.5) {\normalsize 1};
\node at (6.5,10.5) {\normalsize 1};
\node at (4.5,12.5) {\normalsize 1};
\node at (6.5,12.5) {\normalsize 1};
\node at (2.5,9.5) {\normalsize 1};
\node at (2.5,11.5) {\normalsize 1};
\node at (5.5,8.5) {\normalsize 1};
\node at (7.5,8.5) {\normalsize 1};
\node at (8.5,11.5) {\normalsize 1};
\node at (8.5,13.5) {\normalsize 1};
\node at (3.5,14.5) {\normalsize 1};
\node at (5.5,14.5) {\normalsize 1};

\node at (10.5,3.5) {\normalsize 1};
\node at (12.5,3.5) {\normalsize 1};
\node at (10.5,5.5) {\normalsize 1};
\node at (12.5,5.5) {\normalsize 1};
\node at (8.5,2.5) {\normalsize 1};
\node at (8.5,4.5) {\normalsize 1};
\node at (11.5,1.5) {\normalsize 1};
\node at (13.5,1.5) {\normalsize 1};
\node at (14.5,4.5) {\normalsize 1};
\node at (14.5,6.5) {\normalsize 1};
\node at (9.5,7.5) {\normalsize 1};
\node at (11.5,7.5) {\normalsize 1};

\node at (10.5,13.5) {1};
\node at (10.5,11.5) {1};
\node at (7.5,15.5) {1};
\node at (9.5,15.5) {1};
\node at (1.5,15.5) {1};
\node at (1.5,13.5) {1};
\node at (9.5,9.5) {1};
\node at (11.5,9.5) {1};
\node at (12.5,12.5) {1};
\node at (12.5,14.5) {1};
\node at (14.5,12.5) {1};
\node at (14.5,14.5) {1};
\node at (13.5,10.5) {1};
\node at (15.5,10.5) {1};
\node at (13.5,8.5) {1};
\node at (15.5,8.5) {1};
\node at (15.5,2.5) {1};
\node at (15.5,0.5) {1};
\node at (1.5,7.5) {1};
\node at (3.5,7.5) {1};
\node at (5.5,6.5) {1};
\node at (7.5,6.5) {1};
\node at (1.5,5.5) {1};
\node at (3.5,5.5) {1};
\node at (6.5,4.5) {1};
\node at (4.5,3.5) {1};
\node at (2.5,3.5) {1};
\node at (6.5,2.5) {1};
\node at (4.5,1.5) {1};
\node at (2.5,1.5) {1};
\node at (7.5,0.5) {1};
\node at (9.5,0.5) {1};
\node at (0.5,11.5) {1};
\node at (0.5,9.5) {1};
\node at (0.5,2.5) {1};
\node at (0.5,0.5) {1};

\node at (8.5,15.5) {\textcolor{darkgreen} {$m$}};
\node at (9.5,8.5) {\textcolor{darkgreen} {$m$}};
\node at (2.5,7.5) {\textcolor{darkgreen} {$m$}};
\node at (1.5,14.5) {\textcolor{darkgreen} {$m$}};

\node at (8.5,8.5) {\textcolor{blue} {$k$}};
\node at (7.5,1.5) {\textcolor{blue} {$k$}};
\node at (14.5,0.5) {\textcolor{blue} {$k$}};
\node at (15.5,7.5) {\textcolor{blue} {$k$}};
\node at (1.5,8.5) {\textcolor{blue} {$k$}};

\end{scope}

\begin{scope}[shift={(0,0)}]
  \foreach \x in {0,...,16} {
    \draw[gray!70, line width=0.35pt] (\x,0) -- (\x,16);
  }
  \foreach \y in {0,...,16} {
    \draw[gray!70, line width=0.35pt] (0,\y) -- (16,\y);
  }

\draw[very thick, blue] (2,8) rectangle (9,15);
\draw[very thick] (8,1) rectangle (15,8);
\draw[very thick, dashed] (0,1) rectangle (7,8);

\node at (4.5,10.5) {\normalsize 1};
\node at (6.5,10.5) {\normalsize 1};
\node at (4.5,12.5) {\normalsize 1};
\node at (6.5,12.5) {\normalsize 1};
\node at (2.5,9.5) {\normalsize 1};
\node at (2.5,11.5) {\normalsize 1};
\node at (5.5,8.5) {\normalsize 1};
\node at (7.5,8.5) {\normalsize 1};
\node at (8.5,11.5) {\normalsize 1};
\node at (8.5,13.5) {\normalsize 1};
\node at (3.5,14.5) {\normalsize 1};
\node at (5.5,14.5) {\normalsize 1};

\node at (10.5,3.5) {\normalsize 1};
\node at (12.5,3.5) {\normalsize 1};
\node at (10.5,5.5) {\normalsize 1};
\node at (12.5,5.5) {\normalsize 1};
\node at (8.5,2.5) {\normalsize 1};
\node at (8.5,4.5) {\normalsize 1};
\node at (11.5,1.5) {\normalsize 1};
\node at (13.5,1.5) {\normalsize 1};
\node at (14.5,4.5) {\normalsize 1};
\node at (14.5,6.5) {\normalsize 1};
\node at (9.5,7.5) {\normalsize 1};
\node at (11.5,7.5) {\normalsize 1};

\node at (10.5,13.5) {1};
\node at (10.5,11.5) {1};
\node at (7.5,15.5) {1};
\node at (9.5,15.5) {1};
\node at (1.5,15.5) {1};
\node at (1.5,13.5) {1};
\node at (9.5,9.5) {1};
\node at (11.5,9.5) {1};
\node at (12.5,12.5) {1};
\node at (12.5,14.5) {1};
\node at (14.5,12.5) {1};
\node at (14.5,14.5) {1};
\node at (13.5,10.5) {1};
\node at (15.5,10.5) {1};
\node at (13.5,8.5) {1};
\node at (15.5,8.5) {1};
\node at (15.5,2.5) {1};
\node at (15.5,0.5) {1};
\node at (1.5,7.5) {1};
\node at (3.5,7.5) {1};
\node at (5.5,6.5) {1};
\node at (7.5,6.5) {1};
\node at (1.5,5.5) {1};
\node at (3.5,5.5) {1};
\node at (6.5,4.5) {1};
\node at (4.5,3.5) {1};
\node at (2.5,3.5) {1};
\node at (6.5,2.5) {1};
\node at (4.5,1.5) {1};
\node at (2.5,1.5) {1};
\node at (7.5,0.5) {1};
\node at (9.5,0.5) {1};
\node at (0.5,11.5) {1};
\node at (0.5,9.5) {1};
\node at (0.5,2.5) {1};
\node at (0.5,0.5) {1};

\node at (8.5,15.5) {\textcolor{darkgreen} {$m$}};
\node at (9.5,8.5) {\textcolor{darkgreen} {$m$}};
\node at (2.5,7.5) {\textcolor{darkgreen} {$m$}};
\node at (1.5,14.5) {\textcolor{darkgreen} {$m$}};

\node at (8.5,8.5) {\textcolor{blue} {$k$}};
\node at (7.5,1.5) {\textcolor{blue} {$k$}};
\node at (14.5,0.5) {\textcolor{blue} {$k$}};
\node at (15.5,7.5) {\textcolor{blue} {$k$}};
\node at (1.5,8.5) {\textcolor{blue} {$k$}};
\node at (0.5,1.5) {\textcolor{blue} {$k$}};

\end{scope}

\end{tikzpicture}
\caption{$ G $ (black), $G(-6,7)$ (blue) and $G(-8,0)$ (dashed).}
\label{fig:18}
\end{figure}
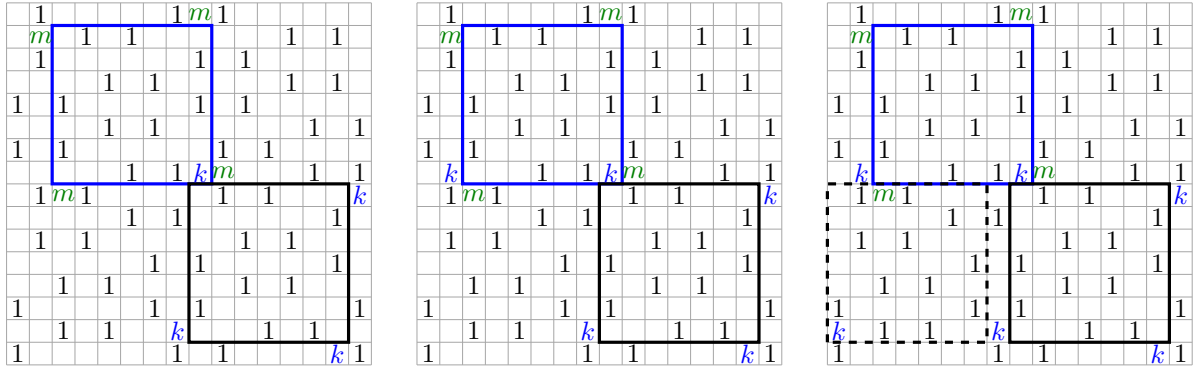

We then consider $ G(-14,1) $ (see Figure~\ref{fig:19}). Since $ c(G(-14,1)) = 11 $, by Observation~\ref{obs:2}\,(iv), this subgraph must contain a square of color~$ m $. Due to the distance constraint, color~$ m $ must be assigned to $ (1,7) $.

Next, we observe that $ G(-7,-6) $ is a critical subgraph. By Observation~\ref{obs:3}\,(ii), let $ n \in \{2, 3, \ldots, 39\} $ be the color that does not appear in $ G(-7,-6) $, and there are only two possible positions for color~$ n $, referred to as type-A and type-B. It follows that $ n \neq k $ since color~$ k $ appears in $ G(-7,-6) $. Moreover, color~$ n $ must appear in a type-A position, as the square in the type-A position has already been assigned color~$ k $.

Similarly, since $ G(-13,1) $ is a critical subgraph, by Observation~\ref{obs:3}\,(ii), let $ p \in \{2, 3, \ldots, 39\} $ be the color that does not appear in $ G(-13,1) $. Then $ p \neq k $, and it must appear in a type-A position (see Figure \ref{fig:19}).

\begin{figure}[ht]
\centering
\begin{tikzpicture}[scale=0.30]
\begin{scope}[shift={(-24,0)}]
  \foreach \x in {0,...,22} {
    \draw[gray!70, line width=0.35pt] (\x,0) -- (\x,22);
  }
  \foreach \y in {0,...,22} {
    \draw[gray!70, line width=0.35pt] (0,\y) -- (22,\y);
  }

\draw[very thick, blue] (8,14) rectangle (15,21);
\draw[very thick] (14,7) rectangle (21,14);
\draw[very thick, dashed] (0,8) rectangle (7,15);

\node at (10.5,16.5) {\normalsize 1};
\node at (12.5,16.5) {\normalsize 1};
\node at (10.5,18.5) {\normalsize 1};
\node at (12.5,18.5) {\normalsize 1};
\node at (8.5,15.5) {\normalsize 1};
\node at (8.5,17.5) {\normalsize 1};
\node at (11.5,14.5) {\normalsize 1};
\node at (13.5,14.5) {\normalsize 1};
\node at (14.5,17.5) {\normalsize 1};
\node at (14.5,19.5) {\normalsize 1};
\node at (9.5,20.5) {\normalsize 1};
\node at (11.5,20.5) {\normalsize 1};

\node at (16.5,9.5) {\normalsize 1};
\node at (18.5,9.5) {\normalsize 1};
\node at (16.5,11.5) {\normalsize 1};
\node at (18.5,11.5) {\normalsize 1};
\node at (14.5,8.5) {\normalsize 1};
\node at (14.5,10.5) {\normalsize 1};
\node at (17.5,7.5) {\normalsize 1};
\node at (19.5,7.5) {\normalsize 1};
\node at (20.5,10.5) {\normalsize 1};
\node at (20.5,12.5) {\normalsize 1};
\node at (15.5,13.5) {\normalsize 1};
\node at (17.5,13.5) {\normalsize 1};

\node at (16.5,19.5) {1};
\node at (16.5,17.5) {1};
\node at (13.5,21.5) {1};
\node at (15.5,21.5) {1};
\node at (7.5,21.5) {1};
\node at (7.5,19.5) {1};
\node at (15.5,15.5) {1};
\node at (17.5,15.5) {1};
\node at (18.5,18.5) {1};
\node at (18.5,20.5) {1};
\node at (20.5,18.5) {1};
\node at (20.5,20.5) {1};
\node at (19.5,16.5) {1};
\node at (21.5,16.5) {1};
\node at (19.5,14.5) {1};
\node at (21.5,14.5) {1};
\node at (21.5,8.5) {1};
\node at (21.5,6.5) {1};
\node at (7.5,13.5) {1};
\node at (9.5,13.5) {1};
\node at (11.5,12.5) {1};
\node at (13.5,12.5) {1};
\node at (7.5,11.5) {1};
\node at (9.5,11.5) {1};
\node at (12.5,10.5) {1};
\node at (10.5,9.5) {1};
\node at (8.5,9.5) {1};
\node at (12.5,8.5) {1};
\node at (10.5,7.5) {1};
\node at (8.5,7.5) {1};
\node at (13.5,6.5) {1};
\node at (15.5,6.5) {1};
\node at (6.5,17.5) {1};
\node at (6.5,15.5) {1};
\node at (6.5,8.5) {1};
\node at (6.5,6.5) {1};
\node at (9.5,5.5) {1};
\node at (11.5,5.5) {1};
\node at (17.5,5.5) {1};
\node at (19.5,5.5) {1};
\node at (7.5,4.5) {1};
\node at (13.5,4.5) {1};
\node at (15.5,4.5) {1};
\node at (9.5,3.5) {1};
\node at (11.5,3.5) {1};
\node at (18.5,3.5) {1};
\node at (20.5,3.5) {1};
\node at (7.5,2.5) {1};
\node at (14.5,2.5) {1};
\node at (16.5,2.5) {1};
\node at (10.5,1.5) {1};
\node at (12.5,1.5) {1};
\node at (18.5,1.5) {1};
\node at (20.5,1.5) {1};
\node at (6.5,0.5) {1};
\node at (8.5,0.5) {1};
\node at (14.5,0.5) {1};
\node at (16.5,0.5) {1};

\node at (5.5,2.5) {1};
\node at (5.5,4.5) {1};
\node at (5.5,10.5) {1};
\node at (5.5,12.5) {1};
\node at (5.5,19.5) {1};
\node at (5.5,21.5) {1};
\node at (4.5,6.5) {1};
\node at (4.5,8.5) {1};
\node at (4.5,14.5) {1};
\node at (4.5,16.5) {1};
\node at (3.5,1.5) {1};
\node at (3.5,3.5) {1};
\node at (3.5,10.5) {1};
\node at (3.5,12.5) {1};
\node at (3.5,18.5) {1};
\node at (3.5,20.5) {1};
\node at (2.5,5.5) {1};
\node at (2.5,7.5) {1};
\node at (2.5,14.5) {1};
\node at (2.5,16.5) {1};
\node at (1.5,1.5) {1};
\node at (1.5,3.5) {1};
\node at (1.5,9.5) {1};
\node at (1.5,11.5) {1};
\node at (1.5,18.5) {1};
\node at (1.5,20.5) {1};
\node at (0.5,5.5) {1};
\node at (0.5,7.5) {1};
\node at (0.5,13.5) {1};
\node at (0.5,15.5) {1};

\node at (14.5,21.5) {\textcolor{darkgreen} {$m$}};
\node at (15.5,14.5) {\textcolor{darkgreen} {$m$}};
\node at (8.5,13.5) {\textcolor{darkgreen} {$m$}};
\node at (7.5,20.5) {\textcolor{darkgreen} {$m$}};

\node at (14.5,14.5) {\textcolor{blue} {$k$}};
\node at (13.5,7.5) {\textcolor{blue} {$k$}};
\node at (20.5,6.5) {\textcolor{blue} {$k$}};
\node at (21.5,13.5) {\textcolor{blue} {$k$}};
\node at (7.5,14.5) {\textcolor{blue} {$k$}};
\node at (6.5,7.5) {\textcolor{blue} {$k$}};
\node at (0.5,14.5) {\textcolor{blue} {$k$}};

\end{scope}

\begin{scope}[shift={(0,0)}]
  \foreach \x in {0,...,22} {
    \draw[gray!70, line width=0.35pt] (\x,0) -- (\x,22);
  }
  \foreach \y in {0,...,22} {
    \draw[gray!70, line width=0.35pt] (0,\y) -- (22,\y);
  }

\draw[very thick, blue] (8,14) rectangle (15,21);
\draw[very thick] (14,7) rectangle (21,14);
\draw[very thick, darkgreen] (7,1) rectangle (14,8);
\draw[very thick, orange] (1,8) rectangle (8,15);

\node at (10.5,16.5) {\normalsize 1};
\node at (12.5,16.5) {\normalsize 1};
\node at (10.5,18.5) {\normalsize 1};
\node at (12.5,18.5) {\normalsize 1};
\node at (8.5,15.5) {\normalsize 1};
\node at (8.5,17.5) {\normalsize 1};
\node at (11.5,14.5) {\normalsize 1};
\node at (13.5,14.5) {\normalsize 1};
\node at (14.5,17.5) {\normalsize 1};
\node at (14.5,19.5) {\normalsize 1};
\node at (9.5,20.5) {\normalsize 1};
\node at (11.5,20.5) {\normalsize 1};

\node at (16.5,9.5) {\normalsize 1};
\node at (18.5,9.5) {\normalsize 1};
\node at (16.5,11.5) {\normalsize 1};
\node at (18.5,11.5) {\normalsize 1};
\node at (14.5,8.5) {\normalsize 1};
\node at (14.5,10.5) {\normalsize 1};
\node at (17.5,7.5) {\normalsize 1};
\node at (19.5,7.5) {\normalsize 1};
\node at (20.5,10.5) {\normalsize 1};
\node at (20.5,12.5) {\normalsize 1};
\node at (15.5,13.5) {\normalsize 1};
\node at (17.5,13.5) {\normalsize 1};

\node at (16.5,19.5) {1};
\node at (16.5,17.5) {1};
\node at (13.5,21.5) {1};
\node at (15.5,21.5) {1};
\node at (7.5,21.5) {1};
\node at (7.5,19.5) {1};
\node at (15.5,15.5) {1};
\node at (17.5,15.5) {1};
\node at (18.5,18.5) {1};
\node at (18.5,20.5) {1};
\node at (20.5,18.5) {1};
\node at (20.5,20.5) {1};
\node at (19.5,16.5) {1};
\node at (21.5,16.5) {1};
\node at (19.5,14.5) {1};
\node at (21.5,14.5) {1};
\node at (21.5,8.5) {1};
\node at (21.5,6.5) {1};
\node at (7.5,13.5) {1};
\node at (9.5,13.5) {1};
\node at (11.5,12.5) {1};
\node at (13.5,12.5) {1};
\node at (7.5,11.5) {1};
\node at (9.5,11.5) {1};
\node at (12.5,10.5) {1};
\node at (10.5,9.5) {1};
\node at (8.5,9.5) {1};
\node at (12.5,8.5) {1};
\node at (10.5,7.5) {1};
\node at (8.5,7.5) {1};
\node at (13.5,6.5) {1};
\node at (15.5,6.5) {1};
\node at (6.5,17.5) {1};
\node at (6.5,15.5) {1};
\node at (6.5,8.5) {1};
\node at (6.5,6.5) {1};
\node at (9.5,5.5) {1};
\node at (11.5,5.5) {1};
\node at (17.5,5.5) {1};
\node at (19.5,5.5) {1};
\node at (7.5,4.5) {1};
\node at (13.5,4.5) {1};
\node at (15.5,4.5) {1};
\node at (9.5,3.5) {1};
\node at (11.5,3.5) {1};
\node at (18.5,3.5) {1};
\node at (20.5,3.5) {1};
\node at (7.5,2.5) {1};
\node at (14.5,2.5) {1};
\node at (16.5,2.5) {1};
\node at (10.5,1.5) {1};
\node at (12.5,1.5) {1};
\node at (18.5,1.5) {1};
\node at (20.5,1.5) {1};
\node at (6.5,0.5) {1};
\node at (8.5,0.5) {1};
\node at (14.5,0.5) {1};
\node at (16.5,0.5) {1};

\node at (5.5,2.5) {1};
\node at (5.5,4.5) {1};
\node at (5.5,10.5) {1};
\node at (5.5,12.5) {1};
\node at (5.5,19.5) {1};
\node at (5.5,21.5) {1};
\node at (4.5,6.5) {1};
\node at (4.5,8.5) {1};
\node at (4.5,14.5) {1};
\node at (4.5,16.5) {1};
\node at (3.5,1.5) {1};
\node at (3.5,3.5) {1};
\node at (3.5,10.5) {1};
\node at (3.5,12.5) {1};
\node at (3.5,18.5) {1};
\node at (3.5,20.5) {1};
\node at (2.5,5.5) {1};
\node at (2.5,7.5) {1};
\node at (2.5,14.5) {1};
\node at (2.5,16.5) {1};
\node at (1.5,1.5) {1};
\node at (1.5,3.5) {1};
\node at (1.5,9.5) {1};
\node at (1.5,11.5) {1};
\node at (1.5,18.5) {1};
\node at (1.5,20.5) {1};
\node at (0.5,5.5) {1};
\node at (0.5,7.5) {1};
\node at (0.5,13.5) {1};
\node at (0.5,15.5) {1};

\node at (14.5,21.5) {\textcolor{darkgreen} {$m$}};
\node at (15.5,14.5) {\textcolor{darkgreen} {$m$}};
\node at (8.5,13.5) {\textcolor{darkgreen} {$m$}};
\node at (7.5,20.5) {\textcolor{darkgreen} {$m$}};

\node at (14.5,14.5) {\textcolor{blue} {$k$}};
\node at (13.5,7.5) {\textcolor{blue} {$k$}};
\node at (20.5,6.5) {\textcolor{blue} {$k$}};
\node at (21.5,13.5) {\textcolor{blue} {$k$}};
\node at (7.5,14.5) {\textcolor{blue} {$k$}};
\node at (6.5,7.5) {\textcolor{blue} {$k$}};
\node at (0.5,14.5) {\textcolor{blue} {$k$}};

\node at (13.5,0.5) {\textcolor{orange} {$n$}};
\node at (14.5,7.5) {\textcolor{orange} {$n$}};
\node at (7.5,8.5) {\textcolor{orange} {$n$}};
\node at (6.5,1.5) {\textcolor{orange} {$n$}};

\node at (7.5,7.5) {\textcolor{purple} {$p$}};
\node at (8.5,14.5) {\textcolor{purple} {$p$}};
\node at (1.5,15.5) {\textcolor{purple} {$p$}};
\node at (0.5,8.5) {\textcolor{purple} {$p$}};

\end{scope}

\end{tikzpicture}
\caption{$ G $ (black), $G(-6,7)$ (blue), $G(-14,1)$ (dashed), $G(-7,-6)$ (green) and $G(-13,1)$ (orange).}
\label{fig:19}
\end{figure}
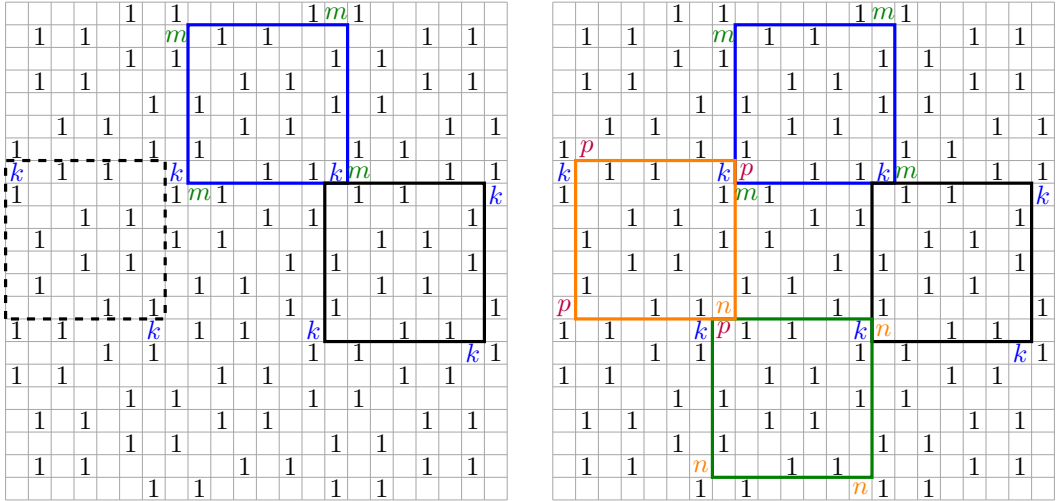

We then consider $ G(-13,1) $ (see Figure~\ref{fig:20}). Since $G(-13,1)$ is a critical subgraph, color $p$ appears in a type-A position and $(7,1)$ has color $n$, by Observation~\ref{obs:3}\,(iii), $(7,7)$ of $G(-13,2)$ must also has color $n$. Next, we consider $ G(-6,2) $. Since $ c(G(-6,2)) = 11 $, by Observation~\ref{obs:2}\,(iv), color~$ n $ must appear in this subgraph. Due to the distance constraint, the only square that can be assigned color~$ n $ is $ (7,7) $. Hence, $ (7,7) $ must be assigned color~$ n $.

Finally, we consider $ G(-1,1) $ (see Figure~\ref{fig:20}). Due to the distance constraint, there is no square in this subgraph that can be assigned color~$ n $. Since $ c(G(-1,1)) = 11 $, by Observation~\ref{obs:2}\,(iv), color~$ n $ must appear in this subgraph, which leads to a contradiction.

\begin{figure}[ht]
\centering
\begin{tikzpicture}[scale=0.30]
\begin{scope}[shift={(-24,0)}]
  \foreach \x in {0,...,22} {
    \draw[gray!70, line width=0.35pt] (\x,0) -- (\x,22);
  }
  \foreach \y in {0,...,22} {
    \draw[gray!70, line width=0.35pt] (0,\y) -- (22,\y);
  }

\draw[very thick,blue] (8,14) rectangle (15,21);
\draw[very thick] (14,7) rectangle (21,14);
\draw[very thick, darkgreen] (7,1) rectangle (14,8);
\draw[very thick, orange] (1,8) rectangle (8,15);
\draw[very thick, dashed] (8,9) rectangle (15,16);

\node at (10.5,16.5) {\normalsize 1};
\node at (12.5,16.5) {\normalsize 1};
\node at (10.5,18.5) {\normalsize 1};
\node at (12.5,18.5) {\normalsize 1};
\node at (8.5,15.5) {\normalsize 1};
\node at (8.5,17.5) {\normalsize 1};
\node at (11.5,14.5) {\normalsize 1};
\node at (13.5,14.5) {\normalsize 1};
\node at (14.5,17.5) {\normalsize 1};
\node at (14.5,19.5) {\normalsize 1};
\node at (9.5,20.5) {\normalsize 1};
\node at (11.5,20.5) {\normalsize 1};

\node at (16.5,9.5) {\normalsize 1};
\node at (18.5,9.5) {\normalsize 1};
\node at (16.5,11.5) {\normalsize 1};
\node at (18.5,11.5) {\normalsize 1};
\node at (14.5,8.5) {\normalsize 1};
\node at (14.5,10.5) {\normalsize 1};
\node at (17.5,7.5) {\normalsize 1};
\node at (19.5,7.5) {\normalsize 1};
\node at (20.5,10.5) {\normalsize 1};
\node at (20.5,12.5) {\normalsize 1};
\node at (15.5,13.5) {\normalsize 1};
\node at (17.5,13.5) {\normalsize 1};

\node at (16.5,19.5) {1};
\node at (16.5,17.5) {1};
\node at (13.5,21.5) {1};
\node at (15.5,21.5) {1};
\node at (7.5,21.5) {1};
\node at (7.5,19.5) {1};
\node at (15.5,15.5) {1};
\node at (17.5,15.5) {1};
\node at (18.5,18.5) {1};
\node at (18.5,20.5) {1};
\node at (20.5,18.5) {1};
\node at (20.5,20.5) {1};
\node at (19.5,16.5) {1};
\node at (21.5,16.5) {1};
\node at (19.5,14.5) {1};
\node at (21.5,14.5) {1};
\node at (21.5,8.5) {1};
\node at (21.5,6.5) {1};
\node at (7.5,13.5) {1};
\node at (9.5,13.5) {1};
\node at (11.5,12.5) {1};
\node at (13.5,12.5) {1};
\node at (7.5,11.5) {1};
\node at (9.5,11.5) {1};
\node at (12.5,10.5) {1};
\node at (10.5,9.5) {1};
\node at (8.5,9.5) {1};
\node at (12.5,8.5) {1};
\node at (10.5,7.5) {1};
\node at (8.5,7.5) {1};
\node at (13.5,6.5) {1};
\node at (15.5,6.5) {1};
\node at (6.5,17.5) {1};
\node at (6.5,15.5) {1};
\node at (6.5,8.5) {1};
\node at (6.5,6.5) {1};
\node at (9.5,5.5) {1};
\node at (11.5,5.5) {1};
\node at (17.5,5.5) {1};
\node at (19.5,5.5) {1};
\node at (7.5,4.5) {1};
\node at (13.5,4.5) {1};
\node at (15.5,4.5) {1};
\node at (9.5,3.5) {1};
\node at (11.5,3.5) {1};
\node at (18.5,3.5) {1};
\node at (20.5,3.5) {1};
\node at (7.5,2.5) {1};
\node at (14.5,2.5) {1};
\node at (16.5,2.5) {1};
\node at (10.5,1.5) {1};
\node at (12.5,1.5) {1};
\node at (18.5,1.5) {1};
\node at (20.5,1.5) {1};
\node at (6.5,0.5) {1};
\node at (8.5,0.5) {1};
\node at (14.5,0.5) {1};
\node at (16.5,0.5) {1};

\node at (5.5,2.5) {1};
\node at (5.5,4.5) {1};
\node at (5.5,10.5) {1};
\node at (5.5,12.5) {1};
\node at (5.5,19.5) {1};
\node at (5.5,21.5) {1};
\node at (4.5,6.5) {1};
\node at (4.5,8.5) {1};
\node at (4.5,14.5) {1};
\node at (4.5,16.5) {1};
\node at (3.5,1.5) {1};
\node at (3.5,3.5) {1};
\node at (3.5,10.5) {1};
\node at (3.5,12.5) {1};
\node at (3.5,18.5) {1};
\node at (3.5,20.5) {1};
\node at (2.5,5.5) {1};
\node at (2.5,7.5) {1};
\node at (2.5,14.5) {1};
\node at (2.5,16.5) {1};
\node at (1.5,1.5) {1};
\node at (1.5,3.5) {1};
\node at (1.5,9.5) {1};
\node at (1.5,11.5) {1};
\node at (1.5,18.5) {1};
\node at (1.5,20.5) {1};
\node at (0.5,5.5) {1};
\node at (0.5,7.5) {1};
\node at (0.5,13.5) {1};
\node at (0.5,15.5) {1};

\node at (14.5,21.5) {\textcolor{darkgreen} {$m$}};
\node at (15.5,14.5) {\textcolor{darkgreen} {$m$}};
\node at (8.5,13.5) {\textcolor{darkgreen} {$m$}};
\node at (7.5,20.5) {\textcolor{darkgreen} {$m$}};

\node at (14.5,14.5) {\textcolor{blue} {$k$}};
\node at (13.5,7.5) {\textcolor{blue} {$k$}};
\node at (20.5,6.5) {\textcolor{blue} {$k$}};
\node at (21.5,13.5) {\textcolor{blue} {$k$}};
\node at (7.5,14.5) {\textcolor{blue} {$k$}};
\node at (6.5,7.5) {\textcolor{blue} {$k$}};
\node at (0.5,14.5) {\textcolor{blue} {$k$}};

\node at (13.5,0.5) {\textcolor{orange} {$n$}};
\node at (14.5,7.5) {\textcolor{orange} {$n$}};
\node at (7.5,8.5) {\textcolor{orange} {$n$}};
\node at (6.5,1.5) {\textcolor{orange} {$n$}};
\node at (7.5,15.5) {\textcolor{orange} {$n$}};
\node at (14.5,15.5) {\textcolor{orange} {$n$}};

\node at (7.5,7.5) {\textcolor{purple} {$p$}};
\node at (8.5,14.5) {\textcolor{purple} {$p$}};
\node at (1.5,15.5) {\textcolor{purple} {$p$}};
\node at (0.5,8.5) {\textcolor{purple} {$p$}};

\end{scope}

\begin{scope}[shift={(0,0)}]
  \foreach \x in {0,...,22} {
    \draw[gray!70, line width=0.35pt] (\x,0) -- (\x,22);
  }
  \foreach \y in {0,...,22} {
    \draw[gray!70, line width=0.35pt] (0,\y) -- (22,\y);
  }

\draw[very thick, blue] (8,14) rectangle (15,21);
\draw[very thick] (14,7) rectangle (21,14);
\draw[very thick, darkgreen] (7,1) rectangle (14,8);
\draw[very thick, orange] (1,8) rectangle (8,15);
\draw[very thick, red] (13,8) rectangle (20,15);

\node at (10.5,16.5) {\normalsize 1};
\node at (12.5,16.5) {\normalsize 1};
\node at (10.5,18.5) {\normalsize 1};
\node at (12.5,18.5) {\normalsize 1};
\node at (8.5,15.5) {\normalsize 1};
\node at (8.5,17.5) {\normalsize 1};
\node at (11.5,14.5) {\normalsize 1};
\node at (13.5,14.5) {\normalsize 1};
\node at (14.5,17.5) {\normalsize 1};
\node at (14.5,19.5) {\normalsize 1};
\node at (9.5,20.5) {\normalsize 1};
\node at (11.5,20.5) {\normalsize 1};

\node at (16.5,9.5) {\normalsize 1};
\node at (18.5,9.5) {\normalsize 1};
\node at (16.5,11.5) {\normalsize 1};
\node at (18.5,11.5) {\normalsize 1};
\node at (14.5,8.5) {\normalsize 1};
\node at (14.5,10.5) {\normalsize 1};
\node at (17.5,7.5) {\normalsize 1};
\node at (19.5,7.5) {\normalsize 1};
\node at (20.5,10.5) {\normalsize 1};
\node at (20.5,12.5) {\normalsize 1};
\node at (15.5,13.5) {\normalsize 1};
\node at (17.5,13.5) {\normalsize 1};

\node at (16.5,19.5) {1};
\node at (16.5,17.5) {1};
\node at (13.5,21.5) {1};
\node at (15.5,21.5) {1};
\node at (7.5,21.5) {1};
\node at (7.5,19.5) {1};
\node at (15.5,15.5) {1};
\node at (17.5,15.5) {1};
\node at (18.5,18.5) {1};
\node at (18.5,20.5) {1};
\node at (20.5,18.5) {1};
\node at (20.5,20.5) {1};
\node at (19.5,16.5) {1};
\node at (21.5,16.5) {1};
\node at (19.5,14.5) {1};
\node at (21.5,14.5) {1};
\node at (21.5,8.5) {1};
\node at (21.5,6.5) {1};
\node at (7.5,13.5) {1};
\node at (9.5,13.5) {1};
\node at (11.5,12.5) {1};
\node at (13.5,12.5) {1};
\node at (7.5,11.5) {1};
\node at (9.5,11.5) {1};
\node at (12.5,10.5) {1};
\node at (10.5,9.5) {1};
\node at (8.5,9.5) {1};
\node at (12.5,8.5) {1};
\node at (10.5,7.5) {1};
\node at (8.5,7.5) {1};
\node at (13.5,6.5) {1};
\node at (15.5,6.5) {1};
\node at (6.5,17.5) {1};
\node at (6.5,15.5) {1};
\node at (6.5,8.5) {1};
\node at (6.5,6.5) {1};
\node at (9.5,5.5) {1};
\node at (11.5,5.5) {1};
\node at (17.5,5.5) {1};
\node at (19.5,5.5) {1};
\node at (7.5,4.5) {1};
\node at (13.5,4.5) {1};
\node at (15.5,4.5) {1};
\node at (9.5,3.5) {1};
\node at (11.5,3.5) {1};
\node at (18.5,3.5) {1};
\node at (20.5,3.5) {1};
\node at (7.5,2.5) {1};
\node at (14.5,2.5) {1};
\node at (16.5,2.5) {1};
\node at (10.5,1.5) {1};
\node at (12.5,1.5) {1};
\node at (18.5,1.5) {1};
\node at (20.5,1.5) {1};
\node at (6.5,0.5) {1};
\node at (8.5,0.5) {1};
\node at (14.5,0.5) {1};
\node at (16.5,0.5) {1};

\node at (5.5,2.5) {1};
\node at (5.5,4.5) {1};
\node at (5.5,10.5) {1};
\node at (5.5,12.5) {1};
\node at (5.5,19.5) {1};
\node at (5.5,21.5) {1};
\node at (4.5,6.5) {1};
\node at (4.5,8.5) {1};
\node at (4.5,14.5) {1};
\node at (4.5,16.5) {1};
\node at (3.5,1.5) {1};
\node at (3.5,3.5) {1};
\node at (3.5,10.5) {1};
\node at (3.5,12.5) {1};
\node at (3.5,18.5) {1};
\node at (3.5,20.5) {1};
\node at (2.5,5.5) {1};
\node at (2.5,7.5) {1};
\node at (2.5,14.5) {1};
\node at (2.5,16.5) {1};
\node at (1.5,1.5) {1};
\node at (1.5,3.5) {1};
\node at (1.5,9.5) {1};
\node at (1.5,11.5) {1};
\node at (1.5,18.5) {1};
\node at (1.5,20.5) {1};
\node at (0.5,5.5) {1};
\node at (0.5,7.5) {1};
\node at (0.5,13.5) {1};
\node at (0.5,15.5) {1};

\node at (14.5,21.5) {\textcolor{darkgreen} {$m$}};
\node at (15.5,14.5) {\textcolor{darkgreen} {$m$}};
\node at (8.5,13.5) {\textcolor{darkgreen} {$m$}};
\node at (7.5,20.5) {\textcolor{darkgreen} {$m$}};

\node at (14.5,14.5) {\textcolor{blue} {$k$}};
\node at (13.5,7.5) {\textcolor{blue} {$k$}};
\node at (20.5,6.5) {\textcolor{blue} {$k$}};
\node at (21.5,13.5) {\textcolor{blue} {$k$}};
\node at (7.5,14.5) {\textcolor{blue} {$k$}};
\node at (6.5,7.5) {\textcolor{blue} {$k$}};
\node at (0.5,14.5) {\textcolor{blue} {$k$}};

\node at (13.5,0.5) {\textcolor{orange} {$n$}};
\node at (14.5,7.5) {\textcolor{orange} {$n$}};
\node at (7.5,8.5) {\textcolor{orange} {$n$}};
\node at (6.5,1.5) {\textcolor{orange} {$n$}};
\node at (7.5,15.5) {\textcolor{orange} {$n$}};
\node at (14.5,15.5) {\textcolor{orange} {$n$}};

\node at (7.5,7.5) {\textcolor{purple} {$p$}};
\node at (8.5,14.5) {\textcolor{purple} {$p$}};
\node at (1.5,15.5) {\textcolor{purple} {$p$}};
\node at (0.5,8.5) {\textcolor{purple} {$p$}};

\end{scope}

\end{tikzpicture}
\caption{$ G $ (black), $G(-6,7)$ (blue), $G(-7,-6)$ (green), $G(-13,1)$ (orange), $G(-6,2)$ (dashed) and $G(-1,1)$ (red).}
\label{fig:20}
\end{figure}
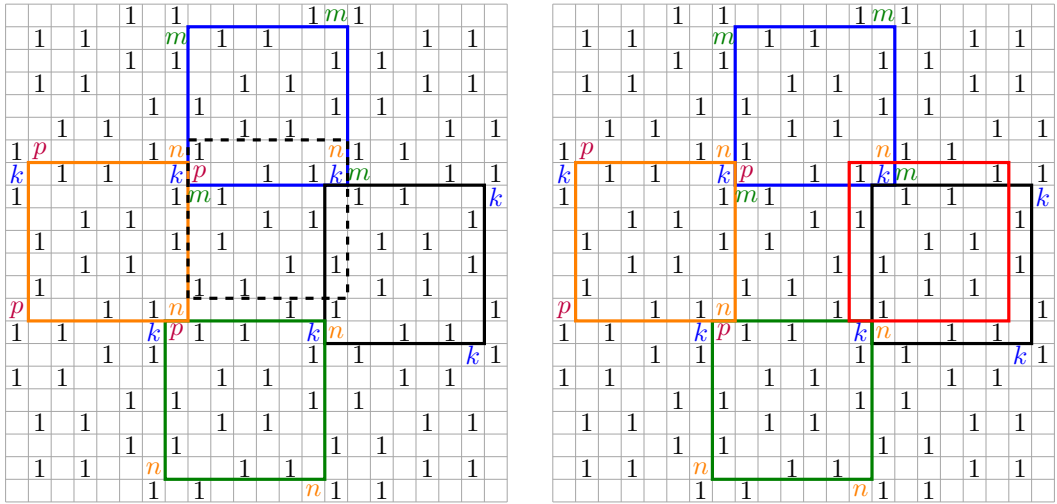

\textbf{Case 2.1.2:} $ m $ appears in a type-A position (see Figure \ref{fig:21}).

Since $G(-6,7)$ is a critical subgraph, $m$ appears in a type-A position and $(7,1)$ has color $k$, by Observation \ref{obs:3}\,(iii), we have that $(7,7)$ has color $k$ (see Figure \ref{fig:21}). Next, we consider $G(1,8)$. Since $c(G(1,8)) = 11$, this subgraph must contain color $k$. Due to distance constraint, either $(7,6)$ or $(7,7)$ must has color $k$ (see Figure \ref{fig:21}).

\begin{figure}[ht]
\centering
\begin{tikzpicture}[scale=0.30]
\begin{scope}[shift={(-34,0)}]
  \foreach \x in {0,...,15} {
    \draw[gray!70, line width=0.35pt] (\x,0) -- (\x,16);
  }
  \foreach \y in {0,...,16} {
    \draw[gray!70, line width=0.35pt] (0,\y) -- (15,\y);
  }

\draw[very thick, blue] (1,8) rectangle (8,15);
\draw[very thick] (7,1) rectangle (14,8);

\node at (3.5,10.5) {\normalsize 1};
\node at (5.5,10.5) {\normalsize 1};
\node at (3.5,12.5) {\normalsize 1};
\node at (5.5,12.5) {\normalsize 1};
\node at (1.5,9.5) {\normalsize 1};
\node at (1.5,11.5) {\normalsize 1};
\node at (4.5,8.5) {\normalsize 1};
\node at (6.5,8.5) {\normalsize 1};
\node at (7.5,11.5) {\normalsize 1};
\node at (7.5,13.5) {\normalsize 1};
\node at (2.5,14.5) {\normalsize 1};
\node at (4.5,14.5) {\normalsize 1};

\node at (9.5,3.5) {\normalsize 1};
\node at (11.5,3.5) {\normalsize 1};
\node at (9.5,5.5) {\normalsize 1};
\node at (11.5,5.5) {\normalsize 1};
\node at (7.5,2.5) {\normalsize 1};
\node at (7.5,4.5) {\normalsize 1};
\node at (10.5,1.5) {\normalsize 1};
\node at (12.5,1.5) {\normalsize 1};
\node at (13.5,4.5) {\normalsize 1};
\node at (13.5,6.5) {\normalsize 1};
\node at (8.5,7.5) {\normalsize 1};
\node at (10.5,7.5) {\normalsize 1};

\node at (9.5,13.5) {1};
\node at (9.5,11.5) {1};
\node at (6.5,15.5) {1};
\node at (8.5,15.5) {1};
\node at (0.5,15.5) {1};
\node at (0.5,13.5) {1};
\node at (8.5,9.5) {1};
\node at (10.5,9.5) {1};
\node at (11.5,12.5) {1};
\node at (11.5,14.5) {1};
\node at (13.5,12.5) {1};
\node at (13.5,14.5) {1};
\node at (12.5,10.5) {1};
\node at (14.5,10.5) {1};
\node at (12.5,8.5) {1};
\node at (14.5,8.5) {1};
\node at (14.5,2.5) {1};
\node at (14.5,0.5) {1};
\node at (0.5,7.5) {1};
\node at (2.5,7.5) {1};
\node at (4.5,6.5) {1};
\node at (6.5,6.5) {1};
\node at (0.5,5.5) {1};
\node at (2.5,5.5) {1};
\node at (5.5,4.5) {1};
\node at (3.5,3.5) {1};
\node at (1.5,3.5) {1};
\node at (5.5,2.5) {1};
\node at (3.5,1.5) {1};
\node at (1.5,1.5) {1};
\node at (6.5,0.5) {1};
\node at (8.5,0.5) {1};

\node at (1.5,15.5) {\textcolor{darkgreen} {$m$}};
\node at (8.5,14.5) {\textcolor{darkgreen} {$m$}};
\node at (7.5,7.5) {\textcolor{darkgreen} {$m$}};
\node at (0.5,8.5) {\textcolor{darkgreen} {$m$}};

\node at (7.5,8.5) {\textcolor{blue} {$k$}};
\node at (6.5,1.5) {\textcolor{blue} {$k$}};
\node at (13.5,0.5) {\textcolor{blue} {$k$}};
\node at (14.5,7.5) {\textcolor{blue} {$k$}};

\end{scope}

\begin{scope}[shift={(-17,0)}]
  \foreach \x in {0,...,15} {
    \draw[gray!70, line width=0.35pt] (\x,0) -- (\x,16);
  }
  \foreach \y in {0,...,16} {
    \draw[gray!70, line width=0.35pt] (0,\y) -- (15,\y);
  }

\draw[very thick, blue] (1,8) rectangle (8,15);
\draw[very thick] (7,1) rectangle (14,8);

\node at (3.5,10.5) {\normalsize 1};
\node at (5.5,10.5) {\normalsize 1};
\node at (3.5,12.5) {\normalsize 1};
\node at (5.5,12.5) {\normalsize 1};
\node at (1.5,9.5) {\normalsize 1};
\node at (1.5,11.5) {\normalsize 1};
\node at (4.5,8.5) {\normalsize 1};
\node at (6.5,8.5) {\normalsize 1};
\node at (7.5,11.5) {\normalsize 1};
\node at (7.5,13.5) {\normalsize 1};
\node at (2.5,14.5) {\normalsize 1};
\node at (4.5,14.5) {\normalsize 1};

\node at (9.5,3.5) {\normalsize 1};
\node at (11.5,3.5) {\normalsize 1};
\node at (9.5,5.5) {\normalsize 1};
\node at (11.5,5.5) {\normalsize 1};
\node at (7.5,2.5) {\normalsize 1};
\node at (7.5,4.5) {\normalsize 1};
\node at (10.5,1.5) {\normalsize 1};
\node at (12.5,1.5) {\normalsize 1};
\node at (13.5,4.5) {\normalsize 1};
\node at (13.5,6.5) {\normalsize 1};
\node at (8.5,7.5) {\normalsize 1};
\node at (10.5,7.5) {\normalsize 1};

\node at (9.5,13.5) {1};
\node at (9.5,11.5) {1};
\node at (6.5,15.5) {1};
\node at (8.5,15.5) {1};
\node at (0.5,15.5) {1};
\node at (0.5,13.5) {1};
\node at (8.5,9.5) {1};
\node at (10.5,9.5) {1};
\node at (11.5,12.5) {1};
\node at (11.5,14.5) {1};
\node at (13.5,12.5) {1};
\node at (13.5,14.5) {1};
\node at (12.5,10.5) {1};
\node at (14.5,10.5) {1};
\node at (12.5,8.5) {1};
\node at (14.5,8.5) {1};
\node at (14.5,2.5) {1};
\node at (14.5,0.5) {1};
\node at (0.5,7.5) {1};
\node at (2.5,7.5) {1};
\node at (4.5,6.5) {1};
\node at (6.5,6.5) {1};
\node at (0.5,5.5) {1};
\node at (2.5,5.5) {1};
\node at (5.5,4.5) {1};
\node at (3.5,3.5) {1};
\node at (1.5,3.5) {1};
\node at (5.5,2.5) {1};
\node at (3.5,1.5) {1};
\node at (1.5,1.5) {1};
\node at (6.5,0.5) {1};
\node at (8.5,0.5) {1};

\node at (1.5,15.5) {\textcolor{darkgreen} {$m$}};
\node at (8.5,14.5) {\textcolor{darkgreen} {$m$}};
\node at (7.5,7.5) {\textcolor{darkgreen} {$m$}};
\node at (0.5,8.5) {\textcolor{darkgreen} {$m$}};

\node at (7.5,8.5) {\textcolor{blue} {$k$}};
\node at (6.5,1.5) {\textcolor{blue} {$k$}};
\node at (13.5,0.5) {\textcolor{blue} {$k$}};
\node at (14.5,7.5) {\textcolor{blue} {$k$}};
\node at (7.5,15.5) {\textcolor{blue} {$k$}};

\end{scope}

\begin{scope}[shift={(0,0)}]
  \foreach \x in {0,...,15} {
    \draw[gray!70, line width=0.35pt] (\x,0) -- (\x,16);
  }
  \foreach \y in {0,...,16} {
    \draw[gray!70, line width=0.35pt] (0,\y) -- (15,\y);
  }

\draw[very thick, dashed] (8,9) rectangle (15,16);
\draw[very thick, blue] (1,8) rectangle (8,15);
\draw[very thick] (7,1) rectangle (14,8);

\node at (3.5,10.5) {\normalsize 1};
\node at (5.5,10.5) {\normalsize 1};
\node at (3.5,12.5) {\normalsize 1};
\node at (5.5,12.5) {\normalsize 1};
\node at (1.5,9.5) {\normalsize 1};
\node at (1.5,11.5) {\normalsize 1};
\node at (4.5,8.5) {\normalsize 1};
\node at (6.5,8.5) {\normalsize 1};
\node at (7.5,11.5) {\normalsize 1};
\node at (7.5,13.5) {\normalsize 1};
\node at (2.5,14.5) {\normalsize 1};
\node at (4.5,14.5) {\normalsize 1};

\node at (9.5,3.5) {\normalsize 1};
\node at (11.5,3.5) {\normalsize 1};
\node at (9.5,5.5) {\normalsize 1};
\node at (11.5,5.5) {\normalsize 1};
\node at (7.5,2.5) {\normalsize 1};
\node at (7.5,4.5) {\normalsize 1};
\node at (10.5,1.5) {\normalsize 1};
\node at (12.5,1.5) {\normalsize 1};
\node at (13.5,4.5) {\normalsize 1};
\node at (13.5,6.5) {\normalsize 1};
\node at (8.5,7.5) {\normalsize 1};
\node at (10.5,7.5) {\normalsize 1};

\node at (9.5,13.5) {1};
\node at (9.5,11.5) {1};
\node at (6.5,15.5) {1};
\node at (8.5,15.5) {1};
\node at (0.5,15.5) {1};
\node at (0.5,13.5) {1};
\node at (8.5,9.5) {1};
\node at (10.5,9.5) {1};
\node at (11.5,12.5) {1};
\node at (11.5,14.5) {1};
\node at (13.5,12.5) {1};
\node at (13.5,14.5) {1};
\node at (12.5,10.5) {1};
\node at (14.5,10.5) {1};
\node at (12.5,8.5) {1};
\node at (14.5,8.5) {1};
\node at (14.5,2.5) {1};
\node at (14.5,0.5) {1};
\node at (0.5,7.5) {1};
\node at (2.5,7.5) {1};
\node at (4.5,6.5) {1};
\node at (6.5,6.5) {1};
\node at (0.5,5.5) {1};
\node at (2.5,5.5) {1};
\node at (5.5,4.5) {1};
\node at (3.5,3.5) {1};
\node at (1.5,3.5) {1};
\node at (5.5,2.5) {1};
\node at (3.5,1.5) {1};
\node at (1.5,1.5) {1};
\node at (6.5,0.5) {1};
\node at (8.5,0.5) {1};

\node at (1.5,15.5) {\textcolor{darkgreen} {$m$}};
\node at (8.5,14.5) {\textcolor{darkgreen} {$m$}};
\node at (7.5,7.5) {\textcolor{darkgreen} {$m$}};
\node at (0.5,8.5) {\textcolor{darkgreen} {$m$}};

\node at (7.5,8.5) {\textcolor{blue} {$k$}};
\node at (6.5,1.5) {\textcolor{blue} {$k$}};
\node at (13.5,0.5) {\textcolor{blue} {$k$}};
\node at (14.5,7.5) {\textcolor{blue} {$k$}};
\node at (7.5,15.5) {\textcolor{blue} {$k$}};
\fill[yellow, opacity=0.3] (14,14) rectangle (15,16);
\end{scope}

\end{tikzpicture}
\caption{$ G $ (black), $G(-6,7)$ (blue) and $G(1,8)$ (dashed).}
\label{fig:21}
\end{figure}
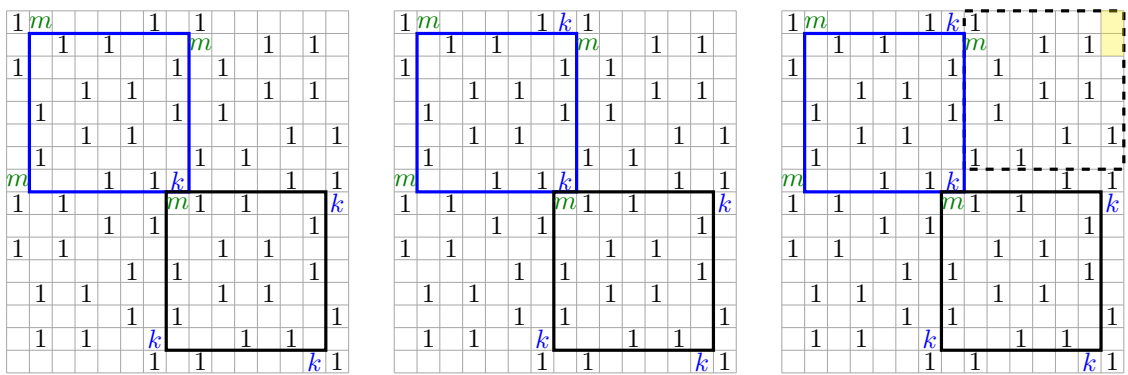

Assume that $(7,7)$ has color $k$. Consider $G(6,7)$ (see Figure \ref{fig:22}). Since $c(G(6,7)) = 11$, this subgraph must contains color $k$. Due to distance constraint, this subgraph cannot have a square of color $k$ which is contradiction. Hence, $(7,6)$ must have color $k$.

Finally, we consider $G(7,6)$ (see Figure~\ref{fig:22}). Observe that this subgraph is a critical subgraph. By Observation~\ref{obs:3}\,(ii), let $ n \in \{2,3,\ldots,39\} $ be the color that does not appear in $ G(7,6) $. Then $ n \neq k $, and it must appear in a type-B position (see Figure~\ref{fig:22}). If we rotate this configuration by $90^\circ$ counterclockwise, it falls into Case~2.1.1. Therefore, this case also leads to a contradiction.

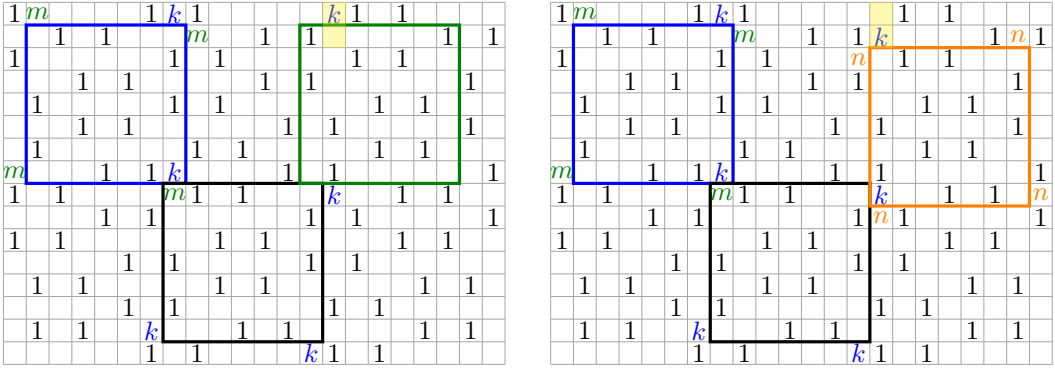
\begin{figure}[ht]
\centering
\begin{tikzpicture}[scale=0.30]
\begin{scope}[shift={(-24,0)}]
  \foreach \x in {0,...,22} {
    \draw[gray!70, line width=0.35pt] (\x,0) -- (\x,16);
  }
  \foreach \y in {0,...,16} {
    \draw[gray!70, line width=0.35pt] (0,\y) -- (22,\y);
  }

\draw[very thick, blue] (1,8) rectangle (8,15);
\draw[very thick] (7,1) rectangle (14,8);
\draw[very thick, darkgreen] (13,8) rectangle (20,15);

\node at (3.5,10.5) {\normalsize 1};
\node at (5.5,10.5) {\normalsize 1};
\node at (3.5,12.5) {\normalsize 1};
\node at (5.5,12.5) {\normalsize 1};
\node at (1.5,9.5) {\normalsize 1};
\node at (1.5,11.5) {\normalsize 1};
\node at (4.5,8.5) {\normalsize 1};
\node at (6.5,8.5) {\normalsize 1};
\node at (7.5,11.5) {\normalsize 1};
\node at (7.5,13.5) {\normalsize 1};
\node at (2.5,14.5) {\normalsize 1};
\node at (4.5,14.5) {\normalsize 1};

\node at (9.5,3.5) {\normalsize 1};
\node at (11.5,3.5) {\normalsize 1};
\node at (9.5,5.5) {\normalsize 1};
\node at (11.5,5.5) {\normalsize 1};
\node at (7.5,2.5) {\normalsize 1};
\node at (7.5,4.5) {\normalsize 1};
\node at (10.5,1.5) {\normalsize 1};
\node at (12.5,1.5) {\normalsize 1};
\node at (13.5,4.5) {\normalsize 1};
\node at (13.5,6.5) {\normalsize 1};
\node at (8.5,7.5) {\normalsize 1};
\node at (10.5,7.5) {\normalsize 1};

\node at (9.5,13.5) {1};
\node at (9.5,11.5) {1};
\node at (6.5,15.5) {1};
\node at (8.5,15.5) {1};
\node at (0.5,15.5) {1};
\node at (0.5,13.5) {1};
\node at (8.5,9.5) {1};
\node at (10.5,9.5) {1};
\node at (11.5,12.5) {1};
\node at (11.5,14.5) {1};
\node at (13.5,12.5) {1};
\node at (13.5,14.5) {1};
\node at (12.5,10.5) {1};
\node at (14.5,10.5) {1};
\node at (12.5,8.5) {1};
\node at (14.5,8.5) {1};
\node at (14.5,2.5) {1};
\node at (14.5,0.5) {1};
\node at (0.5,7.5) {1};
\node at (2.5,7.5) {1};
\node at (4.5,6.5) {1};
\node at (6.5,6.5) {1};
\node at (0.5,5.5) {1};
\node at (2.5,5.5) {1};
\node at (5.5,4.5) {1};
\node at (3.5,3.5) {1};
\node at (1.5,3.5) {1};
\node at (5.5,2.5) {1};
\node at (3.5,1.5) {1};
\node at (1.5,1.5) {1};
\node at (6.5,0.5) {1};
\node at (8.5,0.5) {1};

\node at (15.5,4.5) {1};
\node at (15.5,6.5) {1};
\node at (15.5,13.5) {1};
\node at (15.5,15.5) {1};
\node at (16.5,0.5) {1};
\node at (16.5,2.5) {1};
\node at (16.5,9.5) {1};
\node at (16.5,11.5) {1};
\node at (17.5,5.5) {1};
\node at (17.5,7.5) {1};
\node at (17.5,13.5) {1};
\node at (17.5,15.5) {1};
\node at (18.5,1.5) {1};
\node at (18.5,3.5) {1};
\node at (18.5,9.5) {1};
\node at (18.5,11.5) {1};
\node at (19.5,5.5) {1};
\node at (19.5,7.5) {1};
\node at (19.5,14.5) {1};
\node at (20.5,1.5) {1};
\node at (20.5,3.5) {1};
\node at (20.5,10.5) {1};
\node at (20.5,12.5) {1};
\node at (21.5,6.5) {1};
\node at (21.5,8.5) {1};
\node at (21.5,14.5) {1};

\node at (1.5,15.5) {\textcolor{darkgreen} {$m$}};
\node at (8.5,14.5) {\textcolor{darkgreen} {$m$}};
\node at (7.5,7.5) {\textcolor{darkgreen} {$m$}};
\node at (0.5,8.5) {\textcolor{darkgreen} {$m$}};

\node at (7.5,8.5) {\textcolor{blue} {$k$}};
\node at (6.5,1.5) {\textcolor{blue} {$k$}};
\node at (13.5,0.5) {\textcolor{blue} {$k$}};
\node at (14.5,7.5) {\textcolor{blue} {$k$}};
\node at (7.5,15.5) {\textcolor{blue} {$k$}};
\node at (14.5,15.5) {\textcolor{blue} {$k$}};

\fill[yellow, opacity=0.3] (14,14) rectangle (15,16);
\end{scope}

\begin{scope}[shift={(0,0)}]
  \foreach \x in {0,...,22} {
    \draw[gray!70, line width=0.35pt] (\x,0) -- (\x,16);
  }
  \foreach \y in {0,...,16} {
    \draw[gray!70, line width=0.35pt] (0,\y) -- (22,\y);
  }

\draw[very thick, blue] (1,8) rectangle (8,15);
\draw[very thick] (7,1) rectangle (14,8);
\draw[very thick, orange] (14,7) rectangle (21,14);

\node at (3.5,10.5) {\normalsize 1};
\node at (5.5,10.5) {\normalsize 1};
\node at (3.5,12.5) {\normalsize 1};
\node at (5.5,12.5) {\normalsize 1};
\node at (1.5,9.5) {\normalsize 1};
\node at (1.5,11.5) {\normalsize 1};
\node at (4.5,8.5) {\normalsize 1};
\node at (6.5,8.5) {\normalsize 1};
\node at (7.5,11.5) {\normalsize 1};
\node at (7.5,13.5) {\normalsize 1};
\node at (2.5,14.5) {\normalsize 1};
\node at (4.5,14.5) {\normalsize 1};

\node at (9.5,3.5) {\normalsize 1};
\node at (11.5,3.5) {\normalsize 1};
\node at (9.5,5.5) {\normalsize 1};
\node at (11.5,5.5) {\normalsize 1};
\node at (7.5,2.5) {\normalsize 1};
\node at (7.5,4.5) {\normalsize 1};
\node at (10.5,1.5) {\normalsize 1};
\node at (12.5,1.5) {\normalsize 1};
\node at (13.5,4.5) {\normalsize 1};
\node at (13.5,6.5) {\normalsize 1};
\node at (8.5,7.5) {\normalsize 1};
\node at (10.5,7.5) {\normalsize 1};

\node at (9.5,13.5) {1};
\node at (9.5,11.5) {1};
\node at (6.5,15.5) {1};
\node at (8.5,15.5) {1};
\node at (0.5,15.5) {1};
\node at (0.5,13.5) {1};
\node at (8.5,9.5) {1};
\node at (10.5,9.5) {1};
\node at (11.5,12.5) {1};
\node at (11.5,14.5) {1};
\node at (13.5,12.5) {1};
\node at (13.5,14.5) {1};
\node at (12.5,10.5) {1};
\node at (14.5,10.5) {1};
\node at (12.5,8.5) {1};
\node at (14.5,8.5) {1};
\node at (14.5,2.5) {1};
\node at (14.5,0.5) {1};
\node at (0.5,7.5) {1};
\node at (2.5,7.5) {1};
\node at (4.5,6.5) {1};
\node at (6.5,6.5) {1};
\node at (0.5,5.5) {1};
\node at (2.5,5.5) {1};
\node at (5.5,4.5) {1};
\node at (3.5,3.5) {1};
\node at (1.5,3.5) {1};
\node at (5.5,2.5) {1};
\node at (3.5,1.5) {1};
\node at (1.5,1.5) {1};
\node at (6.5,0.5) {1};
\node at (8.5,0.5) {1};

\node at (15.5,4.5) {1};
\node at (15.5,6.5) {1};
\node at (15.5,13.5) {1};
\node at (15.5,15.5) {1};
\node at (16.5,0.5) {1};
\node at (16.5,2.5) {1};
\node at (16.5,9.5) {1};
\node at (16.5,11.5) {1};
\node at (17.5,5.5) {1};
\node at (17.5,7.5) {1};
\node at (17.5,13.5) {1};
\node at (17.5,15.5) {1};
\node at (18.5,1.5) {1};
\node at (18.5,3.5) {1};
\node at (18.5,9.5) {1};
\node at (18.5,11.5) {1};
\node at (19.5,5.5) {1};
\node at (19.5,7.5) {1};
\node at (19.5,14.5) {1};
\node at (20.5,1.5) {1};
\node at (20.5,3.5) {1};
\node at (20.5,10.5) {1};
\node at (20.5,12.5) {1};
\node at (21.5,6.5) {1};
\node at (21.5,8.5) {1};
\node at (21.5,14.5) {1};

\node at (1.5,15.5) {\textcolor{darkgreen} {$m$}};
\node at (8.5,14.5) {\textcolor{darkgreen} {$m$}};
\node at (7.5,7.5) {\textcolor{darkgreen} {$m$}};
\node at (0.5,8.5) {\textcolor{darkgreen} {$m$}};

\node at (7.5,8.5) {\textcolor{blue} {$k$}};
\node at (6.5,1.5) {\textcolor{blue} {$k$}};
\node at (13.5,0.5) {\textcolor{blue} {$k$}};
\node at (14.5,7.5) {\textcolor{blue} {$k$}};
\node at (7.5,15.5) {\textcolor{blue} {$k$}};
\node at (14.5,14.5) {\textcolor{blue} {$k$}};

\node at (20.5,14.5) {\textcolor{orange} {$n$}};
\node at (21.5,7.5) {\textcolor{orange} {$n$}};
\node at (14.5,6.5) {\textcolor{orange} {$n$}};
\node at (13.5,13.5) {\textcolor{orange} {$n$}};
\fill[yellow, opacity=0.3] (14,14) rectangle (15,16);
\end{scope}

\end{tikzpicture}
\caption{$ G $ (black), $G(-6,7)$ (blue),  $G(6,7)$ (green) and $G(7,6)$ (orange).}
\label{fig:22}
\end{figure}

\textbf{Case 2.2:} $ k $ appears in a type-B position.

In fact, we can assume that the color that does not appear in every critical subgraph $P_7 \boxtimes P_7$ with pattern~(i) must appear in a type-B position, since the type-A case is already covered in Case 2.1. Let $m$ be the color assigned to $(7,7)$ of $G$ (see Figure \ref{fig:23}). We will show that $(1,5)$ of $G(-1,0)$ must also be assigned color $m$. Let $n$ be the color that assigned to $(7,1)$. By Observation \ref{obs:3}\,(iii), we have that $(1,1)$ of $G(-1,0)$ must also be assigned color $n$.

Consider $G(-1,4)$ (see Figure \ref{fig:23}). By Observation~\ref{obs:3}\,(ii), since this subgraph is a critical subgraph, let $p \in \{2,3,\ldots,39\}$ be a color that does not appear in this subgraph, and it must appear in a type-B position. Then, we consider $G(-7,-6)$. By Observation~\ref{obs:3}\,(ii), since this subgraph is a critical subgraph, let $q \in \{2,3,\ldots,39\}$ be a color that does not appear in this subgraph, and it must appear in a type-B position. Next, we consider $G(0,-5)$. By Observation~\ref{obs:2}\,(iv), since $c(G(0,-5)) = 11$, this subgraph must contain color~$q$. By the distance constraint, the only possible square that can be assigned color~$q$ is $(7,7)$, and thus this square must be assigned color~$q$.

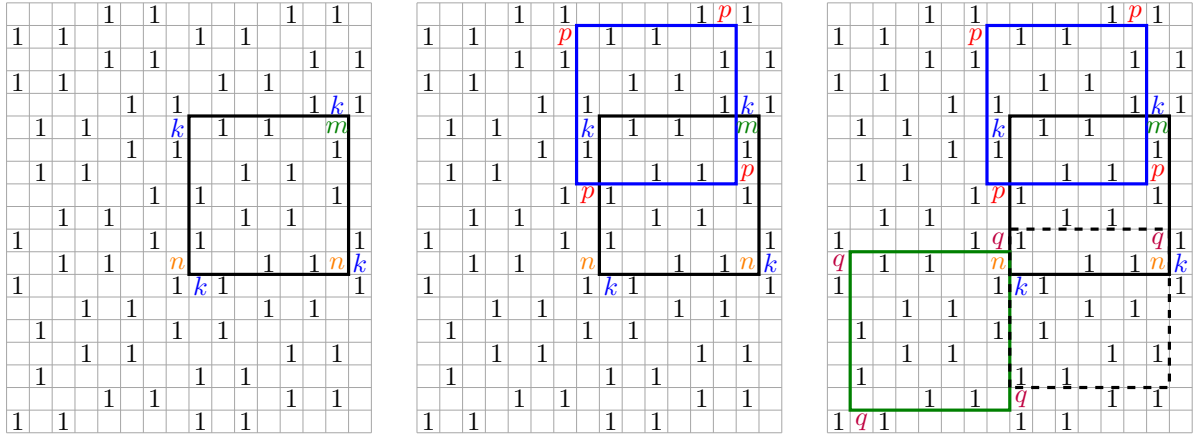
\begin{figure}[ht]
\centering
\begin{tikzpicture}[scale=0.30]

\begin{scope}[shift={(-36,0)}]
    \foreach \x in {0,...,16} {
    \draw[gray!70, line width=0.35pt] (\x,0) -- (\x,19);
  }
  \foreach \y in {0,...,19} {
    \draw[gray!70, line width=0.35pt] (0,\y) -- (16,\y);
  }
\draw[very thick] (8,7) rectangle (15,14);

\node at (4.5,16.5) {\normalsize 1};
\node at (6.5,16.5) {\normalsize 1};
\node at (4.5,18.5) {\normalsize 1};
\node at (6.5,18.5) {\normalsize 1};
\node at (2.5,15.5) {\normalsize 1};
\node at (2.5,17.5) {\normalsize 1};
\node at (5.5,14.5) {\normalsize 1};
\node at (7.5,14.5) {\normalsize 1};
\node at (8.5,17.5) {\normalsize 1};

\node at (10.5,9.5) {\normalsize 1};
\node at (12.5,9.5) {\normalsize 1};
\node at (10.5,11.5) {\normalsize 1};
\node at (12.5,11.5) {\normalsize 1};
\node at (8.5,8.5) {\normalsize 1};
\node at (8.5,10.5) {\normalsize 1};
\node at (11.5,7.5) {\normalsize 1};
\node at (13.5,7.5) {\normalsize 1};
\node at (14.5,10.5) {\normalsize 1};
\node at (14.5,12.5) {\normalsize 1};
\node at (9.5,13.5) {\normalsize 1};
\node at (11.5,13.5) {\normalsize 1};

\node at (10.5,17.5) {1};
\node at (9.5,15.5) {1};
\node at (11.5,15.5) {1};
\node at (12.5,18.5) {1};
\node at (14.5,18.5) {1};
\node at (13.5,16.5) {1};
\node at (15.5,16.5) {1};
\node at (13.5,14.5) {1};
\node at (15.5,14.5) {1};
\node at (15.5,8.5) {1};
\node at (15.5,6.5) {1};
\node at (1.5,13.5) {1};
\node at (3.5,13.5) {1};
\node at (5.5,12.5) {1};
\node at (7.5,12.5) {1};
\node at (1.5,11.5) {1};
\node at (3.5,11.5) {1};
\node at (6.5,10.5) {1};
\node at (4.5,9.5) {1};
\node at (2.5,9.5) {1};
\node at (6.5,8.5) {1};
\node at (4.5,7.5) {1};
\node at (2.5,7.5) {1};
\node at (7.5,6.5) {1};
\node at (9.5,6.5) {1};
\node at (0.5,17.5) {1};
\node at (0.5,15.5) {1};
\node at (0.5,8.5) {1};
\node at (0.5,6.5) {1};
\node at (3.5,5.5) {1};
\node at (5.5,5.5) {1};
\node at (11.5,5.5) {1};
\node at (13.5,5.5) {1};
\node at (1.5,4.5) {1};
\node at (7.5,4.5) {1};
\node at (9.5,4.5) {1};
\node at (3.5,3.5) {1};
\node at (5.5,3.5) {1};
\node at (12.5,3.5) {1};
\node at (14.5,3.5) {1};
\node at (1.5,2.5) {1};
\node at (8.5,2.5) {1};
\node at (10.5,2.5) {1};
\node at (4.5,1.5) {1};
\node at (6.5,1.5) {1};
\node at (12.5,1.5) {1};
\node at (14.5,1.5) {1};
\node at (0.5,0.5) {1};
\node at (2.5,0.5) {1};
\node at (8.5,0.5) {1};
\node at (10.5,0.5) {1};

\node at (8.5,6.5) {\textcolor{blue} {$k$}};
\node at (7.5,13.5) {\textcolor{blue} {$k$}};
\node at (14.5,14.5) {\textcolor{blue} {$k$}};
\node at (15.5,7.5) {\textcolor{blue} {$k$}};

\node at (14.5,13.5) {\textcolor{darkgreen} {$m$}};
\node at (14.5,7.5) {\textcolor{orange} {$n$}};
\node at (7.5,7.5) {\textcolor{orange} {$n$}};
\end{scope}

\begin{scope}[shift={(-18,0)}]
    \foreach \x in {0,...,16} {
    \draw[gray!70, line width=0.35pt] (\x,0) -- (\x,19);
  }
  \foreach \y in {0,...,19} {
    \draw[gray!70, line width=0.35pt] (0,\y) -- (16,\y);
  }
\draw[very thick] (8,7) rectangle (15,14);
\draw[very thick, blue] (7,11) rectangle (14,18);

\node at (4.5,16.5) {\normalsize 1};
\node at (6.5,16.5) {\normalsize 1};
\node at (4.5,18.5) {\normalsize 1};
\node at (6.5,18.5) {\normalsize 1};
\node at (2.5,15.5) {\normalsize 1};
\node at (2.5,17.5) {\normalsize 1};
\node at (5.5,14.5) {\normalsize 1};
\node at (7.5,14.5) {\normalsize 1};
\node at (8.5,17.5) {\normalsize 1};

\node at (10.5,9.5) {\normalsize 1};
\node at (12.5,9.5) {\normalsize 1};
\node at (10.5,11.5) {\normalsize 1};
\node at (12.5,11.5) {\normalsize 1};
\node at (8.5,8.5) {\normalsize 1};
\node at (8.5,10.5) {\normalsize 1};
\node at (11.5,7.5) {\normalsize 1};
\node at (13.5,7.5) {\normalsize 1};
\node at (14.5,10.5) {\normalsize 1};
\node at (14.5,12.5) {\normalsize 1};
\node at (9.5,13.5) {\normalsize 1};
\node at (11.5,13.5) {\normalsize 1};

\node at (10.5,17.5) {1};
\node at (9.5,15.5) {1};
\node at (11.5,15.5) {1};
\node at (12.5,18.5) {1};
\node at (14.5,18.5) {1};
\node at (13.5,16.5) {1};
\node at (15.5,16.5) {1};
\node at (13.5,14.5) {1};
\node at (15.5,14.5) {1};
\node at (15.5,8.5) {1};
\node at (15.5,6.5) {1};
\node at (1.5,13.5) {1};
\node at (3.5,13.5) {1};
\node at (5.5,12.5) {1};
\node at (7.5,12.5) {1};
\node at (1.5,11.5) {1};
\node at (3.5,11.5) {1};
\node at (6.5,10.5) {1};
\node at (4.5,9.5) {1};
\node at (2.5,9.5) {1};
\node at (6.5,8.5) {1};
\node at (4.5,7.5) {1};
\node at (2.5,7.5) {1};
\node at (7.5,6.5) {1};
\node at (9.5,6.5) {1};
\node at (0.5,17.5) {1};
\node at (0.5,15.5) {1};
\node at (0.5,8.5) {1};
\node at (0.5,6.5) {1};
\node at (3.5,5.5) {1};
\node at (5.5,5.5) {1};
\node at (11.5,5.5) {1};
\node at (13.5,5.5) {1};
\node at (1.5,4.5) {1};
\node at (7.5,4.5) {1};
\node at (9.5,4.5) {1};
\node at (3.5,3.5) {1};
\node at (5.5,3.5) {1};
\node at (12.5,3.5) {1};
\node at (14.5,3.5) {1};
\node at (1.5,2.5) {1};
\node at (8.5,2.5) {1};
\node at (10.5,2.5) {1};
\node at (4.5,1.5) {1};
\node at (6.5,1.5) {1};
\node at (12.5,1.5) {1};
\node at (14.5,1.5) {1};
\node at (0.5,0.5) {1};
\node at (2.5,0.5) {1};
\node at (8.5,0.5) {1};
\node at (10.5,0.5) {1};

\node at (8.5,6.5) {\textcolor{blue} {$k$}};
\node at (7.5,13.5) {\textcolor{blue} {$k$}};
\node at (14.5,14.5) {\textcolor{blue} {$k$}};
\node at (15.5,7.5) {\textcolor{blue} {$k$}};

\node at (14.5,13.5) {\textcolor{darkgreen} {$m$}};
\node at (14.5,7.5) {\textcolor{orange} {$n$}};
\node at (7.5,7.5) {\textcolor{orange} {$n$}};

\node at (7.5,10.5) {\textcolor{red} {$p$}};
\node at (14.5,11.5) {\textcolor{red} {$p$}};
\node at (13.5,18.5) {\textcolor{red} {$p$}};
\node at (6.5,17.5) {\textcolor{red} {$p$}};

\end{scope}

\begin{scope}[shift={(0,0)}]
    \foreach \x in {0,...,16} {
    \draw[gray!70, line width=0.35pt] (\x,0) -- (\x,19);
  }
  \foreach \y in {0,...,19} {
    \draw[gray!70, line width=0.35pt] (0,\y) -- (16,\y);
  }
\draw[very thick] (8,7) rectangle (15,14);
\draw[very thick, darkgreen] (1,1) rectangle (8,8);
\draw[very thick, blue] (7,11) rectangle (14,18);
\draw[very thick, dashed] (8,2) rectangle (15,9);

\node at (4.5,16.5) {\normalsize 1};
\node at (6.5,16.5) {\normalsize 1};
\node at (4.5,18.5) {\normalsize 1};
\node at (6.5,18.5) {\normalsize 1};
\node at (2.5,15.5) {\normalsize 1};
\node at (2.5,17.5) {\normalsize 1};
\node at (5.5,14.5) {\normalsize 1};
\node at (7.5,14.5) {\normalsize 1};
\node at (8.5,17.5) {\normalsize 1};

\node at (10.5,9.5) {\normalsize 1};
\node at (12.5,9.5) {\normalsize 1};
\node at (10.5,11.5) {\normalsize 1};
\node at (12.5,11.5) {\normalsize 1};
\node at (8.5,8.5) {\normalsize 1};
\node at (8.5,10.5) {\normalsize 1};
\node at (11.5,7.5) {\normalsize 1};
\node at (13.5,7.5) {\normalsize 1};
\node at (14.5,10.5) {\normalsize 1};
\node at (14.5,12.5) {\normalsize 1};
\node at (9.5,13.5) {\normalsize 1};
\node at (11.5,13.5) {\normalsize 1};

\node at (10.5,17.5) {1};
\node at (9.5,15.5) {1};
\node at (11.5,15.5) {1};
\node at (12.5,18.5) {1};
\node at (14.5,18.5) {1};
\node at (13.5,16.5) {1};
\node at (15.5,16.5) {1};
\node at (13.5,14.5) {1};
\node at (15.5,14.5) {1};
\node at (15.5,8.5) {1};
\node at (15.5,6.5) {1};
\node at (1.5,13.5) {1};
\node at (3.5,13.5) {1};
\node at (5.5,12.5) {1};
\node at (7.5,12.5) {1};
\node at (1.5,11.5) {1};
\node at (3.5,11.5) {1};
\node at (6.5,10.5) {1};
\node at (4.5,9.5) {1};
\node at (2.5,9.5) {1};
\node at (6.5,8.5) {1};
\node at (4.5,7.5) {1};
\node at (2.5,7.5) {1};
\node at (7.5,6.5) {1};
\node at (9.5,6.5) {1};
\node at (0.5,17.5) {1};
\node at (0.5,15.5) {1};
\node at (0.5,8.5) {1};
\node at (0.5,6.5) {1};
\node at (3.5,5.5) {1};
\node at (5.5,5.5) {1};
\node at (11.5,5.5) {1};
\node at (13.5,5.5) {1};
\node at (1.5,4.5) {1};
\node at (7.5,4.5) {1};
\node at (9.5,4.5) {1};
\node at (3.5,3.5) {1};
\node at (5.5,3.5) {1};
\node at (12.5,3.5) {1};
\node at (14.5,3.5) {1};
\node at (1.5,2.5) {1};
\node at (8.5,2.5) {1};
\node at (10.5,2.5) {1};
\node at (4.5,1.5) {1};
\node at (6.5,1.5) {1};
\node at (12.5,1.5) {1};
\node at (14.5,1.5) {1};
\node at (0.5,0.5) {1};
\node at (2.5,0.5) {1};
\node at (8.5,0.5) {1};
\node at (10.5,0.5) {1};

\node at (8.5,6.5) {\textcolor{blue} {$k$}};
\node at (7.5,13.5) {\textcolor{blue} {$k$}};
\node at (14.5,14.5) {\textcolor{blue} {$k$}};
\node at (15.5,7.5) {\textcolor{blue} {$k$}};

\node at (14.5,13.5) {\textcolor{darkgreen} {$m$}};
\node at (14.5,7.5) {\textcolor{orange} {$n$}};
\node at (7.5,7.5) {\textcolor{orange} {$n$}};

\node at (7.5,10.5) {\textcolor{red} {$p$}};
\node at (14.5,11.5) {\textcolor{red} {$p$}};
\node at (13.5,18.5) {\textcolor{red} {$p$}};
\node at (6.5,17.5) {\textcolor{red} {$p$}};

\node at (7.5,8.5) {\textcolor{purple} {$q$}};
\node at (0.5,7.5) {\textcolor{purple} {$q$}};
\node at (1.5,0.5) {\textcolor{purple} {$q$}};
\node at (8.5,1.5) {\textcolor{purple} {$q$}};
\node at (14.5,8.5) {\textcolor{purple} {$q$}};
\end{scope}

\end{tikzpicture}
\caption{$ G $ (black), $G(-1,4)$ (blue), $G(-7,-6)$ (green) and $G(0,-5)$ (dashed).}
\label{fig:23}
\end{figure}

Following this, we consider $G(6,10)$ (see Figure \ref{fig:24}). By Observation~\ref{obs:3}\,(ii), since this subgraph is a critical subgraph, let $r \in \{2,3,\ldots,39\}$ be a color that does not appear in this subgraph, and it must appear in a type-B position. Then, we consider $G(-1,9)$. By Observation \ref{obs:2}\,(iv), since $c(G(-1,9)) = 11$, this subgraph must contain color $r$. By the distance constraint, the square that assigned color $r$ must be $(1,1)$. Similarly, by considering $G(7,3)$, we have that $(7,1)$ of this subgraph must be assigned color $r$. We then consider $G(5,2)$ (see Figure \ref{fig:24}). By Observation \ref{obs:2}\,(iv), since $c(G(5,2)) = 11$, this subgraph must contain color $r$. By the distance constraint, the square that assigned color $r$ must be either $(1,1)$ or $(1,2)$.

Assume that the square assigned color $r$ is $(1,1)$. Then, we consider $G(6,-2)$. By Observation~\ref{obs:2}\,(iv), since $c(G(6,-2)) = 11$, this subgraph must contain color $r$. However, by the distance constraint, no square in this subgraph can be assigned color $r$, leading to a contradiction. Hence, the square assigned color $r$ is $(1,2)$. After that, we consider $G(-1,1)$ (see Figure \ref{fig:24}). By Observation~\ref{obs:2}\,(iv), since $c(G(-1,1)) = 11$, this subgraph must contain color $r$. The only possible squares that can be assigned color $r$ are $(1,1)$ and $(1,2)$. However, square $(1,1)$ has already been assigned color $q$, and clearly $q \neq r$. Therefore, square $(1,2)$ must be assigned color $r$.  

\begin{figure}[ht]
\centering
\begin{tikzpicture}[scale=0.30]

\begin{scope}[shift={(-36,0)}]
    \foreach \x in {0,...,15} {
    \draw[gray!70, line width=0.35pt] (\x,0) -- (\x,20);
  }
  \foreach \y in {0,...,20} {
    \draw[gray!70, line width=0.35pt] (0,\y) -- (15,\y);
  }
\draw[very thick] (1,2) rectangle (8,9);
\draw[very thick, orange] (7,12) rectangle (14,19);
\draw[very thick, red] (0,11) rectangle (7,18);
\draw[very thick, teal] (8,5) rectangle (15,12);

\node at (4.5,17.5) {\normalsize 1};
\node at (6.5,17.5) {\normalsize 1};
\node at (4.5,19.5) {\normalsize 1};
\node at (6.5,19.5) {\normalsize 1};
\node at (2.5,16.5) {\normalsize 1};
\node at (2.5,18.5) {\normalsize 1};
\node at (5.5,15.5) {\normalsize 1};
\node at (7.5,15.5) {\normalsize 1};
\node at (8.5,18.5) {\normalsize 1};

\node at (10.5,10.5) {\normalsize 1};
\node at (12.5,10.5) {\normalsize 1};
\node at (10.5,12.5) {\normalsize 1};
\node at (12.5,12.5) {\normalsize 1};
\node at (8.5,9.5) {\normalsize 1};
\node at (8.5,11.5) {\normalsize 1};
\node at (11.5,8.5) {\normalsize 1};
\node at (13.5,8.5) {\normalsize 1};
\node at (14.5,11.5) {\normalsize 1};
\node at (14.5,13.5) {\normalsize 1};
\node at (9.5,14.5) {\normalsize 1};
\node at (11.5,14.5) {\normalsize 1};

\node at (10.5,18.5) {1};
\node at (9.5,16.5) {1};
\node at (11.5,16.5) {1};
\node at (12.5,19.5) {1};
\node at (14.5,19.5) {1};
\node at (13.5,17.5) {1};
\node at (13.5,15.5) {1};
\node at (1.5,14.5) {1};
\node at (3.5,14.5) {1};
\node at (5.5,13.5) {1};
\node at (7.5,13.5) {1};
\node at (1.5,12.5) {1};
\node at (3.5,12.5) {1};
\node at (6.5,11.5) {1};
\node at (4.5,10.5) {1};
\node at (2.5,10.5) {1};
\node at (6.5,9.5) {1};
\node at (4.5,8.5) {1};
\node at (2.5,8.5) {1};
\node at (7.5,7.5) {1};
\node at (9.5,7.5) {1};
\node at (0.5,18.5) {1};
\node at (0.5,16.5) {1};
\node at (0.5,9.5) {1};
\node at (0.5,7.5) {1};
\node at (3.5,6.5) {1};
\node at (5.5,6.5) {1};
\node at (11.5,6.5) {1};
\node at (13.5,6.5) {1};
\node at (1.5,5.5) {1};
\node at (7.5,5.5) {1};
\node at (9.5,5.5) {1};
\node at (3.5,4.5) {1};
\node at (5.5,4.5) {1};
\node at (12.5,4.5) {1};
\node at (14.5,4.5) {1};
\node at (1.5,3.5) {1};
\node at (8.5,3.5) {1};
\node at (10.5,3.5) {1};
\node at (4.5,2.5) {1};
\node at (6.5,2.5) {1};
\node at (12.5,2.5) {1};
\node at (14.5,2.5) {1};
\node at (0.5,1.5) {1};
\node at (2.5,1.5) {1};
\node at (8.5,1.5) {1};
\node at (10.5,1.5) {1};
\node at (4.5,0.5) {1};
\node at (6.5,0.5) {1};
\node at (13.5,0.5) {1};

\node at (1.5,1.5) {\textcolor{blue} {$k$}};
\node at (0.5,8.5) {\textcolor{blue} {$k$}};
\node at (7.5,9.5) {\textcolor{blue} {$k$}};
\node at (8.5,2.5) {\textcolor{blue} {$k$}};

\node at (7.5,8.5) {\textcolor{darkgreen} {$m$}};
\node at (7.5,2.5) {\textcolor{orange} {$n$}};
\node at (0.5,2.5) {\textcolor{orange} {$n$}};

\node at (0.5,5.5) {\textcolor{red} {$p$}};
\node at (7.5,6.5) {\textcolor{red} {$p$}};
\node at (6.5,13.5) {\textcolor{red} {$p$}};

\node at (0.5,3.5) {\textcolor{purple} {$q$}};
\node at (7.5,3.5) {\textcolor{purple} {$q$}};

\node at (6.5,18.5) {\textcolor{darkcyan} {$r$}};
\node at (13.5,19.5) {\textcolor{darkcyan} {$r$}};
\node at (14.5,12.5) {\textcolor{darkcyan} {$r$}};
\node at (7.5,11.5) {\textcolor{darkcyan} {$r$}};
\node at (0.5,11.5) {\textcolor{darkcyan} {$r$}};
\node at (14.5,5.5) {\textcolor{darkcyan} {$r$}};

\end{scope}

\begin{scope}[shift={(-18,0)}]
    \foreach \x in {0,...,15} {
    \draw[gray!70, line width=0.35pt] (\x,0) -- (\x,20);
  }
  \foreach \y in {0,...,20} {
    \draw[gray!70, line width=0.35pt] (0,\y) -- (15,\y);
  }
\draw[very thick] (1,2) rectangle (8,9);
\draw[very thick, orange] (7,12) rectangle (14,19);
\draw[very thick, blue] (6,4) rectangle (13,11);
\draw[very thick, darkgreen] (7,0) rectangle (14,7);

\node at (4.5,17.5) {\normalsize 1};
\node at (6.5,17.5) {\normalsize 1};
\node at (4.5,19.5) {\normalsize 1};
\node at (6.5,19.5) {\normalsize 1};
\node at (2.5,16.5) {\normalsize 1};
\node at (2.5,18.5) {\normalsize 1};
\node at (5.5,15.5) {\normalsize 1};
\node at (7.5,15.5) {\normalsize 1};
\node at (8.5,18.5) {\normalsize 1};

\node at (10.5,10.5) {\normalsize 1};
\node at (12.5,10.5) {\normalsize 1};
\node at (10.5,12.5) {\normalsize 1};
\node at (12.5,12.5) {\normalsize 1};
\node at (8.5,9.5) {\normalsize 1};
\node at (8.5,11.5) {\normalsize 1};
\node at (11.5,8.5) {\normalsize 1};
\node at (13.5,8.5) {\normalsize 1};
\node at (14.5,11.5) {\normalsize 1};
\node at (14.5,13.5) {\normalsize 1};
\node at (9.5,14.5) {\normalsize 1};
\node at (11.5,14.5) {\normalsize 1};

\node at (10.5,18.5) {1};
\node at (9.5,16.5) {1};
\node at (11.5,16.5) {1};
\node at (12.5,19.5) {1};
\node at (14.5,19.5) {1};
\node at (13.5,17.5) {1};
\node at (13.5,15.5) {1};
\node at (1.5,14.5) {1};
\node at (3.5,14.5) {1};
\node at (5.5,13.5) {1};
\node at (7.5,13.5) {1};
\node at (1.5,12.5) {1};
\node at (3.5,12.5) {1};
\node at (6.5,11.5) {1};
\node at (4.5,10.5) {1};
\node at (2.5,10.5) {1};
\node at (6.5,9.5) {1};
\node at (4.5,8.5) {1};
\node at (2.5,8.5) {1};
\node at (7.5,7.5) {1};
\node at (9.5,7.5) {1};
\node at (0.5,18.5) {1};
\node at (0.5,16.5) {1};
\node at (0.5,9.5) {1};
\node at (0.5,7.5) {1};
\node at (3.5,6.5) {1};
\node at (5.5,6.5) {1};
\node at (11.5,6.5) {1};
\node at (13.5,6.5) {1};
\node at (1.5,5.5) {1};
\node at (7.5,5.5) {1};
\node at (9.5,5.5) {1};
\node at (3.5,4.5) {1};
\node at (5.5,4.5) {1};
\node at (12.5,4.5) {1};
\node at (14.5,4.5) {1};
\node at (1.5,3.5) {1};
\node at (8.5,3.5) {1};
\node at (10.5,3.5) {1};
\node at (4.5,2.5) {1};
\node at (6.5,2.5) {1};
\node at (12.5,2.5) {1};
\node at (14.5,2.5) {1};
\node at (0.5,1.5) {1};
\node at (2.5,1.5) {1};
\node at (8.5,1.5) {1};
\node at (10.5,1.5) {1};
\node at (4.5,0.5) {1};
\node at (6.5,0.5) {1};
\node at (13.5,0.5) {1};

\node at (1.5,1.5) {\textcolor{blue} {$k$}};
\node at (0.5,8.5) {\textcolor{blue} {$k$}};
\node at (7.5,9.5) {\textcolor{blue} {$k$}};
\node at (8.5,2.5) {\textcolor{blue} {$k$}};

\node at (7.5,8.5) {\textcolor{darkgreen} {$m$}};
\node at (7.5,2.5) {\textcolor{orange} {$n$}};
\node at (0.5,2.5) {\textcolor{orange} {$n$}};

\node at (0.5,5.5) {\textcolor{red} {$p$}};
\node at (7.5,6.5) {\textcolor{red} {$p$}};
\node at (6.5,13.5) {\textcolor{red} {$p$}};

\node at (0.5,3.5) {\textcolor{purple} {$q$}};
\node at (7.5,3.5) {\textcolor{purple} {$q$}};

\node at (6.5,18.5) {\textcolor{darkcyan} {$r$}};
\node at (13.5,19.5) {\textcolor{darkcyan} {$r$}};
\node at (14.5,12.5) {\textcolor{darkcyan} {$r$}};
\node at (7.5,11.5) {\textcolor{darkcyan} {$r$}};
\node at (0.5,11.5) {\textcolor{darkcyan} {$r$}};
\node at (14.5,5.5) {\textcolor{darkcyan} {$r$}};
\node at (6.5,4.5) {\textcolor{darkcyan} {$r$}};

\fill[yellow, opacity=0.3] (6,4) rectangle (8,5);

\end{scope}

\begin{scope}[shift={(0,0)}]
    \foreach \x in {0,...,15} {
    \draw[gray!70, line width=0.35pt] (\x,0) -- (\x,20);
  }
  \foreach \y in {0,...,20} {
    \draw[gray!70, line width=0.35pt] (0,\y) -- (15,\y);
  }
\draw[very thick] (1,2) rectangle (8,9);
\draw[very thick, orange] (7,12) rectangle (14,19);
\draw[very thick, dashed] (0,3) rectangle (7,10);

\node at (4.5,17.5) {\normalsize 1};
\node at (6.5,17.5) {\normalsize 1};
\node at (4.5,19.5) {\normalsize 1};
\node at (6.5,19.5) {\normalsize 1};
\node at (2.5,16.5) {\normalsize 1};
\node at (2.5,18.5) {\normalsize 1};
\node at (5.5,15.5) {\normalsize 1};
\node at (7.5,15.5) {\normalsize 1};
\node at (8.5,18.5) {\normalsize 1};

\node at (10.5,10.5) {\normalsize 1};
\node at (12.5,10.5) {\normalsize 1};
\node at (10.5,12.5) {\normalsize 1};
\node at (12.5,12.5) {\normalsize 1};
\node at (8.5,9.5) {\normalsize 1};
\node at (8.5,11.5) {\normalsize 1};
\node at (11.5,8.5) {\normalsize 1};
\node at (13.5,8.5) {\normalsize 1};
\node at (14.5,11.5) {\normalsize 1};
\node at (14.5,13.5) {\normalsize 1};
\node at (9.5,14.5) {\normalsize 1};
\node at (11.5,14.5) {\normalsize 1};

\node at (10.5,18.5) {1};
\node at (9.5,16.5) {1};
\node at (11.5,16.5) {1};
\node at (12.5,19.5) {1};
\node at (14.5,19.5) {1};
\node at (13.5,17.5) {1};
\node at (13.5,15.5) {1};
\node at (1.5,14.5) {1};
\node at (3.5,14.5) {1};
\node at (5.5,13.5) {1};
\node at (7.5,13.5) {1};
\node at (1.5,12.5) {1};
\node at (3.5,12.5) {1};
\node at (6.5,11.5) {1};
\node at (4.5,10.5) {1};
\node at (2.5,10.5) {1};
\node at (6.5,9.5) {1};
\node at (4.5,8.5) {1};
\node at (2.5,8.5) {1};
\node at (7.5,7.5) {1};
\node at (9.5,7.5) {1};
\node at (0.5,18.5) {1};
\node at (0.5,16.5) {1};
\node at (0.5,9.5) {1};
\node at (0.5,7.5) {1};
\node at (3.5,6.5) {1};
\node at (5.5,6.5) {1};
\node at (11.5,6.5) {1};
\node at (13.5,6.5) {1};
\node at (1.5,5.5) {1};
\node at (7.5,5.5) {1};
\node at (9.5,5.5) {1};
\node at (3.5,4.5) {1};
\node at (5.5,4.5) {1};
\node at (12.5,4.5) {1};
\node at (14.5,4.5) {1};
\node at (1.5,3.5) {1};
\node at (8.5,3.5) {1};
\node at (10.5,3.5) {1};
\node at (4.5,2.5) {1};
\node at (6.5,2.5) {1};
\node at (12.5,2.5) {1};
\node at (14.5,2.5) {1};
\node at (0.5,1.5) {1};
\node at (2.5,1.5) {1};
\node at (8.5,1.5) {1};
\node at (10.5,1.5) {1};
\node at (4.5,0.5) {1};
\node at (6.5,0.5) {1};
\node at (13.5,0.5) {1};

\node at (1.5,1.5) {\textcolor{blue} {$k$}};
\node at (0.5,8.5) {\textcolor{blue} {$k$}};
\node at (7.5,9.5) {\textcolor{blue} {$k$}};
\node at (8.5,2.5) {\textcolor{blue} {$k$}};

\node at (7.5,8.5) {\textcolor{darkgreen} {$m$}};
\node at (7.5,2.5) {\textcolor{orange} {$n$}};
\node at (0.5,2.5) {\textcolor{orange} {$n$}};

\node at (0.5,5.5) {\textcolor{red} {$p$}};
\node at (7.5,6.5) {\textcolor{red} {$p$}};
\node at (6.5,13.5) {\textcolor{red} {$p$}};

\node at (0.5,3.5) {\textcolor{purple} {$q$}};
\node at (7.5,3.5) {\textcolor{purple} {$q$}};

\node at (6.5,18.5) {\textcolor{darkcyan} {$r$}};
\node at (13.5,19.5) {\textcolor{darkcyan} {$r$}};
\node at (14.5,12.5) {\textcolor{darkcyan} {$r$}};
\node at (7.5,11.5) {\textcolor{darkcyan} {$r$}};
\node at (0.5,11.5) {\textcolor{darkcyan} {$r$}};
\node at (14.5,5.5) {\textcolor{darkcyan} {$r$}};
\node at (7.5,4.5) {\textcolor{darkcyan} {$r$}};
\node at (0.5,4.5) {\textcolor{darkcyan} {$r$}};

\fill[yellow, opacity=0.3] (6,4) rectangle (8,5);
\end{scope}

\end{tikzpicture}
\caption{$ G $ (black), $G(6,10)$ (orange),  $G(-1,9)$ (red), $G(7,3)$ (teal), $G(5,2)$ (blue), $G(6,-2)$ (green) and $G(-1,1)$ (dashed).}
\label{fig:24}
\end{figure}
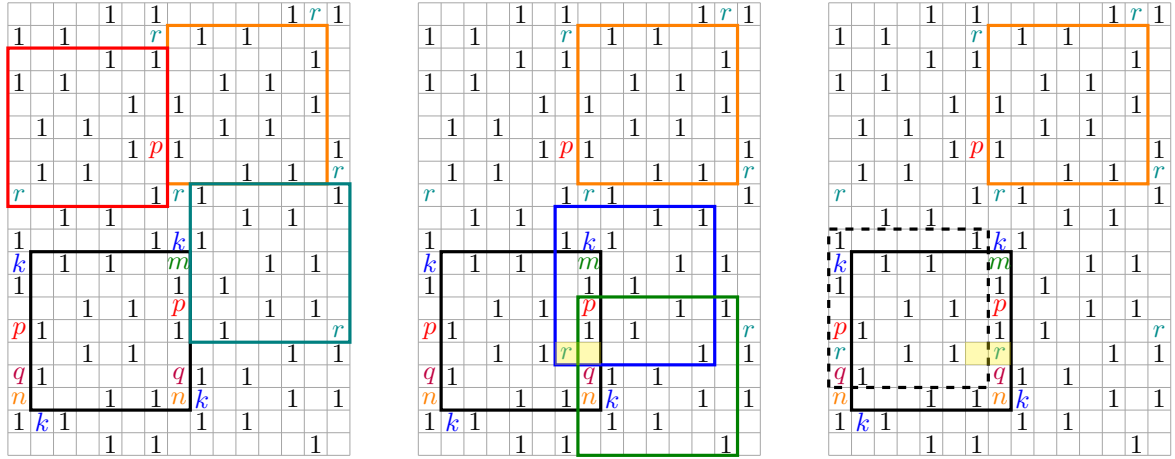

Next, we consider $G(-1,0)$ (see Figure \ref{fig:25}). By Observation~\ref{obs:2}\,(iv), since $c(G(-1,0)) = 11$, this subgraph must contain color $m$. By the distance constraint, the only possible squares that can be assigned color $m$ are those in column~1. However, squares $(1,1)$, $(1,2)$, $(1,3)$, $(1,4)$, $(1,6)$, and $(1,7)$ have already been assigned other colors, which cannot be $m$. Therefore, square $(1,5)$ must be assigned color $m$. Then, we consider $G(-1,5)$. By Observation \ref{obs:2}\,(iv), since $c(G(-1,5)) = 11$, this subgraph must contain color $m$. By the distance constraint, color $m$ must be assigned to square $(1,7)$.

After that, we consider $G(1,8)$ (see Figure \ref{fig:25}). By Observation~\ref{obs:2}\,(iv), since $c(G(1,8)) = 11$, this subgraph must contain color $m$. By the distance constraint, the only possible squares that can be assigned color $m$ are $(6,7)$, $(7,6)$ and $(7,7)$.

Assume that either $(7,6)$ or $(7,7)$ is assigned color $m$. Consider $G(0,12)$ (see Figure~\ref{fig:25}). By Observation~\ref{obs:2}\,(iv), since $c(G(0,12)) = 11$, this subgraph must contain color $m$. However, by the distance constraint, no square in this subgraph can be assigned color $m$, leading to a contradiction. Hence, square $(6,7)$ must be assigned color $m$. Next, consider $G(6,7)$. By Observation~\ref{obs:2}\,(iv), since $c(G(6,7)) = 11$, this subgraph must contain color $m$. Again, by the distance constraint, this subgraph cannot contain color $m$, which is also a contradiction.
\end{proof}

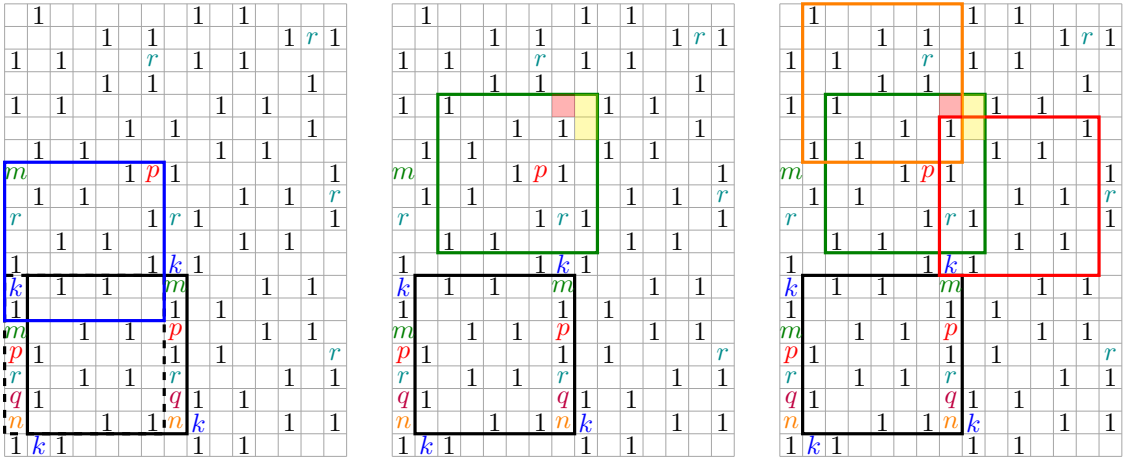
\begin{figure}[ht]
\centering
\begin{tikzpicture}[scale=0.30]

\begin{scope}[shift={(-34,0)}]
    \foreach \x in {0,...,15} {
    \draw[gray!70, line width=0.35pt] (\x,0) -- (\x,20);
  }
  \foreach \y in {0,...,20} {
    \draw[gray!70, line width=0.35pt] (0,\y) -- (15,\y);
  }
\draw[very thick] (1,1) rectangle (8,8);
\draw[very thick, dashed] (0,1) rectangle (7,8);
\draw[very thick, blue] (0,6) rectangle (7,13);

\node at (4.5,16.5) {\normalsize 1};
\node at (6.5,16.5) {\normalsize 1};
\node at (4.5,18.5) {\normalsize 1};
\node at (6.5,18.5) {\normalsize 1};
\node at (2.5,15.5) {\normalsize 1};
\node at (2.5,17.5) {\normalsize 1};
\node at (5.5,14.5) {\normalsize 1};
\node at (7.5,14.5) {\normalsize 1};
\node at (8.5,17.5) {\normalsize 1};

\node at (10.5,9.5) {\normalsize 1};
\node at (12.5,9.5) {\normalsize 1};
\node at (10.5,11.5) {\normalsize 1};
\node at (12.5,11.5) {\normalsize 1};
\node at (8.5,8.5) {\normalsize 1};
\node at (8.5,10.5) {\normalsize 1};
\node at (11.5,7.5) {\normalsize 1};
\node at (13.5,7.5) {\normalsize 1};
\node at (14.5,10.5) {\normalsize 1};
\node at (14.5,12.5) {\normalsize 1};
\node at (9.5,13.5) {\normalsize 1};
\node at (11.5,13.5) {\normalsize 1};

\node at (10.5,17.5) {1};
\node at (9.5,15.5) {1};
\node at (11.5,15.5) {1};
\node at (12.5,18.5) {1};
\node at (14.5,18.5) {1};
\node at (13.5,16.5) {1};
\node at (13.5,14.5) {1};
\node at (1.5,13.5) {1};
\node at (3.5,13.5) {1};
\node at (5.5,12.5) {1};
\node at (7.5,12.5) {1};
\node at (1.5,11.5) {1};
\node at (3.5,11.5) {1};
\node at (6.5,10.5) {1};
\node at (4.5,9.5) {1};
\node at (2.5,9.5) {1};
\node at (6.5,8.5) {1};
\node at (4.5,7.5) {1};
\node at (2.5,7.5) {1};
\node at (7.5,6.5) {1};
\node at (9.5,6.5) {1};
\node at (0.5,17.5) {1};
\node at (0.5,15.5) {1};
\node at (0.5,8.5) {1};
\node at (0.5,6.5) {1};
\node at (3.5,5.5) {1};
\node at (5.5,5.5) {1};
\node at (11.5,5.5) {1};
\node at (13.5,5.5) {1};
\node at (1.5,4.5) {1};
\node at (7.5,4.5) {1};
\node at (9.5,4.5) {1};
\node at (3.5,3.5) {1};
\node at (5.5,3.5) {1};
\node at (12.5,3.5) {1};
\node at (14.5,3.5) {1};
\node at (1.5,2.5) {1};
\node at (8.5,2.5) {1};
\node at (10.5,2.5) {1};
\node at (4.5,1.5) {1};
\node at (6.5,1.5) {1};
\node at (12.5,1.5) {1};
\node at (14.5,1.5) {1};
\node at (0.5,0.5) {1};
\node at (2.5,0.5) {1};
\node at (8.5,0.5) {1};
\node at (10.5,0.5) {1};
\node at (1.5,19.5) {1};
\node at (8.5,19.5) {1};
\node at (10.5,19.5) {1};

\node at (1.5,0.5) {\textcolor{blue} {$k$}};
\node at (0.5,7.5) {\textcolor{blue} {$k$}};
\node at (7.5,8.5) {\textcolor{blue} {$k$}};
\node at (8.5,1.5) {\textcolor{blue} {$k$}};

\node at (7.5,7.5) {\textcolor{darkgreen} {$m$}};
\node at (7.5,1.5) {\textcolor{orange} {$n$}};
\node at (0.5,1.5) {\textcolor{orange} {$n$}};

\node at (0.5,4.5) {\textcolor{red} {$p$}};
\node at (7.5,5.5) {\textcolor{red} {$p$}};
\node at (6.5,12.5) {\textcolor{red} {$p$}};

\node at (0.5,2.5) {\textcolor{purple} {$q$}};
\node at (7.5,2.5) {\textcolor{purple} {$q$}};

\node at (6.5,17.5) {\textcolor{darkcyan} {$r$}};
\node at (13.5,18.5) {\textcolor{darkcyan} {$r$}};
\node at (14.5,11.5) {\textcolor{darkcyan} {$r$}};
\node at (7.5,10.5) {\textcolor{darkcyan} {$r$}};
\node at (0.5,10.5) {\textcolor{darkcyan} {$r$}};
\node at (14.5,4.5) {\textcolor{darkcyan} {$r$}};
\node at (7.5,3.5) {\textcolor{darkcyan} {$r$}};
\node at (0.5,3.5) {\textcolor{darkcyan} {$r$}};
\node at (0.5,5.5) {\textcolor{darkgreen} {$m$}};
\node at (0.5,12.5) {\textcolor{darkgreen} {$m$}};
\end{scope}

\begin{scope}[shift={(-17,0)}]
    \foreach \x in {0,...,15} {
    \draw[gray!70, line width=0.35pt] (\x,0) -- (\x,20);
  }
  \foreach \y in {0,...,20} {
    \draw[gray!70, line width=0.35pt] (0,\y) -- (15,\y);
  }
\draw[very thick] (1,1) rectangle (8,8);
\draw[very thick, darkgreen] (2,9) rectangle (9,16);

\node at (4.5,16.5) {\normalsize 1};
\node at (6.5,16.5) {\normalsize 1};
\node at (4.5,18.5) {\normalsize 1};
\node at (6.5,18.5) {\normalsize 1};
\node at (2.5,15.5) {\normalsize 1};
\node at (2.5,17.5) {\normalsize 1};
\node at (5.5,14.5) {\normalsize 1};
\node at (7.5,14.5) {\normalsize 1};
\node at (8.5,17.5) {\normalsize 1};

\node at (10.5,9.5) {\normalsize 1};
\node at (12.5,9.5) {\normalsize 1};
\node at (10.5,11.5) {\normalsize 1};
\node at (12.5,11.5) {\normalsize 1};
\node at (8.5,8.5) {\normalsize 1};
\node at (8.5,10.5) {\normalsize 1};
\node at (11.5,7.5) {\normalsize 1};
\node at (13.5,7.5) {\normalsize 1};
\node at (14.5,10.5) {\normalsize 1};
\node at (14.5,12.5) {\normalsize 1};
\node at (9.5,13.5) {\normalsize 1};
\node at (11.5,13.5) {\normalsize 1};

\node at (10.5,17.5) {1};
\node at (9.5,15.5) {1};
\node at (11.5,15.5) {1};
\node at (12.5,18.5) {1};
\node at (14.5,18.5) {1};
\node at (13.5,16.5) {1};
\node at (13.5,14.5) {1};
\node at (1.5,13.5) {1};
\node at (3.5,13.5) {1};
\node at (5.5,12.5) {1};
\node at (7.5,12.5) {1};
\node at (1.5,11.5) {1};
\node at (3.5,11.5) {1};
\node at (6.5,10.5) {1};
\node at (4.5,9.5) {1};
\node at (2.5,9.5) {1};
\node at (6.5,8.5) {1};
\node at (4.5,7.5) {1};
\node at (2.5,7.5) {1};
\node at (7.5,6.5) {1};
\node at (9.5,6.5) {1};
\node at (0.5,17.5) {1};
\node at (0.5,15.5) {1};
\node at (0.5,8.5) {1};
\node at (0.5,6.5) {1};
\node at (3.5,5.5) {1};
\node at (5.5,5.5) {1};
\node at (11.5,5.5) {1};
\node at (13.5,5.5) {1};
\node at (1.5,4.5) {1};
\node at (7.5,4.5) {1};
\node at (9.5,4.5) {1};
\node at (3.5,3.5) {1};
\node at (5.5,3.5) {1};
\node at (12.5,3.5) {1};
\node at (14.5,3.5) {1};
\node at (1.5,2.5) {1};
\node at (8.5,2.5) {1};
\node at (10.5,2.5) {1};
\node at (4.5,1.5) {1};
\node at (6.5,1.5) {1};
\node at (12.5,1.5) {1};
\node at (14.5,1.5) {1};
\node at (0.5,0.5) {1};
\node at (2.5,0.5) {1};
\node at (8.5,0.5) {1};
\node at (10.5,0.5) {1};
\node at (1.5,19.5) {1};
\node at (8.5,19.5) {1};
\node at (10.5,19.5) {1};

\node at (1.5,0.5) {\textcolor{blue} {$k$}};
\node at (0.5,7.5) {\textcolor{blue} {$k$}};
\node at (7.5,8.5) {\textcolor{blue} {$k$}};
\node at (8.5,1.5) {\textcolor{blue} {$k$}};

\node at (7.5,7.5) {\textcolor{darkgreen} {$m$}};
\node at (7.5,1.5) {\textcolor{orange} {$n$}};
\node at (0.5,1.5) {\textcolor{orange} {$n$}};

\node at (0.5,4.5) {\textcolor{red} {$p$}};
\node at (7.5,5.5) {\textcolor{red} {$p$}};
\node at (6.5,12.5) {\textcolor{red} {$p$}};

\node at (0.5,2.5) {\textcolor{purple} {$q$}};
\node at (7.5,2.5) {\textcolor{purple} {$q$}};

\node at (6.5,17.5) {\textcolor{darkcyan} {$r$}};
\node at (13.5,18.5) {\textcolor{darkcyan} {$r$}};
\node at (14.5,11.5) {\textcolor{darkcyan} {$r$}};
\node at (7.5,10.5) {\textcolor{darkcyan} {$r$}};
\node at (0.5,10.5) {\textcolor{darkcyan} {$r$}};
\node at (14.5,4.5) {\textcolor{darkcyan} {$r$}};
\node at (7.5,3.5) {\textcolor{darkcyan} {$r$}};
\node at (0.5,3.5) {\textcolor{darkcyan} {$r$}};
\node at (0.5,5.5) {\textcolor{darkgreen} {$m$}};
\node at (0.5,12.5) {\textcolor{darkgreen} {$m$}};
\fill[yellow, opacity=0.3] (8,14) rectangle (9,16);
\fill[red, opacity=0.3] (7,15) rectangle (8,16);
\end{scope}

\begin{scope}[shift={(0,0)}]
    \foreach \x in {0,...,15} {
    \draw[gray!70, line width=0.35pt] (\x,0) -- (\x,20);
  }
  \foreach \y in {0,...,20} {
    \draw[gray!70, line width=0.35pt] (0,\y) -- (15,\y);
  }
\draw[very thick] (1,1) rectangle (8,8);
\draw[very thick, darkgreen] (2,9) rectangle (9,16);
\draw[very thick, orange] (1,13) rectangle (8,20);
\draw[very thick, red] (7,8) rectangle (14,15);

\node at (4.5,16.5) {\normalsize 1};
\node at (6.5,16.5) {\normalsize 1};
\node at (4.5,18.5) {\normalsize 1};
\node at (6.5,18.5) {\normalsize 1};
\node at (2.5,15.5) {\normalsize 1};
\node at (2.5,17.5) {\normalsize 1};
\node at (5.5,14.5) {\normalsize 1};
\node at (7.5,14.5) {\normalsize 1};
\node at (8.5,17.5) {\normalsize 1};

\node at (10.5,9.5) {\normalsize 1};
\node at (12.5,9.5) {\normalsize 1};
\node at (10.5,11.5) {\normalsize 1};
\node at (12.5,11.5) {\normalsize 1};
\node at (8.5,8.5) {\normalsize 1};
\node at (8.5,10.5) {\normalsize 1};
\node at (11.5,7.5) {\normalsize 1};
\node at (13.5,7.5) {\normalsize 1};
\node at (14.5,10.5) {\normalsize 1};
\node at (14.5,12.5) {\normalsize 1};
\node at (9.5,13.5) {\normalsize 1};
\node at (11.5,13.5) {\normalsize 1};

\node at (10.5,17.5) {1};
\node at (9.5,15.5) {1};
\node at (11.5,15.5) {1};
\node at (12.5,18.5) {1};
\node at (14.5,18.5) {1};
\node at (13.5,16.5) {1};
\node at (13.5,14.5) {1};
\node at (1.5,13.5) {1};
\node at (3.5,13.5) {1};
\node at (5.5,12.5) {1};
\node at (7.5,12.5) {1};
\node at (1.5,11.5) {1};
\node at (3.5,11.5) {1};
\node at (6.5,10.5) {1};
\node at (4.5,9.5) {1};
\node at (2.5,9.5) {1};
\node at (6.5,8.5) {1};
\node at (4.5,7.5) {1};
\node at (2.5,7.5) {1};
\node at (7.5,6.5) {1};
\node at (9.5,6.5) {1};
\node at (0.5,17.5) {1};
\node at (0.5,15.5) {1};
\node at (0.5,8.5) {1};
\node at (0.5,6.5) {1};
\node at (3.5,5.5) {1};
\node at (5.5,5.5) {1};
\node at (11.5,5.5) {1};
\node at (13.5,5.5) {1};
\node at (1.5,4.5) {1};
\node at (7.5,4.5) {1};
\node at (9.5,4.5) {1};
\node at (3.5,3.5) {1};
\node at (5.5,3.5) {1};
\node at (12.5,3.5) {1};
\node at (14.5,3.5) {1};
\node at (1.5,2.5) {1};
\node at (8.5,2.5) {1};
\node at (10.5,2.5) {1};
\node at (4.5,1.5) {1};
\node at (6.5,1.5) {1};
\node at (12.5,1.5) {1};
\node at (14.5,1.5) {1};
\node at (0.5,0.5) {1};
\node at (2.5,0.5) {1};
\node at (8.5,0.5) {1};
\node at (10.5,0.5) {1};
\node at (1.5,19.5) {1};
\node at (8.5,19.5) {1};
\node at (10.5,19.5) {1};

\node at (1.5,0.5) {\textcolor{blue} {$k$}};
\node at (0.5,7.5) {\textcolor{blue} {$k$}};
\node at (7.5,8.5) {\textcolor{blue} {$k$}};
\node at (8.5,1.5) {\textcolor{blue} {$k$}};

\node at (7.5,7.5) {\textcolor{darkgreen} {$m$}};
\node at (7.5,1.5) {\textcolor{orange} {$n$}};
\node at (0.5,1.5) {\textcolor{orange} {$n$}};

\node at (0.5,4.5) {\textcolor{red} {$p$}};
\node at (7.5,5.5) {\textcolor{red} {$p$}};
\node at (6.5,12.5) {\textcolor{red} {$p$}};

\node at (0.5,2.5) {\textcolor{purple} {$q$}};
\node at (7.5,2.5) {\textcolor{purple} {$q$}};

\node at (6.5,17.5) {\textcolor{darkcyan} {$r$}};
\node at (13.5,18.5) {\textcolor{darkcyan} {$r$}};
\node at (14.5,11.5) {\textcolor{darkcyan} {$r$}};
\node at (7.5,10.5) {\textcolor{darkcyan} {$r$}};
\node at (0.5,10.5) {\textcolor{darkcyan} {$r$}};
\node at (14.5,4.5) {\textcolor{darkcyan} {$r$}};
\node at (7.5,3.5) {\textcolor{darkcyan} {$r$}};
\node at (0.5,3.5) {\textcolor{darkcyan} {$r$}};
\node at (0.5,5.5) {\textcolor{darkgreen} {$m$}};
\node at (0.5,12.5) {\textcolor{darkgreen} {$m$}};
\fill[yellow, opacity=0.3] (8,14) rectangle (9,16);
\fill[red, opacity=0.3] (7,15) rectangle (8,16);
\end{scope}

\end{tikzpicture}
\caption{$ G $ (black), $G(-1,0)$ (dashed), $G(-1,5)$ (blue), $G(1,8)$ (green), $G(0,12)$ (orange) and $G(6,7)$ (red).}
\label{fig:25}
\end{figure}

\subsection{Proof of Lemma \ref{lem:4}\,(iv)}
\begin{proof}
    Suppose for contradiction that the pattern
    \[
    \begin{bmatrix}
    x & x & x & 1\\
    1 & x & x & x\\
    x & x & x & x\\
    1 & x & 1 & x
    \end{bmatrix}
    \]
    appears in the coloring. Let $ G $ be a $ P_7 \boxtimes P_7 $ subgraph in which the pattern appears, where the squares at $(2,2)$, $(2,4)$, $(4,2)$, and $(5,5)$ are of color~1. Observe that the squares that cannot be assigned color~1 are as shown in Figure~\ref{fig:26}. We now partition the remaining squares into seven parts, as shown in Figure~\ref{fig:26}. By Observation~\ref{obs:2}\,(i) and (iv), each part must contain exactly one square of color~1. In particular, the part at $(7,7)$ must be a square of color~1. By the distance constraint, squares $(6,7)$ and $(7,6)$ cannot be assigned color 1. As a result, squares $(5,7)$ and $(7,5)$ must be of color~1. Observe that this subgraph contains the pattern
\[
\begin{bmatrix}
1 & x & 1\\
x & x & x\\
1 & x & 1
\end{bmatrix}
\]
which contradicts Lemma~\ref{lem:4}\,(iii).
\end{proof}

    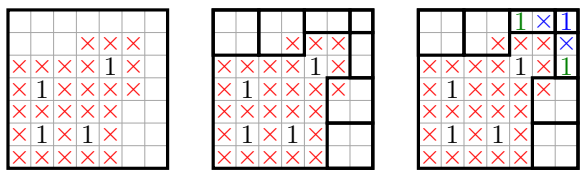
\begin{figure}[ht]
\centering
\begin{tikzpicture}[scale=0.30]

\begin{scope}[shift={(-18,0)}]
  \foreach \x in {0,...,7} {
    \draw[gray!70, line width=0.35pt] (\x,0) -- (\x,7);
  }
  \foreach \y in {0,...,7} {
    \draw[gray!70, line width=0.35pt] (0,\y) -- (7,\y);
  }
  \draw[very thick] (0,0) rectangle (7,7);

  \node at (1.5,1.5) {\normalsize 1};
  \node at (3.5,1.5) {\normalsize 1};
  \node at (1.5,3.5) {\normalsize 1};
  \node at (4.5,4.5) {\normalsize 1};
\node at (0.5,0.5) {\textcolor{red}{$\times$}};
\node at (1.5,0.5) {\textcolor{red}{$\times$}};
\node at (2.5,0.5) {\textcolor{red}{$\times$}};
\node at (3.5,0.5) {\textcolor{red}{$\times$}};
\node at (4.5,0.5) {\textcolor{red}{$\times$}};
\node at (0.5,1.5) {\textcolor{red}{$\times$}};
\node at (2.5,1.5) {\textcolor{red}{$\times$}};
\node at (4.5,1.5) {\textcolor{red}{$\times$}};
\node at (0.5,2.5) {\textcolor{red}{$\times$}};
\node at (1.5,2.5) {\textcolor{red}{$\times$}};
\node at (2.5,2.5) {\textcolor{red}{$\times$}};
\node at (3.5,2.5) {\textcolor{red}{$\times$}};
\node at (4.5,2.5) {\textcolor{red}{$\times$}};
\node at (0.5,3.5) {\textcolor{red}{$\times$}};
\node at (2.5,3.5) {\textcolor{red}{$\times$}};
\node at (3.5,3.5) {\textcolor{red}{$\times$}};
\node at (4.5,3.5) {\textcolor{red}{$\times$}};
\node at (5.5,3.5) {\textcolor{red}{$\times$}};
\node at (0.5,4.5) {\textcolor{red}{$\times$}};
\node at (1.5,4.5) {\textcolor{red}{$\times$}};
\node at (2.5,4.5) {\textcolor{red}{$\times$}};
\node at (3.5,4.5) {\textcolor{red}{$\times$}};
\node at (5.5,4.5) {\textcolor{red}{$\times$}};
\node at (3.5,5.5) {\textcolor{red}{$\times$}};
\node at (4.5,5.5) {\textcolor{red}{$\times$}};
\node at (5.5,5.5) {\textcolor{red}{$\times$}};
  
\end{scope}

\begin{scope}[shift={(-9,0)}]
\foreach \x in {0,...,7} {
    \draw[gray!70, line width=0.35pt] (\x,0) -- (\x,7);
  }
  \foreach \y in {0,...,7} {
    \draw[gray!70, line width=0.35pt] (0,\y) -- (7,\y);
  }
  \draw[very thick] (0,0) rectangle (7,7);
  \draw[very thick] (0,5) rectangle (2,7);
  \draw[very thick] (2,5) rectangle (4,7);
  \draw[very thick] (4,6) rectangle (6,7);
  \draw[very thick] (6,6) rectangle (7,7);
  \draw[very thick] (6,4) rectangle (7,6);
  \draw[very thick] (5,2) rectangle (7,4);
  \draw[very thick] (5,0) rectangle (7,2);

  \node at (1.5,1.5) {\normalsize 1};
  \node at (3.5,1.5) {\normalsize 1};
  \node at (1.5,3.5) {\normalsize 1};
  \node at (4.5,4.5) {\normalsize 1};
\node at (0.5,0.5) {\textcolor{red}{$\times$}};
\node at (1.5,0.5) {\textcolor{red}{$\times$}};
\node at (2.5,0.5) {\textcolor{red}{$\times$}};
\node at (3.5,0.5) {\textcolor{red}{$\times$}};
\node at (4.5,0.5) {\textcolor{red}{$\times$}};
\node at (0.5,1.5) {\textcolor{red}{$\times$}};
\node at (2.5,1.5) {\textcolor{red}{$\times$}};
\node at (4.5,1.5) {\textcolor{red}{$\times$}};
\node at (0.5,2.5) {\textcolor{red}{$\times$}};
\node at (1.5,2.5) {\textcolor{red}{$\times$}};
\node at (2.5,2.5) {\textcolor{red}{$\times$}};
\node at (3.5,2.5) {\textcolor{red}{$\times$}};
\node at (4.5,2.5) {\textcolor{red}{$\times$}};
\node at (0.5,3.5) {\textcolor{red}{$\times$}};
\node at (2.5,3.5) {\textcolor{red}{$\times$}};
\node at (3.5,3.5) {\textcolor{red}{$\times$}};
\node at (4.5,3.5) {\textcolor{red}{$\times$}};
\node at (5.5,3.5) {\textcolor{red}{$\times$}};
\node at (0.5,4.5) {\textcolor{red}{$\times$}};
\node at (1.5,4.5) {\textcolor{red}{$\times$}};
\node at (2.5,4.5) {\textcolor{red}{$\times$}};
\node at (3.5,4.5) {\textcolor{red}{$\times$}};
\node at (5.5,4.5) {\textcolor{red}{$\times$}};
\node at (3.5,5.5) {\textcolor{red}{$\times$}};
\node at (4.5,5.5) {\textcolor{red}{$\times$}};
\node at (5.5,5.5) {\textcolor{red}{$\times$}};
\end{scope}

\begin{scope}[shift={(0,0)}]
  \foreach \x in {0,...,7} {
    \draw[gray!70, line width=0.35pt] (\x,0) -- (\x,7);
  }
  \foreach \y in {0,...,7} {
    \draw[gray!70, line width=0.35pt] (0,\y) -- (7,\y);
  }
  \draw[very thick] (0,0) rectangle (7,7);
  \draw[very thick] (0,5) rectangle (2,7);
  \draw[very thick] (2,5) rectangle (4,7);
  \draw[very thick] (4,6) rectangle (6,7);
  \draw[very thick] (6,6) rectangle (7,7);
  \draw[very thick] (6,4) rectangle (7,6);
  \draw[very thick] (5,2) rectangle (7,4);
  \draw[very thick] (5,0) rectangle (7,2);

  \node at (1.5,1.5) {\normalsize 1};
  \node at (3.5,1.5) {\normalsize 1};
  \node at (1.5,3.5) {\normalsize 1};
  \node at (4.5,4.5) {\normalsize 1};
\node at (0.5,0.5) {\textcolor{red}{$\times$}};
\node at (1.5,0.5) {\textcolor{red}{$\times$}};
\node at (2.5,0.5) {\textcolor{red}{$\times$}};
\node at (3.5,0.5) {\textcolor{red}{$\times$}};
\node at (4.5,0.5) {\textcolor{red}{$\times$}};
\node at (0.5,1.5) {\textcolor{red}{$\times$}};
\node at (2.5,1.5) {\textcolor{red}{$\times$}};
\node at (4.5,1.5) {\textcolor{red}{$\times$}};
\node at (0.5,2.5) {\textcolor{red}{$\times$}};
\node at (1.5,2.5) {\textcolor{red}{$\times$}};
\node at (2.5,2.5) {\textcolor{red}{$\times$}};
\node at (3.5,2.5) {\textcolor{red}{$\times$}};
\node at (4.5,2.5) {\textcolor{red}{$\times$}};
\node at (0.5,3.5) {\textcolor{red}{$\times$}};
\node at (2.5,3.5) {\textcolor{red}{$\times$}};
\node at (3.5,3.5) {\textcolor{red}{$\times$}};
\node at (4.5,3.5) {\textcolor{red}{$\times$}};
\node at (5.5,3.5) {\textcolor{red}{$\times$}};
\node at (0.5,4.5) {\textcolor{red}{$\times$}};
\node at (1.5,4.5) {\textcolor{red}{$\times$}};
\node at (2.5,4.5) {\textcolor{red}{$\times$}};
\node at (3.5,4.5) {\textcolor{red}{$\times$}};
\node at (5.5,4.5) {\textcolor{red}{$\times$}};
\node at (3.5,5.5) {\textcolor{red}{$\times$}};
\node at (4.5,5.5) {\textcolor{red}{$\times$}};
\node at (5.5,5.5) {\textcolor{red}{$\times$}};
\node at (6.5,6.5) {\textcolor{blue} {1}};
\node at (5.5,6.5) {\textcolor{blue} {$\times$}};
\node at (6.5,5.5) {\textcolor{blue} {$\times$}};
\node at (4.5,6.5) {\textcolor{darkgreen} {1}};
\node at (6.5,4.5) {\textcolor{darkgreen} {1}};
\end{scope}

\end{tikzpicture}
\caption{$G$ in the proof of Lemma~4\,(iv).}

\label{fig:26}
\end{figure}

\subsection{Proof of Lemma \ref{lem:4}\,(v)}
\begin{proof}
    Suppose for contradiction that the pattern 
    \[
    \begin{bmatrix}
    1 & x & x\\
    x & x & x\\
    1 & x & 1
    \end{bmatrix}
    \]
    appears in the coloring. Let $ G $ be a $ P_7 \boxtimes P_7 $ subgraph in which the pattern appears, where the squares at $(2,2)$, $(2,4)$, and $(4,2)$ are of color~1. By the distance constraint, Lemma~\ref{lem:4}\,(ii), (iii) and (iv), the squares that cannot be assigned color~1 are shown in Figure~\ref{fig:27}. We now partition the remaining squares into seven parts, as shown in Figure~\ref{fig:27}. By Observation~\ref{obs:2}\,(i), each part contains at most one square of color~1. Hence, $c(G) \leq 3+7 = 10$, which contradicts Observation~\ref{obs:2}\,(iv).

\begin{figure}[ht]
\centering
\begin{tikzpicture}[scale=0.30]

\begin{scope}[shift={(-18,0)}]
  \foreach \x in {0,...,7} {
    \draw[gray!70, line width=0.35pt] (\x,0) -- (\x,7);
  }
  \foreach \y in {0,...,7} {
    \draw[gray!70, line width=0.35pt] (0,\y) -- (7,\y);
  }
  \draw[very thick] (0,0) rectangle (7,7);

  \node at (1.5,1.5) {\normalsize 1};
  \node at (3.5,1.5) {\normalsize 1};
  \node at (1.5,3.5) {\normalsize 1};
\node at (0.5,0.5) {\textcolor{red}{$\times$}};
\node at (1.5,0.5) {\textcolor{red}{$\times$}};
\node at (2.5,0.5) {\textcolor{red}{$\times$}};
\node at (3.5,0.5) {\textcolor{red}{$\times$}};
\node at (4.5,0.5) {\textcolor{red}{$\times$}};
\node at (0.5,1.5) {\textcolor{red}{$\times$}};
\node at (2.5,1.5) {\textcolor{red}{$\times$}};
\node at (4.5,1.5) {\textcolor{red}{$\times$}};
\node at (0.5,2.5) {\textcolor{red}{$\times$}};
\node at (1.5,2.5) {\textcolor{red}{$\times$}};
\node at (2.5,2.5) {\textcolor{red}{$\times$}};
\node at (3.5,2.5) {\textcolor{red}{$\times$}};
\node at (4.5,2.5) {\textcolor{red}{$\times$}};
\node at (0.5,3.5) {\textcolor{red}{$\times$}};
\node at (2.5,3.5) {\textcolor{red}{$\times$}};
\node at (0.5,4.5) {\textcolor{red}{$\times$}};
\node at (1.5,4.5) {\textcolor{red}{$\times$}};
\node at (2.5,4.5) {\textcolor{red}{$\times$}};
  
\end{scope}

\begin{scope}[shift={(-9,0)}]
\foreach \x in {0,...,7} {
    \draw[gray!70, line width=0.35pt] (\x,0) -- (\x,7);
  }
  \foreach \y in {0,...,7} {
    \draw[gray!70, line width=0.35pt] (0,\y) -- (7,\y);
  }
  \draw[very thick] (0,0) rectangle (7,7);

  \node at (1.5,1.5) {\normalsize 1};
  \node at (3.5,1.5) {\normalsize 1};
  \node at (1.5,3.5) {\normalsize 1};
\node at (0.5,0.5) {\textcolor{red}{$\times$}};
\node at (1.5,0.5) {\textcolor{red}{$\times$}};
\node at (2.5,0.5) {\textcolor{red}{$\times$}};
\node at (3.5,0.5) {\textcolor{red}{$\times$}};
\node at (4.5,0.5) {\textcolor{red}{$\times$}};
\node at (0.5,1.5) {\textcolor{red}{$\times$}};
\node at (2.5,1.5) {\textcolor{red}{$\times$}};
\node at (4.5,1.5) {\textcolor{red}{$\times$}};
\node at (0.5,2.5) {\textcolor{red}{$\times$}};
\node at (1.5,2.5) {\textcolor{red}{$\times$}};
\node at (2.5,2.5) {\textcolor{red}{$\times$}};
\node at (3.5,2.5) {\textcolor{red}{$\times$}};
\node at (4.5,2.5) {\textcolor{red}{$\times$}};
\node at (0.5,3.5) {\textcolor{red}{$\times$}};
\node at (2.5,3.5) {\textcolor{red}{$\times$}};
\node at (0.5,4.5) {\textcolor{red}{$\times$}};
\node at (1.5,4.5) {\textcolor{red}{$\times$}};
\node at (2.5,4.5) {\textcolor{red}{$\times$}};

\node at (3.5,3.5) {\textcolor{darkgreen}{$\times$}};
\node at (3.5,4.5) {\textcolor{blue}{$\times$}};
\node at (4.5,3.5) {\textcolor{blue}{$\times$}};
\node at (4.5,4.5) {\textcolor{orange}{$\times$}};
\end{scope}

\begin{scope}[shift={(0,0)}]
  \foreach \x in {0,...,7} {
    \draw[gray!70, line width=0.35pt] (\x,0) -- (\x,7);
  }
  \foreach \y in {0,...,7} {
    \draw[gray!70, line width=0.35pt] (0,\y) -- (7,\y);
  }
  \draw[very thick] (0,0) rectangle (7,7);
  \draw[very thick] (0,5) rectangle (2,7);
  \draw[very thick] (2,5) rectangle (4,7);
  \draw[very thick] (4,5) rectangle (6,7);
  \draw[very thick] (6,5) rectangle (7,7);
  \draw[very thick] (5,4) rectangle (7,5);
  \draw[very thick] (5,2) rectangle (7,4);
  \draw[very thick] (5,0) rectangle (7,2);

  \node at (1.5,1.5) {\normalsize 1};
  \node at (3.5,1.5) {\normalsize 1};
  \node at (1.5,3.5) {\normalsize 1};
\node at (0.5,0.5) {\textcolor{red}{$\times$}};
\node at (1.5,0.5) {\textcolor{red}{$\times$}};
\node at (2.5,0.5) {\textcolor{red}{$\times$}};
\node at (3.5,0.5) {\textcolor{red}{$\times$}};
\node at (4.5,0.5) {\textcolor{red}{$\times$}};
\node at (0.5,1.5) {\textcolor{red}{$\times$}};
\node at (2.5,1.5) {\textcolor{red}{$\times$}};
\node at (4.5,1.5) {\textcolor{red}{$\times$}};
\node at (0.5,2.5) {\textcolor{red}{$\times$}};
\node at (1.5,2.5) {\textcolor{red}{$\times$}};
\node at (2.5,2.5) {\textcolor{red}{$\times$}};
\node at (3.5,2.5) {\textcolor{red}{$\times$}};
\node at (4.5,2.5) {\textcolor{red}{$\times$}};
\node at (0.5,3.5) {\textcolor{red}{$\times$}};
\node at (2.5,3.5) {\textcolor{red}{$\times$}};
\node at (0.5,4.5) {\textcolor{red}{$\times$}};
\node at (1.5,4.5) {\textcolor{red}{$\times$}};
\node at (2.5,4.5) {\textcolor{red}{$\times$}};

\node at (3.5,3.5) {\textcolor{darkgreen}{$\times$}};
\node at (3.5,4.5) {\textcolor{blue}{$\times$}};
\node at (4.5,3.5) {\textcolor{blue}{$\times$}};
\node at (4.5,4.5) {\textcolor{orange}{$\times$}};
\end{scope}

\end{tikzpicture}

\caption{$G$ in the proof of Lemma~4\,(v).}
\label{fig:27}
\end{figure}
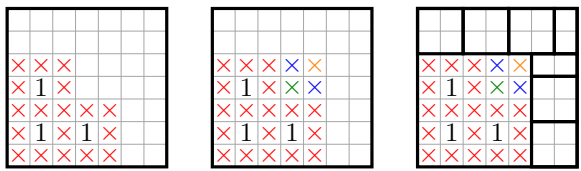

    Moreover, we can show that each L-shaped subgraph contains at most two squares of color 1. Assume for contradiction that there exists an L-shaped subgraph that contains three squares of color 1. Then the subgraph has the pattern  
\[
    \begin{bmatrix}
    1 & x & x\\
    x & x & x\\
    1 & x & 1
    \end{bmatrix}
    \]
which is a contradiction.
\end{proof}

\subsection{Proof of Lemma \ref{lem:4}\,(vi)}
\begin{proof}
Suppose for contradiction that the pattern
\[
    \begin{bmatrix}
    1 & x & 1 & x\\
    x & x & x & x\\
    x & 1 & x & 1
    \end{bmatrix}
    \]
appears in the coloring. Let $ G $ be a $ P_7 \boxtimes P_7 $ subgraph in which the pattern appears, where the squares at $(4,2)$, $(6,2)$, $(3,4)$, and $(5,4)$ are of color~1. By the distance constraint and Lemma~\ref{lem:4}\,(i), (ii), and (v), the squares that cannot be assigned color~1 are shown in Figure~\ref{fig:29}. We now partition the remaining squares into six parts, as shown in Figure~\ref{fig:29}. By Observations~\ref{obs:2}\,(i), (iv), and Lemma \ref{lem:4}\,(i), the $P_1 \boxtimes P_5$ part must contain exactly two squares of color~1, and the other five parts must each contain exactly one square of color~1. In particular, the part at $(7,5)$ must be a square of color~1.  

Next, consider the part consisting of squares $(2,6)$ and $(2,7)$. Assume that square $(2,6)$ is of color~1. As a result, by the distance constraint, the $P_1 \boxtimes P_5$ part can contain at most one square of color~1, which is a contradiction. Hence, square $(2,7)$ must be of color~1. By the distance constraint, squares $(1,3)$ and $(1,5)$ must be of color~1. By Lemma~\ref{lem:4}\,(i), square $(1,1)$ cannot be assigned color~1. Therefore, square $(2,1)$ must be of color~1.  

Following this, we consider $G(-1,-1)$ (see Figure~\ref{fig:29}). By the distance constraint and Lemma~\ref{lem:4}\,(v), the squares that cannot be assigned color~1 are shown in Figure~\ref{fig:29}. We partition the remaining squares into three parts, as shown in Figure~\ref{fig:29}. By Observation~\ref{obs:2}\,(i), each part contains at most one square of color~1. Consequently, we obtain $c(G(-1,-1)) \leq 7+3 = 10$, which contradicts Observation~\ref{obs:2}\,(iv).
\end{proof}

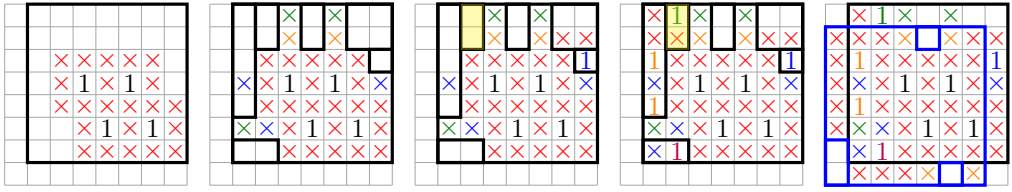
\begin{figure}[ht]
\centering
\begin{tikzpicture}[scale=0.30]

\begin{scope}[shift={(-36,0)}]
  \foreach \x in {0,...,8} {
    \draw[gray!70, line width=0.35pt] (\x,0) -- (\x,8);
  }
  \foreach \y in {0,...,8} {
    \draw[gray!70, line width=0.35pt] (0,\y) -- (8,\y);
  }
  \draw[very thick] (1,1) rectangle (8,8);

\node at (4.5,2.5) {\normalsize 1};
\node at (6.5,2.5) {\normalsize 1};
\node at (3.5,4.5) {\normalsize 1};
\node at (5.5,4.5) {\normalsize 1};

\node at (3.5,1.5) {\textcolor{red}{$\times$}};
\node at (4.5,1.5) {\textcolor{red}{$\times$}};
\node at (5.5,1.5) {\textcolor{red}{$\times$}};
\node at (6.5,1.5) {\textcolor{red}{$\times$}};
\node at (7.5,1.5) {\textcolor{red}{$\times$}};

\node at (3.5,2.5) {\textcolor{red}{$\times$}};
\node at (5.5,2.5) {\textcolor{red}{$\times$}};
\node at (7.5,2.5) {\textcolor{red}{$\times$}};

\node at (2.5,3.5) {\textcolor{red}{$\times$}};
\node at (3.5,3.5) {\textcolor{red}{$\times$}};
\node at (4.5,3.5) {\textcolor{red}{$\times$}};
\node at (5.5,3.5) {\textcolor{red}{$\times$}};
\node at (6.5,3.5) {\textcolor{red}{$\times$}};
\node at (7.5,3.5) {\textcolor{red}{$\times$}};

\node at (2.5,4.5) {\textcolor{red}{$\times$}};
\node at (4.5,4.5) {\textcolor{red}{$\times$}};
\node at (6.5,4.5) {\textcolor{red}{$\times$}};

\node at (2.5,5.5) {\textcolor{red}{$\times$}};
\node at (3.5,5.5) {\textcolor{red}{$\times$}};
\node at (4.5,5.5) {\textcolor{red}{$\times$}};
\node at (5.5,5.5) {\textcolor{red}{$\times$}};
\node at (6.5,5.5) {\textcolor{red}{$\times$}};
  
\end{scope}

\begin{scope}[shift={(-27,0)}]
  \foreach \x in {0,...,8} {
    \draw[gray!70, line width=0.35pt] (\x,0) -- (\x,8);
  }
  \foreach \y in {0,...,8} {
    \draw[gray!70, line width=0.35pt] (0,\y) -- (8,\y);
  }
  \draw[very thick] (1,1) rectangle (8,8);
  \draw[very thick] (1,1) rectangle (3,2);
  \draw[very thick] (1,3) rectangle (2,8);
  \draw[very thick] (2,6) rectangle (3,8);
  \draw[very thick] (4,6) rectangle (5,8);
  \draw[very thick] (6,6) rectangle (8,8);
  \draw[very thick] (7,5) rectangle (8,6);

\node at (4.5,2.5) {\normalsize 1};
\node at (6.5,2.5) {\normalsize 1};
\node at (3.5,4.5) {\normalsize 1};
\node at (5.5,4.5) {\normalsize 1};

\node at (3.5,1.5) {\textcolor{red}{$\times$}};
\node at (4.5,1.5) {\textcolor{red}{$\times$}};
\node at (5.5,1.5) {\textcolor{red}{$\times$}};
\node at (6.5,1.5) {\textcolor{red}{$\times$}};
\node at (7.5,1.5) {\textcolor{red}{$\times$}};

\node at (3.5,2.5) {\textcolor{red}{$\times$}};
\node at (5.5,2.5) {\textcolor{red}{$\times$}};
\node at (7.5,2.5) {\textcolor{red}{$\times$}};

\node at (2.5,3.5) {\textcolor{red}{$\times$}};
\node at (3.5,3.5) {\textcolor{red}{$\times$}};
\node at (4.5,3.5) {\textcolor{red}{$\times$}};
\node at (5.5,3.5) {\textcolor{red}{$\times$}};
\node at (6.5,3.5) {\textcolor{red}{$\times$}};
\node at (7.5,3.5) {\textcolor{red}{$\times$}};

\node at (2.5,4.5) {\textcolor{red}{$\times$}};
\node at (4.5,4.5) {\textcolor{red}{$\times$}};
\node at (6.5,4.5) {\textcolor{red}{$\times$}};

\node at (2.5,5.5) {\textcolor{red}{$\times$}};
\node at (3.5,5.5) {\textcolor{red}{$\times$}};
\node at (4.5,5.5) {\textcolor{red}{$\times$}};
\node at (5.5,5.5) {\textcolor{red}{$\times$}};
\node at (6.5,5.5) {\textcolor{red}{$\times$}};

\node at (2.5,2.5) {\textcolor{blue}{$\times$}};
\node at (1.5,4.5) {\textcolor{blue}{$\times$}};
\node at (7.5,4.5) {\textcolor{blue}{$\times$}};

\node at (1.5,2.5) {\textcolor{darkgreen}{$\times$}};
\node at (5.5,7.5) {\textcolor{darkgreen}{$\times$}};
\node at (3.5,7.5) {\textcolor{darkgreen}{$\times$}};

\node at (3.5,6.5) {\textcolor{orange}{$\times$}};
\node at (5.5,6.5) {\textcolor{orange}{$\times$}};
  
\end{scope}

\begin{scope}[shift={(-18,0)}]
  \foreach \x in {0,...,8} {
    \draw[gray!70, line width=0.35pt] (\x,0) -- (\x,8);
  }
  \foreach \y in {0,...,8} {
    \draw[gray!70, line width=0.35pt] (0,\y) -- (8,\y);
  }
  \draw[very thick] (1,1) rectangle (8,8);
  \draw[very thick] (1,1) rectangle (3,2);
  \draw[very thick] (1,3) rectangle (2,8);
  \draw[very thick] (2,6) rectangle (3,8);
  \draw[very thick] (4,6) rectangle (5,8);
  \draw[very thick] (6,6) rectangle (8,8);
  \draw[very thick] (7,5) rectangle (8,6);

\node at (4.5,2.5) {\normalsize 1};
\node at (6.5,2.5) {\normalsize 1};
\node at (3.5,4.5) {\normalsize 1};
\node at (5.5,4.5) {\normalsize 1};

\node at (3.5,1.5) {\textcolor{red}{$\times$}};
\node at (4.5,1.5) {\textcolor{red}{$\times$}};
\node at (5.5,1.5) {\textcolor{red}{$\times$}};
\node at (6.5,1.5) {\textcolor{red}{$\times$}};
\node at (7.5,1.5) {\textcolor{red}{$\times$}};

\node at (3.5,2.5) {\textcolor{red}{$\times$}};
\node at (5.5,2.5) {\textcolor{red}{$\times$}};
\node at (7.5,2.5) {\textcolor{red}{$\times$}};

\node at (2.5,3.5) {\textcolor{red}{$\times$}};
\node at (3.5,3.5) {\textcolor{red}{$\times$}};
\node at (4.5,3.5) {\textcolor{red}{$\times$}};
\node at (5.5,3.5) {\textcolor{red}{$\times$}};
\node at (6.5,3.5) {\textcolor{red}{$\times$}};
\node at (7.5,3.5) {\textcolor{red}{$\times$}};

\node at (2.5,4.5) {\textcolor{red}{$\times$}};
\node at (4.5,4.5) {\textcolor{red}{$\times$}};
\node at (6.5,4.5) {\textcolor{red}{$\times$}};

\node at (2.5,5.5) {\textcolor{red}{$\times$}};
\node at (3.5,5.5) {\textcolor{red}{$\times$}};
\node at (4.5,5.5) {\textcolor{red}{$\times$}};
\node at (5.5,5.5) {\textcolor{red}{$\times$}};
\node at (6.5,5.5) {\textcolor{red}{$\times$}};

\node at (2.5,2.5) {\textcolor{blue}{$\times$}};
\node at (1.5,4.5) {\textcolor{blue}{$\times$}};
\node at (7.5,4.5) {\textcolor{blue}{$\times$}};

\node at (1.5,2.5) {\textcolor{darkgreen}{$\times$}};
\node at (5.5,7.5) {\textcolor{darkgreen}{$\times$}};
\node at (3.5,7.5) {\textcolor{darkgreen}{$\times$}};

\node at (3.5,6.5) {\textcolor{orange}{$\times$}};
\node at (5.5,6.5) {\textcolor{orange}{$\times$}};

\node at (7.5,5.5) {\textcolor{blue}{1}};
\node at (7.5,6.5) {\textcolor{red}{$\times$}};
\node at (6.5,6.5) {\textcolor{red}{$\times$}};

\fill[yellow, opacity=0.3] (2,6) rectangle (3,8);
  
\end{scope}

\begin{scope}[shift={(-9,0)}]
\foreach \x in {0,...,8} {
    \draw[gray!70, line width=0.35pt] (\x,0) -- (\x,8);
  }
  \foreach \y in {0,...,8} {
    \draw[gray!70, line width=0.35pt] (0,\y) -- (8,\y);
  }
  \draw[very thick] (1,1) rectangle (8,8);
  \draw[very thick] (1,1) rectangle (3,2);
  \draw[very thick] (1,3) rectangle (2,8);
  \draw[very thick] (2,6) rectangle (3,8);
  \draw[very thick] (4,6) rectangle (5,8);
  \draw[very thick] (6,6) rectangle (8,8);
  \draw[very thick] (7,5) rectangle (8,6);

\node at (4.5,2.5) {\normalsize 1};
\node at (6.5,2.5) {\normalsize 1};
\node at (3.5,4.5) {\normalsize 1};
\node at (5.5,4.5) {\normalsize 1};

\node at (3.5,1.5) {\textcolor{red}{$\times$}};
\node at (4.5,1.5) {\textcolor{red}{$\times$}};
\node at (5.5,1.5) {\textcolor{red}{$\times$}};
\node at (6.5,1.5) {\textcolor{red}{$\times$}};
\node at (7.5,1.5) {\textcolor{red}{$\times$}};

\node at (3.5,2.5) {\textcolor{red}{$\times$}};
\node at (5.5,2.5) {\textcolor{red}{$\times$}};
\node at (7.5,2.5) {\textcolor{red}{$\times$}};

\node at (2.5,3.5) {\textcolor{red}{$\times$}};
\node at (3.5,3.5) {\textcolor{red}{$\times$}};
\node at (4.5,3.5) {\textcolor{red}{$\times$}};
\node at (5.5,3.5) {\textcolor{red}{$\times$}};
\node at (6.5,3.5) {\textcolor{red}{$\times$}};
\node at (7.5,3.5) {\textcolor{red}{$\times$}};

\node at (2.5,4.5) {\textcolor{red}{$\times$}};
\node at (4.5,4.5) {\textcolor{red}{$\times$}};
\node at (6.5,4.5) {\textcolor{red}{$\times$}};

\node at (2.5,5.5) {\textcolor{red}{$\times$}};
\node at (3.5,5.5) {\textcolor{red}{$\times$}};
\node at (4.5,5.5) {\textcolor{red}{$\times$}};
\node at (5.5,5.5) {\textcolor{red}{$\times$}};
\node at (6.5,5.5) {\textcolor{red}{$\times$}};

\node at (2.5,2.5) {\textcolor{blue}{$\times$}};
\node at (1.5,4.5) {\textcolor{blue}{$\times$}};
\node at (7.5,4.5) {\textcolor{blue}{$\times$}};

\node at (1.5,2.5) {\textcolor{darkgreen}{$\times$}};
\node at (5.5,7.5) {\textcolor{darkgreen}{$\times$}};
\node at (3.5,7.5) {\textcolor{darkgreen}{$\times$}};

\node at (3.5,6.5) {\textcolor{orange}{$\times$}};
\node at (5.5,6.5) {\textcolor{orange}{$\times$}};

\node at (7.5,5.5) {\textcolor{blue}{1}};
\node at (7.5,6.5) {\textcolor{red}{$\times$}};
\node at (6.5,6.5) {\textcolor{red}{$\times$}};

\node at (2.5,7.5) {\textcolor{darkgreen}{1}};
\node at (2.5,6.5) {\textcolor{red}{$\times$}};
\node at (1.5,6.5) {\textcolor{red}{$\times$}};
\node at (1.5,7.5) {\textcolor{red}{$\times$}};
\fill[yellow, opacity=0.3] (2,6) rectangle (3,8);

\node at (1.5,5.5) {\textcolor{orange}{1}};
\node at (1.5,3.5) {\textcolor{orange}{1}};
\node at (1.5,1.5) {\textcolor{blue}{$\times$}};
\node at (2.5,1.5) {\textcolor{purple}{1}};
\end{scope}

\begin{scope}[shift={(0,0)}]
\foreach \x in {0,...,8} {
    \draw[gray!70, line width=0.35pt] (\x,0) -- (\x,8);
  }
  \foreach \y in {0,...,8} {
    \draw[gray!70, line width=0.35pt] (0,\y) -- (8,\y);
  }
  \draw[very thick] (1,1) rectangle (8,8);
  \draw[very thick, blue] (0,0) rectangle (7,7);
  \draw[very thick, blue] (0,0) rectangle (1,2);
  \draw[very thick, blue] (5,0) rectangle (6,1);
  \draw[very thick, blue] (4,6) rectangle (5,7);

\node at (4.5,2.5) {\normalsize 1};
\node at (6.5,2.5) {\normalsize 1};
\node at (3.5,4.5) {\normalsize 1};
\node at (5.5,4.5) {\normalsize 1};

\node at (3.5,1.5) {\textcolor{red}{$\times$}};
\node at (4.5,1.5) {\textcolor{red}{$\times$}};
\node at (5.5,1.5) {\textcolor{red}{$\times$}};
\node at (6.5,1.5) {\textcolor{red}{$\times$}};
\node at (7.5,1.5) {\textcolor{red}{$\times$}};

\node at (3.5,2.5) {\textcolor{red}{$\times$}};
\node at (5.5,2.5) {\textcolor{red}{$\times$}};
\node at (7.5,2.5) {\textcolor{red}{$\times$}};

\node at (2.5,3.5) {\textcolor{red}{$\times$}};
\node at (3.5,3.5) {\textcolor{red}{$\times$}};
\node at (4.5,3.5) {\textcolor{red}{$\times$}};
\node at (5.5,3.5) {\textcolor{red}{$\times$}};
\node at (6.5,3.5) {\textcolor{red}{$\times$}};
\node at (7.5,3.5) {\textcolor{red}{$\times$}};

\node at (2.5,4.5) {\textcolor{red}{$\times$}};
\node at (4.5,4.5) {\textcolor{red}{$\times$}};
\node at (6.5,4.5) {\textcolor{red}{$\times$}};

\node at (2.5,5.5) {\textcolor{red}{$\times$}};
\node at (3.5,5.5) {\textcolor{red}{$\times$}};
\node at (4.5,5.5) {\textcolor{red}{$\times$}};
\node at (5.5,5.5) {\textcolor{red}{$\times$}};
\node at (6.5,5.5) {\textcolor{red}{$\times$}};

\node at (2.5,2.5) {\textcolor{blue}{$\times$}};
\node at (1.5,4.5) {\textcolor{blue}{$\times$}};
\node at (7.5,4.5) {\textcolor{blue}{$\times$}};

\node at (1.5,2.5) {\textcolor{darkgreen}{$\times$}};
\node at (5.5,7.5) {\textcolor{darkgreen}{$\times$}};
\node at (3.5,7.5) {\textcolor{darkgreen}{$\times$}};

\node at (3.5,6.5) {\textcolor{orange}{$\times$}};
\node at (5.5,6.5) {\textcolor{orange}{$\times$}};

\node at (7.5,5.5) {\textcolor{blue}{1}};
\node at (7.5,6.5) {\textcolor{red}{$\times$}};
\node at (6.5,6.5) {\textcolor{red}{$\times$}};

\node at (2.5,7.5) {\textcolor{darkgreen}{1}};
\node at (2.5,6.5) {\textcolor{red}{$\times$}};
\node at (1.5,6.5) {\textcolor{red}{$\times$}};
\node at (1.5,7.5) {\textcolor{red}{$\times$}};

\node at (1.5,5.5) {\textcolor{orange}{1}};
\node at (1.5,3.5) {\textcolor{orange}{1}};
\node at (1.5,1.5) {\textcolor{blue}{$\times$}};
\node at (2.5,1.5) {\textcolor{purple}{1}};

\node at (0.5,2.5) {\textcolor{red}{$\times$}};
\node at (0.5,3.5) {\textcolor{red}{$\times$}};
\node at (0.5,4.5) {\textcolor{red}{$\times$}};
\node at (0.5,5.5) {\textcolor{red}{$\times$}};
\node at (0.5,6.5) {\textcolor{red}{$\times$}};

\node at (1.5,0.5) {\textcolor{red}{$\times$}};
\node at (2.5,0.5) {\textcolor{red}{$\times$}};
\node at (3.5,0.5) {\textcolor{red}{$\times$}};
\node at (4.5,0.5) {\textcolor{orange}{$\times$}};
\node at (6.5,0.5) {\textcolor{orange}{$\times$}};
\end{scope}

\end{tikzpicture}

\caption{$G$ (black) and $G(-1,-1)$ (blue) in the proof of Lemma~4\,(vi).}
\label{fig:29}
\end{figure}

\subsection{Proof of Lemma \ref{lem:4}\,(vii)}
\begin{proof}
    Suppose for contradiction that the pattern
    \[
    \begin{bmatrix}
    x & 1 & x\\
    x & x & x\\
    1 & x & 1
    \end{bmatrix}
    \]
    appears in the coloring. Let $ G $ be a $ P_7 \boxtimes P_7 $ subgraph in which the pattern appears, where the squares at $(2,2)$, $(4,2)$ and $(3,4)$ are of color~1. By the distance constraint and Lemma~\ref{lem:4}\,(ii) and (vi), the squares that cannot be assigned color~1 are shown in Figure~\ref{fig:30}. We now partition the remaining squares into seven parts, as shown in Figure~\ref{fig:30}. By Observations~\ref{obs:2}\,(i), (iv), and Lemma \ref{lem:4}\,(v), the L-shaped part must contain exactly two squares of color~1, and the other six parts must each contain exactly one square of color~1. In particular, the part at $(1,5)$ must be a square of color~1.

Assume that $(3,6)$ is a square of color~1. As a result, by the distance constraint, square $(1,7)$ must be of color~1. We see that this subgraph contains the pattern
\[
\begin{bmatrix}
1 & x & 1 & x\\
x & x & x & x\\
x & 1 & x & 1
\end{bmatrix}
\]
which contradicts Lemma \ref{lem:4}\,(vi).

Hence, square $(3,6)$ cannot be assigned color~1. Since either $(4,6)$ or $(4,7)$ is of color~1, the distance constraint implies that $(5,6)$ and $(5,7)$ cannot be assigned color~1. We then consider the L-shaped part. Square $(5,5)$ must be of color~1 otherwise this part would contain at most one square of color~1 which is a contradiction. By the distance constraint, square $(4,7)$ must be of color~1. By Lemma~\ref{lem:4}\,(ii), square $(7,7)$ cannot be assigned color~1. Then, square $(6,7)$ must be of color 1. By the distance constraint, square $(7,5)$ must be of color~1. Again, this subgraph contains the pattern
\[
\begin{bmatrix}
1 & x & 1 & x\\
x & x & x & x\\
x & 1 & x & 1
\end{bmatrix}
\]
which contradicts Lemma \ref{lem:4}\,(vi).
\end{proof}

    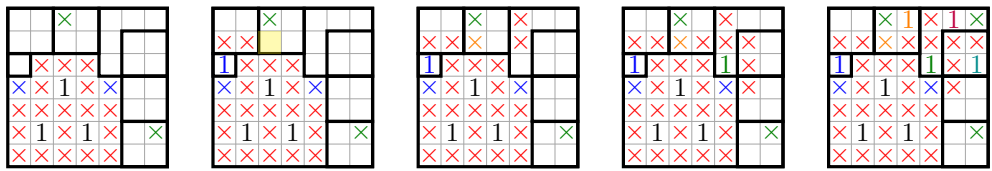
\begin{figure}[ht]
\centering
\begin{tikzpicture}[scale=0.30]

\begin{scope}[shift={(-36,0)}]
  \foreach \x in {0,...,7} {
    \draw[gray!70, line width=0.35pt] (\x,0) -- (\x,7);
  }
  \foreach \y in {0,...,7} {
    \draw[gray!70, line width=0.35pt] (0,\y) -- (7,\y);
  }
  \draw[very thick] (0,0) rectangle (7,7);
  \draw[very thick] (0,4) rectangle (1,5);
  \draw[very thick] (0,5) rectangle (2,7);
  \draw[very thick] (2,5) rectangle (4,7);
  \draw[very thick] (4,4) rectangle (7,7);
  \draw[very thick] (5,4) rectangle (7,6);
  \draw[very thick] (5,2) rectangle (7,4);
  \draw[very thick] (5,0) rectangle (7,2);

  \node at (1.5,1.5) {\normalsize 1};
  \node at (3.5,1.5) {\normalsize 1};
  \node at (2.5,3.5) {\normalsize 1};
\node at (0.5,0.5) {\textcolor{red}{$\times$}};
\node at (1.5,0.5) {\textcolor{red}{$\times$}};
\node at (2.5,0.5) {\textcolor{red}{$\times$}};
\node at (3.5,0.5) {\textcolor{red}{$\times$}};
\node at (4.5,0.5) {\textcolor{red}{$\times$}};
\node at (0.5,1.5) {\textcolor{red}{$\times$}};
\node at (2.5,1.5) {\textcolor{red}{$\times$}};
\node at (4.5,1.5) {\textcolor{red}{$\times$}};
\node at (0.5,2.5) {\textcolor{red}{$\times$}};
\node at (1.5,2.5) {\textcolor{red}{$\times$}};
\node at (2.5,2.5) {\textcolor{red}{$\times$}};
\node at (3.5,2.5) {\textcolor{red}{$\times$}};
\node at (4.5,2.5) {\textcolor{red}{$\times$}};
\node at (1.5,3.5) {\textcolor{red}{$\times$}};
\node at (3.5,3.5) {\textcolor{red}{$\times$}};
\node at (1.5,4.5) {\textcolor{red}{$\times$}};
\node at (2.5,4.5) {\textcolor{red}{$\times$}};
\node at (3.5,4.5) {\textcolor{red}{$\times$}};

\node at (0.5,3.5) {\textcolor{blue}{$\times$}};
\node at (4.5,3.5) {\textcolor{blue}{$\times$}};
\node at (2.5,6.5) {\textcolor{darkgreen}{$\times$}};
\node at (6.5,1.5) {\textcolor{darkgreen}{$\times$}};
\end{scope}

\begin{scope}[shift={(-27,0)}]
  \foreach \x in {0,...,7} {
    \draw[gray!70, line width=0.35pt] (\x,0) -- (\x,7);
  }
  \foreach \y in {0,...,7} {
    \draw[gray!70, line width=0.35pt] (0,\y) -- (7,\y);
  }
  \draw[very thick] (0,0) rectangle (7,7);
  \draw[very thick] (0,4) rectangle (1,5);
  \draw[very thick] (0,5) rectangle (2,7);
  \draw[very thick] (2,5) rectangle (4,7);
  \draw[very thick] (4,4) rectangle (7,7);
  \draw[very thick] (5,4) rectangle (7,6);
  \draw[very thick] (5,2) rectangle (7,4);
  \draw[very thick] (5,0) rectangle (7,2);

  \node at (1.5,1.5) {\normalsize 1};
  \node at (3.5,1.5) {\normalsize 1};
  \node at (2.5,3.5) {\normalsize 1};
\node at (0.5,0.5) {\textcolor{red}{$\times$}};
\node at (1.5,0.5) {\textcolor{red}{$\times$}};
\node at (2.5,0.5) {\textcolor{red}{$\times$}};
\node at (3.5,0.5) {\textcolor{red}{$\times$}};
\node at (4.5,0.5) {\textcolor{red}{$\times$}};
\node at (0.5,1.5) {\textcolor{red}{$\times$}};
\node at (2.5,1.5) {\textcolor{red}{$\times$}};
\node at (4.5,1.5) {\textcolor{red}{$\times$}};
\node at (0.5,2.5) {\textcolor{red}{$\times$}};
\node at (1.5,2.5) {\textcolor{red}{$\times$}};
\node at (2.5,2.5) {\textcolor{red}{$\times$}};
\node at (3.5,2.5) {\textcolor{red}{$\times$}};
\node at (4.5,2.5) {\textcolor{red}{$\times$}};
\node at (1.5,3.5) {\textcolor{red}{$\times$}};
\node at (3.5,3.5) {\textcolor{red}{$\times$}};
\node at (1.5,4.5) {\textcolor{red}{$\times$}};
\node at (2.5,4.5) {\textcolor{red}{$\times$}};
\node at (3.5,4.5) {\textcolor{red}{$\times$}};

\node at (0.5,3.5) {\textcolor{blue}{$\times$}};
\node at (4.5,3.5) {\textcolor{blue}{$\times$}};
\node at (2.5,6.5) {\textcolor{darkgreen}{$\times$}};
\node at (6.5,1.5) {\textcolor{darkgreen}{$\times$}};

\node at (0.5,4.5) {\textcolor{blue}{1}};
\node at (0.5,5.5) {\textcolor{red}{$\times$}};
\node at (1.5,5.5) {\textcolor{red}{$\times$}};
\fill[yellow, opacity=0.3] (2,5) rectangle (3,6);
\end{scope}

\begin{scope}[shift={(-18,0)}]
  \foreach \x in {0,...,7} {
    \draw[gray!70, line width=0.35pt] (\x,0) -- (\x,7);
  }
  \foreach \y in {0,...,7} {
    \draw[gray!70, line width=0.35pt] (0,\y) -- (7,\y);
  }
  \draw[very thick] (0,0) rectangle (7,7);
  \draw[very thick] (0,4) rectangle (1,5);
  \draw[very thick] (0,5) rectangle (2,7);
  \draw[very thick] (2,5) rectangle (4,7);
  \draw[very thick] (4,4) rectangle (7,7);
  \draw[very thick] (5,4) rectangle (7,6);
  \draw[very thick] (5,2) rectangle (7,4);
  \draw[very thick] (5,0) rectangle (7,2);

  \node at (1.5,1.5) {\normalsize 1};
  \node at (3.5,1.5) {\normalsize 1};
  \node at (2.5,3.5) {\normalsize 1};
\node at (0.5,0.5) {\textcolor{red}{$\times$}};
\node at (1.5,0.5) {\textcolor{red}{$\times$}};
\node at (2.5,0.5) {\textcolor{red}{$\times$}};
\node at (3.5,0.5) {\textcolor{red}{$\times$}};
\node at (4.5,0.5) {\textcolor{red}{$\times$}};
\node at (0.5,1.5) {\textcolor{red}{$\times$}};
\node at (2.5,1.5) {\textcolor{red}{$\times$}};
\node at (4.5,1.5) {\textcolor{red}{$\times$}};
\node at (0.5,2.5) {\textcolor{red}{$\times$}};
\node at (1.5,2.5) {\textcolor{red}{$\times$}};
\node at (2.5,2.5) {\textcolor{red}{$\times$}};
\node at (3.5,2.5) {\textcolor{red}{$\times$}};
\node at (4.5,2.5) {\textcolor{red}{$\times$}};
\node at (1.5,3.5) {\textcolor{red}{$\times$}};
\node at (3.5,3.5) {\textcolor{red}{$\times$}};
\node at (1.5,4.5) {\textcolor{red}{$\times$}};
\node at (2.5,4.5) {\textcolor{red}{$\times$}};
\node at (3.5,4.5) {\textcolor{red}{$\times$}};

\node at (0.5,3.5) {\textcolor{blue}{$\times$}};
\node at (4.5,3.5) {\textcolor{blue}{$\times$}};
\node at (2.5,6.5) {\textcolor{darkgreen}{$\times$}};
\node at (6.5,1.5) {\textcolor{darkgreen}{$\times$}};

\node at (0.5,4.5) {\textcolor{blue}{1}};
\node at (0.5,5.5) {\textcolor{red}{$\times$}};
\node at (1.5,5.5) {\textcolor{red}{$\times$}};
\node at (2.5,5.5) {\textcolor{orange}{$\times$}};
\node at (4.5,5.5) {\textcolor{red}{$\times$}};
\node at (4.5,6.5) {\textcolor{red}{$\times$}};
\end{scope}

\begin{scope}[shift={(-9,0)}]
  \foreach \x in {0,...,7} {
    \draw[gray!70, line width=0.35pt] (\x,0) -- (\x,7);
  }
  \foreach \y in {0,...,7} {
    \draw[gray!70, line width=0.35pt] (0,\y) -- (7,\y);
  }
  \draw[very thick] (0,0) rectangle (7,7);
  \draw[very thick] (0,4) rectangle (1,5);
  \draw[very thick] (0,5) rectangle (2,7);
  \draw[very thick] (2,5) rectangle (4,7);
  \draw[very thick] (4,4) rectangle (7,7);
  \draw[very thick] (5,4) rectangle (7,6);
  \draw[very thick] (5,2) rectangle (7,4);
  \draw[very thick] (5,0) rectangle (7,2);

  \node at (1.5,1.5) {\normalsize 1};
  \node at (3.5,1.5) {\normalsize 1};
  \node at (2.5,3.5) {\normalsize 1};
\node at (0.5,0.5) {\textcolor{red}{$\times$}};
\node at (1.5,0.5) {\textcolor{red}{$\times$}};
\node at (2.5,0.5) {\textcolor{red}{$\times$}};
\node at (3.5,0.5) {\textcolor{red}{$\times$}};
\node at (4.5,0.5) {\textcolor{red}{$\times$}};
\node at (0.5,1.5) {\textcolor{red}{$\times$}};
\node at (2.5,1.5) {\textcolor{red}{$\times$}};
\node at (4.5,1.5) {\textcolor{red}{$\times$}};
\node at (0.5,2.5) {\textcolor{red}{$\times$}};
\node at (1.5,2.5) {\textcolor{red}{$\times$}};
\node at (2.5,2.5) {\textcolor{red}{$\times$}};
\node at (3.5,2.5) {\textcolor{red}{$\times$}};
\node at (4.5,2.5) {\textcolor{red}{$\times$}};
\node at (1.5,3.5) {\textcolor{red}{$\times$}};
\node at (3.5,3.5) {\textcolor{red}{$\times$}};
\node at (1.5,4.5) {\textcolor{red}{$\times$}};
\node at (2.5,4.5) {\textcolor{red}{$\times$}};
\node at (3.5,4.5) {\textcolor{red}{$\times$}};

\node at (0.5,3.5) {\textcolor{blue}{$\times$}};
\node at (4.5,3.5) {\textcolor{blue}{$\times$}};
\node at (2.5,6.5) {\textcolor{darkgreen}{$\times$}};
\node at (6.5,1.5) {\textcolor{darkgreen}{$\times$}};

\node at (0.5,4.5) {\textcolor{blue}{1}};
\node at (0.5,5.5) {\textcolor{red}{$\times$}};
\node at (1.5,5.5) {\textcolor{red}{$\times$}};
\node at (2.5,5.5) {\textcolor{orange}{$\times$}};
\node at (4.5,5.5) {\textcolor{red}{$\times$}};
\node at (4.5,6.5) {\textcolor{red}{$\times$}};

\node at (4.5,4.5) {\textcolor{darkgreen}{1}};
\node at (3.5,5.5) {\textcolor{red}{$\times$}};
\node at (5.5,5.5) {\textcolor{red}{$\times$}};
\node at (5.5,4.5) {\textcolor{red}{$\times$}};
\node at (5.5,3.5) {\textcolor{red}{$\times$}};
\end{scope}

\begin{scope}[shift={(0,0)}]
  \foreach \x in {0,...,7} {
    \draw[gray!70, line width=0.35pt] (\x,0) -- (\x,7);
  }
  \foreach \y in {0,...,7} {
    \draw[gray!70, line width=0.35pt] (0,\y) -- (7,\y);
  }
  \draw[very thick] (0,0) rectangle (7,7);
  \draw[very thick] (0,4) rectangle (1,5);
  \draw[very thick] (0,5) rectangle (2,7);
  \draw[very thick] (2,5) rectangle (4,7);
  \draw[very thick] (4,4) rectangle (7,7);
  \draw[very thick] (5,4) rectangle (7,6);
  \draw[very thick] (5,2) rectangle (7,4);
  \draw[very thick] (5,0) rectangle (7,2);

  \node at (1.5,1.5) {\normalsize 1};
  \node at (3.5,1.5) {\normalsize 1};
  \node at (2.5,3.5) {\normalsize 1};
\node at (0.5,0.5) {\textcolor{red}{$\times$}};
\node at (1.5,0.5) {\textcolor{red}{$\times$}};
\node at (2.5,0.5) {\textcolor{red}{$\times$}};
\node at (3.5,0.5) {\textcolor{red}{$\times$}};
\node at (4.5,0.5) {\textcolor{red}{$\times$}};
\node at (0.5,1.5) {\textcolor{red}{$\times$}};
\node at (2.5,1.5) {\textcolor{red}{$\times$}};
\node at (4.5,1.5) {\textcolor{red}{$\times$}};
\node at (0.5,2.5) {\textcolor{red}{$\times$}};
\node at (1.5,2.5) {\textcolor{red}{$\times$}};
\node at (2.5,2.5) {\textcolor{red}{$\times$}};
\node at (3.5,2.5) {\textcolor{red}{$\times$}};
\node at (4.5,2.5) {\textcolor{red}{$\times$}};
\node at (1.5,3.5) {\textcolor{red}{$\times$}};
\node at (3.5,3.5) {\textcolor{red}{$\times$}};
\node at (1.5,4.5) {\textcolor{red}{$\times$}};
\node at (2.5,4.5) {\textcolor{red}{$\times$}};
\node at (3.5,4.5) {\textcolor{red}{$\times$}};

\node at (0.5,3.5) {\textcolor{blue}{$\times$}};
\node at (4.5,3.5) {\textcolor{blue}{$\times$}};
\node at (2.5,6.5) {\textcolor{darkgreen}{$\times$}};
\node at (6.5,1.5) {\textcolor{darkgreen}{$\times$}};

\node at (0.5,4.5) {\textcolor{blue}{1}};
\node at (0.5,5.5) {\textcolor{red}{$\times$}};
\node at (1.5,5.5) {\textcolor{red}{$\times$}};
\node at (2.5,5.5) {\textcolor{orange}{$\times$}};
\node at (4.5,5.5) {\textcolor{red}{$\times$}};
\node at (4.5,6.5) {\textcolor{red}{$\times$}};

\node at (4.5,4.5) {\textcolor{darkgreen}{1}};
\node at (3.5,5.5) {\textcolor{red}{$\times$}};
\node at (5.5,5.5) {\textcolor{red}{$\times$}};
\node at (5.5,4.5) {\textcolor{red}{$\times$}};
\node at (5.5,3.5) {\textcolor{red}{$\times$}};

\node at (3.5,6.5) {\textcolor{orange}{1}};
\node at (6.5,6.5) {\textcolor{darkgreen}{$\times$}};
\node at (5.5,6.5) {\textcolor{purple}{1}};
\node at (6.5,5.5) {\textcolor{red}{$\times$}};
\node at (6.5,4.5) {\textcolor{darkcyan}{1}};
\end{scope}

\end{tikzpicture}

\caption{$G$ in the proof of Lemma~4\,(vii).}
\label{fig:30}
\end{figure}

\subsection{Proof of Lemma \ref{lem:4}\,(viii)}
\begin{proof}
    Suppose for contradiction that the pattern
    \[
    \begin{bmatrix}
    1 & x & 1
    \end{bmatrix}
    \]
    appears in the coloring. Let $ G $ be a $ P_7 \boxtimes P_7 $ subgraph in which the pattern appears, where the squares at $(3,4)$ and $(5,4)$ are of color~1. By the distance constraint and Lemma~\ref{lem:4}\,(ii), (v) and (vii), the squares that cannot be assigned color~1 are shown in Figure~\ref{fig:31}. We partition the remaining squares into eight parts as shown in Figure~\ref{fig:31}. By Lemma~\ref{lem:4}\,(vii), every $ P_1 \boxtimes P_3 $ part contains at most one square of color~1. By Observation~\ref{obs:2}\,(i), each of the other six parts also contains at most one square of color~1. Hence, $c(G) \leq 2 + 2 + 6 = 10$, contradicting Observation~\ref{obs:2}\,(iv).

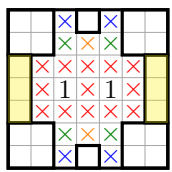
\begin{figure}[ht]
\centering
\begin{tikzpicture}[scale=0.30]

\begin{scope}[shift={(0,0)}]
  \foreach \x in {0,...,7} {
    \draw[gray!70, line width=0.35pt] (\x,0) -- (\x,7);
  }
  \foreach \y in {0,...,7} {
    \draw[gray!70, line width=0.35pt] (0,\y) -- (7,\y);
  }
  \draw[very thick] (0,0) rectangle (7,7);
  \draw[very thick] (0,0) rectangle (2,2);
  \draw[very thick] (0,2) rectangle (1,5);
  \draw[very thick] (0,5) rectangle (2,7);

  \draw[very thick] (3,6) rectangle (4,7);
  \draw[very thick] (5,5) rectangle (7,7);
  \draw[very thick] (6,2) rectangle (7,5);
  \draw[very thick] (5,0) rectangle (7,2);
  \draw[very thick] (3,0) rectangle (4,1);

  \node at (2.5,3.5) {\normalsize 1};
\node at (4.5,3.5) {\normalsize 1};

\node at (1.5,2.5) {\textcolor{red}{$\times$}};
\node at (2.5,2.5) {\textcolor{red}{$\times$}};
\node at (3.5,2.5) {\textcolor{red}{$\times$}};
\node at (4.5,2.5) {\textcolor{red}{$\times$}};
\node at (5.5,2.5) {\textcolor{red}{$\times$}};

\node at (1.5,3.5) {\textcolor{red}{$\times$}};
\node at (3.5,3.5) {\textcolor{red}{$\times$}};
\node at (5.5,3.5) {\textcolor{red}{$\times$}};

\node at (1.5,4.5) {\textcolor{red}{$\times$}};
\node at (2.5,4.5) {\textcolor{red}{$\times$}};
\node at (3.5,4.5) {\textcolor{red}{$\times$}};
\node at (4.5,4.5) {\textcolor{red}{$\times$}};
\node at (5.5,4.5) {\textcolor{red}{$\times$}};

\node at (2.5,0.5) {\textcolor{blue}{$\times$}};
\node at (4.5,0.5) {\textcolor{blue}{$\times$}};
\node at (2.5,6.5) {\textcolor{blue}{$\times$}};
\node at (4.5,6.5) {\textcolor{blue}{$\times$}};

\node at (2.5,1.5) {\textcolor{darkgreen}{$\times$}};
\node at (4.5,1.5) {\textcolor{darkgreen}{$\times$}};
\node at (2.5,5.5) {\textcolor{darkgreen}{$\times$}};
\node at (4.5,5.5) {\textcolor{darkgreen}{$\times$}};

\node at (3.5,5.5) {\textcolor{orange}{$\times$}};
\node at (3.5,1.5) {\textcolor{orange}{$\times$}};
\fill[yellow, opacity=0.3] (0,2) rectangle (1,5);
\fill[yellow, opacity=0.3] (6,2) rectangle (7,5);
\end{scope}

\end{tikzpicture}

\caption{$G$ in the proof of Lemma~4\,(viii).}
\label{fig:31}
\end{figure}

    Moreover, we can show that each $P_3 \boxtimes P_1$ subgraph contains at most one squares of color 1. Assume for contradiction that there exists a $P_3 \boxtimes P_1$  subgraph that contains two squares of color 1. Then the subgraph has the pattern  
\[
    \begin{bmatrix}
    1 & x & 1
    \end{bmatrix}
    \]
which is a contradiction.
\end{proof}

\subsection{Proof of Lemma \ref{lem:4}\,(ix)}
\begin{proof}
Suppose for contradiction that the pattern
\[
    \begin{bmatrix}
    x & x & 1\\
    1 & x & x
    \end{bmatrix}
    \]
appears in the coloring. Let $ G $ be a $ P_7 \boxtimes P_7 $ subgraph in which the pattern appears, where the squares at $(3,4)$ and $(5,5)$ are of color~1. By the distance constraint and Lemma~\ref{lem:4}\,(ii) and (viii), the squares that cannot be assigned color~1 are shown in Figure~\ref{fig:32}. We partition the remaining squares into eight parts as shown in Figure~\ref{fig:32}. By Observation~\ref{obs:2}\,(i) and (iv), each parts contains exactly one square of color~1. Hence, squares $(1,3)$ and $(1,5)$ must be of color 1 (see Figure \ref{fig:32}). This contradicts Lemma \ref{lem:4}\,(viii).
\end{proof}

\begin{figure}[ht]
\centering
\begin{tikzpicture}[scale=0.30]

\begin{scope}[shift={(-9,0)}]
  \foreach \x in {0,...,7} {
    \draw[gray!70, line width=0.35pt] (\x,0) -- (\x,7);
  }
  \foreach \y in {0,...,7} {
    \draw[gray!70, line width=0.35pt] (0,\y) -- (7,\y);
  }
  \draw[very thick] (0,0) rectangle (7,7);
  \draw[very thick] (0,0) rectangle (2,2);
  \draw[very thick] (0,2) rectangle (1,3);
  \draw[very thick] (0,4) rectangle (1,5);
  \draw[very thick] (0,5) rectangle (2,7);

  \draw[very thick] (3,6) rectangle (4,7);
  \draw[very thick] (5,5) rectangle (7,7);
  \draw[very thick] (5,2) rectangle (7,4);
  \draw[very thick] (5,0) rectangle (7,2);
  \draw[very thick] (3,0) rectangle (5,2);

  \node at (2.5,3.5) {\normalsize 1};
\node at (4.5,4.5) {\normalsize 1};

\node at (1.5,2.5) {\textcolor{red}{$\times$}};
\node at (2.5,2.5) {\textcolor{red}{$\times$}};
\node at (3.5,2.5) {\textcolor{red}{$\times$}};
\node at (4.5,2.5) {\textcolor{darkgreen}{$\times$}};

\node at (1.5,3.5) {\textcolor{red}{$\times$}};
\node at (3.5,3.5) {\textcolor{red}{$\times$}};
\node at (5.5,3.5) {\textcolor{red}{$\times$}};

\node at (1.5,4.5) {\textcolor{red}{$\times$}};
\node at (2.5,4.5) {\textcolor{red}{$\times$}};
\node at (3.5,4.5) {\textcolor{red}{$\times$}};
\node at (4.5,3.5) {\textcolor{red}{$\times$}};
\node at (5.5,4.5) {\textcolor{red}{$\times$}};

\node at (2.5,0.5) {\textcolor{blue}{$\times$}};
\node at (2.5,6.5) {\textcolor{blue}{$\times$}};
\node at (4.5,6.5) {\textcolor{darkgreen}{$\times$}};

\node at (2.5,1.5) {\textcolor{darkgreen}{$\times$}};
\node at (0.5,3.5) {\textcolor{darkgreen}{$\times$}};
\node at (6.5,4.5) {\textcolor{darkgreen}{$\times$}};
\node at (4.5,1.5) {\textcolor{blue}{$\times$}};
\node at (2.5,5.5) {\textcolor{darkgreen}{$\times$}};
\node at (4.5,5.5) {\textcolor{red}{$\times$}};
\node at (5.5,5.5) {\textcolor{red}{$\times$}};

\node at (3.5,5.5) {\textcolor{red}{$\times$}};
\end{scope}

\begin{scope}[shift={(0,0)}]
  \foreach \x in {0,...,7} {
    \draw[gray!70, line width=0.35pt] (\x,0) -- (\x,7);
  }
  \foreach \y in {0,...,7} {
    \draw[gray!70, line width=0.35pt] (0,\y) -- (7,\y);
  }
  \draw[very thick] (0,0) rectangle (7,7);
  \draw[very thick] (0,0) rectangle (2,2);
  \draw[very thick] (0,2) rectangle (1,3);
  \draw[very thick] (0,4) rectangle (1,5);
  \draw[very thick] (0,5) rectangle (2,7);

  \draw[very thick] (3,6) rectangle (4,7);
  \draw[very thick] (5,5) rectangle (7,7);
  \draw[very thick] (5,2) rectangle (7,4);
  \draw[very thick] (5,0) rectangle (7,2);
  \draw[very thick] (3,0) rectangle (5,2);

  \node at (2.5,3.5) {\normalsize 1};
\node at (4.5,4.5) {\normalsize 1};

\node at (1.5,2.5) {\textcolor{red}{$\times$}};
\node at (2.5,2.5) {\textcolor{red}{$\times$}};
\node at (3.5,2.5) {\textcolor{red}{$\times$}};
\node at (4.5,2.5) {\textcolor{darkgreen}{$\times$}};

\node at (1.5,3.5) {\textcolor{red}{$\times$}};
\node at (3.5,3.5) {\textcolor{red}{$\times$}};
\node at (5.5,3.5) {\textcolor{red}{$\times$}};

\node at (1.5,4.5) {\textcolor{red}{$\times$}};
\node at (2.5,4.5) {\textcolor{red}{$\times$}};
\node at (3.5,4.5) {\textcolor{red}{$\times$}};
\node at (4.5,3.5) {\textcolor{red}{$\times$}};
\node at (5.5,4.5) {\textcolor{red}{$\times$}};

\node at (2.5,0.5) {\textcolor{blue}{$\times$}};
\node at (2.5,6.5) {\textcolor{blue}{$\times$}};
\node at (4.5,6.5) {\textcolor{darkgreen}{$\times$}};

\node at (2.5,1.5) {\textcolor{darkgreen}{$\times$}};
\node at (0.5,3.5) {\textcolor{darkgreen}{$\times$}};
\node at (6.5,4.5) {\textcolor{darkgreen}{$\times$}};
\node at (4.5,1.5) {\textcolor{blue}{$\times$}};
\node at (2.5,5.5) {\textcolor{darkgreen}{$\times$}};
\node at (4.5,5.5) {\textcolor{red}{$\times$}};
\node at (5.5,5.5) {\textcolor{red}{$\times$}};

\node at (3.5,5.5) {\textcolor{red}{$\times$}};

\node at (0.5,2.5) {\textcolor{blue}{1}};
\node at (0.5,4.5) {\textcolor{blue}{1}};
\end{scope}
\end{tikzpicture}

\caption{$G$ in the proof of Lemma~4\,(ix).}
\label{fig:32}
\end{figure}
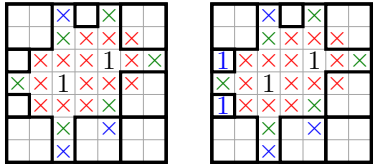

\section{Concluding remarks}\label{sec:6}

We have proved that $\chi_S(P_\infty \boxtimes P_\infty) = 40$ for $S = (1,6,6,\ldots)$. However, the exact values of $\chi_S(P_\infty \boxtimes P_\infty)$ for $S = (1,k,k,\ldots)$ with even $k \geq 8$ are still unknown. The best known bounds of $\chi_S(P_\infty \boxtimes P_\infty)$ in these cases are
\[
\frac{3k^2}{4}+\frac{3k}{2}+3 \leq \chi_S(P_\infty \boxtimes P_\infty) \leq \left\lceil \frac{3k^2+7k+8}{4} \right\rceil,
\]
as shown in~\cite{tiyajamorn2024packing}. We conjecture that the exact value of $\chi_S(P_\infty \boxtimes P_\infty)$ in these cases is equal to the upper bound.

\begin{conjecture}
    Let $k \geq 8$ be even and $S = (1,k,k,\ldots)$. Then
    \[
    \chi_S(P_\infty \boxtimes P_\infty) = \left\lceil \frac{3k^2+7k+8}{4} \right\rceil.
    \]
\end{conjecture}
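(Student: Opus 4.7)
The plan is to adapt and strengthen the techniques of the proof of Theorem~\ref{thm:1} from the $k=6$ case to general even $k \geq 8$. I would assume for contradiction that $\chi_S(P_\infty \boxtimes P_\infty) \leq M - 1$, where $M = \lceil (3k^2+7k+8)/4 \rceil$, and derive a contradiction by analyzing the structure of color-$1$ squares in $P_{k+1} \boxtimes P_{k+1}$ subgraphs. The counting observations from Section~\ref{sec:2} generalize directly: the diameter of $P_{k+1} \boxtimes P_{k+1}$ is $k$, so each color $i \geq 2$ appears at most once in such a window, giving $c(G) \geq (k+1)^2 - (M-2)$. The notion of critical subgraph, together with the type-A/type-B dichotomy of Observation~\ref{obs:3}, carry over verbatim with $7$ replaced by $k+1$. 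Thus the framework stays intact; what changes is the combinatorics of how a near-maximal color-$1$ set can fit inside a window of side $k+1$.

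The main task, and the main obstacle, is producing a suitable generalization of Lemma~\ref{lem:4}: a list of forbidden local patterns of color~$1$ under the assumption $\chi_S \leq M-1$. The natural starting points are the straight-line and L-shaped configurations from parts (i), (v) and (viii), which should yield to the same window-counting template once adapted to a $P_{k+1}\boxtimes P_{k+1}$ frame. The difficulty is that the density shortfall on color~$1$ — the difference between $1/4$ and the enforced density $((k+1)^2 - M + 2)/(k+1)^2$ — is only $(k-4)/(4(k+1)^2)$, while the absolute gap $M - \tfrac{3k^2+6k+12}{4} = (k-4)/4$ to be closed grows linearly in $k$. Consequently a bounded list of forbidden patterns, analogous to the nine patterns used for $k=6$, cannot suffice uniformly; one needs a $k$-parameterized family whose exclusion becomes sharper as $k$ grows, or a fractional argument that automatically captures the whole family at once. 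My preferred route is a discharging scheme in which every square is assigned a charge, color-$1$ squares and non-$1$ squares are balanced via local redistribution rules, and the forbidden-pattern analogs of Lemma~\ref{lem:4} serve precisely as the rules prohibiting charge pile-ups. The hard step will be choosing the charge and the rules so that the accumulated deficit across any critical subgraph exceeds the slack in the density bound by the required $(k-4)/4$.

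A useful auxiliary approach, running in parallel, is to resolve small cases ($k=8, 10, \ldots$) by SAT, analogous to the Subercaseaux--Heule resolution of $\chi_\rho(P_\infty \Box P_\infty)$ in \cite{subercaseaux2023packing}. This would both verify the conjecture for small $k$ and, more importantly, reveal the rigidity pattern of near-extremal colorings (much as the two rigid configurations (i) and (ii) of Figure~\ref{fig:8} governed the $k=6$ proof), which should guide the choice of the uniform forbidden family and the discharging rules needed in the general proof. I also expect that an inductive step of the form ``the conjecture for $k$ implies it for $k+2$'' is unlikely to succeed, since the constructions and forbidden patterns are genuinely window-size dependent; the proof will most likely be a single unified argument parameterized by $k$.
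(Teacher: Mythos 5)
This statement is stated in the paper as an open conjecture; the paper gives no proof of it, and your submission is not one either. What you have written is a research program: you correctly identify that the framework of Section~\ref{sec:2} (window counting in $P_{k+1}\boxtimes P_{k+1}$, critical subgraphs, the type-A/type-B dichotomy) generalizes, and you correctly compute that the gap to be closed between the known lower bound $\tfrac{3k^2+6k+12}{4}$ and the conjectured value grows like $\tfrac{k-4}{4}$, so that no bounded list of forbidden patterns analogous to Lemma~\ref{lem:4} can suffice uniformly in $k$. But that observation is precisely where the proof has to begin, not where it can end. The discharging scheme you propose is named but not constructed: no charge assignment is given, no redistribution rules are specified, and no $k$-parameterized family of forbidden patterns is exhibited or proved forbidden. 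Since the entire difficulty of the $k=6$ case already lay in establishing Lemma~\ref{lem:4} (especially part (iii), which occupies most of Section~\ref{sec:5}), asserting that a suitably sharpened family ``should yield to the same window-counting template'' is a placeholder for the whole argument rather than a step of it.

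To be clear about what is missing concretely: even for the single case $k=8$, your proposal supplies no argument that $63$ colors are insufficient; the SAT verification you mention is proposed, not performed. A correct resolution of the conjecture would need either (a) an explicit, verified family of excluded color-$1$ configurations whose combined effect forces $c(G)\geq (k+1)^2-(M-2)+\lceil (k-4)/4\rceil$ failures in some window, together with the chain of rigidity and color-propagation arguments that convert those exclusions into a contradiction (as Sections~\ref{sec:4} and~\ref{sec:5} do for $k=6$), or (b) a genuinely new density or discharging argument with all constants worked out. Neither is present, so the conjecture remains open after your proposal exactly as it was before it.
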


It may be possible to generalize the approach in this paper to $k\geq 8$ but one has to carefully choose the order of forbidden patterns of squares of color 1 to consider.

More generally, for sequences of the form $S = (i,k,k,\ldots)$, the problem of determining $\chi_S (P_\infty \boxtimes P_\infty)$ is open for the case when $i$ satisfies $i \leq \lfloor\frac{k-3}{2} \rfloor$ and $i+1\nmid k+1$ (see~\cite{tiyajamorn2024packing}).

\bibliographystyle{siam}
\bibliography{s-packing}

\end{document}